\newtheorem{theoremletter}{Theorem}
\newtheorem{thm}{Theorem}[section]
\newtheorem{pro}[thm]{Proposition}
\newtheorem{lem}[thm]{Lemma}
\newtheorem{cor}[thm]{Corollary}
\theoremstyle{definition}
\newtheorem{definition}[thm]{Definition}
\newtheorem{exa}[thm]{Example}
\newtheorem*{ackn}{Acknowledgements}
\theoremstyle{remark}
\newtheorem{rmk}[thm]{Remark}
\newtheorem*{claim}{Claim}
\newtheorem{prb}[thm]{Problem}
\newtheorem*{prob}{Problem}
\numberwithin{equation}{section}
\def\EC{effectively closed for rational intersections}
\def\eps{\varepsilon}
\def\es{\varnothing}
\def\ol#1{\overline{#1}}
\def\sh#1{\mathrm{SH}(#1)}
\def\gen#1{\langle{#1}\rangle}
\def\pre#1#2{\langle{#1}\,|\,{#2}\rangle}
\DeclareMathOperator\Mon{Mon} \DeclareMathOperator\Inv{Inv} \DeclareMathOperator\Gp{Gp} 
\DeclareMathOperator\pref{pref} \DeclareMathOperator\suff{suff} \DeclareMathOperator\red{red} 
\DeclareMathOperator\Rat{Rat}
\begin{document}

% TITLE

\title[The prefix membership problem for one-relator groups]%
{New results on the prefix membership problem for one-relator groups} % \\ \today

% AUTHORS

\author{IGOR DOLINKA}

\address{Department of Mathematics and Informatics, University of Novi Sad, Trg Dositeja Obradovi\'ca 4,
21101 Novi Sad, Serbia}

\email{dockie@dmi.uns.ac.rs}

\author{ROBERT D. GRAY}

\address{School of Mathematics, University of East Anglia, Norwich NR4 7TJ, England, UK}

\email{Robert.D.Gray@uea.ac.uk}

\thanks{The research of the first named author was supported by the Ministry of Education, Science, and Technological
Development of the Republic of Serbia through the grant No.174019. The research of the second named author was supported 
by the EPSRC grant EP/N033353/1 ``Special inverse monoids: subgroups, structure, geometry, rewriting systems and the word problem''}

% SUBJECT CLASSIFICATION

\subjclass[2010]{Primary 20F10; Secondary 20F05, 20M05, 20M18, 68Q70}

% KEY WORDS

\keywords{One-relator group; Prefix membership problem; Word problem; Special inverse monoid}

%\date{}

%\dedicatory{}

% ABSTRACT

\begin{abstract}
In this paper we prove several results regarding decidability of the membership problem for 
certain submonoids in amalgamated free products and HNN extensions of groups. These general results
are then applied to solve the prefix membership problem for a number of classes of one-relator groups
which are low in the Magnus--Moldavanski\u{\i} hierarchy. Since the prefix membership problem 
for one-relator groups is intimately related to the word problem for one-relator special 
inverse monoids in the $E$-unitary case (as discovered in 2001 by Ivanov, Margolis and Meakin),
these results yield solutions of the word problem for several new classes of one-relator special 
inverse monoids. 
In establishing these results, we introduce a new theory of 
conservative factorisations of words 
which provides a link between the prefix membership problem of a one-relator group and the
group of units of the corresponding one-relator special inverse monoid. Finally, we exhibit the
first example of a one-relator group, defined by a reduced relator word, that has an undecidable prefix
membership problem.
\end{abstract}

% THE ARTICLE

\maketitle

\section{Introduction}
\label{sec:intro} 

From the early days of combinatorial group theory, algorithmic problems have occupied a central position in the course of its development, going back to the pioneering work of Dehn \cite{Dehn}.
One of the most celebrated classical results in this area is the positive solution of the word problem for one-relator groups by Dehn's student Magnus \cite{Ma2}. It is based on a previous important result by 
Magnus \cite{Ma1}, the \emph{Freiheitssatz}, stating that if $w$ is a cyclically reduced word then any subgroup 
of the one-relator group $\Gp\pre{X}{w=1}$ generated by a subset of $X$ omitting at least one letter that appears in $w$ must be free. Magnus's approach is applicable to an array of other algorithmic problems for 
one-relator groups and entails what is today known as the \emph{Magnus method}. The modern exposition of this 
method stems from the paper of McCool and Schupp \cite{McCSch} (see also the monograph \cite{LSch}), and is 
based on an observation due to Moldavanski\u{\i} \cite{Mol} that if $w$ is a reduced word and has exponent sum 
zero for some letter from $X$, then $\Gp\pre{X}{w=1}$ is an HNN extension of a one-relator group with a defining 
relator shorter than $w$.

Given the vigorous development of combinatorial algebra over a number of decades, it is quite striking 
that the following problem still remains open.

\begin{prob}
Is the word problem decidable for all one-relation monoids $\Mon\pre{X}{u=v}$ (where $u,v$ are words over $X$)?
\end{prob}

\noindent 
This problem has received significant attention, with a number of special cases being solved. 
A  strong early impetus was given by Adjan \cite{Adj} who proved that $\Mon\pre{X}{u=v}$ has
decidable word problem if either one of the words $u,v$ are empty (this is the case of the so-called \emph{special
monoids}, with presentations of the form $\Mon\pre{X}{w=1}$), or both $u,v$ are non-empty and have different
initial and different terminal letters. For both of these cases, Adjan exhibits a reduction of the monoid
word problem to the word problem of an associated one-relator group, and then makes an appeal to Magnus's result.
Later on, Adyan and Oganessyan \cite{AO} showed that the word problem for $\Mon\pre{X}{u=v}$
can be reduced to the case
of monoid presentations of the form $\Mon\pre{X}{asb=atc}$, where $a,b,c\in X$, $b\neq c$, and $s,t$ are 
arbitrary words over $X$.

An entirely new approach to the problem was provided by the work of Ivanov, Margolis and Meakin \cite{IMM}
(which is also the central reference for the present paper). 
There, a crucial observation is made that the monoid $\Mon\pre{X}{asb=atc}$ (arising from the reduction found in \cite{AO}) embeds into the \emph{inverse monoid} defined
by the inverse monoid presentation $\Inv\pre{X}{asbc^{-1}t^{-1}a^{-1}=1}$; consequently, the decidability of the word
problem for special inverse monoid presentations $\Inv\pre{X}{w=1}$ (where $w$ is a word over the 
alphabet $\ol{X}=X\cup X^{-1}$) would immediately imply the positive solution of the word problem for one-relator 
monoids. This strongly motivates the study of special inverse monoids and their word problems, which is also
interesting in its own right, given the prevalence of inverse semigroups and their combinatorial and geometric 
aspects in various areas of mathematics (see \cite{Law}). However, a recent surprising result of Gray 
\cite{Gr-Inv} shows that the word problem for one-relator special inverse monoids in complete generality is
\emph{undecidable}. 

Given that the word problem for $\Inv\pre{X}{w=1}$ is undecidable in general, the key problem that remains is to determine for which words $w \in (X \cup X^{-1})^*$ it is decidable? 
In particular, is it decidable if $w$ is (i) a reduced word or (ii) a cyclically reduced word? 
A positive answer to the first of these questions would still, as a consequence of the 
results from \cite{IMM} described above, imply a positive answer to decidability of the word problem for arbitrary one-relator monoids $\Mon\pre{X}{u=v}$.  
This motivates investigating the word problem in the cases that $w$ is reduced or cyclically reduced. 

In the cyclically reduced case, the word problem for the one-relator inverse monoid $\Inv\pre{X}{w=1}$ is closely related to an algorithmic problem in the corresponding one-relator group  $\Gp\pre{X}{w=1}$ called the prefix membership problem. 
For a one-relator group $G=\Gp\pre{X}{w=1}$, let $P_w$ denote the submonoid of $G$ generated by the elements of $G$ represented by all prefixes of $w$. This is the \emph{prefix monoid} of $G$. Another crucial result from \cite{IMM}  (Theorem 3.1), shows that if the inverse monoid $\Inv\pre{X}{w=1}$ has the so-called $E$-unitary property (which  is e.g.\ the case when $w$ is cyclically reduced) then the word problem of $\Inv\pre{X}{w=1}$ is decidable whenever the membership problem for $P_w$ in $G$ is decidable. 
This is significant because it translates the word problem for a one-relator special inverse monoids 
into the realm of one-relator groups and associated decision problems. 

The connections between decision problems for monoids, inverse monoids and groups just described highlight
the importance of other, more general, algorithmic problems. For example, it is still
unknown whether the \emph{subgroup membership problem}---also called the \emph{generalised word problem}---is
decidable for one-relator groups. However, there exist one-relator groups in which the \emph{submonoid membership
problem} (and thus the more general \emph{rational subset membership problem} \cite{Loh}) is undecidable \cite{Gr-Inv}. 
The one-relator group with undecidable submonoid membership problem given in \cite{Gr-Inv} is an HNN extension of $\mathbb{Z} \times \mathbb{Z}$ 
with respect to an isomorphism mapping one of the natural copies of $\mathbb{Z}$ to the other.
So in general the submonoid and rational subset membership problems are not well-behaved under the HNN extension construction, and similarly for free products with amalgamation.  
On the other hand, under the assumption of finiteness of edge groups, the decidability of the rational subset 
membership problem is preserved under the graph of groups construction \cite{KSS}, which includes amalgamated 
free products and HNN extensions. We direct the reader e.g.\ to \cite{Gru,KWM,Loh,LSt} for a sampler of results 
in this broader area in which the present topic is couched.

Motivated by the above discussion, both the word problem for one-relator inverse monoids and the prefix membership problem for one-relator groups, with cyclically reduced defining relator, have already received a great deal of attention in the literature; see e.g. \cite{Hermiller:2010bs, Inam, IMM, Juh, MMSu, Mea} and   
\cite[Question~13.10]{Bes-list}.
In this paper we will make several new contributions towards resolving these open problems.  
The new approaches to these problems that we present in this article naturally divide into two themes.  

Firstly, as mentioned above, the standard modern approach to proving results about one-relator 
a one-relator group $\Gp\pre{X}{w=1}$  is by induction on the length of $w$ using the McCool--Schupp \cite{McCSch, LSch} Moldavanski\u{\i} \cite{Mol} approach via HNN extensions.
The inductive step of this approach is based on the fact that the one-relator group embeds in a certain HNN extension of a one-relator group with a shorter defining relator.  
We shall refer to the steps in this induction as levels in the Magnus--Moldavanski\u{\i} hierarchy.
Given its utility in proving other results for one-relator groups, it is very natural to also attempt to use this approach to investigate the prefix membership problem for $\Gp\pre{X}{w=1}$.  If the group happens to be free then by a theorem of  Benois \cite{Be} the prefix membership problem is decidable (in fact, the more general rational subset membership problem is decidable for free groups.).   So the next natural step in this approach is to investigate the prefix membership problem for one-relator groups that are one-step away from being free in this hierarchy. The general results we prove for HNN extensions in this paper are motivated by this idea, and we will apply them in this paper to prove decidability of the prefix membership problem for several classes of one-relator groups which are low in the Magnus--Moldavanski\u{\i} hierarchy.

The second new viewpoint revealed by the results we prove in this paper is that  
the word problem in $\Inv\pre{A}{w=1}$ can be often be shown to be decidable by analysing decompositions of the word 
$w \equiv w_1 w_2 \dots w_k$, where all the $w_i$ represent invertible elements of the monoid.
We call this a \emph{unital decomposition} of the word $w$.  
We shall identify several combinatorial conditions on unital decompositions which suffice to imply decidability of the word problem for the monoid. This gives a new approach to the word problem for one-relator inverse monoids which goes via the group of units, in this sense.  Something that makes this approach widely applicable is that the above decomposition of $w$ does not need to be minimal in order for our results to apply.  
That is, provided the words $w_i$ satisfying the needed combinatorial properties, it is not important whether or not there is a finer decomposition of $w$ as a product of units.   
This means that there are situations where we can show the word problem is decidable without necessarily having an algorithm to compute the minimal invertible pieces of the defining relator word.
Similarly it means that the word problem can sometimes be shown to be decidable without having to determine the structure of the group of units of the monoid.  
To make use of information about unital decompositions in the inverse monoid presentation to solve the prefix membership problem in the corresponding group $\Gp\pre{A}{w=1}$ we develop a theory of, so-called, \emph{conservative factorisations} of relator words in one-relator groups.
This is another key new idea that we introduce in this paper, since it allows us to transform algebraic information about units in the inverse monoid into corresponding algebraic information about submonoids of the maximal group image generated by prefixes of pieces of the relator.     
This allows us to state our results entirely in terms of one-relator groups and conservative factorisations, 
and then apply them to solve the word problem for various families of one-relator inverse monoids.   

These new approaches give rise to results which, when expressed in their most general form, prove 
decidability of the membership problem in 
certain submonoids of amalgamated free products of groups and HNN extensions of groups. 
In this paper we prove four new general results of this kind.  Specifically, we   prove two general theorems for amalgamated free products in Section~\ref{sec:amalg}, Theorems~\ref{thm:amal} and  \ref{thm:amal51}, and then in Section~\ref{sec:hnn} we prove two general theorems for HNN extensions of groups,  namely Theorem~\ref{thm:hnn41} and Theorem~\ref{thm:hnn}. Then, in Sections~\ref{sec:appl1} and \ref{sec:appl2}, respectively, we show how, via ideas summarised in the description of the two main themes above,   
we can apply these general results to solve the prefix membership problem for certain  one-relator groups and, consequently, to solve the word problem for some classes of special one-relator inverse monoids. 
As applications we recover new proofs of numerous results from the literature 
\cite{BMM,IMM,Juh,MM93,MMSu, Mea} (bringing them under a common framework), and at the same time we 
prove decidability of the prefix membership problem for 
many classes of one-relator groups (and special one-relator inverse monoids) not covered by previous results. In particular, our work was inspired by the attempts to solve the word problem for the so-called \emph{O'Hare monoid} (see \cite{MMSt,Mea} and Example~\ref{exa:ohare} below), which is eventually dealt with  in this paper, in Proposition~\ref{pro:OHare}. Other main applications of our general results include  Theorems~\ref{thm:marker}, \ref{thm:disj}, \ref{thm:pinch1}, \ref{thm:pos-neg}, \ref{thm:adjan} and  \ref{thm:pinch2}.

In the last section of the paper we present a result of a different flavour which 
says something about the limits of what we should hope to be able to prove about the prefix membership problem in one-relator groups.  
Specifically, by modifying the construction from \cite{Gr-Inv}, we will show in Theorem~\ref{thm:undec} that there is a finite alphabet $X$ and a reduced word $w \in (X \cup X^{-1})^*$ such that $\Gp\pre{X}{w=1}$ has undecidable prefix membership problem. 
Hence if \cite[Question~13.10]{Bes-list} has a positive answer then the cyclically reduced hypothesis will need to be used. 

The paper is organised as follows. 
In the next preliminary section we gather the notation and basic notions, aiming to make the paper reasonably 
self-contained. This is followed by Section~\ref{sec:factor} where we discuss the relationship between two 
types of factorisations of a word $w$ appearing as a relator in $M=\Inv\pre{X}{w=1}$, namely, \emph{unital} ones, 
decomposing $w$ into pieces representing invertible elements (units) of the inverse monoid $M$, and 
\emph{conservative} factorisations preserving, in a sense, the prefix monoid $P_w$. When $M$ is $E$-unitary,
these two types of factorisation coincide (see Theorem \ref{thm:UnitalConservative}), and that, taken together 
with the Benois factorisation algorithm devised by Gray and Ru\v skuc \cite{GR} (producing such factorisations 
in a manner finer than the Adjan-Zhang overlap algorithm \cite{Adj,Zh1}), is an important pre-requisite for some 
of our decidability results. The main body of our results is then presented in 
Sections~\ref{sec:amalg}--\ref{sec:appl2}. We finish the paper by few concluding remarks in 
Section~\ref{sec:conclude}.

% PRELIMINARIES

\section{Preliminaries}
\label{sec:prelim}

We give some background definitions and results from combinatorial group and monoid theory that will be needed 
later. For more background we refer the reader to \cite{LSch} for groups, \cite{How,Law,Pet} for monoids and 
inverse semigroups, and \cite{HU,Pin} for automata and formal languages. In particular we refer the reader to 
\cite{LSch} for basic notions from the algorithmic theory of finitely generated groups.  

\subsection{Words and free objects}

Let $X$ be a finite alphabet. By $X^*$ we denote the \emph{free monoid} on $X$, consisting of all words over $X$
including the empty word $1$. However, whenever we are concerned with groups and inverse monoids it is
more useful to consider a `doubled' alphabet $\ol{X}=X\cup X^{-1}$, where $X^{-1}=\{x^{-1}:\ x\in X\}$ is a 
disjoint copy of $X$, with an obvious bijective correspondence between $X$ and $X^{-1}$. 
Now the free monoid $\ol{X}^*$ has a natural
involutory operation so that for a word $w=x_1^{\eps_1}\dots x_k^{\eps_k}$, $x_1,\dots, x_k\in X$,
$\eps_1,\dots,\eps_k\in\{-1,1\}$, we have $w^{-1}=x_k^{-\eps_k}\dots x_1^{-\eps_1}$. 

When $w\in \ol{X}^*$, we use the notation $w(x_1,\dots,x_n)$ to stress that the letters occurring in $w$ are
among $x_1,\dots,x_n,x_1^{-1},\dots,x_n^{-1}$. In other words, an occurrence of a letter $x_i$ in $w$ can happen 
either as $x_i$, or as $x_i^{-1}$. Given $w(x_1,\dots,x_n)$ and a sequence of (not necessarily distinct) words
$p_1,\dots,p_n\in \ol{X}^*$, we write $w(p_1,\dots,p_n)$ to denote the word obtained from $w=w(x_1,\dots,x_n)$
by replacing each letter $x_i$ by $p_i$ and each letter $x_i^{-1}$ by $p_i^{-1}$.

Given $w\in \ol{X}^*$ we denote by $\red(w)$ the \emph{reduced form} of $w$, which is obtained
from $w$ by the confluent rewriting process of successively removing subwords of the form $xx^{-1}$ and
$x^{-1}x$, where $x\in X$. This notation is extended to sets, too, so that for $A\subseteq \ol{X}^*$, $\red(A)$
stands for the set of words obtained by reducing each word from $A$. As is well known, one can identify the
elements of the free group $FG(X)$ on $X$ with the set of all reduced words from $\ol{X}^*$, so that the result
of the multiplication of two such words $u,v$ is $\red(uv)$, and the inverse of $u$ is simply $u^{-1}$.

A monoid $M$ is called \emph{inverse} \cite{Law,Pet} if for every $a\in M$, 
there is a unique element $a^{-1} \in M$, called the inverse of $a$, such that  
$aa^{-1}a=a$ and $a^{-1}aa^{-1}=a^{-1}$. 
Inverse monoids form a variety in
the sense of universal algebra, so free inverse monoids $FIM(X)$ exist. A straightforward, albeit implicit 
description of $FIM(X)$ is given as a quotient of $\ol{X}^*$ by the so-called \emph{Vagner congruence}: this
is the congruence of $\ol{X}^*$ generated by all pairs of the form $(u,uu^{-1}u)$ and $(uu^{-1}vv^{-1}, 
vv^{-1}uu^{-1})$, where $u,v\in \ol{X}^*$. Concrete descriptions of $FIM(X)$ (and so the solutions of 
its word problem) go back to Scheiblich \cite{Sch} and Munn \cite{Munn}: the element of $FIM(X)$ represented
by a word $w\in \ol{X}^*$ can be identified with a birooted tree today called the \emph{Munn tree} of $w$. 
This is obtained as a connected subtree of the Cayley tree of the free group $FG(X)$ which arises by travelling
along the path labelled by $w$.
Hence, $u,v\in \ol{X}^*$ represent the same element of $FIM(X)$ if and only if they give rise to the same
Munn tree. Clearly, there is a natural surjective homomorphism $FIM(X)\to FG(X)$.

\subsection{Presentations}

We denote by 
$$
G=\Gp\pre{X}{w_i=1\ (i\in I)}
$$
the group presented by generators $X$ and relators $w_i$, $i\in I$; as usual, this is canonically the quotient of
the free group $FG(X)$ by its smallest normal subgroup $N$ containing all the elements (reduced words) $w_i$, $i\in I$.
Similarly, the monoid defined by a monoid presentation $M=\Mon\pre{X}{u_i=v_i\ (i\in I)}$ is the quotient of the
free monoid $X^*$ by the congruence $\rho$ generated by the pairs $(u_i,v_i)$, $i\in I$.

In an analogous fashion, inverse monoids can be given by inverse monoid presentations
$$
M=\Inv\pre{X}{u_i=v_i\ (i\in I)},
$$
where $M\cong FIM(X)/\rho$ for the inverse monoid congruence $\rho$ of $FIM(X)$ generated by the pairs $(u_i,v_i)$, 
$i\in I$. 
This is equivalent to saying that $M$ as the quotient $\ol{X}^*/\rho'$ 
where $\rho'$ is
the smallest congruence containing the Vagner congruence and all the pairs $(u_i,v_i)$, $i\in I$. When one of
the sides of each defining relation is the empty word, say $v_i$ is the empty word for all $i\in I$, we get the notion
of a \emph{special inverse monoid} and special inverse monoid presentations. 
The maximal group homomorphic image of 
$M=\Inv\pre{X}{u_i=1\ (i\in I)}$ is the group defined by the presentation $\Gp\pre{X}{u_i=1\ (i\in I)}$.

For a monoid or inverse monoid $M$, we denote by $U_M$ the group of units  of $M$. 
So $U_M$ is the set of all invertible elements of the monoid $M$.   
If $G$ is a group and $A\subseteq G$, we denote by $\Gp\gen{A}$ the \emph{subgroup} generated by $A$, while
$\Mon\gen{A}$ is the \emph{submonoid} of $G$ generated by $A$.

Throughout the paper, if $M$ is an inverse monoid generated by a set $X$, given any two words $u, v \in \ol{X}^*$ we say 
$u = v$ in $M$ to mean that that the two words represent the same element of the inverse monoid, and write $u \equiv v$ to mean that $u$ and $v$ are identical as words in $\ol{X}^*$.   
The same comments apply in particular when we are working with a group $G$ generated by a set $X$. 
Also, in this situation, given any subset $A$ of $\ol{X}^*$ by the submonoid of $G$ generated by $A$, we mean the submonoid generated by the set of all elements of $G$ represented be words in $A$ (that is, the image of $A$ in $G$). We write this as $\Mon\gen{A} \leq G$. Similarly we talk about the subgroup $\Gp\gen{A}$ of $G$ generated by the set of words $A$.

\subsection{$E$-unitary inverse monoids}

Let $M$ be an inverse monoid and $A\subseteq M$. The subset $A$ is said to be \emph{left unitary} 
if $a\in A$, $s\in M$ and $as\in A$ imply $s\in A$. 
The notion of \emph{right unitary} subset is defined dually. 
A subset is \emph{unitary}
if it is both left and right unitary. As is shown, for example, in \cite[Proposition 2.4.3]{Law}, when $A$
is $E=E(M)$, the set of idempotents of $M$, the properties of being left, right and two-sided $E$-unitary
coincide, thus defining the class of \emph{$E$-unitary inverse monoids}.

Each inverse monoid $M$ has the \emph{minimum group congruence} $\sigma$, the smallest congruence of $M$ such that
$M/\sigma$ is a group (see \cite[Theorem 2.4.1]{Law}). On the other hand, on any inverse monoid $M$ one can
define the \emph{compatibility relation} $\sim$ by
$$
a\sim b\text{ if and only if }ab^{-1},a^{-1}b\in E(M).
$$
Whenever $M$ is $E$-unitary, the relation $\sim$ is an equivalence relation, and, furthermore, a congruence of $M$.
In general, $\sigma$ is the congruence generated by the relation $\sim$. In fact, the following characterisation 
holds (see \cite[Theorem 2.4.6]{Law}).

\begin{pro}
An inverse monoid $M$ is $E$-unitary if and only if $\sigma=\,\sim$.
\end{pro}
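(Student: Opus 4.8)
The plan is to prove the two implications separately, in each case reducing to facts already assembled above: that $\sigma$ is the congruence generated by the compatibility relation $\sim$ (so that in particular ${\sim} \subseteq \sigma$, by minimality), that $\sim$ is a congruence of $M$ whenever $M$ is $E$-unitary, and that for the idempotent set $E = E(M)$ the left, right and two-sided unitary conditions coincide by \cite[Proposition 2.4.3]{Law}. This last fact means that to establish $E$-unitarity it suffices to verify the single left unitary condition: $e \in E$, $s \in M$, $es \in E \Rightarrow s \in E$.

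For the forward implication I would argue purely formally. Assuming $M$ is $E$-unitary, the relation $\sim$ is a congruence of $M$. Since $\sigma$ is by definition the smallest congruence containing $\sim$, and $\sim$ is itself a congruence containing $\sim$, this forces $\sigma \subseteq {\sim}$; combined with the inclusion ${\sim} \subseteq \sigma$ valid in general, I obtain $\sigma = {\sim}$. No computation is needed here beyond invoking the two quoted structural facts.

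For the converse I would assume $\sigma = {\sim}$ and verify the left unitary condition directly. Given $e \in E$ and $s \in M$ with $es \in E$, I pass to the maximal group image $M/\sigma$, writing $\ol{x}$ for the image of $x$. Every idempotent is sent to the identity of the group $M/\sigma$, so $\ol{e} = 1$ and $\ol{es} = 1$; since the quotient map is a homomorphism onto a group, $\ol{s} = \ol{e}\,\ol{s} = \ol{es} = 1$, that is, $s\,\sigma\,1$. The hypothesis $\sigma = {\sim}$ then gives $s \sim 1$, and by the definition of the compatibility relation this means $s = s\cdot 1^{-1} \in E$. Hence $E$ is left unitary, and by the cited coincidence of the unitarity conditions $M$ is $E$-unitary.

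The argument is short, so I do not anticipate a genuinely hard step; the points that require care are simply the correct use of the two structural facts (that $\sigma$ is generated by $\sim$, and that $\sim$ is a congruence in the $E$-unitary case) and, in the converse, the observation that $es \in E$ forces $s\,\sigma\,1$, which is exactly the place where the group structure of $M/\sigma$ enters. The main conceptual content of the statement is really the reconciliation of the \emph{intrinsic} definition of $E$-unitarity (a unitarity condition on $E$) with the \emph{extrinsic} one (that the minimum group congruence is already an equivalence generated by compatibility), and the proof above shows that this reconciliation is forced by those preparatory lemmas rather than requiring any new idea.
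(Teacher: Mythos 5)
Your proof is correct. The paper does not actually prove this proposition --- it simply cites \cite[Theorem 2.4.6]{Law} --- and your argument (forward direction from the quoted facts that $\sim$ is a congruence in the $E$-unitary case and that $\sigma$ is the congruence it generates; converse by noting $es\in E$ forces $s\,\sigma\,1$ in the group $M/\sigma$ and hence $s\sim 1$, i.e.\ $s\in E$) is the standard one found there.
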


Turning to the case of special inverse monoids $M=\Inv\pre{X}{u_i=1\ (i\in I)}$, we have that $M/\sigma=G=
\Gp\pre{X}{u_i=1\ (i\in I)}$, and $\sigma$ is simply the kernel relation of the natural homomorphism $M\to G$.
Therefore, we immediately get the following well-known result.

\begin{lem}\label{lem:sim}
Assume that the inverse monoid $M=\Inv\pre{X}{u_i=1\ (i\in I)}$ is $E$-unitary, and let $u,v\in \ol{X}^*$ be
such that $u=v$ holds in $G=\Gp\pre{X}{u_i=1\ (i\in I)}$. Then $u\sim v$ holds in $M$.
\end{lem}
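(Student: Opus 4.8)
The plan is to deduce this directly from the preceding Proposition together with the explicit description, recorded just above the lemma, of the minimum group congruence $\sigma$ of a special inverse monoid. The point is that for $M=\Inv\pre{X}{u_i=1\ (i\in I)}$ we have $M/\sigma=G$, with $\sigma$ being precisely the kernel relation of the canonical homomorphism $\pi\colon M\to G$. Thus the hypothesis that $u=v$ holds in $G$ should translate immediately into $\sigma$-equivalence of the images of $u$ and $v$ in $M$, and then $E$-unitarity will upgrade $\sigma$ to $\sim$.

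In detail, I would let $\phi\colon\ol{X}^*\to M$ and $\psi\colon\ol{X}^*\to G$ denote the canonical maps, so that $\pi\circ\phi=\psi$ by the universal property of the maximal group image. The assumption that $u=v$ in $G$ means exactly $\psi(u)=\psi(v)$, that is, $\pi(\phi(u))=\pi(\phi(v))$. Since $\sigma=\ker\pi$, this says that $\phi(u)$ and $\phi(v)$ are $\sigma$-related in $M$. Now I invoke the Proposition: as $M$ is $E$-unitary, we have $\sigma\,=\,\sim$, and therefore $\phi(u)\sim\phi(v)$, which is the desired conclusion that $u\sim v$ holds in $M$.

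There is no real obstacle here; the statement is essentially a bookkeeping consequence of two facts already in hand, and it is labelled well-known for that reason. The only point requiring a little care is the identification $M/\sigma=G$ together with the claim that $\sigma$ is \emph{exactly} the kernel of $M\to G$ (rather than some possibly larger congruence), but this is precisely what was observed in the discussion immediately preceding the lemma. Once the commuting triangle of canonical maps is in place, the conclusion is immediate.
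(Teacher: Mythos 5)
Your proposal is correct and matches the paper exactly: the lemma is stated there as an immediate consequence of the identification of $\sigma$ with the kernel of the natural map $M\to G$ together with the preceding Proposition that $E$-unitarity is equivalent to $\sigma=\,\sim$. No further comment is needed.
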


Let us repeat the main result of \cite{IMM}, which confirmed a conjecture formulated
earlier in \cite{MMSt}.

\begin{thm}\emph{(}\cite[Theorem~4.1]{IMM}\emph{)}\label{thm:cr-euni}
If the word $w\in \ol{X}^*$ is cyclically reduced then the inverse monoid $M=\Inv\pre{X}{u=1}$ is $E$-unitary.
\end{thm}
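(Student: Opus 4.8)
The plan is to prove this via Stephen's theory of Schützenberger automata, anchored on the standard reformulation of $E$-unitarity in that language. Recall first that, in the notation of the Proposition above, $M$ is $E$-unitary exactly when $\sigma = {\sim}$; since ${\sim} \subseteq \sigma$ always holds, and since $\sigma$ is the kernel of the natural map $M \to G$ with $G = \Gp\pre{X}{w=1}$, this is equivalent to the statement that $\sigma^{-1}(1) = E(M)$, i.e.\ that every word $u \in \ol{X}^*$ representing the identity of $G$ already represents an idempotent of $M$. Thus the whole problem is to show that triviality in the maximal group image forces idempotency in the inverse monoid, and it is precisely here that the cyclically reduced hypothesis on $w$ must enter.

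To attack this I would pass to the geometric picture. Stephen's theory describes the elements of $M = \Inv\pre{X}{w=1}$ through the family of Schützenberger graphs $S\Gamma(e)$, one for each idempotent $e$, whose vertices are the elements of the $\mathcal{R}$-class $R_e$ and which carry a canonical graph morphism $\phi_e \colon S\Gamma(e) \to \mathrm{Cay}(G, \ol{X})$ sending the base vertex $e$ to the image of $e$ in $G$. A clean reformulation of the reduction above is that $M$ is $E$-unitary if and only if each $\phi_e$ is injective on vertices, i.e.\ the Schützenberger graphs embed into the Cayley graph of $G$; a collapse of two distinct $\mathcal{R}$-class vertices onto a single group element is exactly what would manufacture a non-idempotent in $\sigma^{-1}(1)$. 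So the target becomes: establish injectivity of these morphisms.

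Next I would construct the relevant Schützenberger graph by Stephen's iterative procedure, realising it as a directed limit of finite approximating inverse automata built from two elementary operations: an \emph{expansion}, which sews on a loop labelled $w$ at any vertex from which a proper prefix of $w$ (or of $w^{-1}$) can be read, thereby imposing the relation $w = 1$; and a \emph{Stallings folding}, which identifies edges sharing a label and an initial vertex. The invariant I would carry along the construction is that at every finite stage the morphism to $\mathrm{Cay}(G)$ is injective on vertices. Expansions preserve this invariant essentially for free, because the sewn loop maps onto the genuine closed path labelled $w$ based at the corresponding group element.

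The hard part, and the only place the hypothesis is really used, is controlling the foldings. Because $w$ is cyclically reduced, the loop it labels projects to a \emph{reduced} closed path in $\mathrm{Cay}(G)$ that stays reduced across its seam; consequently, when a relator cell is attached and folded against the existing graph, the only identifications it can force are those already present in $\mathrm{Cay}(G)$, so no folding collapses two vertices with distinct images under $\phi$. Making this precise is the technical core: one argues that any folding step violating vertex-injectivity would have to originate from a non-reduced overlap of two cyclic conjugates of $w$, directly contradicting cyclic reducedness. Granting this, the injectivity invariant survives the limit, each $\phi_e$ is an embedding, and therefore $\sigma^{-1}(1) = E(M)$, so $M$ is $E$-unitary as claimed.
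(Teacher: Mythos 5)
The paper offers no proof of this statement: it is quoted verbatim as \cite[Theorem~4.1]{IMM}, so the only thing to measure your sketch against is the original argument of Ivanov, Margolis and Meakin, which proceeds by induction on the number of cells of a reduced van Kampen diagram over $\Gp\pre{X}{w=1}$ and leans on structural facts about such diagrams that are specific to \emph{one-relator} presentations. Your reduction of $E$-unitarity to the injectivity of the canonical morphisms $\phi_e$ from the Sch\"utzenberger graphs to $\mathrm{Cay}(G,\ol{X})$ is correct and standard, but the induction you build on top of it has a genuine gap. The invariant ``at every finite stage of Stephen's procedure the morphism to $\mathrm{Cay}(G)$ is injective on vertices'' is already false at stage zero: the linear automaton (or Munn tree) of the word $w$ itself has distinct initial and terminal vertices, since $\red(w)\neq 1$ in $FG(X)$ for a nonempty cyclically reduced $w$, yet both map to $1\in G$. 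These two vertices are only glued together by later expansions and foldings, so non-injectivity at finite stages is the normal state of affairs, not an anomaly to be excluded.

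Relatedly, your treatment of foldings points in the wrong direction. A folding identifies two vertices that are $x$-neighbours of a common vertex, so they automatically have the same image under any label-preserving morphism to $\mathrm{Cay}(G)$; the claim that ``no folding collapses two vertices with distinct images'' is therefore trivially true for \emph{any} relator and any presentation, and cannot be where the cyclically reduced hypothesis enters. The actual content of the theorem is the converse implication: one must show that any two vertices of the limit graph with the same image in $G$ are eventually identified by the procedure, i.e.\ that every coincidence forced by the group is witnessed by some finite sequence of expansions and foldings. Your sketch never engages with this. A decisive sanity check is that your argument, as described, nowhere uses that there is only one relator, whereas the paper notes immediately after the statement that the result fails for special inverse monoids with two cyclically reduced relators; any correct proof must therefore invoke the one-relator hypothesis in an essential way, as the diagrammatic argument in \cite{IMM} does.
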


In \cite{IMM} one can find an example of a special inverse monoid with more than one defining relation, and with both defining relators being cyclically reduced words, which is
non-$E$-unitary, so the one-relator condition is essential here.

\subsection{Decision problems in finitely generated groups, finite state automata, the Benois Theorem}

Let $G=\gen{X}$ be a finitely generated group with  
canonical homomorphism $\pi:\ol{X}^*\to G$,
let $A$ be a finite subset of $\ol{X}^*$ 
and let $M=\Mon\gen{A}$ be the submonoid of $G$ generated by $A$. 
The  \emph{membership problem for $M$ in $G$} is the following decision problem:

\medskip

\noindent INPUT: A word $w\in\ol{X}^*$.\\
QUESTION: $w\pi \in M$? 

\medskip

\noindent In other words, is $w$ equal in $G$ to some product of words from $A$?

Given a one-relator group $G=\Gp\pre{X}{w=1}$ we define the associated \emph{prefix monoid} $P_w$ to be the submonoid 
\[
P_w = \Mon\gen{\pref(w)} \leq G, 
\]
where $\pref(w)$ denotes the set of all prefixes of the word $w$. We use $\suff(w)$ to denote the set of all suffixes of $w$. 
We stress that the prefix monoid of $G$ actually \emph{depends on the presentation} of $G$---it can happen that two different 
presentations define the same group, while the corresponding prefix monoids are different, as shown in the following simple example.

\begin{exa}
Both groups $G_1=\Gp\pre{a,b}{aba=1}$ and $G_2=\Gp\pre{a,b}{baa=1}$ are infinite cyclic, that is, free groups of rank $1$
generated by $a$. In this sense, these two presentations define the same group $G=G_1=G_2$. However, the prefix monoid corresponding 
to the first presentation is $M_1=\Mon\gen{a,ab}=\Mon\gen{a,a^{-1}}=G$, while the prefix monoid for the second presentation is
$M_2=\Mon\gen{b,ba}=\Mon\gen{a^{-2},a^{-1}}=\Mon\gen{a^{-1}}=\{1,a^{-1},a^{-2},a^{-3},\dots\}$, clearly a proper submonoid of $M_1$.
\end{exa}

Therefore, we are always going to refer to the prefix monoid of a one-relator group defined by a explicitly stated presentation
$G=\Gp\pre{X}{w=1}$, or make sure that the presentation for $G$ is clear from the context.
Proceeding in this vein, we say that the one-relator group $G$ defined by the presentation $\Gp\pre{X}{w=1}$ has 
decidable \emph{prefix membership problem} if the membership problem for $P_w$ in $G$ is decidable. 
The following crucial connection to the word problem of one-relator special inverse monoids was made in \cite{IMM}.
It is an immediate consequence of 
\cite[Theorem 3.3]{IMM}.

\begin{thm}\emph{(}\cite{IMM}\emph{)}\label{thm:pw-wp}
Let $w\in\ol{X}^*$ be a word such that the inverse monoid 
$$M=\Inv\pre{X}{w=1}$$ 
is $E$-unitary. If the prefix membership problem for $G=\Gp\pre{X}{w=1}$ is decidable, so is the word problem for $M$.
\end{thm}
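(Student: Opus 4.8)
The plan is to reduce the word problem of $M$ to two subproblems that are both decidable under the hypotheses: the word problem of the one-relator group $G$, and the membership problem for $P_w$ in $G$. The first ingredient is a purely inverse-semigroup-theoretic reduction of equality to an equality of idempotents. In any inverse monoid one has $a=b$ if and only if $a\sim b$ and $aa^{-1}=bb^{-1}$: indeed, if $a\sim b$ then $a^{-1}b, b^{-1}a\in E(M)$, so combining $b^{-1}a\in E(M)$ with $aa^{-1}=bb^{-1}$ gives $a=aa^{-1}a=bb^{-1}a=b(b^{-1}a)\le b$, and symmetrically $b\le a$, whence $a=b$; the converse is trivial. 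Now let $u,v\in\ol{X}^*$. If $u=v$ in $M$ then certainly $u=v$ in $G$ and $uu^{-1}=vv^{-1}$ in $M$. Conversely, if $u=v$ in $G$ then Lemma~\ref{lem:sim} yields $u\sim v$ in $M$ (using $E$-unitarity), so together with $uu^{-1}=vv^{-1}$ the displayed fact gives $u=v$ in $M$. Hence deciding whether $u=v$ in $M$ amounts to deciding (i) whether $u=v$ in $G$, and, if so, (ii) whether the idempotents $uu^{-1}$ and $vv^{-1}$ coincide in $M$.

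Subproblem (i) is decidable because $G=\Gp\pre{X}{w=1}$ is a one-relator group (after reducing $w$), whose word problem was solved by Magnus. The crux is therefore subproblem (ii), and this is where the prefix monoid enters. Here I would appeal to the description of $M$ via its Schützenberger graphs: because $M$ is $E$-unitary, the Schützenberger graph $\sh{1}$ of the identity embeds as a based connected subgraph of the Cayley graph of $G$, and its vertex set is exactly (the image in $G$ of) the prefix monoid $P_w$. The idempotent $uu^{-1}$ is then encoded by the finite subgraph of $\sh{1}$ swept out by reading $uu^{-1}$ from the basepoint, and the equality $uu^{-1}=vv^{-1}$ in $M$ is equivalent to the coincidence of these two finite subgraphs inside $\sh{1}$. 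Deciding this coincidence requires, for the finitely many prefixes $p$ of $u$ and of $v$, determining which of the group elements $p\pi$ actually lie in $\sh{1}$, that is, in $P_w$; each such test is a single instance of the membership problem for $P_w$ in $G$, supplemented by finitely many equality tests in $G$. Thus (ii) is decidable under our hypotheses, and the precise form of this reduction is exactly the content of \cite[Theorem~3.3]{IMM}.

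The main obstacle is step (ii): one must justify that equality of idempotents in $M$ is faithfully detected by finitely many membership queries in $P_w$. This rests squarely on the $E$-unitary hypothesis, which guarantees that $\sh{1}$ sits inside the Cayley graph of $G$ so that its vertices are genuine group elements that can be tested for membership in $P_w$, and on the identification of $P_w$ as the vertex set of $\sh{1}$. Once these structural facts are in place, the theorem follows by combining Magnus's solution of the one-relator word problem with the assumed decidability of the prefix membership problem.
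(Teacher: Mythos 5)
The paper gives no argument for this statement --- it defers entirely to \cite[Theorem~3.3]{IMM} --- so what has to be judged is whether your reconstruction of that argument is sound. Your first reduction is correct and well executed: in any inverse monoid $a=b$ if and only if $a\sim b$ and $aa^{-1}=bb^{-1}$, and combining this with Lemma~\ref{lem:sim} shows that for $E$-unitary $M$ one has $u=v$ in $M$ if and only if $u=v$ in $G$ and $uu^{-1}=vv^{-1}$ in $M$; the first test is handled by Magnus's theorem. The gap is in step (ii).

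The idempotent $uu^{-1}$ does not lie in the $\mathcal{R}$-class of $1$ in general, so it is not encoded by a subgraph of $\sh{1}$: reading $uu^{-1}$ from the basepoint of $\sh{1}$ visits the vertices $p\pi$ for $p\in\pref(uu^{-1})$, which need not belong to $P_w$, so the path typically leaves $\sh{1}$ at the first step. The object that encodes $uu^{-1}$ is the Schützenberger graph of $u$ itself, which in the $E$-unitary case embeds in the Cayley graph of $G$ as the closure of the Munn tree of $u$ under attaching $w$-labelled loops at every vertex; its vertex set is $\{p\pi:\ p\in\pref(u)\}\cdot P_w$, which is typically infinite, not finite. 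As a result the membership queries you propose are the wrong ones. Knowing, for each prefix $p$ of $u$ and of $v$, whether $p\pi\in P_w$, together with finitely many equality tests in $G$, does not determine whether $uu^{-1}=vv^{-1}$: for instance, with $u\equiv x$ and $v\equiv x(x^{-1}sx)$ where $s\pi=1$, deciding whether the vertex $(xt)\pi$ of the Munn tree of $v$ lies in the closure of the Munn tree of $u$ requires knowing whether $t\pi\in P_w$ for prefixes $t$ of $s$, i.e.\ whether $(p^{-1}q)\pi\in P_w$ with $p\equiv x$ and $q\equiv xt$, and $t\pi$ is not among the elements your algorithm ever queries. The correct reduction (which is what \cite[Theorem~3.3]{IMM} actually provides) tests mutual containment of the two closed subgraphs, namely whether $q\pi\in p\pi\cdot P_w$ for each prefix $q$ of $v$ and some prefix $p$ of $u$, and symmetrically, together with the analogous finitely many checks on edges; each such test is a single instance of the prefix membership problem applied to the word $p^{-1}q$. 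With that modification the theorem follows as you intend, but the algorithm as you have stated it would return wrong answers.
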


Let $M=\Inv\pre{X}{w=1}$ and let $R$ be the set of right units of $M$. Then $R$ is a submonoid, but in general not an inverse submonoid, of $M$. 
Clearly, every prefix of $w$ represents an element of $R$. Conversely, by the geometric argument given in the second paragraph of the proof of 
\cite[Proposition 4.2]{IMM}, for every word $u\in\ol{X}^*$ representing an element of $R$ there are prefixes $p_1,\dots,p_k$ of $w$ such that
$u = p_1\dots p_k$ holds in $M$. 
%%Similarly as in the previous remark, 
The statement of this proposition in \cite{IMM} actually assumes that $w$ is
cyclically reduced; but it is straightforward to check that the corresponding argument does not make use of that assumption. Hence, in general 
in a special one-relator inverse monoid $M=\Inv\pre{X}{w=1}$ every right unit can be expressed as a product of prefixes of $w$. Therefore, 
the image of $R$ under the natural homomorphism $M\to G=\Gp\pre{X}{w=1}$ is precisely $P_w$.

We use a standard model for \emph{finite state automata} (FSA): this is a quintuple $\mathcal{A}=(Q,\Sigma,E,
I,T)$, where $Q$ is a finite set of states, $\Sigma$ is the alphabet, $I,T\subseteq Q$ are the initial and final states,
respectively, and $E\subseteq Q\times\Sigma\times Q$ are the transitions. 
The automaton $\mathcal{A}$ accepts the word $w\in\Sigma^*$
if there is a path
%\note{`path' has not been defined}
from an initial state to a final state labelled by $w$; the set of all accepted words $L(\mathcal{A})
\subseteq\Sigma^*$ is the \emph{language} of the FSA. By Kleene's Theorem \cite{HU}, the class of languages of FSA is 
precisely the class of regular languages. 

Given a group $G$, the class of \emph{rational subsets} of $G$ is the smallest set containing all finite subsets of $G$
that is closed with respect to union, product and submonoid generation. 
Note that it is immediate from this definition that every finitely generated submonoid $M$ of $G$ is a rational subset of $G$. 
Combining this notion with Kleene's Theorem,
it is immediate to arrive at the following result.

\begin{pro}
Let $G=\Gp\gen{X}$ be a finitely generated group and let $\pi:\ol{X}^*\to G$ be the corresponding canonical homomorphism.
A subset $R\subseteq G$ is a rational if and only if there is a FSA $\mathcal{A}$ over $\ol{X}$ such that 
$R=L(\mathcal{A})\pi$.
\end{pro}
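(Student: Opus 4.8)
The plan is to prove both implications by exploiting the precise correspondence between the recursive definition of rational subsets of $G$ and the characterisation of regular languages over $\ol{X}$ furnished by Kleene's Theorem, using throughout that $\pi\colon\ol{X}^*\to G$ is a \emph{surjective} monoid homomorphism. The key elementary observations are that, for any languages $L,L_1,L_2\subseteq\ol{X}^*$, one has $(L_1\cup L_2)\pi=L_1\pi\cup L_2\pi$, $(L_1L_2)\pi=(L_1\pi)(L_2\pi)$, and $L^*\pi=\Mon\gen{L\pi}$, the last equality being immediate from $L^*=\{1\}\cup L\cup LL\cup\cdots$ together with the fact that $\pi$ respects products. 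Thus the image under $\pi$ transports union, concatenation and Kleene star of languages exactly onto union, product and submonoid generation of subsets of $G$, which are the three operations appearing in the definition of rational subsets.

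For the implication from right to left, suppose $R=L(\mathcal{A})\pi$ for some finite state automaton $\mathcal{A}$ over $\ol{X}$. By Kleene's Theorem the language $L(\mathcal{A})$ is regular, hence obtained from finite subsets of $\ol{X}^*$ by finitely many applications of union, concatenation and Kleene star. I would then argue by induction on the structure of a regular expression denoting $L(\mathcal{A})$: the image of a finite language is a finite (hence rational) subset of $G$, and the three identities above guarantee that each constructor is carried by $\pi$ onto an operation under which the class of rational subsets is closed. Therefore $R=L(\mathcal{A})\pi$ is rational.

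For the converse, I would show, by induction following the recursive definition of rational subsets, that every rational $R\subseteq G$ can be written as $L\pi$ for some regular $L\subseteq\ol{X}^*$; an appeal to Kleene's Theorem then yields an automaton $\mathcal{A}$ with $L(\mathcal{A})=L$, whence $R=L(\mathcal{A})\pi$. In the base case $R$ is finite, and since $\pi$ is surjective we may choose, for each $g\in R$, a word $u_g\in\ol{X}^*$ with $u_g\pi=g$; the finite (hence regular) language $L=\{u_g:g\in R\}$ satisfies $L\pi=R$. For the inductive step, if $R_1=L_1\pi$ and $R_2=L_2\pi$ with $L_1,L_2$ regular, then the three identities give $R_1\cup R_2=(L_1\cup L_2)\pi$, $R_1R_2=(L_1L_2)\pi$ and $\Mon\gen{R_1}=L_1^*\pi$, and regularity of languages is preserved under each of these language operations.

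Since the argument in both directions is a direct structural induction matching the two recursive descriptions, I do not expect any substantial obstacle. The only points requiring care are the surjectivity of $\pi$, which is precisely what makes finite subsets of $G$ arise as images of finite languages in the base case of the converse, and the verification of the identity $L^*\pi=\Mon\gen{L\pi}$, which is what aligns Kleene star with submonoid generation and so makes the inductions on each side genuinely correspond.
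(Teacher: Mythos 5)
Your proof is correct and is exactly the standard argument that the paper has in mind when it says the result is ``immediate'' from Kleene's Theorem (the paper omits the details entirely): a structural induction in each direction using the fact that the surjective homomorphism $\pi$ carries union, concatenation and Kleene star of languages onto union, product and submonoid generation in $G$. No issues.
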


We note that the empty set is a regular language, and the empty set is a rational subset of $G$ for any group $G$.  

This proposition shows that FSA are convenient vehicles to define rational subsets in finitely generated groups by a finite amount of data.  The \emph{rational subset membership problem} \cite{Loh} for a finitely generated group $G=\Gp\gen{X}$ with the canonical homomorphism $\pi:\ol{X}^*\to G$ is the following decision problem.

\medskip

\noindent INPUT: A FSA $\mathcal{A}$ over $\ol{X}$ and a word $w\in\ol{X}^*$.\\
QUESTION: $w\pi\in L(\mathcal{A})\pi$?

\medskip

A particularly pleasant algorithmic properties are enjoyed by free groups, as a consequence of a key result due to 
Benois \cite{Be} (see also e.g.\ \cite{BS,BMM}).

\begin{thm}\emph{(}\cite{Be}\emph{)}\label{thm:ben}
If $L\subseteq \ol{X}^*$ is a regular language over $\ol{X}$ then $\red(L)$ is also a regular language.
\end{thm}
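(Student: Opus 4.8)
The plan is to prove this by a saturation construction on finite state automata, which is the classical route. Starting from an automaton $\cA = (Q, \ol{X}, E, I, T)$ with $L(\cA) = L$ (which exists by Kleene's Theorem), I would build an enlarged automaton $\cA'$ over $\ol{X}$, allowing $\eps$-transitions, whose accepted language, once intersected with the (regular) language of all reduced words, is exactly $\red(L)$.

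The core idea is to record, directly in the transition structure, every internal cancellation that free reduction can perform. Concretely, I would add $\eps$-transitions according to the following saturation rule: whenever there is a letter $x \in \ol{X}$ and states $p, r, s, q \in Q$ such that $\cA'$ currently has an $x$-labelled transition from $p$ to $r$, a path of $\eps$-transitions from $r$ to $s$, and an $x^{-1}$-labelled transition from $s$ to $q$, add an $\eps$-transition from $p$ to $q$. Since $Q$ is finite there are only finitely many candidate $\eps$-edges $(p,q)$, and the rule only ever adds edges, so iterating it reaches a fixed point after finitely many rounds; this produces $\cA'$.

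I would then establish the two inclusions. For soundness, I would argue that every $\eps$-transition added to $\cA'$ certifies a path in the original structure labelled by a word that freely reduces to the empty word; hence, if a reduced word $u$ is read along an accepting path of $\cA'$, one can reinstate the cancelled factors to recover a word $v \in L$ with $\red(v) \equiv u$, so $u \in \red(L)$. For completeness, given $u = \red(v)$ with $v \in L$, I would induct on the number of elementary reductions carrying $v$ to $u$: each time a factor $x x^{-1}$ (or $x^{-1}x$) is deleted, the saturation rule guarantees that the two states flanking that factor in an accepting path for $v$ are now joined by an $\eps$-transition, so the shortened word is still accepted by $\cA'$. Intersecting $L(\cA')$ with the regular set of reduced words then yields precisely $\red(L)$, which is regular as an intersection of two regular languages.

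The main obstacle I anticipate is the completeness direction, specifically matching the iterative saturation to arbitrarily nested cancellations: deleting one factor may expose a new adjacent cancelling pair, and one must check that the saturation rule fires richly enough to capture every reduced form, including cancellations whose two letters are originally separated by a long subword that itself reduces to the empty word. Handling this cleanly is exactly why the rule permits an intervening $\eps$-path between the $x$- and $x^{-1}$-transitions, and the induction should be organised on the structure of the reduction sequence (for instance, using a leftmost-innermost reduction strategy, or inducting on the length of $v$) rather than on $\cA'$ itself.
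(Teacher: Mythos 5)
The paper does not prove this statement at all: it is quoted as a known result of Benois with a citation to \cite{Be}, so there is no in-paper argument to compare yours against. Your proposal is the standard saturation proof of Benois' theorem and it is correct: the fixed-point rule adding an $\eps$-edge from $p$ to $q$ whenever there is a configuration $p \xrightarrow{x} r \xrightarrow{\eps^{*}} s \xrightarrow{x^{-1}} q$ terminates (at most $|Q|^2$ candidate edges), soundness follows by induction on the order in which $\eps$-edges are added (each certifies a path whose label freely reduces to the empty word), and completeness follows by induction on the length of a reduction sequence, precisely because the intervening $\eps$-path in the rule absorbs the $\eps$-edges introduced by earlier cancellations. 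Intersecting with the regular set of reduced words then gives $\red(L)$, as you say; the one point worth making explicit in a final write-up is that the automaton with $\eps$-transitions still recognises a regular language (standard $\eps$-elimination), so the intersection argument applies.
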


\begin{cor}\label{cor:ben}
Let $X$ be a finite set. 
%and let $FG(X)$ be the free group on $X$. 
Then the free group $FG(X)$ has decidable rational subset membership problem. 
In particular, $FG(X)$ has decidable submonoid membership problem and subgroup membership problem.   
Also, the rational subsets of $FG(X)$ are closed for intersection and complement.
\end{cor}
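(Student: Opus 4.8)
The plan is to reduce every assertion to effective manipulations of regular languages of \emph{reduced} words, with Benois' Theorem~\ref{thm:ben} doing the essential work. The guiding observation is that, since each element of $FG(X)$ is identified with the unique reduced word representing it, two words $u,v\in\ol{X}^*$ satisfy $u\pi=v\pi$ precisely when $\red(u)\equiv\red(v)$. Consequently, if a rational subset is presented as $R=L(\cA)\pi$ for an FSA $\cA$ with $L=L(\cA)$, then the set of reduced-word representatives of the elements of $R$ is exactly $\red(L)$, which is regular by Theorem~\ref{thm:ben}. In this way rational subsets of $FG(X)$ correspond to regular languages contained in $\red(\ol{X}^*)$, the language of all reduced words (itself regular, being the complement of the language of words containing a factor $xx^{-1}$ or $x^{-1}x$).

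For the rational subset membership problem I would argue as follows. Given an FSA $\cA$ over $\ol{X}$ with $L=L(\cA)$ and a word $w\in\ol{X}^*$, first compute $\red(w)$, and then, using the effective form of Theorem~\ref{thm:ben}, construct an FSA recognising $\red(L)$. By the equivalence above, $w\pi\in L\pi$ holds if and only if $\red(w)\in\red(L)$, and membership of a given word in a regular language presented by an FSA is decidable \cite{HU}. This settles decidability of the rational subset membership problem. The submonoid and subgroup membership problems are then immediate special cases: a finitely generated submonoid $\Mon\gen{A}$ is rational, with an FSA effectively constructible from the finite set $A$ (as noted after the preceding proposition), and a finitely generated subgroup $\Gp\gen{A}=\Mon\gen{A\cup A^{-1}}$ is rational for the same reason.

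For the closure properties, let $R_1=L_1\pi$ and $R_2=L_2\pi$ be rational, and pass to the regular languages $\red(L_1),\red(L_2)\subseteq\red(\ol{X}^*)$ of their reduced representatives. An element $g\in FG(X)$ lies in $R_i$ exactly when $\red(g)\in\red(L_i)$, so the reduced representatives of $R_1\cap R_2$ form the language $\red(L_1)\cap\red(L_2)$, while those of the complement $FG(X)\setminus R_1$ form $\red(\ol{X}^*)\setminus\red(L_1)$. Both languages are regular, since the class of regular languages is closed under intersection and complement \cite{HU}, and both consist only of reduced words; applying $\pi$ therefore exhibits $R_1\cap R_2$ and $FG(X)\setminus R_1$ as rational subsets.

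The only genuinely delicate point—and the reason Benois' theorem is indispensable—is that these constructions must be carried out on reduced-word representatives rather than on the raw recognising languages. For instance one cannot simply intersect or complement $L_1,L_2$ inside $\ol{X}^*$, because distinct words may represent the same group element; it is precisely the regularity of $\red(L)$ and of $\red(\ol{X}^*)$, together with the uniqueness of reduced normal forms in the free group, that makes the passage to reduced representatives both correct and effective.
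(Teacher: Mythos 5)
Your proof is correct and is exactly the standard argument the paper intends: the corollary is stated there without proof as an immediate consequence of Theorem~\ref{thm:ben}, and your reduction to regular languages of reduced-word representatives (using the effective form of Benois' theorem together with closure of regular languages under intersection and complement) is the expected justification. The only point worth flagging is that you implicitly rely on the effectiveness of the construction of an FSA for $\red(L)$, which the statement of Theorem~\ref{thm:ben} does not assert but which the standard saturation proof of Benois' theorem provides and the paper uses throughout.
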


Note that, in general, rational subsets of (finitely generated) groups need not be closed under intersection nor complement.

%%\textbf{** Task: 17th August 2019 -- I have pasted the Herbst material here. Now I just want to tidy it up, and then delete the duplication of this material from the last section of this document. ** }

\subsection{A theorem of Herbst on rational subsets of subgroups of groups}
Let $G$ be a 
finitely generated 
group, let $U$ be a subgroup of $G$, and let $Q$ be a subset of $U$.  It is immediate from the definition of rational subset that if $Q$ is a rational subset of $U$ then $Q$ is also a rational subset of $G$.  The converse is also true,  but it is far less obvious. It was proved by Herbst in \cite{He} that, under the above assumptions, if $Q$ is a rational subset of $G$ then $Q$ is a also rational subset of $U$. 

In this subsection we will explain and give a proof of an effective version of Herbst's theorem which will be of crucial importance for us in this paper.  

Let us begin by recalling some basic facts about rational subsets and regular languages.  Let $M$ be a monoid. Just as we did for groups above, we can talk about the rational subsets of the monoid $M$. The rational subsets of $M$ are the sets that can be obtained from finite subsets using the operations of union, product and submonoid generation. If $A$ is a set of generators for $M$ then a subset of $M$ is rational if and only if it is accepted by a FSA over $A$. Here a subset $U$ of $M$ is said to be \emph{accepted by a FSA over $A$} if, with $\pi:A^* \rightarrow M$ the canonical homomorphism, there is a FSA $\mathcal{A}$ such that $L(\mathcal{A}) \pi = U$. It is a standard fact that a subset $U$ of the free monoid $A^*$ is the language of a finite state automaton if and only if $U$ can be described by a \emph{rational expression} over $A$. A rational expression for a subset $U$ of $A^*$ is a formal expression which gives a description of a way of constructing the set $U$ from finite subsets using finitely many operations of union, product and the Kleene star operation (which is submonoid generating in $A^*$). 
For example, $(ab)^* \cup (ba)^* \cup a(ba)^* \cup b(ab)^*$, is a rational expression for the regular language of all words with alternating $a$s and $b$s. Of course, different rational expressions can describe the same regular language.  It is easy to show that there is an algorithm which takes any rational expression over $A$ and constructs a FSA recognising the language that the rational expression defines, and conversely there is an algorithm which given a FSA $\mathcal{A}$ computes a rational expression defining $L(\mathcal{A})$. 
It follows that if $M$ is a monoid generated by a set $A$, then a subset $U$ of $M$ is rational if and only if there is a rational expression over $A$ which defines $U$. 
We will not be working with regular expressions much in this paper, but we shall need this notion when making reference to proofs of Herbst below. 
For a formal definition of rational expression we refer the reader to 
Section~2 of the book \cite{Pin}. 

\begin{thm}\label{thm:effective:herbst} 
Let $G$ be a finitely generated group with finite generating set $X$, and canonical homomorphism $\pi: (X \cup X^{-1})^* \rightarrow G$. 
Suppose further that $G$ has a recursively enumerable word problem. 
Let $U$ be a finitely generated subgroup of $G$ with finite generating set $Y$ and canonical homomorphism $\sigma: (Y \cup Y^{-1})^* \rightarrow U$. 
Then for every subset $Q$ of $U$ we have 
\[
Q \in \Rat(G) \Leftrightarrow Q \in \Rat(U). 
\]
Furthermore there is an algorithm which 
\begin{enumerate}
\item for any FSA $\mathcal{A}$ over $X \cup X^{-1}$ such that $L(\mathcal{A})\pi \subseteq U$  computes a FSA $\mathcal{B}$ over $Y \cup Y^{-1}$ such that $L(\mathcal{B}) \sigma = L(\mathcal{A}) \pi$, and there is an algorithm which      
\item for any FSA $\mathcal{B}$ over $Y \cup Y^{-1}$ computes a FSA $\mathcal{A}$ over $X \cup X^{-1}$ such that $L(\mathcal{A}) \pi = L(\mathcal{B}) \sigma$.   
\end{enumerate}    
\end{thm}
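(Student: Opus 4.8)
The plan is to derive the displayed equivalence directly from the two algorithmic assertions, so I focus on constructing the two conversion procedures; the set-membership equivalence then follows immediately, since a representation $Q=L(\mathcal{A})\pi\in\Rat(G)$ with $Q\subseteq U$ makes $\mathcal{A}$ eligible for~(1) and produces $Q=L(\mathcal{B})\sigma\in\Rat(U)$, while~(2) supplies the reverse passage.

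Direction~(2) is the easy one and uses neither Herbst's theorem nor the recursive enumerability of the word problem. Since $Y\subseteq U\leq G$, each $y\in Y$ is represented by a fixed word $\omega_y\in\ol{X}^*$; extending $y\mapsto\omega_y$ and $y^{-1}\mapsto\omega_y^{-1}$ to a monoid homomorphism $\phi\colon\ol{Y}^*\to\ol{X}^*$ gives $\phi\pi=\sigma$. Given an FSA $\mathcal{B}$ over $\ol{Y}$, subdividing each edge labelled $y^{\pm1}$ into a path spelling $\omega_y^{\pm1}$ yields an FSA $\mathcal{A}$ over $\ol{X}$ with $L(\mathcal{A})=L(\mathcal{B})\phi$, whence $L(\mathcal{A})\pi=L(\mathcal{B})\phi\pi=L(\mathcal{B})\sigma$. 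This is manifestly effective.

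For the hard direction~(1) I first isolate the single use of the hypothesis on $G$, namely a rewriting subroutine: given $u\in\ol{X}^*$ with $u\pi\in U$, one can compute a word $\wh{u}\in\ol{Y}^*$ with $\wh{u}\sigma=u\pi$. Indeed, dovetailing over all $v\in\ol{Y}^*$ and semi-testing $(v\phi)u^{-1}=_{G}1$ via the recursively enumerable word problem must halt, because $u\pi\in U=\Gp\gen{Y}$ guarantees some such $v$. (This only semi-decides membership of $u\pi$ in $U$, which is all that is available and all that we need.) Now I build $\mathcal{B}$ from an $\mathcal{A}$ satisfying the promise $L(\mathcal{A})\pi\subseteq U$. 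After a standard value-preserving reduction I may assume $\mathcal{A}$ is trim with a single initial state $\iota$ and a single final state $\tau$. The decisive structural observation is that for each state $q$ all values in $G$ of paths $\iota\to q$ lie in one fixed left coset of $U$: trimness gives $q$ a path to $\tau$ of some value $h$, and if $g,g'$ are two incoming values then $gh,g'h\in L(\mathcal{A})\pi\subseteq U$, whence $g(g')^{-1}=(gh)(g'h)^{-1}\in U$. As $\mathcal{A}$ has finitely many states, only finitely many cosets arise. For each $q$ I fix a word $c_q\in\ol{X}^*$ representing its coset -- the label of any path $\iota\to q$ located by search, and the empty word when $q\in\{\iota,\tau\}$, which is legitimate because both of those cosets are $U$. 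For every transition $e=(q,x,q')$ of $\mathcal{A}$ the word $c_qxc_{q'}^{-1}$ then represents an element of $U$ (extend a path $\iota\to q$ by $e$: its value lies in the coset attached to $q'$, i.e.\ in $U(c_{q'}\pi)$, while $c_q\pi$ lies in that of $q$), so I may set $\beta_e:=\wh{c_qxc_{q'}^{-1}}\in\ol{Y}^*$ and let $\mathcal{B}$ have the same states, the same $\iota,\tau$, and, for each $e$, a path from $q$ to $q'$ spelling $\beta_e$. A telescoping computation shows that the $\sigma$-value of a $\mathcal{B}$-run over $e_1\cdots e_n$ equals $(c_\iota\, x_1\cdots x_n\, c_\tau^{-1})\pi=(x_1\cdots x_n)\pi$, the $\pi$-value of the corresponding $\mathcal{A}$-run; since the accepting runs of $\mathcal{A}$ and $\mathcal{B}$ are in value-preserving bijection, $L(\mathcal{B})\sigma=L(\mathcal{A})\pi$, as required.

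The main obstacle -- and the essential content of Herbst's theorem -- is precisely the coset observation of the previous paragraph: a naive unfolding of $\mathcal{A}$ over the (a priori infinitely many) cosets of $U$ would not terminate, but the promise $L(\mathcal{A})\pi\subseteq U$ collapses every state to a single coset, which simultaneously keeps the state set finite and guarantees that each label $c_qxc_{q'}^{-1}$ lands in $U$ so that the rewriting subroutine applies. Verifying the coset claim cleanly, and organising the $\varepsilon$-edges introduced both by the trim/single-state reduction and by any empty $\beta_e$, are the only routine points that require care.
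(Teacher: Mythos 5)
Your proof is correct, but it takes a genuinely different route from the paper's for the hard direction (1). The paper follows Herbst's original argument: it converts $\mathcal{A}$ to a rational expression and inducts on starheight, rewriting each union term $w_1T_1^*\cdots w_nT_n^*w_{n+1}$ via the conjugated expressions $S_i = w_1\cdots w_iT_iw_i^{-1}\cdots w_1^{-1}$ (which land in $U$ by Herbst's Lemma~5.1 and have smaller starheight), and only invokes the recursively enumerable word problem through Lemma~\ref{lem:display} at the starheight-zero base case. You instead argue directly on the automaton: the promise $L(\mathcal{A})\pi\subseteq U$ forces every state of a trim single-entry/single-exit automaton to read into a single coset of $U$, so relabelling each transition $(q,x,q')$ by a $\ol{Y}$-word for $c_qxc_{q'}^{-1}\in U$ (found by the same dovetailing subroutine, which is exactly the content of the paper's Lemma~\ref{lem:display}) gives a value-preserving bijection of accepting runs. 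Your coset observation is sound --- note only that $g(g')^{-1}\in U$ places $g,g'$ in the same coset of the form $Ug'$, so mind the left/right naming --- and the telescoping computation is correct. What each approach buys: yours is more self-contained and makes the single use of the hypothesis on $G$ and of the promise completely transparent, avoiding the rational-expression formalism entirely; the paper's has the advantage of being a line-by-line effectivization of Herbst's published proof, so its correctness can be checked against \cite{He} directly. The easy direction (2) and the derivation of the rationality equivalence from the two algorithms are the same in both treatments.
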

\begin{proof}
Throughout this proof we make use of the same notation and conventions as in \cite{He}.  
From the discussion above we know that there is an algorithm which will take a FSA $\mathcal{A}$ over $A$ recognising $Q$ and use it to compute a rational expression $\rho$ over $A$ defining $L(\mathcal{A})$.  

We now show how the argument used in the proof of \cite[Proposition 5.2]{He} can be used to prove the following claim.

\begin{claim}\label{claim:Herbst} 
 There is an algorithm which takes as input any rational expression $T$ over $X \cup X^{-1} $ such that $L(T)\pi \subseteq U$  and returns a rational expression $\ol{T}$ over $Y \cup Y^{-1} $ such that $\sigma(L(\ol{T})) = \pi(L(T)) \subseteq U$.
  \end{claim}

Here $L(R)$ denotes the language defined by a rational expression $R$. It follows from the discussion preceding the statement of the theorem that once this claim has been established then (1) will follow. We prove Claim~\ref{claim:Herbst} by induction on the starheight, denoted $\sh{T}$.  It is important to note that here by $\sh{T}$ we mean the starheight of the rational expression $T$, which might well be larger than the minimum starheight of the language $L(T)$ defined by this rational expression. That is, it is possible that there is a rational expression $T'$ with $\sh{T'} < \sh{T}$ and with $L(T') = L(T)$.         

When $\sh{T}=0$ then clearly there is an algorithm which replaces $T$ by an expression of the form  
\[
w_{1} \cup w_{2} \cup \ldots \cup w_{m}  
\]
for a finite set $\{ w_{1}, \ldots, w_{m} \} \subseteq (X \cup X^{-1} )^{*}  $ and $\pi(w_{i} ) \in U$ for $1 \leq i \leq m$. So we may assume that $T$ has this form.  Since $G$ is recursive enumerable can apply Lemma~\ref{lem:display} (see below). By Lemma~\ref{lem:display} there is an algorithm that computes words $\ol{w_{1}}, \ldots, \ol{w_{m}} \in (Y \cup Y^{-1} )^*$ such that $\sigma(\ol{w_i}) = \pi(w_i)$ for $1 \leq i \leq m$.  Hence $\sigma(\{\ol{w_{1}}, \ldots, \ol{w_{m}} \}) = \pi(\{ w_{1}, \ldots, w_{m} \})$ and this deals with this case. 

For the induction step now let us assume $\sh{T} > 0$. We can write $T = \tau_1 \cup \ldots \cup \tau_q$ where each $\tau_j$ is a rational expression of the form
\begin{equation}\label{eqn:tau:rat:expression} 
w_{1} T_{1}^{*}  w_{2} T_{2}^{*}  w_{3}  \ldots  w_{n} T_{n}^{*}
  w_{n+1}, \end{equation}
where each $w_{i} \in (X \cup X^{-1} )^{*} $ and each $T_{i} $ is a rational expression over $(X \cup X^{-1} ) $ with $\sh{T_i} < \sh{T}$ for $1 \leq i \leq n$. Consider such an expression \eqref{eqn:tau:rat:expression} corresponding to $\tau_k$.  Set 
\[
S_i = w_1 w_2 \ldots w_i T_i w_i^{-1} \ldots w_2^{-1} w_1^{-1}  
\]  
for $1 \leq i \leq n$ noting that $\sh{S_i} = \sh{T_i} < \sh{T}$ and, by \cite[Lemma~5.1]{He}, we have $\pi(L(S_i)) \subseteq U$. By induction we can suppose that our algorithm takes $S_i$ and returns a rational expression $\ol{S_i}$ over $Y \cup Y^{-1} $ such that $\pi(L(S_i)) = \sigma(L(\ol{S_i}))$. It follows from the argument given in the proof of \cite[Proposition~5.2]{He} that 
\[
\pi(L(S_1^* S_2^*  \ldots S_n^* w_1 w_2 \ldots w_{n+1} )) =
\pi(L(\tau_k)).
\]
Then we instruct our algorithm to compute 
\[
\ol{S_1}^*  \ol{S_2}^*  \ldots \ol{S_n}^*  \ol{w_1   \ldots w_2  w_{n+1}},  
\]
which is a rational expression over $Y \cup Y^{-1} $ such that 
\[
\sigma(L( 
\ol{S_1}^*  \ol{S_2}^*  \ldots \ol{S_n}^*  \ol{w_1   \ldots w_2  w_{n+1}}
)) = 
\pi(L(S_1^* S_2^*  \ldots S_n^* w_1 w_2 \ldots w_{n+1} )) =
\pi(L(\tau_k)).
\]
Repeating this for all $t_k$ in $T = \tau_1 \cup \ldots \cup \tau_q$ our algorithm computes a rational expression $\ol{T}$ over $Y \cup Y^{-1}  $ satisfying Claim~\ref{claim:Herbst}. It follows from this argument that there is a recursive algorithm satisfying the requirements of Claim~\ref{claim:Herbst}. This completes the proof of part (1).

(2)
By the comments preceding the 
statement of the theorem, 
there is an algorithm which takes $\mathcal{B}$ and computes a rational expression $R$ over $Y \cup Y^{-1} $ such that $L(R)\sigma = L(\mathcal{B})\sigma \subseteq U$.       
For each $y \in Y$ let $\hat{y} \in (X \cup X^{-1})^*$ such that $y \sigma = \hat{y} \pi$ in $U$.      
Extend this notation to $y \in Y^{-1} $ where $\widehat{y^{-1}} = \hat{y}^{-1}$, the formal inverse of the word.   
Let $\xi: (Y \cup Y^{-1})^* \rightarrow (X \cup X^{-1})^*$ be the unique homomorphism satisfying $y \xi = \hat{y}$ for all $y \in Y \cup Y^{-1}$. 
Let $S = R \xi$ be the expression obtained by replacing every word $w$ in the rational expression $w$ by the word $w \xi$. 
Then $S$ is a rational expression over $X \cup X^{-1}  $ such that $L(S)\pi = L(R)\sigma$. 
Finally apply the algorithm that takes $S$ and returns a FSA $\mathcal{A}$ over $X \cup X^{-1} $ such that $L(\mathcal{A})\pi = L(S)\pi$. 
This completes the proof.                
\end{proof}

\begin{lem}\label{lem:ultimate:herbst} 
Let $G$ be a finitely generated group with finite generating set $X$, and canonical homomorphism $\pi: (X \cup X^{-1})^* \rightarrow G$. 
Furthermore suppose that $G$ has a recursively enumerable word problem.  
Let $A, B \leq G$ be finitely generated subgroups with $\phi:A \rightarrow B$ is an isomorphism. 
Then there is an algorithm which takes any FSA $\mathcal{A}$ over $X \cup X^{-1}$ such that $L(\mathcal{A})\pi \subseteq A$  computes a FSA $\mathcal{B}$ over $X \cup X^{-1}$ such that $L(\mathcal{B}) \pi = (L(\mathcal{A}) \pi)\phi$. 
\end{lem}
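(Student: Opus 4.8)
The plan is to reduce the statement to part~(1) of Theorem~\ref{thm:effective:herbst}, followed by a routine relabelling of automaton transitions that implements the isomorphism $\phi$. Let $Y$ be a finite generating set for the subgroup $A$, with canonical homomorphism $\sigma: (Y \cup Y^{-1})^* \rightarrow A$. The hypothesis $L(\mathcal{A})\pi \subseteq A$ is precisely the admissibility condition needed to invoke Theorem~\ref{thm:effective:herbst}(1) with $U = A$: doing so computes, from $\mathcal{A}$, a FSA $\mathcal{A}'$ over $Y \cup Y^{-1}$ such that $L(\mathcal{A}')\sigma = L(\mathcal{A})\pi$. In effect this rewrites the rational subset $Q = L(\mathcal{A})\pi$ of $A$, originally presented by a word-automaton over the ambient generators $X$, as a rational subset presented over the intrinsic generators $Y$ of $A$. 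This is the step where the recursive enumerability of the word problem of $G$ (via Lemma~\ref{lem:display}) is used, and it is the only genuinely substantial ingredient.

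Next I would implement $\phi$ by substitution, exactly in the style of part~(2) of the proof of Theorem~\ref{thm:effective:herbst}. Since $\phi: A \rightarrow B \leq G$ is the given isomorphism, for each $y \in Y$ I take a word $\hat{y} \in (X \cup X^{-1})^*$ representing the element $(y\sigma)\phi \in B$; this data is part of the explicit description of $\phi$. Extending by $\widehat{y^{-1}} = \hat{y}^{-1}$, let $\xi: (Y \cup Y^{-1})^* \rightarrow (X \cup X^{-1})^*$ be the unique monoid homomorphism with $y\xi = \hat{y}$. Because $\sigma$, $\phi$ and $\pi$ are group homomorphisms respecting inverses, one checks immediately that $(u\xi)\pi = (u\sigma)\phi$ for every $u \in (Y \cup Y^{-1})^*$. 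Applying $\xi$ to $\mathcal{A}'$ at the level of transitions (replacing each edge labelled $y$ by a fresh path spelling $\hat{y}$) is an effective construction producing a FSA $\mathcal{B}$ over $X \cup X^{-1}$ with $L(\mathcal{B}) = L(\mathcal{A}')\xi$. Then
\[
L(\mathcal{B})\pi = (L(\mathcal{A}')\xi)\pi = (L(\mathcal{A}')\sigma)\phi = (L(\mathcal{A})\pi)\phi,
\]
which is exactly the required conclusion.

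The real content of the argument is packaged inside Theorem~\ref{thm:effective:herbst}(1); once that effective form of Herbst's theorem is in hand, the remaining construction is the straightforward substitution implementing $\phi$. Consequently the only points demanding care are confirming that the hypothesis $L(\mathcal{A})\pi \subseteq A$ coincides with the admissibility condition of Herbst part~(1), so that the theorem may legitimately be applied, and that $\phi$ is supplied concretely enough to read off the words $\hat{y}$ effectively; both hold under the stated assumptions, so I do not expect any serious obstacle beyond invoking the earlier theorem correctly.
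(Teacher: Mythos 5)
Your proposal is correct and follows essentially the same route as the paper: first invoke Theorem~\ref{thm:effective:herbst}(1) to re-express $L(\mathcal{A})\pi$ over a generating set $Y$ of $A$, then push forward through $\phi$ by substituting words $\hat{y}$ over $X \cup X^{-1}$ for the letters of $Y$. The only cosmetic difference is that the paper packages your second step as an application of Theorem~\ref{thm:effective:herbst}(2) to the pair $G$, $B$ with canonical homomorphism $\sigma\phi$, whereas you carry out the same substitution construction explicitly.
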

\begin{proof}
Let $Y$ be a finite generating set for $A$ with canonical homomorphism $\sigma: (Y \cup Y^{-1} )^* \rightarrow A$. 
Apply the algorithm from Theorem~\ref{thm:effective:herbst}(1) to compute a FSA $\mathcal{Q}$ over $Y \cup Y^{-1} $ such that $L(\mathcal{Q})\sigma = L(\mathcal{A})\pi$.       
Since $\phi$ is an isomorphism it follows that $Y$ is a finite generating set for the group $B$ with canonical homomorphism $\sigma \phi$. 
Now apply Theorem~\ref{thm:effective:herbst}(2) to the pair $G$, $B$, with respect to the canonical homomorphism $\sigma \phi$ to compute a FSA $\mathcal{B}$ over $X \cup X^{-1} $ such that $L(\mathcal{Q}) \sigma \phi = L(\mathcal{B})\pi$.
This completes the proof since 
$
L(\mathcal{B})\pi=
L(\mathcal{Q}) \sigma \phi = 
L(\mathcal{A}) \pi \phi. 
$
\end{proof}

\begin{rmk}
Note that the hypotheses of 
Lemma~\ref{lem:ultimate:herbst}
hold in particular if $G$ has decidable word problem.   
\end{rmk}

% FACTORISATIONS

\section{Unital and conservative factorisations}
\label{sec:factor}

Let $w\in\ol{X}^*$. A \emph{factorisation} of $w$ is a decomposition
$$
w\equiv w_1\dots w_m
$$
where 
$w_1, \dots, w_k \in \ol{X}^*$ . The words $w_i$ $(1 \leq i \leq k)$ are called the factors of this factorisation. 

Let $u_1, \dots, u_k$ be distinct words in $\ol{X}^*$ and let $w \in \ol{X}^*$. 
If $w$ belongs to the submonoid of $\ol{X}^*$ generated by $u_1,\dots,u_k$, then there
is a word $w'(x_1,\dots,x_k)$ over the alphabet $\{x_1,\dots,x_k\}$ such that 
$$
w\equiv w'(u_1,\dots,u_k). 
$$
This expression gives a factorisation of $w$ where each factor is equal to $u_j$ for some $1 \leq j \leq k$.     

A factorisation $w\equiv v_1\dots v_n$ is \emph{finer} than 
a factorisation 
$w\equiv w_1\dots w_m$ if there exist $0\leq k_1\leq\dots
\leq k_{m-1}\leq n$ such that $w_1\equiv v_1\dots v_{k_1}$, $w_m\equiv v_{k_{m-1}+1}\dots v_n$ and $w_i\equiv
v_{k_{i-1}+1}\dots v_{k_i}$ for all $1<i<m$.

We say that a factorisation $w\equiv w_1\dots w_m$ is \emph{unital} if each of its factors represents a unit of the inverse monoid $M=\Inv\pre{X}{w=1}$. In such a case, $w_1,\dots,w_m$ are called \emph{invertible pieces}. It is an easy exercise to show that there is a unique finest unital factorisation of $w$; this is a decomposition into \emph{minimal invertible pieces} $w\equiv w_1\dots w_m$, so that no proper prefix of any of the words $w_i$ represents an element of $U_M$. In fact, there is a strong connection between the minimal invertible pieces of $w$ and the group of units $U_M$.  

\begin{pro} 
Let $M=\Inv\pre{X}{w=1}$ where  $w\in \ol{X}^*$. Then the minimal invertible pieces $w_1,\dots,w_m$ of $w$ generate the group $U_M$.
\end{pro}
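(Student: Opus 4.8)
The plan is to prove the two inclusions separately; write $N=\Gp\gen{w_1,\dots,w_m}$. The inclusion $N\subseteq U_M$ is immediate, since each $w_i$ represents a unit of $M$ by the definition of a unital factorisation, and $U_M$ is a group. For the reverse inclusion I would exploit the description of right units recalled just before the statement: as every unit is in particular a right unit, every $g\in U_M$ can be written in $M$ as a product $g=p_1\cdots p_k$ of prefixes of $w$. Thus it suffices to show that any such product which represents a unit already lies in $N$.

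The first key step is to identify which prefixes are units. Writing $\pi_j=w_1\cdots w_j$ for $0\le j\le m$ (so $\pi_0=1$ and $\pi_m\equiv w=1$), I claim the prefixes of $w$ representing units are exactly $\pi_0,\dots,\pi_m$, all of which lie in $N$; indeed $w_i=\pi_{i-1}^{-1}\pi_i$, so $N=\Gp\gen{\pi_0,\dots,\pi_m}$. To see this, note every prefix of $w$ has a unique form $\pi_j q$ with $q$ a (possibly empty) proper prefix of the piece $w_{j+1}$. Since $\pi_j$ is a unit, $\pi_j q$ is a unit if and only if $q$ is, and minimality of the invertible piece $w_{j+1}$ forbids a nonempty proper prefix $q$ from representing a unit. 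Hence $q$ must be empty and the prefix equals $\pi_j$.

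With this in hand I would run a strong induction on the least number $k$ of prefix factors needed to express a given unit $g$. If some proper initial subproduct $p_1\cdots p_i$ with $1\le i<k$ is itself a unit, then $g$ splits as a product of the two units $p_1\cdots p_i$ and $p_{i+1}\cdots p_k=(p_1\cdots p_i)^{-1}g$, each expressible using fewer than $k$ prefixes, so both lie in $N$ by induction and hence so does $g$. This reduces matters to a unit $g=p_1\cdots p_k$ admitting no such intermediate unit split; for $k\le 1$ the previous step identifies $g$ as a unit-prefix $\pi_j\in N$, and we are done.

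The main obstacle is therefore the remaining case $k\ge 2$ in which no proper initial subproduct of $p_1\cdots p_k$ is a unit: a purely formal induction stalls here, because in an abstract inverse monoid a product of non-unit prefixes can perfectly well be invertible, so one must bring in the geometry underlying the right-unit description. I expect to argue via the Schützenberger graph of the $\mathcal{R}$-class of the identity (equivalently, the geometric picture in the proof of \cite[Proposition~4.2]{IMM}): a unit labels a closed path at the base vertex, the minimal invertible pieces $w_i$ label the minimal such loops, and the hypothesis that no proper initial subproduct is a unit means the path first returns to the base vertex only at its end, forcing $g$ to be a single minimal loop and hence a unit-prefix in $N$. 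Making this last step precise — and noting in particular that it requires no $E$-unitary hypothesis, so that it rests on the general right-unit geometry rather than on the maximal group image — is where the real work of the proof lies.
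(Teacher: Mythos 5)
Your preliminary reductions are correct and worth having: the prefixes of $w$ that represent units are exactly the partial products $\pi_j \equiv w_1\cdots w_j$, and the strong induction on the number of prefix factors correctly handles every unit whose chosen expression $p_1\cdots p_k$ has some proper initial subproduct representing a unit. But the residual case you isolate ($k\ge 2$ with no intermediate unit subproduct) is where the entire content of the proposition sits, and you do not prove it; moreover, the geometric picture you sketch for closing it is wrong as stated. In the Sch\"utzenberger graph of the $\mathcal{R}$-class of $1$ the vertices are the right units, and a word representing a unit $g$ labels a path from the base vertex $1$ to the vertex $g$, which for $g\neq 1$ is a \emph{different} vertex: it is not a closed path, the minimal invertible pieces do not label loops at the base vertex, and so the inference that ``the path first returns to the base vertex only at its end, forcing $g$ to be a single minimal loop'' does not get off the ground. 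The underlying obstruction is the one you yourself name: in a monoid a product of non-units can perfectly well be a unit, so no formal argument starting only from ``every right unit is a product of prefixes'' together with ``the unit prefixes are the $\pi_j$'' can conclude that every unit lies in $\Gp\gen{w_1,\dots,w_m}$.

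For comparison, the paper's proof of this proposition is a direct appeal to the \emph{argument} (not merely the statement) in the proof of \cite[Proposition 4.2]{IMM}, together with the observation that the cyclically reduced hypothesis there is never used. That argument works directly with the decomposition of an arbitrary path in the Sch\"utzenberger graph of the $\mathcal{R}$-class of $1$ into segments of the $w$-labelled loops from which that graph is assembled, and it yields the generation statement for $U_M$ in one pass rather than via your induction. So your instinct to fall back on \cite[Proposition 4.2]{IMM} is exactly the paper's move, but what you import from it (only the product-of-prefixes description of right units) is too weak to finish, and the extra geometric step you would need is the real content, which remains unproved in your write-up.
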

\begin{proof}
This follows from the argument given in the proof of \cite[Proposition 4.2]{IMM}.  
Note that, as already mentioned above, while in the statement of \cite[Proposition 4.2]{IMM} it is assumed that the words 
in the defining relators are all cyclically reduced, that assumption is not used anywhere in the proof, and the proposition 
holds with that assumption removed.  
\end{proof}

We now briefly recall the Adjan overlap algorithm (as presented in \cite{Lal}). Let $X$ be an alphabet, and assume $W_0$
is a finite set of words over $X$. Inductively, we define a sequence $W_i$, $i\geq 0$, of sets of words as follows.
Assuming that $W_k$ is determined, we let $u\in W_{k+1}$ if one of the following conditions hold:
\begin{itemize}
\item[(i)] $u\in W_k$ and $u\not\in\pref(v)\cup\suff(v)$ for all $v\in W_k\setminus\{u\}$;
\item[(ii)] there exist $v,v'\in X^*$ not both empty such that $uv,v'u\in W_k$;
\item[(iii)] there exist $v,v'\in X^*$ such that $v$ is non-empty and $uv,vv'\in W_k$;
\item[(iv)] there exist $v,v'\in X^*$ such that $v'$ is non-empty and $v'u,vv'\in W_k$.
\end{itemize}
It is fairly easy to show that the sequence of sets of words $W_k$ stabilises after finitely many steps, i.e.\ that there
exists $k_0$ such that $W_k=W_{k_0}$ for all $k\geq k_0$. Define $\Gamma=W_{k_0}$. 

This algorithm is applied to special monoid presentations by setting $W_0$ to be the set of relator words in the considered presentation; in particular, for $\Mon\pre{X}{w=1}$ we put $W_0=\{w\}$. 
The same algorithm can be applied to special inverse monoid presentation, and in particular to one-relator special inverse monoids $\Inv\pre{X}{w=1}$ by replacing $X$ by $\ol{X}$.  

It is easy to see that the words from the set $\Gamma$ generated by the Adjan algorithm all represent invertible elements of the monoid.   Adjan \cite{Adj} proved that in the case of one-relator special monoid presentations, this algorithm actually computes the decomposition of the defining relator word $w$ into minimal invertible pieces. However, this is no longer true for arbitrary special monoid presentations. 
In fact, more than this, for finitely presented special monoids the problem of computing the minimal invertible pieces is known to be undecidable. Indeed, in \cite{Otto} 
it is shown that it is undecidable whether a finitely presented special monoid is a group, and if there were an algorithm or computing the minimal invertible pieces then that algorithm could be used to decide whether a special monoid is a group by testing whether all the generators appear in at least one relator, and that the minimal invertible pieces all have size one.  

For special inverse monoids the Adjan algorithm fails to compute the minimal invertible pieces even in the one-relator case.  This is illustrated by the following example, which appeared first in print in \cite{MMSt} (see also \cite{IMM}), sometimes known as the \emph{O'Hare example} (because it was constructed by Margolis and Meakin while waiting for a connecting flight at the O'Hare International Airport, Chicago).

\begin{exa}\label{exa:ohare}
Let 
$$
M = \Inv\pre{a,b,c,d}{abcdacdadabbcdacd=1}.
$$
Applying a geometric method called \emph{Stephen's procedure} \cite{Ste} it was shown in \cite{MMSt} that
$$
w \equiv (abcd)(acd)(ad)(abbcd)(acd)
$$
is a unital factorisation of the relator word. In fact, the same methods show that any subword of the relator word representing
a unit of $M$ must begin with either $a$ or $d^{-1}$ and end with either $d$ or $a^{-1}$, so it follows that this is the
decomposition of $w$ into minimal invertible pieces. Thus it follows from
the previous proposition that $\{abcd,acd,ad,abbcd\}$ is a generating set of $U_M$ ($abbcd$ can be shown to be redundant).

On the other hand, this is not something Adjan algorithm would discover, as $\Gamma=\{abcdacdadabbcdacd\}$ in this case.
Indeed, this is the reason why the \emph{monoid} $\Mon\pre{a,b,c,d}{abcdacdadabbcdacd=1}$ has a trivial group of units,
in contrast to the inverse monoid defined by the same presentation which has an infinite group of units. 
\end{exa}

To address this, in \cite{GR} Gray and Ru\v skuc devised a new, finer pieces computing algorithm better suited for special inverse monoid presentations 
$$M=\Inv\pre{X}{w_i=1\ (i\in I)},$$ 
called the \emph{Benois algorithm} (because it relies on the Benois theorem and its consequences). Namely, let 
$$
U=\{\pref(w_i):\ i\in I\}\cup\{\pref(w_i^{-1}):\ i\in I\}.
$$
Observe that all the words in $U$ represent right invertible elements of the inverse monoid $M$. Let $V=\Mon\gen{U}$ be the submonoid of the free group $FG(X)$ generated by the words from $U$ viewed as elements of the free group i.e.\ the submonoid of $FG(X)$ generated by $\red(U)$. 
Now by Corollary \ref{cor:ben} of Benois' Theorem the free group $FG(X)$ has decidable submonoid membership problem. 
So, we can algorithmically test, for each prefix $p$ of some word $w_i$, whether $p^{-1}$ represents an element of $V$. 
If it does then $p^{-1}$ is right invertible which implies $p$ is left invertible, and hence since $p$ is also right invertible being a prefix of $w_i$, it follows that  the word $p$ represents an invertible element of the monoid $M$. Thus this algorithm gives a method for finding invertible pieces of relators. The collection of all such prefixes for which the answer is `yes' naturally gives rise to factorisations
$$
w_i\equiv w_{i,1}\dots w_{i,k_i}
$$
for all $i\in I$, such that for every prefix $p$ of $w_i$ we have
$$
p^{-1}\in V \text{ if and only if } p\equiv w_{i,1}\dots w_{i,j} \text{ for some } j\in\{1,\dots,k_i\}.
$$
It is clear from the definition that  
for all $i \in I$ the decomposition of $w_i$ into pieces computed by the Benois algorithm is unital. 
In fact, it is shown in \cite{GR} that for each word $w_i$ the factorisation computed by the Benois algorithm 
is a refinement of the decomposition computed by the Adjan algorithm.

In the same way as for special monoid presentations, there is no algorithm which takes finitely presented special inverse monoids and computes the minimal pieces of the defining relator words. In the particular case of one-relator special inverse monoids, whether the Benois algorithm computes the minimal invertible pieces is an open problem. It may be shown (see \cite{GR}) that when applied to the O'Hare monoid the Benois algorithm does compute the minimal invertible pieces of the defining relator, giving the unital factorisation of the defining relator described above in Example \ref{exa:ohare}. This example shows that there are cases where the Benois algorithm preforms strictly better than the Adjan algorithm, in the sense that it gives a decomposition which is a strict refinement of the Adjan decomposition.  

While it is not known whether the Benois algorithm computes the minimal invertible pieces for one-relator special inverse monoids, one very important central theme of the present paper will be that it is often the case that it is sufficient to find \emph{some} suitable (not necessarily minimal) unital factorisation of $w$ in order to prove that the monoid $M = \Inv\pre{A}{w=1}$ has decidable word problem. In this sense, the Benois algorithm will provide a key tool for solving the word problem for certain examples and classes of one-relator special inverse monoids.

Now we introduce another type of factorisation of a word that makes computing and handling the prefix monoid of a one-relator group presentation somewhat easier and more manageable.
Let $w \in \overline{X}^*$. Then for a factorisation
$$
w \equiv w_1\dots w_k
$$
let $P(w_1,\dots,w_k)$ denote the submonoid of $G=\Gp\pre{X}{w=1}$ generated by elements
$$
\bigcup_{i=1}^k \pref(w_i).
$$
It is quite easy to see that we always have $P_w\subseteq P(w_1,\dots,w_k)$. 
In the case that $P_w = P(w_1,\dots,w_k)$ then we say that the factorisation  
$w \equiv w_1\dots w_k$ is \emph{conservative}.     
The next result establishes a connection between unital and conservative factorisations. 

\begin{thm}\label{thm:UnitalConservative}
Let $w \in \overline{X}^{*} $. 
\begin{enumerate}
\item[(i)] Any unital factorisation of $w$ is conservative. 
\item[(ii)] If $\Inv\pre{X}{w=1}$ is $E$-unitary (in particular, if $w$ is cyclically reduced) then every conservative factorisation of $w$ is unital.     
\end{enumerate}
\end{thm}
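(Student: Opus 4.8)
The plan is to prove the two statements separately, in both cases leaning on the description of the right units recorded just above: writing $R$ for the submonoid of right units of $M=\Inv\pre{X}{w=1}$, a word represents an element of $R$ precisely when it equals in $M$ a product of prefixes of $w$, and the image of $R$ under $M\to G$ is exactly $P_w$. Throughout I identify a word with the element it represents in $G$ or in $M$ as the context dictates.

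For part (i), given a unital factorisation $w\equiv w_1\cdots w_m$, it suffices to establish $P(w_1,\dots,w_m)\subseteq P_w$, since the reverse inclusion always holds. So I would fix $i$ and a prefix $p$ of $w_i$ and show $p\in P_w$. The point is that $qp$, with $q\equiv w_1\cdots w_{i-1}$, is a genuine prefix of $w$, so $qp\in P_w$; and each $w_\ell$, being a unit, has inverse again a unit and in particular a right unit, whence $w_\ell^{-1}\in P_w$. Writing $p = w_{i-1}^{-1}\cdots w_1^{-1}\cdot(qp)$ in $G$ then exhibits $p$ as a product of elements of the submonoid $P_w$, giving $p\in P_w$ and hence conservativity.

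For part (ii), assume $M$ is $E$-unitary and that $w\equiv w_1\cdots w_m$ is conservative. The key idea is to show that every prefix $p_j\equiv w_1\cdots w_j$ of $w$ is a two-sided unit of $M$, not merely a right unit; once this is known, each factor is recovered in $M$ as $w_j = p_{j-1}^{-1}p_j$ (using $p_{j-1}^{-1}p_{j-1}=1$, valid as $p_{j-1}$ is a unit), a product of two units and hence a unit, so the factorisation is unital. To see that $p_j$ is a unit I argue as follows. Conservativity gives $w_\ell\in P_w$ in $G$ for every $\ell$, so from $w_1\cdots w_m=1$ in $G$ we also get $p_j^{-1}=w_{j+1}\cdots w_m\in P_w$. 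Hence there is a word $r$ that is a product of prefixes of $w$—so a right unit, $rr^{-1}=1$—with $r=p_j^{-1}$ in $G$, i.e.\ $p_jr=1$ in $G$. By Lemma~\ref{lem:sim} (this is where $E$-unitarity enters) we obtain $p_jr\sim 1$ in $M$, and therefore $p_jr\in E(M)$. Now $p_jr$ is an idempotent which is also a right unit, since $(p_jr)(p_jr)^{-1}=p_j\,rr^{-1}\,p_j^{-1}=p_jp_j^{-1}=1$; an idempotent that is a right unit equals $1$, so $p_jr=1$ already in $M$. Since a product equal to $1$ forces its right-hand factor to be a left unit, $r$ is then a unit (it is already a right unit), and $p_j=(p_jr)r^{-1}=r^{-1}$ is a unit as well.

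The main obstacle is exactly the collapse step in part (ii): passing from $p_jr=1$ in $G$ to invertibility in $M$. Knowing only that $p_j$ maps into $P_w$ is not enough, because many non-invertible elements of $M$ share the image in $G$ of a unit—an idempotent multiple $e\,r<r$ is compatible with $r$ but need not be a right unit—so compatibility alone does not force invertibility. What rescues the argument is to use the right-unit property of \emph{both} $p_j$ and $r$ to force the idempotent $p_jr$ up to the identity, and this is precisely the point at which the $E$-unitary hypothesis is used, through Lemma~\ref{lem:sim}.
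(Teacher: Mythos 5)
Your proof is correct and follows essentially the same route as the paper's: part (i) rests on the same key fact that right units of $M$ are products of prefixes, and part (ii) uses the same mechanism of Lemma~\ref{lem:sim} plus the observation that a right-invertible idempotent equals $1$. The only (cosmetic) difference is in (ii), where you run the argument over the prefixes $w_1\cdots w_j$ instead of iterating over cyclic conjugates $w_2\cdots w_k w_1=1$ as the paper does; both organisations are valid.
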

\begin{proof}
(i) Assume $w\equiv w_1\dots w_k$ is a unital factorisation, so that each word $w_i$, $1\leq i\leq k$, represents a unit of $M=\Inv\pre{X}{w=1}$. Let $p$ be a prefix of $w_i$ for some $i$. Then
$$
p' \equiv w_1\dots w_{i-1}p
$$
is a prefix of $w$ and thus in $G=\Gp\pre{X}{w=1}$ this word represents an element of the prefix monoid $P_w$. Since $p'$ is right invertible in $M$ and $w_1 \ldots w_{i-1}$ is invertible in $M$ it follows that $p$ is right invertible in $M$. Thus by the remarks following Theorem \ref{thm:pw-wp} it follows that $p$ is equal in $M$ to a product of prefixes of $w$. Applying the natural homomorphism from $M$ to its maximal group image, it follows that $p$ represents in $G$ an element of the prefix monoid $P_w$. It follows that $P(w_1,\dots,w_k)$ must be contained in $P_w$, so in fact we have an equality of these two submonoids of $G$, confirming that the considered factorisation in conservative.

(ii) Assume that $M=\Inv\pre{X}{w=1}$ is $E$-unitary and let $w\equiv w_1\dots w_k$ be a conservative factorisation of $w$, so that we have $P(w_1,\dots,w_k)=P_w$. Our aim is to show that $w_1$ represents a unit of $M$, that is, we want to prove that $w_1\in U_M$.

Since $w_1 w_2 \ldots w_k = 1$ it is immediate that $w_1$ is a right unit of $M$. Now we need to prove that $w_1$ is a left unit which clearly is equivalent to proving that $w_1^{-1}$ is a right unit of $M$. In $G=\Gp\pre{X}{w=1}$ we have
\[
w_1^{-1} = w_2 \ldots w_k \in P(w_1, \ldots, w_k) = P_w
\]
since the factorisation is conservative. Thus in $G$ we have 
$$
w_1^{-1}=p_1\dots p_m
$$
for some $p_i \in\pref(w)$ for all $1 \leq i \leq m$. Since $M$ is $E$-unitary, it follows from Lemma \ref{lem:sim} that we have $w_1^{-1}\sim p_1\dots p_m$ in $M$. This implies that $w_1p_1\dots p_m\in E(M)$. Since the only right invertible idempotent in an inverse monoid is the identity element it follows that $w_1 p_1 \ldots p_m = 1$ in $M$. From this we deduce that $w_1 p_1 \ldots p_m w_1 = w_1$ and $p_1 \ldots p_m w_1 p_1 \ldots p_m= p_1 \ldots p_m$ and hence $w_1^{-1} = p_1 \ldots p_m$ in $M$. We conclude that $w_1^{-1}$ represents an invertible element of $M$. This concludes the proof that $w_1$ represents an invertible element of the monoid $M$.  

Consequently, 
$$
w_2\dots w_kw_1 = w_1^{-1}(w_1\dots w_k)w_1=w_1^{-1}w_1=1,
$$
so now we can argue, by repeating the previous argument, that $w_2\in U_M$. In this say we can prove that each factor represents an invertible element of the monoid. We conclude that the considered factorisation is unital.
\end{proof}

\begin{cor}
Let $w\in\ol{X}^*$.
Then the Benois algorithm applied to $w$ computes a conservative 
factorisation of $w$.
\end{cor}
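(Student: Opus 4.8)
The plan is simply to chain together the unitality of the Benois factorisation with part (i) of Theorem~\ref{thm:UnitalConservative}. First I would recall what the Benois algorithm produces when applied to the single relator $w$: a factorisation $w \equiv w_1 \dots w_k$ whose cut points are exactly those prefixes $p$ of $w$ satisfying $p^{-1} \in V$, where $V = \Mon\gen{\red(U)}$ is the submonoid of $FG(X)$ recorded by the algorithm. As explained in the exposition preceding this corollary, each such cut point $p$ is both right invertible (being a prefix of $w$) and left invertible (since $p^{-1} \in V$ forces $p^{-1}$ to be right invertible, hence $p$ left invertible), and is therefore a unit of $M = \Inv\pre{X}{w=1}$. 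Consequently each factor $w_j$, being the product of one cut point with the inverse of the preceding one inside the group of units, is itself a unit, so the Benois factorisation of $w$ is unital.

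With this in hand I would invoke Theorem~\ref{thm:UnitalConservative}(i), which guarantees that every unital factorisation of $w$ is conservative. Applying it to the unital factorisation just produced delivers the conclusion at once. There is no real obstacle here, since all the work is already done: the only point demanding attention is the unitality of the Benois output, and that is precisely what the description of the algorithm establishes. I would therefore present the argument in a couple of sentences, citing both facts explicitly.

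Note that the same reasoning applies verbatim in the multi-relator setting to each relator $w_i$ individually, but since the corollary is stated for a single word $w$ it suffices to treat that case, and the one-relator hypothesis plays no further role beyond matching the hypotheses of Theorem~\ref{thm:UnitalConservative}.
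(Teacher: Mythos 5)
Your proof is correct and follows exactly the route the paper intends: the text preceding the corollary already records that the Benois factorisation is unital (each cut point being right invertible as a prefix and left invertible because its inverse lies in $V$), and the corollary is then immediate from Theorem~\ref{thm:UnitalConservative}(i). No issues.
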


This importance of this corollary will become in Section~\ref{sec:appl1} where  there are theorems whose hypotheses are that the defining relator admits a conservative factorisation satisfying certain properties. 
This corollary will help us verify that these hypotheses do hold in particular concrete examples.

% AMALGAMATED

\section{Deciding membership in submonoids of amalgamated free products}

\label{sec:amalg}

As explained in the introduction, our aim in this section is to give two general decidability results concerning the membership problem for certain submonoids of free amalgamated products $B*_A C$ of finitely generated groups. Then in the next section these results will be applied to the prefix membership problem for certain one-relator groups.

Let us now fix the notation and conventions that will be in place throughout this section. We refer the reader to \cite{LSch} for more background and proofs of standard results. Throughout this section we use $B*_A C$ to denote the amalgamated free product of two groups $B$ and $C$ over a group $A$. We shall always assume that all three of these groups are finitely generated by, respectively, the finite sets $X$, $Y$ and $Z$. We formalise this by fixing canonical homomorphisms $\pi:\ol{X}^*\to B$, $\theta:\ol{Y}^*\to C$ and $\xi:\ol{Z}\to A$.

In addition, we have two injective homomorphisms $f:A\to B$ and $g:A\to C$ and $B \ast_A C$ is the corresponding pushout in the category of groups. 
Then the amalgamated free product $B \ast_A C$ is defined by the presentation obtained by taking the disjoint union of presentations for $B$ and $C$ together with additional defining relations $zf = zg$ for all $z \in Z$. To be more precise, let $Z=\{z_1,\dots,z_m\}$. Then there are words $\alpha_i\in\ol{X}^*$ and $\beta_i\in\ol{Y}^*$, $1\leq i\leq m$, such that the mappings $f: Z \rightarrow \ol{X}^*$ and $g:Z \rightarrow \ol{Y}^*$ defined by $z_if = \alpha_i$ and $z_ig = \beta_i$ induce injective homomorphisms  $f:A\to B$ and $g:A\to C$. Note that $f$ and $g$ are used to denote both mappings from $A$ into $B$ and $C$, respectively, and also to define mappings on words $f: \ol{Z}^* \rightarrow \ol{X}^*$ and $g:\ol{Z}^* \rightarrow \ol{Y}^*$. Consequently,
$$
wf \equiv w(\alpha_1,\dots,\alpha_m)\text{ and }wg \equiv w(\beta_1,\dots,\beta_m)
$$ 
for any $w\in \ol{Z}^*$. 
Recall that $w(\alpha_1, \ldots, \alpha_m)$ is the word in $\ol{X^*}$ obtained by replacing $z_i$ by $\alpha_i$ for each letter $z_i\in Z$ in the word $w$.
Using this notation, we may speak about the membership problem for the subgroup $A$ in $B$ and $C$ respectively: 
the algorithmic question is whether there is an algorithm which takes as input any word  $u$ over $\ol{X}$ (resp.\ $\ol{Y}$)  and decides whether or not there  exists a word $w\in\ol{Z}^*$ such that $u=w(\alpha_1,\dots, \alpha_m)$ holds in $B$ (resp.\ $u=w(\beta_1,\dots,\beta_m)$ holds in $C$).
We will often identify $A$ with its image in $B \ast_A C$. In this way, each of $A$, $B$ and $C$ is viewed as a subset of the amalgamated free product $B \ast_A C$ and $B \cap C = A$. 
So, for any $b \in B$ by saying that $b$ \emph{belongs to} $A$ we mean that $b \in Af$, and analogously we talk about an element $c \in C$ belonging to $A$.  

We are now in a position to state the two main general results of this section. 

\begin{theoremletter}\label{thm:amal}
Let $G=B*_A C$, where $A,B,C$ are finitely generated groups such that both $B,C$ have decidable word problems, and the membership problem for $A$ in both $B$ and $C$ is decidable. Let $M$ be a submonoid of $G$ such that the following conditions hold:
\begin{itemize}
\item[(i)] $A\subseteq M$;
\item[(ii)] both $M\cap B$ and $M\cap C$ are finitely generated and 
            $$M=\Mon\gen{(M\cap B)\cup(M\cap C)};$$
\item[(iii)] the membership problem for $M\cap B$ in $B$ is decidable;
\item[(iv)] the membership problem for $M\cap C$ in $C$ is decidable.
\end{itemize}
Then the membership problem for $M$ in $G$ is decidable.
\end{theoremletter}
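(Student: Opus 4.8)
The plan is to decide membership in $M$ by recursion on the \emph{syllable length} of the input element with respect to the normal form of the amalgamated free product $G=B\ast_A C$. Throughout I work with reduced sequences: every $g\in G$ can be written as $g\equiv s_1 s_2\cdots s_p$ with the $s_i$ lying alternately in $B$ and $C$, none of them in $A$ when $p\geq 2$, and with the integer $p$ (the syllable length) together with the $A$-cosets of the $s_i$ being invariants of $g$ by the normal form theorem for amalgamated free products \cite{LSch}. The first preliminary step is to record that, under the present hypotheses, reduced forms are effectively computable: grouping an input word into maximal $\ol{X}$- and $\ol{Y}$-blocks uses the word problems of $B$ and $C$, detecting a block that collapses into $A$ uses the decidable membership problem for $A$ in $B$ and in $C$, and the amalgamation move (applying $f^{-1}$ and then $g$, or vice versa) can be realised by searching $\ol{Z}^*$ for a witness, a search that terminates once the membership test has answered `yes'. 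In particular $G$ then has decidable word problem, and the first syllable $s_1\in B$ (or $C$) of any $g$ can be produced as an explicit word together with a reduced form of $s_1^{-1}g$, which has syllable length $p-1$.

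The engine of the recursion is the following characterisation, which I would isolate as a lemma: if $g$ has syllable length $p\geq 2$ and its first syllable $s_1$ lies in $B$, then $g\in M$ if and only if $s_1\in M\cap B$ and $s_1^{-1}g\in M$ (and symmetrically when $s_1\in C$). The implication used in the `accept' branch of the algorithm is trivial, since $M$ is a submonoid and $s_1\in M\cap B\subseteq M$ forces $g=s_1(s_1^{-1}g)\in M$. The content lies in the converse. Here I would use the generation equality in condition (ii) to write $g$ as a product of elements of $(M\cap B)\cup(M\cap C)$, group consecutive factors of the same type, and then repeatedly absorb any syllable lying in $A$ into a neighbour; the crucial point is that condition (i), $A\subseteq M$, gives $A\subseteq M\cap B$ and $A\subseteq M\cap C$, so each absorption keeps the merged syllable inside the relevant submonoid. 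This produces a reduced alternating factorisation of $g$ all of whose factors lie in $M\cap B$ or $M\cap C$; by the uniqueness part of the normal form theorem it has length exactly $p$, and its first factor $n_1$ has the same type as $s_1$ and satisfies $n_1\in s_1 A$. Writing $n_1=s_1 a$ with $a\in A$ gives $s_1=n_1 a^{-1}\in (M\cap B)A=M\cap B$, while $s_1^{-1}g = a\,n_2\cdots n_p=(a\,n_2)n_3\cdots n_p\in M$ because $a\,n_2\in A\,(M\cap C)=M\cap C$. I expect this to be the main obstacle: reconciling the a priori arbitrary factorisation coming from (ii) with the rigid normal form, and checking that the `floating' amalgamation element $a$ can always be pushed into the next syllable without leaving $M$.

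With the lemma established the decision procedure follows by induction on $p$. Given the input, first compute a reduced form. If $p=0$ then $g=1\in A\subseteq M$ and we return true; if $p=1$ then $g$ is a single block in $B$ or in $C$, and membership in $M$ reduces to testing $g\in M\cap B$ or $g\in M\cap C$, which is decidable by (iii) or (iv) (note that if $g\in A$ this is consistent, since $A\subseteq M\cap B$). If $p\geq 2$, extract the first syllable $s_1$, say of type $B$, and test $s_1\in M\cap B$ using (iii); return false if this fails, and otherwise recurse on the length-$(p-1)$ element $s_1^{-1}g$. Each recursive call strictly decreases the syllable length, so the algorithm halts after at most $p$ steps and returns the correct answer by the lemma. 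The finite generation of $M\cap B$ and $M\cap C$ in condition (ii) is what makes the membership problems invoked in (iii) and (iv) genuine membership problems for finitely generated submonoids, and hence natural to verify in the intended applications.
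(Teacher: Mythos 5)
Your proposal is correct and follows essentially the same route as the paper: effectively compute a reduced form using the word problems of $B$, $C$ and the decidable membership of $A$ in each, then show that membership in $M$ is detected syllable-by-syllable by rewriting the factorisation supplied by condition (ii) into a reduced alternating form with factors in $(M\cap B)\cup(M\cap C)$ (absorbing $A$-syllables via $A\subseteq M$) and comparing it with the given reduced form through the normal form theorem. The only difference is presentational — you peel off one syllable at a time recursively, whereas the paper states the simultaneous characterisation (Proposition~\ref{pro:g-in-M}) that $g=b_1c_1\dots b_nc_n\in M$ iff every $b_i\in M\cap B$ and every $c_i\in M\cap C$ — and the key computation $s_1=n_1a^{-1}\in(M\cap B)A\subseteq M\cap B$ is exactly the paper's $b_i\in A(M\cap B)A\subseteq M\cap B$.
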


\begin{definition}(Closed for rational intersections)
Let $G$ be a finitely generated group, generated by a finite set $\Omega$ with canonical homomorphism $\tau: \ol \Omega \rightarrow G$, and let $H$ be a finitely generated subgroup of $G$. We say that \emph{$H$ in $G$ is closed for rational intersections} if $R \cap H \in \Rat(G)$ for all $R \in \Rat(G)$. We say that \emph{$H$ in $G$ is effectively closed for rational intersections} if it is closed for rational intersections and moreover that there is an algorithm which given a FSA $\mathcal{A}$ over $\ol{\Omega}$ computes a FSA $\mathcal{A}_H$ over $\ol{\Omega}$ such that $L(\mathcal{A}_H)\tau = (L(\mathcal{A})\tau) \cap H$. 
\end{definition}

\begin{theoremletter}\label{thm:amal51}
Let $G=B*_A C$, where $A,B,C$ are finitely generated groups. Let $M$ be a submonoid of $G$ such that both $M\cap B$ and $M\cap C$
are finitely generated and $M=\Mon\gen{(M\cap B)\cup(M\cap C)}$. Assume further that the following conditions hold:
\begin{itemize}
\item[(i)] $B$ and $C$ have decidable rational subset membership problems;
\item[(ii)] $A \leq B$ is \EC;  
\item[(iii)] $A \leq C$ is \EC.  
%the mappings $R\mapsto R\cap A$, where $R$ is a rational subset of $B$ or $C$, respectively, are effectively rational.
\end{itemize}
Then the membership problem for $M$ in $G$ is decidable.
\end{theoremletter}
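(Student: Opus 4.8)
The plan is to reduce membership in $M$ to a reachability computation taking place entirely inside the amalgamated subgroup $A$, and then to show that the reachable sets are effectively computable rational subsets of $A$. Throughout write $P = M \cap B$ and $Q = M \cap C$; by hypothesis these are finitely generated submonoids, so I may fix finite generating sets and regard $P$ and $Q$ as (effectively given) rational subsets of $B$ and $C$ respectively. Note first that hypothesis (i) gives in particular decidable word problems for $B$ and $C$ and, since $A$ is the finitely generated subgroup $\Gp\gen{Z}$, decidable membership of $A$ in each of $B$ and $C$; by the standard reduction algorithm for amalgamated free products \cite{LSch} this makes the word problem of $G$, and the computation of normal forms in $G$, decidable. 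Hence, given an input word representing $g \in G$, I may compute its syllable length $n$ together with a reduced sequence $g = x_1 x_2 \cdots x_n$, where each $x_i$ lies in $B \setminus A$ or $C \setminus A$ and consecutive syllables come from different factors. (The cases $n=0$, i.e.\ $g \in A$, and $g=1$ are disposed of directly, since then $g \in M$ iff $g \in P$, which is decidable by (i).)

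The combinatorial heart of the argument is the following reformulation of membership. Suppose $g = m_1 \cdots m_k$ with each $m_j \in P \cup Q$. Whenever some factor lies in $A$, it lies in $M \cap A \subseteq P \cap Q$ and may be absorbed into an adjacent factor (using that $P$ and $Q$ are submonoids); likewise two consecutive factors from the same side may be merged. After performing all such reductions I obtain a factorisation in which no factor lies in $A$ and consecutive factors come from different factors of the product, i.e.\ a \emph{reduced} sequence representing $g$. By the normal form theorem for amalgamated free products \cite{LSch}, two reduced sequences of the same element have equal length and differ only by ``$A$-shifts''; comparing with $x_1 \cdots x_n$ this forces $k = n$ and the existence of elements $c_1, \dots, c_{n-1} \in A$ (with $c_0 = c_n = 1$) such that $m_i = c_{i-1}^{-1} x_i c_i$ for all $i$. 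Consequently
\[
g \in M \iff \exists\, c_1, \dots, c_{n-1} \in A \ (c_0 = c_n = 1) \text{ with } c_{i-1}^{-1} x_i c_i \in
\begin{cases} P & \text{if } x_i \in B,\\ Q & \text{if } x_i \in C,\end{cases}
\]
for every $1 \le i \le n$. This is precisely a path-existence problem in the (infinite) graph with vertex set $A$ in which, at stage $i$, an edge runs from $c_{i-1}$ to $c_i$ exactly when the displayed condition holds.

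I would solve this by the obvious forward sweep, defining the reachable sets $R_0 = \{1\}$ and, for $1 \le i \le n$,
\[
R_i = \left( x_i^{-1}\, R_{i-1}\, P \right) \cap A \ \ (x_i \in B), \qquad R_i = \left( x_i^{-1}\, R_{i-1}\, Q \right) \cap A \ \ (x_i \in C),
\]
so that $R_i = \{\, c_i \in A : \exists\, c_{i-1} \in R_{i-1},\ c_{i-1}^{-1} x_i c_i \in P \text{ (resp.\ } Q)\,\}$; the characterisation above then reads $g \in M \iff 1 \in R_n$. It remains to verify that each $R_i$ is an effectively computable rational subset of $A$. Suppose inductively that $R_{i-1}$ is given by a FSA over $\ol Z$ and that $x_i \in B$. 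Since $R_{i-1} \subseteq A \leq B$, rewriting the $Z$-generators as their images $\alpha_i$ under $f$ turns this into a FSA presenting $R_{i-1}$ as a rational subset of $B$; concatenating with the fixed automata for $\{x_i^{-1}\}$ and $P$ yields a FSA over $\ol X$ for the rational subset $x_i^{-1} R_{i-1} P$ of $B$. By hypothesis (ii), $A \le B$ is \EC, so I may compute a FSA over $\ol X$ for the intersection $(x_i^{-1} R_{i-1} P) \cap A \in \Rat(B)$. Finally, this rational set is contained in the finitely generated subgroup $A$, so Theorem~\ref{thm:effective:herbst}(1), applied to the pair $A \le B$ (whose ambient group $B$ has decidable, hence recursively enumerable, word problem), converts it back into a FSA over $\ol Z$ presenting $R_i$ as a rational subset of $A$. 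The case $x_i \in C$ is identical, using hypothesis (iii) and the pair $A \le C$. After $n$ steps I obtain a FSA over $\ol Z$ for $R_n$; regarding $R_n$ once more as a rational subset of $B$ and testing membership of the identity via the decidable rational subset membership problem of $B$ (hypothesis (i)) decides whether $1 \in R_n$, and hence whether $g \in M$.

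The main obstacle, and the reason all three hypotheses are needed, is the ``hand-off'' of the carry across the amalgam at each syllable boundary: the set $R_{i-1}$ of admissible carries is produced on (say) the $B$-side but must be fed into a computation on the $C$-side at the next step. Keeping it merely as a rational subset of $B$ would be useless there, so the crucial point is that it can be pulled back to a rational subset of the common subgroup $A$. This is exactly what the combination of effective closure for rational intersections (to cut down to $A$) and the effective form of Herbst's theorem (to re-express an $A$-valued rational set over generators of $A$) delivers, and it is what renders the otherwise infinite vertex set $A$ algorithmically tractable.
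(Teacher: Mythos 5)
Your proposal is correct and follows essentially the same route as the paper: compute a reduced form, characterise membership by the existence of a chain of ``carries'' in $A$ (your sets $R_i$ are, up to inversion and a re-indexing of the alternating syllables, exactly the paper's sets $Q_i$ from Proposition~\ref{pro:amal55}), prove these reachable sets are effectively rational in $A$ via the \EC{} hypothesis together with the effective Herbst theorem, and decide the final condition using rational subset membership in one of the factors. The only differences are cosmetic (strict alternation versus the $b_1c_1\cdots b_nc_n$ convention, and testing $1\in R_n$ rather than $c_n\in Q_{2n-1}^{-1}(M\cap C)$).
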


Comparing these two theorems,  Theorem~\ref{thm:amal} shows that  the membership problem for   suitably nice submonoids in relatively general amalgamated free products  is decidable, while  Theorem~\ref{thm:amal51} shows that under stronger assumptions on the groups of the amalgamated free product, we get a much broader family of submonoids  in which we can decide membership.

As is well known, see e.g.  \cite[pp.\ 186-187]{LSch}, each element $g\in G=B*_A C$ can be written as 
$$
g = b_1c_1\dots b_nc_n
$$
with $b_i\in B$ and $c_i\in C$ for all $1\leq i\leq n$. 
The above representation is said to be in \emph{reduced form} if
\begin{itemize}
\item If $n>1$ then $b_i$ does not belong to $A$ for all $i \neq 1$, 
$b_1$ is either equal to $1$ or else does not belong to $A$, 
$c_i$ does not belong to $A$ for all $i \neq n$, and 
$c_n$ is either equal to $1$ or else does not belong to $A$.  
\item If $n=1$ then if both $b_1$ and $c_1$ belong to $A$ then exactly one of $b_1=1$ or $c_1=1$ holds.   
\end{itemize}
Moreover, a word  $w\equiv u_1v_1\dots u_kv_k$  where $u_i\in\ol{X}^*$ and $v_i\in\ol{Y}^*$, $1\leq i\leq k$,  is said to be \emph{a word in reduced form} if and only if $(u_1 \pi) (v_1 \theta) \dots (u_k \pi)(v_k \theta)$ is a reduced form.
The following result is standard and can be proved e.g.\ by applying \cite[Theorem IV.2.6]{LSch}. 

\begin{lem}\label{lem:rf-amal}
An equality of two reduced forms
$$
b_1c_1\dots b_nc_n = p_1q_1\dots p_kq_k 
$$
holds in $G=B*_A C$ if and only if the following conditions are satisfied:
\begin{itemize}
\item[(i)] $n=k$;
\item[(ii)] there exist $1=a_0,a_1,\dots,a_{2n-1},a_{2n}=1\in A$ such that for all $1\leq i\leq n$ we have
$$
p_i = a_{2i-2}^{-1}b_ia_{2i-1}\text{ and }q_i = a_{2i-1}^{-1}c_ia_{2i}.
$$
\end{itemize} 
\end{lem}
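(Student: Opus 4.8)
The plan is to obtain both directions from the normal form theorem for amalgamated free products, \cite[Theorem IV.2.6]{LSch}, the reverse implication being an easy direct computation and the forward implication requiring the precise uniqueness statement for reduced sequences.

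For the ``if'' direction I would simply substitute $p_i = a_{2i-2}^{-1}b_i a_{2i-1}$ and $q_i = a_{2i-1}^{-1}c_i a_{2i}$ into the product $p_1q_1\cdots p_nq_n$ and observe that the internal factors cancel telescopically: each $a_j a_j^{-1}$ with $1\leq j\leq 2n-1$ collapses to the identity, and using $a_0=a_{2n}=1$ what remains is exactly $b_1c_1\cdots b_nc_n$. Hence the stated equality holds in $G$, and in particular this direction uses nothing beyond the group axioms.

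For the ``only if'' direction I would first convert each paired reduced form into a genuine \emph{reduced sequence} in the sense of \cite{LSch}. Reading $b_1c_1\cdots b_nc_n$ from left to right and deleting every factor equal to $1$, the reduced-form hypotheses guarantee that (when $n>1$) the only factors that can be trivial are the leading $b_1$ and the trailing $c_n$, since $b_i\notin A$ for $i\neq 1$ and $c_i\notin A$ for $i\neq n$ force those factors to be nontrivial; what is left is a sequence whose terms alternate between $B\setminus A$ and $C\setminus A$, which is precisely a reduced sequence. Doing the same on the right-hand side and invoking \cite[Theorem IV.2.6]{LSch}, the two reduced sequences representing the same element of $G$ must have equal length and their corresponding terms must agree up to conjugation by a chain of amalgamated elements $a_j\in A$ with the extreme ones trivial. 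Re-indexing these $a_j$ so that $a_j$ records the boundary after the $j$-th syllable of the alternating length-$2n$ word, and reinserting the deleted trivial end syllables, yields $n=k$ together with the required relations $p_i=a_{2i-2}^{-1}b_ia_{2i-1}$ and $q_i=a_{2i-1}^{-1}c_ia_{2i}$ with $a_0=a_{2n}=1$, which is (i) and (ii).

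The step I expect to be most delicate is the bookkeeping at the two ends and in the degenerate case $n=1$. When $b_1=1$ (respectively $c_n=1$) the corresponding syllable is absent from the alternating sequence, so I must check that the reduced-form conditions force the matching factor $p_1$ (respectively $q_k$) to be trivial as well, so that the boundary value $a_0$ (respectively $a_{2n}$) may consistently be taken to be $1$. Likewise, the $n=1$ possibilities in which $b_1$ or $c_1$ lies in $A$ fall outside the length-$\geq 2$ hypothesis of the normal form theorem and would be handled directly from the defining relations of $B*_A C$, absorbing the amalgamated factor into its neighbour before comparing. Once these boundary cases are dispatched, matching indices against \cite[Theorem IV.2.6]{LSch} is routine.
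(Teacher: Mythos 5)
Your proposal is correct and follows exactly the route the paper indicates: the paper gives no proof of this lemma, stating only that it is standard and ``can be proved e.g.\ by applying \cite[Theorem IV.2.6]{LSch}'', which is precisely the normal form theorem you invoke, with the telescoping computation for the ``if'' direction and the end-syllable/$n=1$ bookkeeping you flag being the only details one would need to write out.
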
 

Before embarking on the proofs of Theorems~\ref{thm:amal} and \ref{thm:amal51}, we first collect several useful lemmas.  

\begin{lem}\label{lem:reduce}
Let $G=B*_A C$ and let 
$M$ be a submonoid of $G$ such that
            $$M=\Mon\gen{(M\cap B)\cup(M\cap C)}.$$
Then every element $g \in M$ can be written in reduced form 
\[
g = p_1q_1\dots p_nq_n
\]
where $p_i \in M \cap B$ and $q_i \in M \cap C$ for $1 \leq i \leq n$.
\end{lem}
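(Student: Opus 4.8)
The plan is to take a general element $g\in M$, write it as a product of factors drawn from $M\cap B$ and $M\cap C$, and then run the usual rewriting to reduced form for the amalgamated free product, checking at every step that the syllables never leave these two submonoids. First I would record three elementary facts. Since $M$ is a submonoid of $G$ while $B$ and $C$ are subgroups, each of $M\cap B$ and $M\cap C$ is a submonoid: it contains $1$ and is closed under products. Secondly, identifying $A$ with $B\cap C$ inside $G$, any element of $M$ that lies in $A$ lies in \emph{both} $M\cap B$ and $M\cap C$; this single observation is what makes the whole argument go through, since we never assume $A\subseteq M$. Using the hypothesis $M=\Mon\gen{(M\cap B)\cup(M\cap C)}$, I would write $g\equiv s_1\dots s_\ell$ with each $s_j\in(M\cap B)\cup(M\cap C)$, and view this as a sequence of syllables, each tagged as lying in $B$ or in $C$ (a syllable belonging to $A$ may be given either tag).

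Next I would reduce this sequence using the standard elementary moves: delete any interior syllable equal to $1$; replace two adjacent syllables of the same tag by their product; and, whenever an interior syllable lies in $A$, absorb it into an adjacent syllable of the opposite tag. The crucial point is that each move keeps all syllables inside $M\cap B$ or $M\cap C$. Merging two $B$-syllables (respectively two $C$-syllables) stays in the submonoid $M\cap B$ (respectively $M\cap C$); and when an interior $A$-syllable $s$ is absorbed into an adjacent, say, $C$-syllable, the product again lies in $M\cap C$, precisely because $s\in M\cap A\subseteq M\cap C$. Each move strictly decreases the number of syllables, so the rewriting terminates, and when no move applies the sequence alternates in tag with every interior syllable nontrivial and outside $A$.

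I expect the main obstacle to be the boundary bookkeeping rather than the interior reductions, since the moves above touch only interior syllables and so do not constrain the two extreme syllables. After the rewriting I would therefore inspect the ends: a nontrivial $A$-syllable at the front can be absorbed into its neighbour of opposite tag (which is outside $A$ once the interior has been cleaned) while resetting the front to $1$, and dually at the back, so that the extreme syllables become either trivial or outside $A$. Padding the resulting alternating sequence with a leading $1\in M\cap B$ and/or a trailing $1\in M\cap C$ where necessary then presents $g$ as $g=p_1q_1\dots p_nq_n$ with $p_i\in M\cap B$ and $q_i\in M\cap C$ in reduced form, the only identities introduced sitting in the permitted first $B$-slot and last $C$-slot. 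Finally the genuinely degenerate cases must be matched against the $n=1$ clause of the definition of reduced form: if $g\in A\setminus\{1\}$ then $g=g\cdot 1$ works, so that exactly one of the two syllables equals $1$, while $g=1$ is the trivial case. Lemma~\ref{lem:rf-amal} is available should the well-definedness of reduced forms be needed, but for existence the termination argument above suffices.
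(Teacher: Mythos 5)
Your proof is correct and follows essentially the same route as the paper's: both hinge on the observation that an element of $M$ lying in $A=B\cap C$ belongs to both $M\cap B$ and $M\cap C$, so that absorbing such a syllable into a neighbour of the opposite type keeps every term inside the appropriate submonoid, and both terminate by a length-decreasing induction. Your treatment of the boundary syllables and the degenerate $n=1$ cases is somewhat more explicit than the paper's, but this is only a difference in bookkeeping, not in the argument.
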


\begin{proof}
By assumption we can write
$$
g = r_1s_1\dots r_ks_k
$$
for some $r_i\in M\cap B$ and $s_i\in M\cap C$, $1\leq i\leq k$. If this is in reduced form we are done. Otherwise, there is some term, say  $r_i$, with $r_i \in A$. Since $r_i\in (M\cap B)\cap A = M\cap A\subseteq M\cap C$, we have $s_{i-1}'=s_{i-1}r_is_i\in M\cap C$, so upon relabelling $r_j'=r_{j+1}$, $s_j'=s_{j+1}$ for $i\leq j<k$ we get
$$
g = r_1s_1\dots r_{i-1}s_{i-1}'r_i's_i'\dots r_{k-1}'s_{k-1}',
$$
a shorter alternating product of elements of $M\cap B$ and $M\cap C$. 
A similar argument applies if there is a term $s_i$ with $s_i \in A$. 
At the end of this process we arrive at a reduced form $p_1q_1\dots p_nq_n$ 
for $g$ whose terms alternatively belong to $M\cap B$ and $M\cap C$, completing the proof of the lemma. 
\end{proof}

A crucial algorithmic aspect is settled by the following observation. 
The proof is routine and so it is omitted. 

\begin{lem}\label{lem:display}
Let $G$ be a group finitely generated by $\Omega$ and suppose that $G$ has a recursively enumerable word problem. Let $w_1,\dots,w_n\in\ol{\Omega}^*$ and 
\[ 
H=\Gp\gen{w_1,\dots,w_k} \leq G.
\] 
Suppose that the membership problem for $H$ in $G$ is decidable. Then there exists an algorithm which, given any word
$w\in\ol{\Omega}^*$ for which the algorithm for testing membership in $H$ returns `yes', outputs a word 
$u(t_1,\dots,t_k)\in \ol{T}^*$, where $T=\{t_1,\dots,t_k\}$, such that
$$
w = u(w_1,\dots,w_k)
$$ 
holds in $G$. 
\end{lem}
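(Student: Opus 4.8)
The plan is to realise the algorithm as an unbounded (dovetailing) search: I would guess a candidate word $u$ over $\ol{T}$, form the substituted word $u(w_1,\dots,w_k) \in \ol{\Omega}^*$, and then verify semi-algorithmically that this represents the same element of $G$ as $w$. The two facts that make this work are that the recursive enumerability of the word problem of $G$ lets me semi-decide equality of any two given words in $G$, and that the hypothesis $w \in H$ guarantees that a correct guess exists, so that the search is bound to succeed.

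First I would record the semi-decision procedure for equality. Since $G$ has recursively enumerable word problem, the set $\{v \in \ol{\Omega}^* : v = 1 \text{ in } G\}$ is recursively enumerable. Hence, for any two words $v_1, v_2 \in \ol{\Omega}^*$, I can semi-decide whether $v_1 = v_2$ in $G$ by enumerating this set and waiting for the word $v_1 v_2^{-1}$ to appear; this happens if and only if $v_1 = v_2$ in $G$. This yields a procedure that halts with `yes' exactly when the two words represent the same element of $G$, and runs forever otherwise.

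Next I would argue that a witness exists. Because the membership algorithm returns `yes' on $w$ and membership in $H$ is decidable, we genuinely have $w \in H = \Gp\gen{w_1,\dots,w_k}$. By the definition of the subgroup generated by $w_1,\dots,w_k$, the element $w$ is therefore equal in $G$ to some product $w_{i_1}^{\eps_1}\cdots w_{i_\ell}^{\eps_\ell}$ with each $i_j \in \{1,\dots,k\}$ and $\eps_j \in \{-1,1\}$. Putting $u \equiv t_{i_1}^{\eps_1}\cdots t_{i_\ell}^{\eps_\ell} \in \ol{T}^*$ gives $w = u(w_1,\dots,w_k)$ in $G$, so at least one word over $\ol{T}$ is a valid output.

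Finally I would assemble the search. Enumerate $\ol{T}^*$ as $u_1, u_2, \dots$; for each $u_j$ form $u_j(w_1,\dots,w_k)$ by substituting $t_i \mapsto w_i$ and $t_i^{-1} \mapsto w_i^{-1}$, and launch the equality-checking procedure above for $w = u_j(w_1,\dots,w_k)$ in $G$. Dovetailing these infinitely many semi-decision procedures into a single search, I halt as soon as one returns `yes' and output the corresponding $u_j$. Since some witness occurs in the enumeration, the corresponding procedure eventually halts, so the search terminates with an output $u$ satisfying $w = u(w_1,\dots,w_k)$ in $G$. The only delicate point is the bookkeeping required to interleave infinitely many possibly non-halting verification procedures correctly, but this is the standard dovetailing construction and presents no real obstacle --- which is consistent with the lemma being stated as routine.
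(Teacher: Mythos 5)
Your proof is correct, and it supplies exactly the standard blind-search-with-dovetailing argument that the authors have in mind: the paper itself omits the proof of this lemma as ``routine''. The two ingredients you isolate --- semi-deciding equality in $G$ via the recursively enumerable word problem, and the guaranteed existence of a witness $u$ because the membership test certifies $w\in H$ --- are precisely what make the unbounded search terminate, so there is nothing to add.
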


The following key lemma identifies conditions under which  the process in Lemma \ref{lem:reduce} can be performed algorithmically. 

\begin{lem}
\label{lem:eff-reduce}
Let $G=B*_A C$ and assume that the following conditions hold:
\begin{itemize}
\item $B$ and $C$ both have recursively enumerable word problem;
\item the membership problem for $A$ in $B$ is decidable; and 
\item the membership problem for $A$ in $C$ is decidable.
\end{itemize}
Then there is an algorithm which takes as input any word $w\equiv u_1v_1\dots u_kv_k$  where $u_i\in\ol{X}^*$ and $v_i\in\ol{Y}^*$, $1\leq i\leq k$,  and returns a word $p_1q_1\dots p_nq_n$ in reduced form where $p_i\in\ol{X}^*$ and $q_i\in\ol{Y}^*$, $1\leq i\leq k$,  such that $w = p_1q_1\dots p_nq_n$ holds in $G$. 
\end{lem}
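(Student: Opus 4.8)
The plan is to give an algorithm that mimics the reduction process from Lemma~\ref{lem:reduce}, but performed effectively by making repeated use of the decidable membership problems for $A$ in $B$ and $C$. First I would observe that the input word $w \equiv u_1 v_1 \dots u_k v_k$ already alternates between $\ol{X}^*$-blocks and $\ol{Y}^*$-blocks, so it represents the element $(u_1\pi)(v_1\theta)\dots(u_k\pi)(v_k\theta)$ written as an alternating product of elements of $B$ and $C$. The only obstruction to this being a reduced form is that some intermediate factor $u_i\pi$ (with $i \neq 1$) may belong to $A$, or some $v_i\theta$ (with $i \neq k$) may belong to $A$, and the boundary conditions at $i=1$ and $i=n$ may fail.

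The core of the algorithm is to detect and eliminate such offending factors one at a time. Since the membership problem for $A$ in $B$ is decidable, I can test, for each block $u_i$, whether $u_i\pi \in A$; dually, using decidability of membership for $A$ in $C$, I can test whether $v_i\theta \in A$. Suppose a test reveals that some $u_i\pi \in A$ with $1 < i \le k$. Then, exactly as in Lemma~\ref{lem:reduce}, I can absorb this factor into the adjacent $C$-block: since $u_i\pi \in A \le C$, I replace the neighbouring $C$-blocks by forming $v_{i-1}(u_i f^{-1} g) v_i$ and delete the $B$-block, thereby shortening the alternating product by two factors. The delicate point here is that I must produce an actual \emph{word} over $\ol{Y}^*$ representing $u_i\pi$ as an element of $C$; this is precisely where Lemma~\ref{lem:display} is invoked. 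Because $B$ has recursively enumerable word problem and membership of $A$ in $B$ is decidable, Lemma~\ref{lem:display} furnishes a word $\omega \in \ol{Z}^*$ with $u_i = \omega(\alpha_1,\dots,\alpha_m)$ in $B$; applying $g$ to $\omega$ yields $\omega(\beta_1,\dots,\beta_m) \in \ol{Y}^*$, a word representing the same element of $A \le C$, which I can splice into the $C$-block. A symmetric procedure handles any offending $v_i\theta \in A$, using Lemma~\ref{lem:display} for $C$ and the map $f$ to produce a word over $\ol{X}^*$.

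Iterating this merging step strictly decreases the number of syllables each time, so the process terminates after finitely many steps; when no further test returns `yes' for an interior factor, and the boundary cases have been normalised (if $n=1$ and both remaining factors lie in $A$, collapse one of them to the empty word), the resulting alternating word $p_1 q_1 \dots p_n q_n$ satisfies the reduced-form conditions and equals $w$ in $G$ by construction. I expect the main obstacle to be the bookkeeping in the reduction step: one must be careful that after merging two $C$-blocks (or two $X$-blocks) around a deleted factor, the newly-formed block might \emph{itself} now lie in $A$, so the membership tests have to be re-run on the updated word rather than only once; the strict decrease in syllable length is what guarantees this loop halts. The other subtle point worth flagging is that Lemma~\ref{lem:display} requires the membership algorithm to have already returned `yes', which is exactly the situation we are in when we have detected an interior factor in $A$, so its hypotheses are met each time it is applied.
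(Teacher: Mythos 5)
Your proposal is correct and follows essentially the same route as the paper's proof: test each syllable for membership in $A$ using the decidable membership problems, invoke Lemma~\ref{lem:display} to produce an explicit word over the generators of $A$, rewrite it over the other factor's alphabet to merge the adjacent blocks, and terminate by the strict decrease in syllable count. Your additional remarks on re-running the tests after each merge and on normalising the boundary cases are consistent with (and slightly more explicit than) the paper's argument.
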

\begin{proof}
It follows by the assumptions that there is algorithm which decides for each of the terms $u_i$ and $v_j$ whether or not that term represents an element of $A$. 
If none of the terms represents an element of $A$ then 
$u_1v_1 \dots u_kv_k$ is a word in reduced form and the algorithm terminates and outputs this word. 
Otherwise, suppose that some $u_i$ or $v_j$ does represent an element of $A$. 
Let $u_i$ be the first term that the algorithm detects as belonging to $A$. 
Since $B$ has 
recursively enumerable word problem
and the membership problem for $A$ within $B$ is decidable,  
we can apply Lemma~\ref{lem:display}. 
This tells us that there is an algorithm which takes any such $u_i$ as input and  
returns a word $u'\in\ol{Z}^*$ such that we have  
$$
u_i = u'(\alpha_1,\dots,\alpha_m),
$$
in $B$,  where $\alpha_i = z_i f$ for $1 \leq i \leq m$. 
The algorithm then computes the word 
$v_{i-1}u'(\beta_1,\dots,\beta_m) v_i\in\ol{Y}^*$. 
Let $v_{i-1}’$ denote this word. 
Then, since $u'(\alpha_1,\dots,\alpha_m) =   u'(\beta_1,\dots,\beta_m)$ in $G$, it follows that 
$$
u_1v_1\dots u_{i-1}v_{i-1}'u_i'v_i'\dots u_{k-1}'v_{k-1}',
$$
is equal to $w$ in $G$, 
where $u_j'=u_{j+1}$ and $v_j'=v_{j+1}$ for $i\leq j<k$. 
This word has strictly fewer terms than the input word $w$. 
If $v_i$ is the first term that the algorithm detects as belonging to $A$ then a similar argument applies, working within $C$.  
In this way, in a finite number of steps the algorithm eventually terminates outputting a word in reduced form.
\end{proof}

The following straightforward consequence of Theorem~\ref{thm:effective:herbst} will be important for the proof of Theorem~\ref{thm:amal51}.  

\begin{lem}\label{lem:left-to-right}
Let $G=B*_A C$  where $A,B,C$ are finitely generated groups with finite generating sets $X$, $Y$, and $Z$ respectively, and canonical homomorphisms  $\pi: \ol{X}^* \rightarrow B$,   $\theta: \ol{Y}^* \rightarrow C$,   $\xi: \ol{Z}^* \rightarrow A$. 
Then we have the following.  
\begin{enumerate}
    \item[(i)] There is an algorithm which takes any FSA $\mathcal{P}$ over $\ol{X}$ such that $[L(\mathcal{P})] \pi \subseteq A$ as input and returns a FSA $\mathcal{P}'$ over $\ol{Z}$, and a FSA $\mathcal{P}''$ over $\ol{Y}$, such that  
    \[
    [L(\mathcal{P''})] \theta = [L(\mathcal{P'})] \xi = [L(\mathcal{P})] \pi. 
    \] 
    \item[(ii)] There is an algorithm which takes any FSA $\mathcal{Q}$ over $\ol{Y}$ such that $[L(\mathcal{Q})] \theta \subseteq A$ as input and returns a FSA $\mathcal{Q}'$ over $\ol{Z}$, and a FSA $\mathcal{Q}''$ over $\ol{X}$, such that  
    \[
    [L(\mathcal{Q''})] \pi= [L(\mathcal{Q'})] \xi = [L(\mathcal{Q})] \theta. 
    \]
\end{enumerate}
\end{lem}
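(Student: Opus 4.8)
The plan is to reduce Lemma~\ref{lem:left-to-right} directly to the effective Herbst machinery already developed in Theorem~\ref{thm:effective:herbst}. The key observation is that $A$ is a finitely generated subgroup of both $B$ and $C$, realised concretely inside each via the homomorphisms $f$ and $g$: the words $\alpha_1,\dots,\alpha_m \in \ol{X}^*$ generate $Af \leq B$, and the words $\beta_1,\dots,\beta_m \in \ol{Y}^*$ generate $Ag \leq C$. So each statement is really about transferring a rational subset of $A$ (presented via its embedding in one factor) to a rational subset of $A$ presented either abstractly over $\ol{Z}$ or via its embedding in the other factor.

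For part (i), I would proceed in two stages. First, regard $A$ as a finitely generated subgroup of $B$ with generating set $\{\alpha_1,\dots,\alpha_m\}$ (equivalently, the image of $Z$ under $f$), and observe that the canonical homomorphism $\xi:\ol{Z}^* \to A$ factors as $\xi = (f\text{-substitution})\cdot \pi$ up to the isomorphism $A \cong Af$. Since $B$ has decidable, hence recursively enumerable, word problem, I apply Theorem~\ref{thm:effective:herbst}(1) to the pair $(B, A)$ with the generating set $\{\alpha_1,\dots,\alpha_m\}$ for $A$: given the FSA $\mathcal{P}$ over $\ol{X}$ with $[L(\mathcal{P})]\pi \subseteq A$, this produces a FSA $\mathcal{P}'$ over $\ol{Z}$ with $[L(\mathcal{P}')]\xi = [L(\mathcal{P})]\pi$, as required. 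Second, to obtain $\mathcal{P}''$ over $\ol{Y}$, I apply Theorem~\ref{thm:effective:herbst}(2) to the pair $(C, A)$, now viewing $A$ as the subgroup $Ag \leq C$ generated by $\{\beta_1,\dots,\beta_m\}$: feeding the just-computed $\mathcal{P}'$ (whose language maps under $\xi$ into $A$) through part (2) yields a FSA $\mathcal{P}''$ over $\ol{Y}$ with $[L(\mathcal{P}'')]\theta = [L(\mathcal{P}')]\xi$. Chaining the two equalities gives the displayed triple equality. Part (ii) is entirely symmetric, swapping the roles of $B$ and $C$ (and of $X$ and $Y$), so I would simply remark that it follows by the same argument with the factors interchanged.

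The one technical point requiring care is verifying that the hypotheses of Theorem~\ref{thm:effective:herbst} are genuinely met at each invocation: the theorem requires the ambient group to have recursively enumerable word problem and the subgroup to be finitely generated with an \emph{explicit} finite generating set, and part (2) in particular needs no hypothesis on the input language beyond its being a FSA over $\ol{Z}$. Here $A$ is finitely generated by hypothesis (it is generated by $Z$, and $\alpha_i, \beta_i$ give explicit generators of its images), and both $B$ and $C$ are finitely generated groups, so their word problems are recursively enumerable. Thus each application is legitimate.

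\textbf{The main obstacle} I anticipate is purely bookkeeping rather than conceptual: one must be scrupulous about the three distinct incarnations of $A$ — as $A$ itself over $\ol{Z}$, as $Af \leq B$ over $\ol{X}$, and as $Ag \leq C$ over $\ol{Y}$ — and track that $[L(\mathcal{P}')]\xi$ really does land inside $A$ so that part (2) applies, and that the isomorphism $f g^{-1}$ between the two embedded copies is correctly encoded by the common index set $\{1,\dots,m\}$ of generators. Because Theorem~\ref{thm:effective:herbst} is already fully effective in both directions, no new algorithmic content is needed; the lemma is essentially a direct corollary, and the proof amounts to naming the right generating sets and composing the two algorithms.
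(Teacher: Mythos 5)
Your overall strategy is exactly the intended one: the paper gives no proof of this lemma at all, introducing it only as ``the following straightforward consequence of Theorem~\ref{thm:effective:herbst}'', and your two-stage composition --- part (1) applied to the pair $(B, Af)$ to pass from $\ol{X}$ to $\ol{Z}$, followed by part (2) applied to the pair $(C, Ag)$ to pass from $\ol{Z}$ to $\ol{Y}$, with part (ii) by symmetry --- is precisely how that consequence is meant to be extracted. Your bookkeeping of the three incarnations of $A$ and of which direction of Theorem~\ref{thm:effective:herbst} needs which hypothesis is also correct: part (2) is pure substitution of the explicit words $\alpha_i$ or $\beta_i$ and needs nothing further, while part (1) is where the word-problem hypothesis enters.

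There is, however, one genuinely false step in your verification of hypotheses: you assert that ``both $B$ and $C$ are finitely generated groups, so their word problems are recursively enumerable.'' Finite generation does not imply a recursively enumerable word problem (only recursive presentability does; there are uncountably many finitely generated groups but only countably many with r.e.\ word problem). Earlier in the same paragraph you also invoke decidability of the word problem of $B$, which is likewise not among the stated hypotheses of the lemma. To be fair, the lemma as printed omits this hypothesis too, so you have inherited a gap in the paper rather than created one; but your proof should either add the hypothesis that $B$ and $C$ have recursively enumerable word problem, or note that in the only place the lemma is used --- the proof of Theorem~\ref{thm:amal51} --- condition (i) there gives $B$ and $C$ decidable rational subset membership, hence decidable word problem, so Theorem~\ref{thm:effective:herbst}(1) applies legitimately in that context. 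With that hypothesis made explicit, your argument is complete.
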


\subsection{Proof of Theorem~\ref{thm:amal}}

A key ingredient in our proof is summarised in the following auxiliary result, which strengthens Lemma \ref{lem:reduce} under the stronger conditions of Theorem \ref{thm:amal}.

\begin{pro}\label{pro:g-in-M}
Assuming all the notation and conditions from Theorem \ref{thm:amal}, let
$$
g = b_1c_1\dots b_nc_n
$$
be an element of $G=B*_A C$ written in reduced form. Then $g\in M$ if and only if $b_i\in M\cap B$ and
$c_i\in M\cap C$ for all $1\leq i\leq n$. 
\end{pro}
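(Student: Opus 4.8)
The plan is to prove both directions of the equivalence, with the forward direction (membership in $M$ implies each syllable lies in the appropriate intersection) being the substantial one. The backward direction is immediate: if $b_i \in M \cap B$ and $c_i \in M \cap C$ for all $i$, then $g = b_1 c_1 \dots b_n c_n$ is visibly a product of elements of $M \cap B$ and $M \cap C$, hence lies in $M = \Mon\gen{(M\cap B)\cup(M\cap C)}$ by condition (ii). So the work is entirely in the forward implication.

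For the forward direction, suppose $g \in M$. By Lemma~\ref{lem:reduce} we may write $g$ in reduced form as $g = p_1 q_1 \dots p_m q_m$ where $p_j \in M \cap B$ and $q_j \in M \cap C$ for all $j$. We are also given a reduced form $g = b_1 c_1 \dots b_n c_n$. First I would address the degenerate cases where some of the reduced-form terms are trivial (the leading $b_1$ or trailing $c_n$ may equal $1$, and likewise in the $M$-expression), so that both expressions are genuine reduced forms in the sense of Lemma~\ref{lem:rf-amal}; one must handle the matching of lengths carefully here, and this bookkeeping is the first mild obstacle. Once both are honest reduced forms, Lemma~\ref{lem:rf-amal} applies: the two expressions have the same number of syllables (so $n = m$, modulo the trivial-term adjustments), and there exist elements $1 = a_0, a_1, \dots, a_{2n} = 1 \in A$ with
\[
b_i = a_{2i-2}\, p_i\, a_{2i-1}^{-1} \quad\text{and}\quad c_i = a_{2i-1}\, q_i\, a_{2i}^{-1}
\]
for all $1 \leq i \leq n$.

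The key point is now to exploit condition (i), namely $A \subseteq M$. Since each $a_j \in A \subseteq M$, and in fact $a_j \in A = B \cap C \subseteq (M\cap B)\cap(M\cap C)$, each $a_j$ lies in both $M \cap B$ and $M \cap C$. From the displayed equations, $b_i$ is a product of $a_{2i-2} \in M\cap B$, $p_i \in M \cap B$, and $a_{2i-1}^{-1} \in M\cap B$; as $M \cap B$ is a submonoid of $B$ closed under the group operations inherited here (it is the intersection of the submonoid $M$ with the subgroup $B$, and the $a_j$ are units lying in it), we conclude $b_i \in M \cap B$. Symmetrically $c_i \in M \cap C$. The main obstacle I anticipate is justifying that $M \cap B$ actually contains these products, i.e.\ that multiplying an element of $M\cap B$ on left and right by the amalgam elements $a_j$ (which lie in $A \subseteq M$) keeps us inside $M \cap B$; this needs the observation that $A \subseteq M$ forces the $a_j$ and their inverses into $M$, and that $b_i \in B$ already, so $b_i = a_{2i-2} p_i a_{2i-1}^{-1}$ is simultaneously in $B$ and in $M$, whence in $M \cap B$. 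With this the forward direction is complete, and combining both directions establishes the proposition.
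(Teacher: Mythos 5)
Your proposal is correct and follows essentially the same route as the paper: reduce $g$ to a product $p_1q_1\dots p_mq_m$ with factors in $M\cap B$ and $M\cap C$ via Lemma~\ref{lem:reduce}, compare the two reduced forms with Lemma~\ref{lem:rf-amal}, and use $A\subseteq M$ (together with $A$ being a subgroup, so the $a_j^{\pm1}$ all lie in $M\cap B$ and $M\cap C$) to absorb the conjugating elements, giving $b_i\in A(M\cap B)A\subseteq M\cap B$ and likewise for $c_i$. The degenerate-term bookkeeping you flag is harmless and is handled implicitly by the statement of Lemma~\ref{lem:rf-amal}, exactly as in the paper.
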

\begin{proof}
($\Rightarrow$) 
By Lemma \ref{lem:reduce}, we can write 
\[
g = p_1q_1\dots p_mq_m
\]
in reduced form such that $p_i \in M \cap B$ and $q_i \in M \cap C$ for $1 \leq i \leq m$.
Now, by Lemma~\ref{lem:rf-amal} we have $m=n$ and
$$
b_i=a_{2i-2}^{-1}p_ia_{2i-1}\quad \mbox{and}\quad c_i=a_{2i-1}^{-1}q_ia_{2i}
$$
for some $a_j\in A$, $0\leq j\leq 2n$, $1\leq i\leq n$. But this implies $b_i\in A(M\cap B)A\subseteq M\cap B$ 
and  $c_i\in A(M\cap C)A\subseteq M\cap C$, as $A\subseteq (M\cap B)\cap (M\cap C)$ by condition (i) in Theorem~\ref{thm:amal}.

($\Leftarrow$) This is trivial, as $M\cap B$ and $M\cap C$ are both subsets of $M$.
\end{proof}

\begin{proof}[Proof of Theorem \ref{thm:amal}]
To prove the theorem we must show that  there is an algorithm which takes any word $w$ from $(\ol{X} \cup \ol{Y})^*$ as input and decides whether or not the word represents an element of the submonoid $M$. The hypotheses of   Lemma~\ref{lem:eff-reduce} are satisfied since by the assumptions $B$ and $C$ both have decidable word problem, and the membership problem for $A$ in each of $B$ and $C$ is decidable.  Applying this lemma  we conclude that  there is an algorithm that given such a word $w$ returns a word $p_1q_1\dots p_nq_n$ in reduced form where $p_i\in\ol{X}^*$ and $q_i\in\ol{Y}^*$, $1\leq i\leq k$, such that $w = p_1q_1\dots p_nq_n$ holds in $G$.   It follows from  Proposition \ref{pro:g-in-M} that $w \in M$ if and only if $p_i\in M\cap B$ and $q_i\in M\cap C$ for all $1\leq i\leq n$, which can be decided by  conditions (iii) and (iv). 
\end{proof}

With applications in mind, it is worthwhile to record a consequence of Theorem \ref{thm:amal} for free products of groups, arising from the case when the amalgamated subgroup $A$ is trivial.

\begin{cor}\label{cor:free-pr}
Let $G=B*C$, where $B,C$ are finitely generated groups such that both $B,C$ have decidable word problems. Let $M$ be a submonoid of $G$ such that the following conditions hold:
\begin{itemize}
\item[(i)] both $M\cap B$ and $M\cap C$ are finitely generated and 
            $$M=\Mon\gen{(M\cap B)\cup(M\cap C)};$$
\item[(ii)] the membership problem for $M\cap B$ in $B$ is decidable;
\item[(iii)] the membership problem for $M\cap C$ in $C$ is decidable.
\end{itemize}
Then the membership problem for $M$ in $G$ is decidable.
\end{cor}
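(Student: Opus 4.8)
The plan is to deduce this corollary as a direct specialisation of Theorem~\ref{thm:amal}, taking the amalgamated subgroup $A$ to be trivial. A free product $B*C$ is exactly the amalgamated free product $B*_A C$ in the case $A = \{1\}$, so the overall structural setting matches that of Theorem~\ref{thm:amal} once this identification is made.

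First I would check each hypothesis of Theorem~\ref{thm:amal} with $A = \{1\}$. The trivial group is finitely generated, so the finite-generation requirement on $A$ holds. The membership problem for $A = \{1\}$ in $B$ asks, for a word over $\ol{X}$, whether it represents the identity of $B$; this is precisely the word problem for $B$, which is decidable by hypothesis, and the same applies to $C$. Condition~(i) of Theorem~\ref{thm:amal}, that $A \subseteq M$, is automatic here, since any submonoid $M$ contains the identity element $1$, which is the only element of $A$.

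Finally I would observe that conditions~(ii), (iii) and (iv) of Theorem~\ref{thm:amal} coincide word-for-word with conditions~(i), (ii) and (iii) of the present corollary. Having verified all the hypotheses, Theorem~\ref{thm:amal} immediately yields that the membership problem for $M$ in $G = B*C$ is decidable, which is the desired conclusion. There is no substantive obstacle in this argument; the only point calling for a moment's care is the identification of the membership problem for the trivial subgroup with the word problem of the ambient factor, which is routine.
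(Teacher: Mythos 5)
Your proposal is correct and is exactly the paper's intended argument: the corollary is recorded there as the specialisation of Theorem~\ref{thm:amal} to the case of a trivial amalgamated subgroup, and your verification of the hypotheses (in particular that membership of $A=\{1\}$ in $B$ or $C$ is just the word problem, and that $A\subseteq M$ is automatic) is precisely what is needed.
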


\subsection{Proof of Theorem~\ref{thm:amal51}}

The following result which gives necessary and sufficient conditions for an element in reduced form to belong to $M$, will be essential for the proof of Theorem~\ref{thm:amal51}.

\begin{pro}\label{pro:amal55}
Let $G=B*_A C$ and let $M$ be a submonoid of $G$ such that
            $$M=\Mon\gen{(M\cap B)\cup(M\cap C)}.$$
Let
$$
g = b_1c_1\dots b_nc_n
$$
be an element of $G=B*_A C$ written in reduced form. 
For $i \in \{0, \ldots, 2n-1 \}$ define 
$Q_i = Q_i(b_1,c_1, \ldots, b_n, c_n)$ in the following way: 
\begin{align*}
Q_0 &= \{1\},\\
Q_{2k-1} &= (M\cap B)^{-1}Q_{2k-2}b_k \cap A, \; \mbox{for $1 \leq k \leq n$,}\\
Q_{2k} &= (M\cap C)^{-1}Q_{2k-1}c_k \cap A, \; \mbox{for $1 \leq k \leq n-1$.}
\end{align*}
Then $g\in M$ if and only if 
$$
c_n\in Q_{2n-1}^{-1}(M\cap C).
$$
\end{pro}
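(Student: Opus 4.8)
The plan is to characterise membership of a reduced-form element $g = b_1 c_1 \dots b_n c_n$ in $M$ by peeling off the factors from the left, one at a time, tracking the $A$-valued ``correction'' that accumulates as we absorb elements of $M \cap B$ and $M \cap C$. The sets $Q_i$ are designed precisely to record all possible such corrections. Concretely, I would prove by induction on $i$ the following invariant: for $1 \leq k \leq n$, an element $a \in A$ lies in $Q_{2k-1}$ if and only if there exist $p_1 \in M \cap B$, $q_1 \in M \cap C, \dots, p_k \in M \cap B$ such that
\[
p_1 q_1 \dots p_{k-1} q_{k-1} p_k \, a = b_1 c_1 \dots b_{k-1} c_{k-1} b_k
\]
holds in $G$ (and the analogous statement at even index $2k$ with a trailing $q_k$). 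The base case $Q_0 = \{1\}$ is immediate, and the recursive definitions $Q_{2k-1} = (M\cap B)^{-1} Q_{2k-2} b_k \cap A$ and $Q_{2k} = (M\cap C)^{-1} Q_{2k-1} c_k \cap A$ are exactly the algebraic manipulation needed to pass from the length-$(k-1)$ statement to the length-$k$ statement: solving $p_k a = (\text{previous correction})^{-1} b_k$ for the new correction $a \in A$.

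The heart of the matter is to show this invariant is correct, and here I would lean on Lemma~\ref{lem:reduce} and Lemma~\ref{lem:rf-amal}. By Lemma~\ref{lem:reduce}, $g \in M$ if and only if $g$ can be written in reduced form $p_1 q_1 \dots p_m q_m$ with $p_i \in M \cap B$ and $q_i \in M \cap C$. By Lemma~\ref{lem:rf-amal}, comparing this with the given reduced form $b_1 c_1 \dots b_n c_n$ forces $m = n$ and produces amalgam elements $a_0 = 1, a_1, \dots, a_{2n} = 1 \in A$ with $b_i = a_{2i-2}^{-1} p_i a_{2i-1}$ and $c_i = a_{2i-1}^{-1} q_i a_{2i}$. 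The sets $Q_i$ are, I claim, exactly the set of all possible values of $a_i$ ranging over all such factorisations; this is what the induction establishes. The final condition $c_n \in Q_{2n-1}^{-1}(M \cap C)$ then captures the last constraint: we need $a_{2n-1} \in Q_{2n-1}$ together with $q_n = a_{2n-1} c_n a_{2n}^{-1} = a_{2n-1} c_n \in M \cap C$ (using $a_{2n} = 1$), which rearranges to $c_n \in a_{2n-1}^{-1}(M \cap C) \subseteq Q_{2n-1}^{-1}(M \cap C)$, and conversely any witness for the membership $c_n \in Q_{2n-1}^{-1}(M \cap C)$ assembles into a full $M$-factorisation of $g$.

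For the two directions I would argue as follows. For the forward direction, take a reduced-form factorisation of $g$ through $M \cap B$ and $M \cap C$, extract the $a_i$ via Lemma~\ref{lem:rf-amal}, and verify by induction that each $a_{2k-1} \in Q_{2k-1}$ and $a_{2k} \in Q_{2k}$, concluding with $c_n = a_{2n-1}^{-1} q_n \in Q_{2n-1}^{-1}(M \cap C)$. For the converse, given $c_n = a_{2n-1}^{-1} q_n$ with $a_{2n-1} \in Q_{2n-1}$ and $q_n \in M \cap C$, unwind the definition of $Q_{2n-1}$ backwards to recover a chain of elements $p_k \in M \cap B$, $q_k \in M \cap C$ and intermediate $a_j \in A$ realising the invariant, and then reassemble $g = p_1 q_1 \dots p_n q_n$, which exhibits $g \in M$ since each factor lies in $M \cap B$ or $M \cap C \subseteq M$.

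I expect the main obstacle to be bookkeeping rather than conceptual: stating the induction hypothesis at both even and odd indices precisely enough that the recursion $Q_i = (\,\cdot\,)^{-1} Q_{i-1}(\,\cdot\,) \cap A$ matches the algebra coming out of Lemma~\ref{lem:rf-amal}, and handling the boundary conventions $a_0 = a_{2n} = 1$ and the reduced-form requirements (ensuring the intermediate products remain genuinely reduced, so that Lemma~\ref{lem:rf-amal} applies at each stage) without sign or indexing slips. The role of the intersection with $A$ is crucial and must be tracked carefully, since it is exactly the condition that the accumulated correction be an amalgam element that can be legitimately pushed across from the $B$-side to the $C$-side at each syllable boundary.
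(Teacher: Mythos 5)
Your proposal is correct and follows essentially the same route as the paper's proof: the forward direction extracts the amalgam elements $a_i$ from a factorisation supplied by Lemma~\ref{lem:reduce} via Lemma~\ref{lem:rf-amal} and checks inductively that $a_i \in Q_i$, while the converse unwinds the recursive definition of $Q_{2n-1}$ to reassemble a product of elements of $M\cap B$ and $M\cap C$ equal to $g$. The only cosmetic difference is that you package the argument as an explicit ``if and only if'' invariant characterising each $Q_i$, whereas the paper proves just the two containments it needs; also note that Lemma~\ref{lem:rf-amal} is invoked only once, on the two full reduced forms, so no stage-by-stage reducedness check is required.
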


\begin{proof}
($\Rightarrow$) Assume that $g\in M$. 
Then by assumption the hypotheses of 
Lemma~\ref{lem:reduce}
are satisfied and thus
the element
$g$ can be written in reduced form
$$
g=p_1q_1\dots p_nq_n
$$
such that $p_i\in M\cap B$ and $q_i\in M\cap C$, $1\leq i\leq n$. 
Now, by Lemma~\ref{lem:rf-amal}, we must have
\begin{align*}
b_1 &= p_1a_1 & c_1 &= a_1^{-1}q_1a_2,\\
b_2 &= a_2^{-1}p_2a_3, & c_2 &= a_3^{-1}q_2a_4,\\
&\vdots & & \vdots \\
b_n &= a_{2n-2}^{-1}p_na_{2n-1}, & c_n &= a_{2n-1}^{-1}q_n,
\end{align*}
for some $a_1,\dots,a_{2n-1}\in A$. 
Solving alternatively for $a$'s with odd indices from the first and with even ones from the
second column of equations, we obtain
\begin{align*}
a_1 &= p_1^{-1}b_1 \in (M\cap B)^{-1}b_1 \cap A = Q_1,\\
a_2 &= q_1^{-1}a_1c_1 \in (M\cap C)^{-1}Q_1c_1 \cap A = Q_2,\\
a_3 &= p_2^{-1}a_2b_2 \in (M\cap B)^{-1}Q_2b_2 \cap A = Q_3,\\
&\vdots \\
a_{2n-1} &= p_n^{-1}a_{2n-2}b_n \in (M\cap B)^{-1}Q_{2n-2}b_n \cap A = Q_{2n-1}.
\end{align*}
Therefore, from the last equation of the second column we conclude that
$$
c_n = a_{2n-1}^{-1}q_n \in Q_{2n-1}^{-1}(M\cap C),
$$
as required.

($\Leftarrow$) Assume that $g=b_1c_1\dots b_nc_n$ is such that $c_n\in Q_{2n-1}^{-1}(M\cap C)$. Then $c_n=
\xi_{2n-1}^{-1}\gamma_n$ for some $\xi_{2n-1}\in Q_{2n-1}$ and $\gamma_n\in M\cap C$. The fact that $\xi_{2n-1}\in Q_{2n-1}=
(M\cap B)^{-1}Q_{2n-2}b_n \cap A$ implies that we can write $\xi_{2n-1}=\beta_n^{-1}\xi_{2n-2}b_n$ for some $\beta_n\in M\cap B$
and $\xi_{2n-2}\in Q_{2n-2}$. 
Continuing this process 
yields elements $\beta_{n-1},\dots,\beta_1\in M\cap B$,
$\gamma_{n-1},\dots,\gamma_1\in M\cap C$ and $\xi_i\in Q_i$, $0\leq i\leq 2n-1$, such that 
$$
\xi_{2j-1}=\beta_j^{-1}\xi_{2j-2}b_j
$$
for $1\leq j\leq n$ (where $\xi_0=1$), and 
$$
\xi_{2j}=\gamma_j^{-1}\xi_{2j-1}c_j
$$
for $1\leq j\leq n-1$. 
Solving each of these equations for 
$b_j$ and $c_j$, 
substituting 
into the reduced form of $g$, and cancelling the $\xi_k$'s gives 
$$
b_1c_1\dots b_nc_n = \beta_1\gamma_1\dots \beta_n\gamma_n,
$$
which belongs to $M$ since $\beta_j, \gamma_j \in M$ for all $1 \leq j \leq n$. %%
\end{proof}

\begin{lem}\label{lem:Qrat}
Under the assumptions of Theorem~\ref{thm:amal51},
in the statement of Proposition~\ref{pro:amal55}  
every set $Q_i$ ($0 \leq i \leq 2n-1$) 
is a rational subset of $A$. 
\end{lem}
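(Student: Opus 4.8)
The plan is to prove the statement by induction on the index $i$, establishing at each stage that $Q_i$ is a rational subset of $A$ \emph{itself} rather than merely of $B$ or of $C$. This stronger formulation is what makes the induction close up: the odd-indexed sets $Q_{2k-1}$ are built inside $B$ while the even-indexed sets $Q_{2k}$ are built inside $C$, so the output of a computation in one factor must be fed into a computation carried out in the other. The base case is immediate, since $Q_0 = \{1\}$ is a finite, hence rational, subset of $A$.

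For the inductive step at an odd index, suppose $Q_{2k-2} \in \Rat(A)$. First I would record three standard closure facts for rational subsets of a group: a finitely generated submonoid is rational (as already noted in the preliminaries); the class $\Rat$ is closed under product; and in a group $\Rat$ is closed under taking inverses (reverse the accepting automaton and invert each letter). Since $M \cap B$ is finitely generated by hypothesis, $(M\cap B)^{-1} \in \Rat(B)$; the singleton $\{b_k\}$ is rational in $B$; and $Q_{2k-2} \in \Rat(A) \subseteq \Rat(B)$ because $A \leq B$. Hence the product $(M\cap B)^{-1}Q_{2k-2}b_k$ lies in $\Rat(B)$. Intersecting with $A$ and invoking condition (ii) of Theorem~\ref{thm:amal51}, namely that $A \leq B$ is \EC{}, then gives $Q_{2k-1} = (M\cap B)^{-1}Q_{2k-2}b_k \cap A \in \Rat(B)$.

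At this point $Q_{2k-1}$ is a subset of $A$ known only to be rational in the ambient group $B$, and the crucial move is to upgrade this to rationality in $A$. This is exactly Herbst's theorem, the equivalence $Q \in \Rat(G) \Leftrightarrow Q \in \Rat(U)$ for $Q \subseteq U \leq G$ (see \cite{He} and Theorem~\ref{thm:effective:herbst}): applied to $A \leq B$ it yields $Q_{2k-1} \in \Rat(A)$, completing the odd step. The even step is entirely symmetric: assuming $Q_{2k-1} \in \Rat(A) \subseteq \Rat(C)$, the same reasoning inside $C$, now using that $M \cap C$ is finitely generated, condition (iii) that $A \leq C$ is \EC{}, and Herbst's theorem for $A \leq C$, shows $Q_{2k} \in \Rat(A)$.

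The main obstacle, and the only genuinely nontrivial point, is precisely this passage between the two factors: intersecting with $A$ inside $B$ (respectively $C$) delivers a set rational only in $B$ (respectively $C$), whereas the recursion demands a set rational in $A$ so that it may be regarded as a rational subset of the other factor at the next stage. Herbst's theorem is what licenses this transfer. I would also note that the hypotheses of Theorem~\ref{thm:effective:herbst} are satisfied here, since decidability of the rational subset membership problem for $B$ and $C$ (condition (i)) in particular gives them recursively enumerable---indeed decidable---word problems. Only plain rationality is needed for the present lemma, but the very same induction can be carried out effectively, using the effective closure for rational intersections together with the effective version of Herbst's theorem, which is what the proof of Theorem~\ref{thm:amal51} will go on to exploit.
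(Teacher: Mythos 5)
Your proof is correct and takes essentially the same route as the paper's (much terser) argument, which simply notes that $(M\cap B)^{-1}$ and $(M\cap C)^{-1}$ are rational and says the lemma follows from conditions (ii) and (iii) and the definition of the $Q_i$. You have merely made explicit the induction and the appeal to Herbst's theorem (Theorem~\ref{thm:effective:herbst}) needed to upgrade rationality in the ambient factor $B$ or $C$ to rationality in $A$ before passing to the other factor --- a step the paper leaves implicit here but uses explicitly in the analogous Lemma~\ref{lem:Qrat2}.
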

\begin{proof}
By assumption both $M \cap B$ and $M \cap C$ are finitely generated submonoids of 
$B$ and $C$, respectively. 
Hence $(M \cap B)^{-1}$ is a rational subset of $B$, and 
$(M \cap C)^{-1}$ is a rational subset of $C$. 
The lemma follows from this combined with conditions (ii) and (iii) in the statement of Theorem~\ref{thm:amal51} and the definition of $Q_i$. 
\end{proof}

It is very important to note that the sequence of rational subsets $Q_i$ given in   Proposition~\ref{pro:amal55} depends on the reduced form $b_1 c_1 \ldots b_n c_n$.  

\begin{proof}[Proof of Theorem \ref{thm:amal51}]
Similarly to the proof of Theorem \ref{thm:amal}, to prove the theorem we must show that 
there is an algorithm which takes any word $w$ from $(\ol{X} \cup \ol{Y})^*$ as input and decides whether or not the word represents an element of the submonoid $M$. 
By assumption (i) it follows that the groups $B$ and $C$ both have decidable subgroup membership problem, and in particular both have decidable word problem. 
Condition (i) also implies that the membership problem for $A$ within $B$ is decidable, and for $A$ within $C$ is decidable. 
Hence, the hypotheses of Lemma~\ref{lem:eff-reduce} are satisfied. 
Applying this lemma  we conclude that 
there is an algorithm that given any such word $w$ returns a word  $p_1q_1\dots p_nq_n$ in reduced form where $p_i\in\ol{X}^*$ and $q_i\in\ol{Y}^*$, $1\leq i\leq k$,  such that $w = p_1q_1\dots p_nq_n$ holds in $G$.  

Set $b_i = p_i \pi$ and $c_i = q_i \theta$ for $1 \leq i \leq n$, and let $g = b_1c_1 \ldots b_nc_n$ noting that this is a reduced form for the element $g$. 
For each $i \in \{0, \ldots, 2n-1 \}$ let $Q_i = Q_i(b_1,c_1,\ldots,b_n,c_n)$ be defined as in the statement of Proposition~\ref{pro:amal55}.
Then by Lemma~\ref{lem:Qrat} each of these sets $Q_i$ is a rational subset of $A$, and therefore also a rational subset of both $B$ and $C$.  

\begin{claim}
There exists an algorithm which for each $i \in \{0, \ldots, 2n-1 \}$ computes 
\begin{itemize}
    \item a finite state automaton $\mathcal{A}_i$ over $\ol{Z}$ with 
    $[L(\mathcal{A}_i)]\xi = Q_i$, 
    \item a finite state automaton $\mathcal{B}_i$ over $\ol{X}$ with 
    $[L(\mathcal{B}_i)]\pi = Q_i$, and
  \item a finite state automaton $\mathcal{C}_i$ over $\ol{Y}$ with 
    $[L(\mathcal{C}_i)]\theta = Q_i$.        
\end{itemize}
\end{claim}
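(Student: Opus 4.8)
The plan is to establish the Claim by induction on $i$, constructing all three automata $\mathcal{A}_i$, $\mathcal{B}_i$, $\mathcal{C}_i$ simultaneously at each stage. For the base case $i=0$ we have $Q_0=\{1\}$, and it suffices to take, over each of $\ol{Z}$, $\ol{X}$, $\ol{Y}$, the trivial automaton accepting the empty word (whose image under $\xi$, $\pi$, $\theta$ respectively is $\{1\}$); this is clearly computable. For the inductive step I would split according to the parity of $i$, exploiting the fact, guaranteed by Lemma~\ref{lem:Qrat}, that each $Q_i$ is a rational subset of $A$ and hence of $B$ and $C$.

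For the odd case $i=2k-1$, recall that $Q_{2k-1}=(M\cap B)^{-1}Q_{2k-2}b_k\cap A$, so the natural place to work is inside $B$. By the inductive hypothesis I have an automaton $\mathcal{B}_{2k-2}$ over $\ol{X}$ with $[L(\mathcal{B}_{2k-2})]\pi=Q_{2k-2}$. Since $M\cap B$ is finitely generated, $(M\cap B)^{-1}$ is a rational subset of $B$ and an FSA over $\ol{X}$ for it can be written down effectively; the element $b_k$ is represented by the explicit word $p_k\in\ol{X}^*$. Concatenating the automaton for $(M\cap B)^{-1}$, then $\mathcal{B}_{2k-2}$, then a single path reading $p_k$ yields, by the routine effectiveness of products of rational subsets, an FSA over $\ol{X}$ whose image in $B$ is exactly $(M\cap B)^{-1}Q_{2k-2}b_k$. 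I then apply hypothesis (ii) of Theorem~\ref{thm:amal51}, that $A\le B$ is \EC, to this automaton to compute $\mathcal{B}_{2k-1}$ over $\ol{X}$ with $[L(\mathcal{B}_{2k-1})]\pi=(M\cap B)^{-1}Q_{2k-2}b_k\cap A=Q_{2k-1}$. Because $Q_{2k-1}\subseteq A$, I can feed $\mathcal{B}_{2k-1}$ into Lemma~\ref{lem:left-to-right}(i), which returns the remaining two automata $\mathcal{A}_{2k-1}$ over $\ol{Z}$ and $\mathcal{C}_{2k-1}$ over $\ol{Y}$ satisfying $[L(\mathcal{A}_{2k-1})]\xi=[L(\mathcal{C}_{2k-1})]\theta=Q_{2k-1}$. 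The even case $i=2k$ is handled by the symmetric argument carried out inside $C$: starting from $\mathcal{C}_{2k-1}$, I build an automaton over $\ol{Y}$ for $(M\cap C)^{-1}Q_{2k-1}c_k$ (with $c_k$ represented by $q_k\in\ol{Y}^*$), intersect with $A$ using hypothesis (iii) to obtain $\mathcal{C}_{2k}$, and then transfer across the three alphabets using Lemma~\ref{lem:left-to-right}(ii).

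I expect the conceptual content of the proof to lie entirely in the two operations that the recursive definition of $Q_i$ demands but that are not automatic for rational subsets of a general group, namely intersecting a rational subset with the subgroup $A$ and re-expressing the result over the generators of $A$ (and of the other factor). These are precisely the steps that the \EC\ hypotheses and the effective Herbst machinery of Theorem~\ref{thm:effective:herbst}, repackaged in Lemma~\ref{lem:left-to-right}, were introduced to make algorithmic; everything else (products and inverses of finitely generated rational subsets, translating a concrete word into a single-path automaton) is standard and routine. The main obstacle to watch for is verifying at each application of Lemma~\ref{lem:left-to-right} that the input automaton indeed has image contained in $A$, which is ensured by taking the intersection with $A$ before transferring alphabets, so that the induction is self-consistent.
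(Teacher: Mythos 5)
Your proposal is correct and follows essentially the same route as the paper: base case with trivial automata, then an induction split by parity in which the product automaton for $(M\cap B)^{-1}Q_{i-1}b_k$ (resp.\ the analogue in $C$) is built effectively, intersected with $A$ via the \EC\ hypotheses, and then transferred to the other two alphabets via Lemma~\ref{lem:left-to-right}. No gaps to report.
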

\begin{proof}[Proof of claim.] 
The algorithm iteratively constructs the triples $(\mathcal{A}_i, \mathcal{B}_i, \mathcal{C}_i)$ in the following way. When $i=0$ we have $Q_i = \{ 1 \}$ and it is clear that an appropriate triple $(\mathcal{A}_0, \mathcal{B}_0, \mathcal{C}_0)$ 
can be computed e.g. by taking automata that accept only the empty word in each case.  
Now consider a typical stage $i$ with $i >0$. There are two cases depending on the parity of $i$. 

First suppose that $i$ is odd, and write $i=2k-1$. Then by definition $Q_i=(M\cap B)^{-1}Q_{i-1}b_k \cap A$.  Since $M \cap B$ is assumed to be finitely generated, there is a fixed FSA (depending only on $M$) over $\ol{X}$, which we denote by $\mathcal{B}$, satisfying $[L(\mathcal{B})]\pi = (M \cap B)^{-1}$.  Using $\mathcal{B}$ and $\mathcal{B}_{i-1}$ the algorithm then produces, in the obvious way, a FSA $\mathcal{B}^{(i)}$ over $\ol{X}$   such that  $[L(\mathcal{B}^{(i)})] \pi = (M\cap B)^{-1}Q_{i-1}b_k$.  The algorithm given by assumption (ii), in the statement of the theorem,  is then applied to the automaton $\mathcal{B}^{(i)}$ which yields an automaton  $\mathcal{B}_i$ over $\ol{X}$  satisfying 
\[
[L(\mathcal{B}_i)]\pi =  (M\cap B)^{-1}Q_{i-1}b_k \cap A = Q_i.
\]
The algorithm then calls as a subroutine the algorithm given in Lemma~\ref{lem:left-to-right} to compute automata $\mathcal{A}_i$ and $\mathcal{C}_i$  the properties given in the statement of the claim. 

If $i$ is even the procedure is analogous but with the roles of $B$ and $C$ interchanged.  
\end{proof}

To complete the proof, by Proposition~\ref{pro:amal55}, we have $g \in M$ if and only if   $c_n \in Q_{2n-1}^{-1}(M\cap C)$.  Using the automata $\mathcal{C}_{2n-1}$ and $\mathcal{C}$, the algorithm produces, in the obvious way,  a FSA $\mathcal{C}(w)$ over $\ol{Y}$ such that $[L(\mathcal{C}(w))]\theta = Q_{2n-1}^{-1}(M\cap C)$. 

Therefore, in summary we have shown that there is an algorithm which given any word $w \in (\ol{X} \cup \ol{Y})^*$ computes 
a word $p_1 q_1 \ldots p_n q_n$ in reduced form, equal to $w$ in $G$, and also computes 
a FSA $\mathcal{C}(w)$ over $\ol{Y}$ such that $w$ represents an element of $M$ if and only if $q_n \theta \in [L(\mathcal{C}(w))]\theta$. This is decidable by condition (i) of the theorem.  
\end{proof}

\begin{cor}
Let $X$ and $Y$ be finite alphabets, and let $G=FG(X) *_A FG(Y)$ such that $A$ is finitely generated. let $M$ be a submonoid
of $G$ such that both $M\cap FG(X)$ and $M\cap FG(Y)$ are finitely generated and 
$$M=\Mon\gen{(M\cap FG(X))\cup(M\cap FG(Y))}.$$
Then the membership problem of $M$ within $G$ is decidable.
\end{cor}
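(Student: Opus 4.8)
The plan is to deduce this corollary directly from Theorem~\ref{thm:amal51} by taking $B=FG(X)$ and $C=FG(Y)$. The hypotheses of that theorem concerning the submonoid $M$ --- namely that $M\cap B$ and $M\cap C$ are finitely generated and that $M=\Mon\gen{(M\cap B)\cup(M\cap C)}$ --- are precisely what we are handed in the statement of the corollary. Likewise $B$ and $C$ are finitely generated, being free groups on the finite alphabets $X$ and $Y$. Thus the entire task reduces to verifying the three numbered conditions (i)--(iii) of Theorem~\ref{thm:amal51} for the two free factors, after which the conclusion is immediate.

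Condition (i) of Theorem~\ref{thm:amal51} is handed to us at once by Corollary~\ref{cor:ben}, which asserts that a free group of finite rank has decidable rational subset membership problem; applied to $FG(X)$ and $FG(Y)$ this gives (i). The substantive part is therefore conditions (ii) and (iii), which require that the finitely generated subgroup $A$ be \emph{\EC} inside each free factor. Since the two cases are entirely symmetric, I would verify only that $A\leq FG(X)$ is \EC, and then repeat the argument verbatim with $FG(Y)$ replacing $FG(X)$ to obtain (iii).

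To establish this, I would proceed in two steps. First, observe that $A$ is itself a rational subset of $FG(X)$: choosing a finite generating set $\{a_1,\dots,a_k\}$ for $A$ with each $a_j$ represented by a word in $\ol{X}^*$, the subgroup $A$ is the image under the canonical map $\pi:\ol{X}^*\to FG(X)$ of the regular language $\{a_1^{\pm1},\dots,a_k^{\pm1}\}^*$, for which an FSA $\mathcal{A}_0$ is trivially computable. Second, I would invoke the effectiveness of the closure under intersection that underlies Corollary~\ref{cor:ben}: given any FSA $\mathcal{A}$ over $\ol{X}$ representing a rational subset $R=L(\mathcal{A})\pi$, one computes via the Benois construction (Theorem~\ref{thm:ben}) FSA recognising the regular languages $\red(L(\mathcal{A}))$ and $\red(L(\mathcal{A}_0))$ of reduced words representing $R$ and $A$. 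Because reduced words are unique normal forms for elements of $FG(X)$, the ordinary regular-language intersection $\red(L(\mathcal{A}))\cap\red(L(\mathcal{A}_0))$ is an effectively computable regular language whose $\pi$-image is exactly $R\cap A$. This yields the required FSA $\mathcal{A}_A$ with $L(\mathcal{A}_A)\pi=(L(\mathcal{A})\pi)\cap A$, confirming that $A\leq FG(X)$ is \EC.

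With (i), (ii) and (iii) all verified, Theorem~\ref{thm:amal51} applies and delivers decidability of the membership problem for $M$ in $G$. The only genuinely delicate point is the second step above: one must be sure that the closure of rational subsets of a free group under intersection, stated in Corollary~\ref{cor:ben}, can actually be carried out by an algorithm that outputs an explicit FSA for the intersection, rather than merely asserting that such a subset is rational. This in turn rests on the effectiveness of the Benois normalisation $L\mapsto\red(L)$, so that is where I would concentrate the verification; everything else is a routine assembly of finite automata.
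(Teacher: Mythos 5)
Your proposal is correct and follows exactly the route the paper intends: the corollary is stated without proof immediately after Theorem~\ref{thm:amal51}, and (as the proof of the analogous Corollary~\ref{thm:hnn-free} for HNN extensions makes explicit) the intended argument is precisely to verify condition (i) via Benois' Theorem and Corollary~\ref{cor:ben}, and conditions (ii)--(iii) via the effective closure of rational subsets of free groups under intersection. Your explicit verification of that effectivity, by intersecting the regular languages of reduced representatives, is the standard argument the paper delegates to the literature, so there is nothing further to add.
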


\section{Applications of amalgamated free product results to the prefix membership problem}
\label{sec:appl1}

In this section we present several applications of the general results from the previous section to the prefix membership problem for one-relator groups and the word problem for one-relator inverse monoids.   

We fix some terminology that will be in place throughout the section. Let $v \in \ol{X}^*$ and let $x \in X$. 
We say that the letter $x$ \emph{appears} in the word $v$ if either $v \equiv v_1 x v_2$ or $v \equiv v_1 x^{-1} v_2$ for some words $v_1, v_2 \in \ol{X}^*$. 
Given two words $w_1, w_2 \in \ol{X}^*$ we say that $w_1$ and $w_2$ have no letters in common if there is no $x \in X$ which appears both in $w_1$ and in $w_2$.  
Furthermore, let $z \in \ol{X}^*$ and let $x \in \ol{X}$. We say that $z$ \emph{contains} $x$ if $z \equiv z_1 x z_2$ for some $z_1, z_2 \in \ol{X}^*$.

\subsection{Unique marker letter theorem}

\begin{thm}\label{thm:marker}
Let $G=\Gp\pre{X}{w=1}$ and let $u = u(y_1, \ldots, y_k) \in\ol{Y}^*$, with $Y=\{y_1,\dots,y_k\}$, be such that the decomposition $w\equiv u(w_1,\dots,w_k)$ determines a conservative factorisation of $w$, where $w_1, \ldots, w_k \in \overline{X}^*$. 
Suppose that for all $i\in\{1,\dots,k\}$ there is a letter $x_i\in X$ that appears exactly once in $w_i$ and does not appear in any $w_j$ for $j\neq i$. Then the group $G=\Gp\pre{X}{w=1}$ has decidable prefix membership problem. 

Consequently, if  the above conditions are satisfied, and the one-relator inverse monoid  $\Inv\pre{X}{w=1}$  is $E$-unitary (in particular, if $w$ is cyclically reduced) then $\Inv\pre{X}{w=1}$  has decidable word problem. 
\end{thm}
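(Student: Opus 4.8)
The plan is to realise $G = \Gp\pre{X}{w=1}$ as an amalgamated free product and then apply Theorem~\ref{thm:amal} to the prefix monoid $P_w$. Since the factorisation $w \equiv u(w_1, \ldots, w_k)$ is conservative, we know $P_w = P(w_1, \ldots, w_k)$, so it suffices to decide membership for the submonoid generated by $\bigcup_{i=1}^k \pref(w_i)$. The marker-letter hypothesis is what gives the decomposition: each $w_i$ carries a private letter $x_i \in X$ that appears in $w_i$ and in no other $w_j$. I would use these markers to split the generators $X$ into blocks and exhibit $G$ as a free product, amalgamated over the subgroup generated by the shared (marker-free) letters, of the one-relator group on $u$ together with free factors attached along each block of letters peculiar to a single $w_i$.

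Concretely, I would first set up the amalgam. Let $Y = \{y_1, \ldots, y_k\}$ and think of $u = u(y_1, \ldots, y_k)$ as the relator of an auxiliary one-relator group $H = \Gp\pre{Y}{u = 1}$. The substitution $y_i \mapsto w_i$ induces a homomorphism, and because each $w_i$ contains its own unique letter $x_i$ occurring exactly once, the $w_i$ behave like free generators relative to the rest of $X$; this should let me write $G$ as an amalgamated free product $B *_A C$ in which one factor absorbs the relation $u(w_1,\ldots,w_k)=1$ and the other is free on the letters not constrained by the marker condition, with $A$ the free group on the letters common to the decomposition. The amalgamated subgroup $A$ will be a free group, hence has decidable membership problem in each factor by Benois (Corollary~\ref{cor:ben}), and the factors will have decidable word problem by induction / by being free or lower in the hierarchy.

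Next I would verify the four hypotheses of Theorem~\ref{thm:amal} for $M = P_w$. Condition (i), that $A \subseteq M$, should follow from arranging the prefixes so that every common letter is represented in $P_w$; conditions (iii) and (iv), decidability of membership for $M \cap B$ and $M \cap C$, reduce to rational subset membership in free or one-relator groups, handled again by Benois and by lower levels of the Magnus--Moldavanski\u{\i} hierarchy. The key structural point is condition (ii): I must show $M \cap B$ and $M \cap C$ are finitely generated and that $M = \Mon\gen{(M \cap B) \cup (M \cap C)}$. This is where the unique-marker hypothesis does the real work, since the private letters $x_i$ let one read off, from any prefix of $w$, which factor its pieces fall into, so that every generating prefix decomposes cleanly across the amalgam.

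The main obstacle I anticipate is precisely establishing condition~(ii): controlling how prefixes of $w \equiv u(w_1, \ldots, w_k)$ interact with the amalgamated decomposition, i.e.\ showing that each generator $\pref(w_i)$ lands inside $M \cap B$ or $M \cap C$ (up to the amalgamated subgroup $A$) and that finitely many of them suffice. The unique marker letters are the crucial device here: because $x_i$ pins down $w_i$ uniquely and occurs only once, prefixes of $w$ cannot ambiguously straddle several $w_j$, which should force the desired finite generation and the splitting $M = \Mon\gen{(M\cap B)\cup(M\cap C)}$. Once Theorem~\ref{thm:amal} delivers decidability of the prefix membership problem, the final ``consequently'' clause is immediate from Theorem~\ref{thm:pw-wp}, since $E$-unitarity of $\Inv\pre{X}{w=1}$ (guaranteed by Theorem~\ref{thm:cr-euni} when $w$ is cyclically reduced) lets decidability of the prefix membership problem transfer to decidability of the word problem for the inverse monoid.
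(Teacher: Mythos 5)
Your overall strategy---reduce to the amalgamated free product machinery of Section~4 and use conservativity to replace $P_w$ by $P(w_1,\dots,w_k)$---points in the right direction, but the decomposition you propose does not work, and the step you flag as ``the main obstacle'' is in fact where the argument breaks. You want to write $G=B*_AC$ with $A$ the subgroup generated by the letters \emph{shared} among the $w_i$ and then apply Theorem~\ref{thm:amal}. That theorem requires condition~(i), $A\subseteq M=P_w$, i.e.\ every shared letter and its inverse must lie in the prefix monoid. This is generally false: in Example~\ref{exa:marker} the pieces $axb$ and $ayb$ share $a$ and $b$, and there is no reason for $b$ or $b^{-1}$ to lie in $P_w$. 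The paper's own non-example at the end of Section~5.2 (the relator $(ababb)(cbcbb)\cdots$) is exactly a case where one tries to amalgamate over a subgroup containing a shared letter $b$ and the argument collapses because $b\in P_w$ cannot be established. Moreover, the shared letters occur inside the relator $u(w_1,\dots,w_k)$, so they cannot simultaneously generate an amalgamated subgroup and a separate free factor ``not constrained by the relation''; your description of $B$, $C$ and $A$ does not actually define a splitting of $G$.

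The idea you gesture at---``the $w_i$ behave like free generators relative to the rest of $X$''---is the right one, but the way to exploit it is a Tietze transformation, not an amalgam over the common letters. Since $x_i$ occurs exactly once in $w_i\equiv p_ix_i^{\eps_i}q_i$ and nowhere else in $w$, one introduces new generators $z_i=w_i$, solves $x_i^{\eps_i}=p_i^{-1}z_iq_i^{-1}$, and \emph{eliminates} the marker letters entirely. This yields $G=FG(X_1)*H$ with $X_1=X\setminus\{x_1,\dots,x_k\}$ and $H=\Gp\pre{Z}{u(z_1,\dots,z_k)=1}$: a plain free product with trivial amalgamation, so condition~(i) is vacuous and Corollary~\ref{cor:free-pr} applies. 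The remaining work, which your proposal leaves entirely unaddressed, is to rewrite the generators of $P_w$ in the new coordinates ($\pref(w_i)=\pref(p_i)\cup z_i\cdot\pref(q_i^{-1})$ in $G$), to show $H\subseteq P_w$ by reading off $z_j^{-1}=u''u'$ from cyclic conjugates of $u$, and to identify $P_w\cap FG(X_1)$ as the finitely generated submonoid generated by the prefixes of the $p_i$ and $q_i^{-1}$ via a reduced-form argument. Your closing paragraph on deducing the inverse-monoid statement from Theorems~\ref{thm:pw-wp} and~\ref{thm:cr-euni} is correct.
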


\begin{proof}
Denote $X_0=\{x_1,\dots,x_k\}$ and set $X_1=X\setminus X_0$. 
So for each $1 \leq i \leq k$ the letter $x_i$ appears exactly once in the word $w_i$ (either as $x_i$ or $x_i^{-1}$) and $x_i$ does not appear in any of the words $w_j$ with $j \neq i$. 
Therefore, for all $1\leq i\leq k$, we can write 
$$
w_i\equiv p_i x_i^{\eps_i} q_i
$$
where $\eps_i\in\{1,-1\}$ and $p_i,q_i\in\ol{X_1}^*$. Let us now apply Tietze transformations
to the initial presentation of $G$ by introducing new letters $Z=\{z_1,\dots,z_k\}$ with the aim of replacing
the factors $w_1,\dots,w_k$.
The conditions on the words $w_i$ then allow us to apply further Tietze transformations showing that the generators $x_i$ are redundant and thus can be eliminated giving a presentation just in terms of the generators $X_1 \cup Z$. 
This gives 
\begin{align*}
G &=
\Gp\pre{X_0\cup X_1\cup Z}{z_i=w_i\ (1\leq i\leq k),\ u(w_1,\dots,w_k)=1}\\
&= \Gp\pre{X_0\cup X_1\cup Z}{z_i=w_i\ (1\leq i\leq k),\ u(z_1,\dots,z_k)=1}\\
  &= \Gp\pre{X_0\cup X_1\cup Z}{x_i^{\eps_i}=p_i^{-1}z_iq_i^{-1}(1\leq i\leq k),\ u(z_1,\dots,z_k)=1}\\
	%&= \Gp\pre{X_0\cup X_1\cup Z}{x_i=p_i^{-\eps_i}z_i^{\eps_i}q_i^{-\eps_i}(1\leq i\leq k),\ 
	%   u(z_1,\dots,z_k)=1}\\
	&= \Gp\pre{X_1\cup Z}{u(z_1,\dots,z_k)=1}.
\end{align*}
Therefore, $G=FG(X_1)*H$, where $H=\Gp\pre{Z}{u(z_1,\dots,z_k)=1}$ is a one-relator group.

We now turn to considering the prefix monoid $P_w = \Mon\gen{\pref{w}} \leq G$. 

Without loss of generality, we may suppose that the letters of the alphabet $Y = \{y_1, \ldots, y_k\}$ are ordered in such a way that the following conditions hold:
\begin{itemize}
    \item $y_1, \ldots, y_r$ appear in $u$ while none of $y_{r+1}, \ldots y_k$ appears in $u$;  
    \item $y_s^{-1}, \ldots, y_k^{-1}$ appear in $u$ while none of $y_1^{-1}, \ldots, y_{s-1}^{-1}$ appears in $u$, 
\end{itemize}
where $r \in \{0, \ldots, k\}$ and $s \in \{1, \ldots, k+1\}$ and $s \leq r+1$. The condition $s \leq r+1$ comes from the fact that all of the letters $y_1, \ldots, y_k$ appear in the word $u = u(y_1, \ldots, y_k)$ either as $y_j$ or $y_j^{-1}$.

Since the given factorisation is assumed to be conservative,
$$
P_w = \Mon\gen{\pref(w_1)\cup\dots\cup\pref(w_r)\cup\pref(w_s^{-1})\cup\dots\cup\pref(w_k^{-1})}.
$$
Clearly, in the group $G$ we have the following equalities of sets
\begin{align*}
\pref(w_i) &= \pref(p_i) \cup p_i x_i^{\eps_i}\cdot\pref(q_i)\\
           &= \pref(p_i) \cup z_iq_i^{-1}\cdot\pref(q_i)\\
	       &= \pref(p_i) \cup z_i\cdot\pref(q_i^{-1}).
\end{align*}
In particular, $z_i$ is among the elements represented by prefixes of $w_i$. Similarly,
$$
\pref(w_i^{-1}) = \pref(q_i^{-1})\cup z_i^{-1}\cdot\suff(p_i^{-1})^{-1} = 
                  \pref(q_i^{-1})\cup z_i^{-1}\cdot\pref(p_i), 
$$
which includes the element $z_i^{-1}$. Thus $P_w$ is equal to the submonoid of $FG(X_1)*H$ generated by the set
$$
\{z_1,\dots,z_r,z_s^{-1},\dots,z_k^{-1}\} \cup \bigcup_{1\leq i\leq k} \left(\pref(p_i)\cup \pref(q_i^{-1})\right).
$$
Next observe that for any $1\leq j\leq r$ we can write $u \equiv u(z_1,\dots,z_k)\equiv u'z_ju''$, which means that  $
z_j^{-1} = u''u'
$ holds in $H$, since $u(z_1, \ldots, z_k)=1$ in $H$. But both $u',u''$ are products of letters from $\{z_1,\dots,z_r,z_s^{-1},\dots,z_k^{-1}\}$, which shows that $z_j^{-1}\in P_w$. Similarly, $z_j\in P_w$ for all $s\leq j\leq k$. Since $r \leq s+1$ this proves that $\ol{Z} \subseteq P_w$ and hence the entire group $H$ is contained in $P_w$. Thus $P_w$ is equal to the submonoid of $FG(X_1)*H$ generated by the set $
H 
\cup 
Q
$ where 
$$
Q=\bigcup_{1\leq i\leq k} \left(\pref(p_i)\cup \pref(q_i^{-1})\right) \subseteq FG(X_1).
$$ 
To complete the proof it will suffice to show that the conditions of Corollary~\ref{cor:free-pr} are satisfied for the submonoid $P_w$ of the group $G = FG(X_1) * H$. The groups $FG(X_1)$ and $H$ both have decidable word problem by Magnus' Theorem. 

We have  $P_w \cap H = H$ which is finitely generated. We claim that $P_w \cap FG(X_1)$ equal to the submonoid of $FG(X_1)$ generated by $Q$, and hence is finite generated. 

Indeed, we have that $P_w$ is equal to the submonoid of $FG(X_1)*H$ generated by the set $
H 
\cup 
Q
$. Let $g \in P_w \cap FG(X_1)$. Since $g \in P_w$ we can write $g = h_0 t_1 h_1 \dots  h_l t_l$ where  $h_i \in H$ and $t_i \in \Mon\gen{Q}$, and, furthermore, $h_i \neq 1$ for $1 \leq i \leq l-1$ and $t_i \neq 1$ for all $i$. In the free product $FG(X_1) \ast H$ this is a reduced form. Since we are assuming that $g \in FG(X_1)$ it follows by Lemma~\ref{lem:rf-amal} that we must have $g=t_1 \in \Mon\gen {Q}$. This proves that $P_w \cap FG(X_1)$ is contained in $\Mon\gen Q$. The opposite containment is trivial. Hence condition (i) holds. 

Condition (iii) holds again since $P_w \cap H = H$, while condition (ii) holds by Benois' Theorem as $P_w \cap FG(X_1)$ is a finitely generated submonoid of the free group $FG(X_1)$. This completes the proof of the theorem.
\end{proof}

\begin{exa}\label{exa:marker}
Let $X = \{a,b,x,y\}$, let $w = axbaybaybaxbaybaxb$ and set $G = \Gp\pre{X}{w=1}$ and $M = \Inv\pre{X}{w=1}$. 
Since $axb$ is both a prefix and a suffix of $w$ it follows that this word represents an invertible element $M$. It follows that the word  $(ayb)aybaxb(ayb)$ also represents an invertible element of $M$ and hence so does the word $ayb$.  
We conclude that 
$$
w=(axb)(ayb)(ayb)(axb)(ayb)(axb)
$$
is a unital factorisation and thus also a conservative factorisation by Theorem~\ref{thm:UnitalConservative}. 
Notice that $x$ occurs exactly once in $axb$ but not in $ayb$, and conversely, $y$ occurs just once in $ayb$ but not in $axb$. 
So, the above factorisation of $w$ satisfies the unique marker letter condition of 
Theorem~\ref{thm:marker}.
Also note that $w$ is a cyclically reduced word. 
Therefore applying the theorem we conclude that the group defined by the presentation 
\[
\Gp\pre{a,b,x,y}{axbaybaybaxbaybaxb=1}
\]
has decidable prefix membership problem and the inverse monoid 
\[
\Inv\pre{a,b,x,y}{axbaybaybaxbaybaxb=1}
\]
has decidable word problem. 
\end{exa}

Many other similar examples to which Theorem~\ref{thm:marker} can be applied may be constructed. 
In the example above in order to deduce that the inverse monoid has decidable word problem we just used the fact that it is $E$-unitary since the defining relator is a cyclically reduced word. It was not important that the defining relator was a positive word. 
For example, 
in much the same way we can show that the
inverse monoid
$$
M' = \Inv\pre{a,b,x,y}{a^{-1}xbab^{-1}ayb^{-1}b^{-1}ayb^{-1}a^{-1}b^{-1}x^{-1}ab^{-1}ayb^{-1}a^{-1}xba=1}.
$$
has decidable word problem.

Next we shall see that Theorem~\ref{thm:marker} can also be applied in certain situations where the given defining relator does not immediately satisfy the unique marker letter condition. 

\begin{exa}\label{exa:OHareDecWP}
Let $M$ be the  ``O'Hare inverse monoid'' 
$$
\Inv\pre{a,b,c,d}{abcdacdadabbcdacd=1}.
$$
and let $G$ be the group with the same presentation. 
Recall in Example \ref{exa:ohare} where we defined the ``O'Hare inverse monoid'' 
we saw that 
$$
w\equiv (abcd)(acd)(ad)(abbcd)(acd)
$$
is a unital factorisation and thus also a conservative one. 
Note that these invertible pieces do not satisfy the unique marker letter property. 
However, as we shall now see, this monoid admits a one relator special inverse monoid presentation such that the defining relator does satisfy the hypotheses of Theorem~\ref{thm:marker}. 
In fact, we shall identify an infinite family of examples, which includes the O'Hare monoid, for which this approach is possible.

\begin{pro}\label{pro:OHare}
Let 
$$
M = \Inv\pre{X}{au_{i_1}dau_{i_2}d\dots au_{i_m}d = 1},
$$
where $a,d\in X$ and $u_{i_k} \in \ol{Y}^*$ is a reduced word for $1 \leq k \leq m$
where $Y = X \setminus \{a,d\}$. 

Assume further that the 
following conditions hold:
\begin{itemize}
\item[(i)] for some $1\leq j\leq m$, $u_{i_j}$ is the empty word;
\item[(ii)] for each $x\in X\setminus\{a,d\}$ there exist $r,s$ such that $x\equiv\red(u_{i_r}u_{i_s}^{-1})$;
\item[(iii)] each word $au_{i_k}d$ represents an invertible element of $M$.
\end{itemize}
Then the group defined by the presentation
$$
G = \Gp\pre{X}{au_{i_1}dau_{i_2}d\dots au_{i_m}d = 1},
$$
has decidable prefix membership problem, and 
the inverse monoid 
$M$ has decidable word problem.
\end{pro}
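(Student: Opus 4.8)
The plan is to verify the hypotheses of Corollary~\ref{cor:free-pr} for the prefix monoid $P_w$, after exposing a free product decomposition of $G$ by a change of generators driven by the hypotheses (i) and (ii). First I would record the structural facts that come for free. Since $a,d\notin\ol{Y}$ and each $u_{i_k}$ is reduced, the relator $w\equiv(au_{i_1}d)(au_{i_2}d)\cdots(au_{i_m}d)$ is reduced, and it is cyclically reduced (it begins with $a$ and ends with $d$), so $M$ is $E$-unitary by Theorem~\ref{thm:cr-euni}. By hypothesis (iii) each piece $e_k\equiv au_{i_k}d$ is a unit, so this factorisation is unital, hence conservative by Theorem~\ref{thm:UnitalConservative}(i); therefore $P_w=P(e_1,\dots,e_m)$ and $P_w$ is generated by the prefixes of the pieces, namely $\{a\}\cup\{a p:p\in\pref(u_{i_k})\}\cup\{e_1,\dots,e_m\}$.

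Next I would use (i) and (ii) to rewrite the presentation of the \emph{group} $G$ (I do not need any inverse-monoid Tietze moves). Introducing a generator $t_l$ for each distinct piece value $av_ld$, hypothesis (i) supplies $ad=t_{j_0}$, whence $d=a^{-1}t_{j_0}$, while hypothesis (ii) supplies, for each $x\in Y=X\setminus\{a,d\}$, an identity $axa^{-1}=t_rt_s^{-1}$ and hence $x=a^{-1}t_rt_s^{-1}a$. Using these I can eliminate $d$ and every letter of $Y$; because each piece rewrites as $au_{i_k}d=(au_{i_k}a^{-1})(ad)$ and $au_{i_k}a^{-1}$ telescopes to a word in the $t_l$, the generator $a$ drops out of every surviving relation. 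The expected outcome is
\[
G\cong FG(\{a\})*H,\qquad H=\Gp\pre{T'}{\ol{w}=1},
\]
with $a$ a free factor and $H$ a one-relator group (hence of decidable word problem by Magnus' theorem). This is exactly the free-product-with-marker structure that underlies Theorem~\ref{thm:marker}, now produced by the change of generators instead of being visible in the original relator. The step I expect to be the main obstacle is verifying that precisely one relation survives, so that $H$ really is \emph{one-relator}: a deficiency count is consistent with this, but the bookkeeping of which piece-defining relations are consumed to eliminate $Y\cup\{d\}$ and which collapse must be carried out carefully.

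The decisive observation for analysing $P_w$ is that each $e_k$ is a unit of $M$, so $e_k^{-1}$ is a unit as well, in particular a right unit; since the image of the right units of $M$ under $M\to G$ is exactly $P_w$, both $e_k$ and $e_k^{-1}$ lie in $P_w$, and therefore $H=\Gp\gen{t_1,\dots,t_n}\subseteq P_w$. Combining this with the generating set from the first paragraph — each prefix $ap$ equals $(apa^{-1})a$ with $apa^{-1}\in H$ by (ii) — yields $P_w=\Mon\gen{H\cup\{a\}}$. Using the retraction $\phi:G\to\mathbb{Z}$ determined by $\phi(a)=1$, $\phi(H)=0$, one checks that $P_w\cap FG(\{a\})=\Mon\gen{a}$ and $P_w\cap H=H$, both finitely generated and with decidable membership, and that $P_w=\Mon\gen{(P_w\cap FG(\{a\}))\cup(P_w\cap H)}$.

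Finally I would apply Corollary~\ref{cor:free-pr} to the submonoid $P_w$ of $FG(\{a\})*H$: its three conditions hold by the previous paragraph together with the decidability of the word problems of $FG(\{a\})$ and $H$. This gives decidability of the membership problem for $P_w$ in $G$, that is, the prefix membership problem for $G=\Gp\pre{X}{w=1}$ is decidable. Since $M$ is $E$-unitary, Theorem~\ref{thm:pw-wp} then upgrades this to decidability of the word problem for $M$, completing the argument.
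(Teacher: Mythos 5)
Your overall strategy is sound and arrives at essentially the same destination as the paper --- a free product decomposition $G\cong FG(\{a\})*H$ with $H$ generated by the pieces $au_{i_k}d$, followed by the free-product membership machinery --- but by a genuinely different route. The paper first rewrites the relator \emph{inside the inverse monoid}: using (i)--(iii) and the cancellation law for right units it proves $au_{i_r}d=(ab_{1,r}a^{-1})\cdots(ab_{t_r,r}a^{-1})(ad)$ in $M$, shows that $\Inv\pre{X}{w=1}$ and $\Inv\pre{X}{w'=1}$ are isomorphic via the identity map on $\ol{X}$ (hence $P_w=P_{w'}$), and then applies Theorem~\ref{thm:marker} to $w'$, whose pieces $axa^{-1}$ and $ad$ carry unique marker letters. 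You instead perform the Tietze eliminations directly on $\Gp\pre{X}{w=1}$ and check the hypotheses of Corollary~\ref{cor:free-pr} by hand. Your identification of the generators of $P_w$ via the unital-hence-conservative factorisation, the equality $P_w=\Mon\gen{H\cup\{a\}}$, the computation $P_w\cap FG(\{a\})=\Mon\gen{a}$ via the retraction, and $P_w\cap H=H$ are all correct.

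The one genuine gap is exactly the step you flag: that $H$ is a \emph{one-relator} group, which you need in order to cite Magnus for its word problem. Your deficiency count does not in fact support this. Introducing $t_l=av_ld$ adds $n$ generators and $n$ relations; eliminating $d$ consumes the relation $t_{j_0}=ad$, but eliminating each $x\in Y$ uses the \emph{derived} relation $x=a^{-1}t_rt_s^{-1}a$ (added as a redundant consequence), not one of the piece-defining relations. So after elimination you are left with $n$ generators $t_1,\dots,t_n$ and $n$ relations: $\bar R_0\colon t_{l_1}\cdots t_{l_m}=1$ together with $t_l=W_l(T)\,t_{j_0}$ for each $l\ne j_0$, where $W_l(T)$ is the telescoped word in the $t$'s. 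In the O'Hare example these extra relations happen to be trivial or eliminable, but in general $H$ is a priori only a quotient of the one-relator group $\Gp\pre{T}{\bar R_0}$, and there is no reason for $n-1$ of the surviving relations to be redundant. The gap is harmless but must be closed differently: $H$ is a free factor of $G$ generated by the explicit words $av_ld$, so a word $W(t_1,\dots,t_n)$ is trivial in $H$ if and only if $W(av_1d,\dots,av_nd)=1$ in $G$, and the latter is decidable by Magnus's theorem applied to the one-relator group $G$ itself; with that substitution your appeal to Corollary~\ref{cor:free-pr} goes through. This bookkeeping problem is precisely what the paper's detour through $w'$ avoids: after the rewriting, each piece of $w'$ has its own marker letter, each piece-relation $z_i=w_i$ is consumed by eliminating that marker, and the group $H$ produced by Theorem~\ref{thm:marker} really is one-relator.
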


\begin{proof}
By (iii) all of the words $a u_{i_k} d$ with $1 \leq k \leq m$ all represent invertible elements of $M$. 
It follows that the inverse words $(a u_{i_k} d)^{-1}\equiv d^{-1}u_{i_k}^{-1}a^{-1}$ with $1 \leq k \leq m$ also all represent invertible elements of $M$. 
Since the product of two invertible elements is invertible, it follows that for all $1 \leq r,s \leq m$ the word $(a u_{i_r} d)(d^{-1} u_{i_s}^{-1} a^{-1})$ represents an invertible element of $M$. 

We then use the following well-known fact from inverse semigroup theory: if $bc$ is a right invertible element of an inverse monoid,
then $bcc^{-1}=b$. 
Since every prefix of the word $(a u_{i_r} d)(d^{-1} u_{i_s}^{-1} a^{-1})$ is right invertible, applying the above general fact we conclude that 
\[
(a u_{i_r} d)(d^{-1} u_{i_s}^{-1} a^{-1}) = a \red(u_{i_r} u_{i_s}^{-1}) a^{-1} 
\]
holds in $M$ for all $1 \leq r,s \leq m$.  

By conditions (ii) it follows that
for every letter $x \in Y$ we have that $axa^{-1}$ represents an invertible element of $M$. Also, by condition (i) and (iii) the word $ad$ represents an invertible element of $M$. 

For each $1 \leq r \leq m$ write 
$$
au_{i_r}d\equiv ab_{1,r}\dots b_{t_r,r}d
$$
where $b_{i,r} \in Y$ for $1 \leq i \leq t_r$. 
Using the observations from the previous paragraph, and the general observation above about cancelling inverse pairs in right invertible words we conclude that in $M$ we have 
\begin{equation}\label{eqn:OHare}
au_{i_r}d = ab_{1,r}\dots b_{t_r,r}d = (ab_{1,r}a^{-1})\dots(ab_{t_r,r}a^{-1})(ad).
\end{equation}
for all $1 \leq r \leq m$. 

Let 
$v_r \equiv (ab_{1,r}a^{-1})\dots(ab_{t_r,r}a^{-1})(ad)$ for all $1 \leq r \leq m$ 
and then set $w' \equiv v_1 v_2 \dots v_k$. 
We claim that the presentations $\Inv\pre{X}{w=1}$ and $\Inv\pre{X}{w'=1}$ are equivalent in the sense that the identity map on $\overline{X}$ induces an isomorphism between the inverse monoids defined by these presentations.  
To prove this it suffices to show that $w'=1$ holds in the monoid $M=\Inv\pre{X}{w=1}$, and, conversely, that $w=1$ holds in the monoids $M'=\Inv\pre{X}{w'=1}$. 

The fact that $w'=1$ holds in $M$ follows immediately from Equation~\ref{eqn:OHare}.  
Conversely, in the inverse monoid $M'=\Inv\pre{X}{w'=1}$
each prefix of $w'$ arising as a product of factors of the form $ab_{j,r}a^{-1}$ represents a right invertible element of $M'$.
This observation along with the 
general fact above about cancelling inverse pairs in right invertible words 
makes it possible to delete from $w'$ all factors of the form $a^{-1}a$ without
changing the value of $w'$ in $M'$. In other words, $w=1$ holds in $M'$. 

This shows that the presentations for $M$ and $M'$ are equivalent, and in particular that $M$ and $M'$ are isomorphic via the identity map on $\ol{X}$.  
It follows that for any word $\gamma \in \ol{X}^*$ we have that $\gamma$ represents a right invertible element of $M$ if and only if $\gamma$ represents a right invertible element of $M'$.
Let $R$ be the submonoid of right invertible elements of $M$, and let $R'$ be the submonoid of right invertible elements of $M'$.  
Let $\phi: M \rightarrow G$ and $\phi' : M' \rightarrow G$  be the maps to the maximal group image induced by the identity map on $\ol{X}$. 
Then we have 
\[
P_w = R \phi = R' \phi' = P_{w'}. 
\]

However, the relator word $w'$ from the presentation of $M'$ has a unital and thus conservative
factorisation into factors of the form $axa^{-1}$, $x\in X\setminus\{a,d\}$, and $ad$. Picking $x$ as the unique marker letter
from $axa^{-1}$, and $d$ from $ad$, shows that the inverse monoid $M'$ has a presentation which satisfies the conditions of
Theorem \ref{thm:marker}. Hence, the 
membership problem for 
$P_{w'} = P_w$ in $G$ is decidable. 
Hence the group presentation $\Gp\pre{X}{w=1}$ has decidable prefix membership problem and the inverse monoid $M$ has decidable word problem.  
\end{proof}

In particular the above proposition applies the O'Hare monoid 
\[
\Inv\pre{a,b,c,d}{(abcd)(acd)(ad)(abbcd)(acd)=1}
\]
since, as already observed, the displayed decomposition is unital, and hence in particular $ad$ represents an invertible element of the monoid, and, moreover, we clearly have 
$b=\red((bc)c^{-1})$ and $c=\red(c1^{-1})$. Hence all the hypotheses of the proposition are satisfied and we conclude that the O'Hare monoid has decidable word problem. 
\end{exa}
 
\begin{rmk}
It was pointed out to us by Jim Howie (Heriot-Watt University, Edinburgh) \cite{Jim} that the ``O'Hare group'' 
$\Gp\pre{a,b,c,d}{abcdacdadabbcdacd=1}$ is in fact a free group of rank 3 (albeit in a rather non-obvious way). 
Therefore, in this case $P_w$ has decidable membership in this group as a direct consequence of Benois' Theorem, and so the 
word problem for the O'Hare inverse monoid is decidable. 
On the other hand, not every group satisfying the hypotheses of Proposition~\ref{pro:OHare} is free.
\end{rmk}

\subsection{Disjoint alphabets theorem}

\begin{lem}\label{lem:useful}
Let $G=B *_A C$ and let $U$ be a finite subset of $B\cup C$ such that $M=\Mon\gen{U}$, the submonoid of $G$ generated by $U$,
contains $A$. Then $M\cap B$ is generated by $(U\cap B)\cup A$ and $M\cap C$ is generated by $(U\cap C)\cup A$. Consequently,
if $A$ is finitely generated, then so are the monoids $M\cap B$ and $M\cap C$.
\end{lem}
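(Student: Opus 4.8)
The plan is to prove both containment directions for each of the two asserted generating sets, treating $M \cap B$ in detail since the argument for $M \cap C$ is entirely symmetric. The inclusion $\Mon\gen{(U\cap B)\cup A} \subseteq M \cap B$ is the easy direction: every generator lies in $B$ (elements of $U \cap B$ by definition, and elements of $A$ because $A$ is identified with its image in $B$), so any product of them lies in $B$; and every such product lies in $M$ because $U \cap B \subseteq U \subseteq M$ while $A \subseteq M$ by hypothesis. Hence the generated submonoid sits inside both $B$ and $M$.

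The substantive direction is $M \cap B \subseteq \Mon\gen{(U\cap B)\cup A}$. Here I would take an arbitrary $g \in M \cap B$ and use $g \in M = \Mon\gen{U}$ to write $g = s_1 s_2 \cdots s_\ell$ with each $s_j \in U \subseteq B \cup C$. The idea is to rewrite this product into one using only generators from $(U \cap B) \cup A$, exploiting the fact that $g$ lies in $B$. I would argue that any factor $s_j$ coming from $U \cap C$ must, in the course of forming an element of the small subgroup $B$, be absorbable into $A$. Concretely, I expect to pass to a reduced form: since $A \subseteq M$, one can interleave the $s_j$ with elements of $A$ and apply Lemma~\ref{lem:reduce} (or its reduction process) to put $g$ into reduced form $p_1 q_1 \cdots p_n q_n$ with $p_i \in M \cap B$, $q_i \in M \cap C$. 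Because $g \in B$, the reduced form must collapse by Lemma~\ref{lem:rf-amal}: an element of $B$ written in reduced form forces $n = 1$ and $q_1 \in A$, so $g$ equals a single element of $M \cap B$ conjugated into $A$-cosets.

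The cleanest way to make this rigorous is an induction on $\ell$, the length of the product $s_1 \cdots s_\ell$. The main obstacle, which I would anticipate and handle carefully, is controlling what happens when a $C$-letter $s_j$ appears: I want to show that whenever a factor from $U \cap C$ occurs, the fact that the total product lies in $B$ forces that factor (together with its neighbours) to represent an element of $A = B \cap C$, which can then be rewritten as a generator from $A$ and merged. The key structural input is that $A \subseteq M$, so we are free to insert and extract elements of $A$ without leaving $M$; combined with the normal form theory for amalgamated free products this forces every $C$-contribution to reduce into $A \subseteq (U \cap B) \cup A$. Carrying out this reduction replaces the product by a shorter one (or one using only $(U \cap B) \cup A$ as letters), and induction closes the argument.

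Finally, for the last sentence of the statement, I would simply observe that a monoid generated by a finite set is finitely generated, so if $A$ is finitely generated with finite generating set $Z$, then $(U \cap B) \cup Z$ is a finite generating set for $M \cap B$ (since $\Mon\gen{(U\cap B)\cup A} = \Mon\gen{(U\cap B)\cup Z}$, as the submonoid generated by $A$ coincides with that generated by $Z \cup Z^{-1}$, and all of $A$ is already available once $Z$ and the monoid operation are). The analogous finite set $(U \cap C) \cup Z$ works for $M \cap C$, completing the proof.
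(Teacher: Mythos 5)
Your overall strategy is the one the paper uses: express $g\in M\cap B$ as a product of generators from $U$, invoke the normal form theory for $B*_AC$ to force collapse, and exploit $A\subseteq M$ to absorb the $A$-terms that appear along the way; the easy containment and the final finite-generation remark are also fine as you state them. However, two concrete steps in your plan would fail as written. First, the appeal to Lemma~\ref{lem:reduce} is circular for this purpose: that lemma only produces a reduced form $p_1q_1\cdots p_nq_n$ with $p_i\in M\cap B$ and $q_i\in M\cap C$, so the conclusion you draw from it --- that $g$ is a single element of $M\cap B$ times an element of $A$ --- gives no information about membership in $\Mon\gen{(U\cap B)\cup A}$, which is precisely the set whose equality with $M\cap B$ you are trying to establish. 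Second, an induction on the number $\ell$ of $U$-letters does not close: once you merge a $C$-letter ``with its neighbours'' the resulting factor is no longer an element of $U$, so the inductive hypothesis no longer applies to the shortened product.

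The paper's proof repairs both points at once. Group the product from the outset into maximal alternating blocks $g=c_0b_1c_1\cdots b_lc_l$ with $b_i\in\Mon\gen{U\cap B}$ and $c_i\in\Mon\gen{U\cap C}$, and induct on the number of blocks. By Lemma~\ref{lem:rf-amal}, either this expression is already reduced --- in which case $g\in B$ forces $l=1$ with trivial $C$-parts, so $g$ is a single block lying in $\Mon\gen{(U\cap B)\cup A}$ --- or some block lies in $A$, and absorbing that block into its neighbours strictly decreases the block count while keeping every block inside $\Mon\gen{(U\cap B)\cup A}$ or $\Mon\gen{(U\cap C)\cup A}$; the hypothesis $A\subseteq M$ and the fact that these two monoids contain $A$ are exactly what make the absorption legitimate. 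Note that your heuristic ``the total product lies in $B$, so each $C$-factor together with its neighbours lies in $A$'' is really this contrapositive use of the normal form theorem: it is not that any particular $C$-block is visibly in $A$, but that if no block were, the expression would be reduced of syllable length greater than one and hence could not represent an element of $B$.
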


\begin{proof}
Let $g\in M\cap B$. Then, since $g\in M$, we may write
$$
g = c_0 b_1c_1 \dots b_lc_l
$$
for some $b_i\in\Mon\gen{U\cap B}$ and $c_i\in\Mon\gen{U\cap C}$ such that $c_i\neq 1$ for $1\leq i\leq l-1$ and $b_i\neq 1$ for all $i$.
This expression is either a reduced form in $G$, whence $l=1$, $c_0=c_1=1$ and $g=b_1\in\Mon\gen{U\cap B}\subseteq \Mon\gen{(U\cap B)\cup A}$ 
by Lemma 4.2, or, otherwise, at least one of the terms belongs to $A$. For example, suppose that $b_j\in A$. Then $c_{j-1}' = c_{j-1}b_jc_j\in 
\Mon\gen{(M\cap C)\cup A}$, so upon relabelling $b_k'=b_k$ for all $1\leq k\leq j-1$, $c_k'=c_k$ for all $0\leq k\leq j-2$, $b_k'=b_{k+1}$
for all $j+1\leq k\leq l-1$, and $c_k'=c_{k+1}$ for all $j\leq k\leq l-1$, we get a new expression
$$
g = c'_0 b'_1c'_1 \dots b'_{l-1}c'_{l-1},
$$
where $b_k'\in\Mon\gen{(U\cap B)\cup A}$ and $c_k'\in\Mon\gen{(U\cap C)\cup A}$ for all $k$, for which the previous argument can be repeated. 
We proceed similarly if $c_j\in A$ for some $j$. Continuing in this fashion, we eventually arrive at a reduced form for $g$, leading to the 
conclusion that $g\in \Mon\gen{(U\cap B)\cup A}$, as required.

Analogously, if $g\in M\cap C$ we have that $g\in \Mon\gen{(U\cap C)\cup A}$, thus proving the lemma.
\end{proof}

Here is our second application, whose proof makes an appeal to Theorem \ref{thm:amal}.

\begin{thm}\label{thm:disj} 
Let $G=\Gp\pre{X}{w=1}$ 
where $w \in \ol{X}^*$ is a cyclically reduced word. 
Suppose that there is a finite alphabet 
$Y=\{y_1,\dots,y_k\}$ with $k \geq 2$ 
and a word 
$u\in\ol{Y}^*$
such that
$w\equiv u(w_1,\dots,w_k)$,
all letters from $Y$ appear in $u$ (either in positive or inverted form), 
and that this determines a conservative factorisation of $w$. 
Suppose that for any pair of distinct $i, j \in \{ 1, \dots, k \}$ the words $w_i$ and $w_j$ have no letters in common.
Then the group $G= \Gp\pre{X}{w=1}$ has decidable prefix membership problem and thus 
$\Inv\pre{X}{w=1}$ has decidable word problem. 
\end{thm}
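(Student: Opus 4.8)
The plan is to argue by induction on $|X|$, peeling off one disjoint block of letters at a time and realising $G$ as an amalgamated free product to which Theorem~\ref{thm:amal} applies. First I would record the consequence of the hypotheses that does most of the work: since $w$ is cyclically reduced, the inverse monoid $\Inv\pre{X}{w=1}$ is $E$-unitary by Theorem~\ref{thm:cr-euni}, so by Theorem~\ref{thm:UnitalConservative}(ii) the given conservative factorisation $w \equiv u(w_1, \dots, w_k)$ is in fact unital. Hence each $w_i$ represents a unit of $M = \Inv\pre{X}{w=1}$, so both $w_i$ and $w_i^{-1}$ lie in $P_w$, and consequently $\Gp\gen{w_i} \subseteq P_w$ for every $i$. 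This is exactly the containment that condition~(i) of Theorem~\ref{thm:amal} asks for, and it is the feature absent in the naive attempt that makes the amalgam argument go through.

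Assuming, as we may (splitting off any superfluous generators as a free factor by Corollary~\ref{cor:free-pr}, just as in the proof of Theorem~\ref{thm:marker}), that every letter of $X$ occurs in $w$, the base case is when every block $X_i$ is a single letter: then each $w_i$ is a single (possibly inverted) generator, so $P_w = \Mon\gen{w_1,\dots,w_k} = \Gp\gen{w_1,\dots,w_k} = G$ using that the $w_i$ are units, and prefix membership is trivial. Otherwise pick a block, say $X_1$, with $|X_1| \geq 2$. Introducing a fresh letter $z_1$ with the relation $z_1 = w_1$ by Tietze transformations and then substituting $z_1$ for the occurrences of $w_1$ in the relator, I would write
\[
G = FG(X_1) \ast_{\langle w_1\rangle = \langle z_1\rangle} C, \qquad C = \Gp\pre{\{z_1\} \cup X_2 \cup \dots \cup X_k}{u(z_1, w_2, \dots, w_k) = 1},
\]
where $C$ is again a one-relator group. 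The key structural observation is that $\langle z_1\rangle$ is a Magnus subgroup of $C$, being generated by the single generator $z_1$ while the remaining generators all occur in the defining relator of $C$; by the Freiheitssatz it is free of rank one, so $z_1$ has infinite order, the amalgam is genuine, and membership of $\langle z_1\rangle$ in $C$ is decidable. This is precisely what supplies decidable membership of the amalgamated subgroup $A=\langle w_1\rangle$ in both factors.

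With this decomposition I would verify the hypotheses of Theorem~\ref{thm:amal} with $B = FG(X_1)$, $A = \langle w_1\rangle$, $C$ as above and $M = P_w$. The word problems of $B$ and $C$ are decidable (Benois and Magnus); condition~(i) is the containment $A \subseteq P_w$ noted above; the finite generation in condition~(ii) follows from Lemma~\ref{lem:useful} applied to the generating set $\bigcup_i \pref(w_i)$ of $P_w$, whose intersections with $B$ and $C$ are then generated by finitely many prefixes together with $A$; and condition~(iii) is membership of a finitely generated submonoid of the free group $FG(X_1)$, decidable by Corollary~\ref{cor:ben}. The remaining condition~(iv), membership of $P_w \cap C$ in $C$, is where the induction enters: Lemma~\ref{lem:useful} identifies $P_w \cap C$ with the submonoid generated by $\{z_1^{\pm 1}\} \cup \bigcup_{i\geq 2}\pref(w_i)$, and I would show this coincides with the prefix monoid of $C$ for its displayed presentation, so that the inductive hypothesis (applicable since $C$ has strictly fewer generators) yields decidability.

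The main obstacle, and the step needing the most care, is this last identification together with the verification that the smaller presentation still satisfies the hypotheses of the theorem. I would check that $u(z_1, w_2, \dots, w_k)$ is again cyclically reduced — this uses disjointness of the alphabets together with the fact that the fresh letter $z_1$ introduces no new cancellation, the only dangerous cyclic seam being excluded precisely because $w$ is cyclically reduced — and that its factorisation is conservative, which by Theorem~\ref{thm:UnitalConservative} reduces to checking it is unital, i.e. that $z_1$ and each $w_i$ $(i\geq 2)$ represent units of $\Inv\pre{\{z_1\}\cup X_2 \cup\dots}{u(z_1,w_2,\dots)=1}$; here one transfers unitality across the presentation change $z_1 \leftrightarrow w_1$ in the spirit of the rewriting argument of Proposition~\ref{pro:OHare}. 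Once these are secured, $P_w \cap C$ is the prefix monoid of the smaller group and the induction closes. I expect the delicate bookkeeping around the peeled single-letter factor (ensuring $z_1$ remains a unit, so that $z_1^{-1}$ is absorbed into the prefix monoid of $C$) to be the most technical part, while the conceptual crux is the recognition of $\langle z_1\rangle$ as a cyclic Magnus subgroup of $C$ with decidable membership.
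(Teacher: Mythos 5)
Your decomposition of $G$ is the same one the paper uses (peel off $FG(X_i)$ as a free factor amalgamated over the cyclic subgroup $\gen{w_i}=\gen{z_i}$, feed the result to Theorem~\ref{thm:amal}, use the Freiheitssatz/Magnus subgroup facts for decidable membership of the edge group), and your observation that $E$-unitarity makes the factorisation unital, hence $\Gp\gen{w_i}\subseteq P_w$, correctly supplies condition~(i) at the top level. The problem is how you close the induction. You induct on the theorem statement itself, so at the inductive step you must know that $P_w\cap C$ is the \emph{prefix monoid} of $C=\Gp\pre{\{z_1\}\cup X_2\cup\dots\cup X_k}{u(z_1,w_2,\dots,w_k)=1}$ and that the induced factorisation of $u(z_1,w_2,\dots,w_k)$ is again conservative. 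Neither is established. Lemma~\ref{lem:useful} gives $P_w\cap C=\Mon\gen{\{z_1,z_1^{-1}\}\cup\bigcup_{j\geq 2}\pref(w_j^{\pm})}$, and this monoid always contains $z_1^{-1}$; the prefix monoid $P_{w'}$ of $C$ need not. For instance, if $u\equiv y_1y_2y_1y_2$ and both blocks have been reduced to single letters, then in $\Gp\pre{z_1,z_2}{z_1z_2z_1z_2=1}\cong\mathbb{Z}\ast\mathbb{Z}/2\mathbb{Z}$ an exponent-sum argument shows $z_1^{-1}\notin\Mon\gen{z_1,z_1z_2,z_1z_2z_1}=P_{w'}$, so the single-letter factorisation of $z_1z_2z_1z_2$ is \emph{not} conservative. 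Hence conservativity is genuinely not inherited by the rewritten relator for free, and your proposed justification --- transferring unitality ``in the spirit of Proposition~\ref{pro:OHare}'' --- does not apply: that argument works because the two presentations there define isomorphic inverse monoids over the \emph{same} alphabet via the identity map, whereas here the alphabets differ and there is no isomorphism between $\Inv\pre{X}{w=1}$ and $\Inv\pre{\{z_1\}\cup X_2\cup\dots}{u(z_1,w_2,\dots,w_k)=1}$ through which units can be pulled back. (Your base case ``$P_w=G$ when all blocks are letters'' fails for the same reason unless conservativity has survived every step.)

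The paper's proof is structured precisely to avoid this trap. It builds the whole tower $G=FG(X_1)\ast_{A_1}(FG(X_2)\ast_{A_2}(\dots(FG(X_k)\ast_{A_k}G_k)\dots))$ with $G_k=FG(X_0)\ast H$, $H=\Gp\pre{t_1,\dots,t_k}{u(t_1,\dots,t_k)=1}$, and then inducts not on prefix monoids of the intermediate one-relator groups but on an explicitly defined chain of submonoids $M_k=H$, $M_{i-1}=\Mon\gen{M_i\cup W_i}$, each of which contains the whole group $H$ and therefore contains $A_{i+1}=\Gp\gen{t_{i+1}}$, which is all that Theorem~\ref{thm:amal}(i) needs; Lemma~\ref{lem:useful} then gives $M_i\cap G_{i+1}=M_{i+1}$ directly. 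Conservativity of the given factorisation is invoked exactly once, at the very top, to identify $M_0$ with $P_w$. To repair your argument you would either have to prove that conservativity passes to $u(z_1,w_2,\dots,w_k)$ (doubtful, and certainly not done), or strengthen your inductive statement so that it concerns the larger monoids $\Mon\gen{\{z_1^{\pm1}\}\cup\bigcup_{j\geq2}\pref(w_j^{\pm})}$ rather than prefix monoids --- which is exactly the reformulation the paper carries out.
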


\begin{proof}
For $1\leq i\leq k$, let $X_i\subseteq X$ denote the \emph{content} of $w_i$, namely the set of all letters
$x\in X$ that appear in $w_i$. 
Then the conditions given in the theorem state
that $i\neq j$ implies $X_i\cap X_j=\es$. We also let $X_0=X\setminus\bigcup_{1\leq i\leq k}X_i$. 
Notice that
since $w$ is cyclically reduced it follows that $u$ must also be cyclically reduced.

Let $t_1\not\in X$ be a new letter. Then an easy application of Tietze transformations gives
$$
G = \Gp\pre{X,t_1}{w_1t_1^{-1}=1,\ u(t_1,w_2,\dots,w_k)=1}.
$$
Let 
$G_1=\Gp\pre{X\setminus X_1,t_1}{u(t_1,w_2,\dots,w_k)=1}$ 
noting that by the assumptions the word 
$u(t_1,w_2,\dots,w_k)$ is written over the alphabet  
$(X\setminus X_1) \cup \{t_1\}$ 
and is a cyclically reduced word. 
Since by assumption $k \geq 2$, it follows by Magnus' Freiheitssatz 
that the subgroup $A_1'=\Gp\gen{t_1}$ of $G_1$ generated by $t_1$ is an infinite cyclic group. 
On the other hand, since $w_1\in\ol{X_1}^*$ is a non-empty reduced word, it follows that the subgroup 
$A_1 = \Gp\gen{w_1}$ of free group $FG(X_1)$ generated by $w_1$ is 
also infinite cyclic. 
Thus we can form the amalgamated free product 
\[
FG(X_1) *_{A_1} G_1.
\]
there $A_1$ and $A_1'$ are identified via the isomorphism sending $w_1$ to $t_1$.
This gives 
\[
G = 
\Gp\pre{
X_1, (X \setminus X_1), t_1}{
w_1 = t_1, \ u(t_1,w_2,\dots,w_k)=1} =
FG(X_1) *_{A_1} G_1.
\]

The same reasoning as above can be then applied to $G_1$: upon introducing a new letter $t_2$, one can decompose
$G_1$ as the free product of $FG(X_2)$ and $G_2=\Gp\pre{X\setminus(X_1\cup X_2),t_1,t_2}{u(t_1,t_2,w_3,\dots,w_k)=1}$
amalgamated over the infinite cyclic subgroups $A_2$ and $A_2'$ generated by $w_2$ and $t_2$, respectively. 

Continuing in this way, 
after a finite number of steps 
we obtain the following tower of amalgamated free products:
$$
G = FG(X_1) *_{A_1} (FG(X_2) *_{A_2} (\dots (FG(X_k)*_{A_k} G_k)\dots)),
$$
where $A_i$ is generated by $t_i=w_i$ and
\[
G_k=\Gp\pre{X_0,t_1,\dots,t_k}{u(t_1,\dots,t_k)=1}
= FG(X_0) * H   
\]
where 
$$
H=\Gp\pre{t_1,\dots,t_k}{u(t_1,\dots,t_k)=1}.
$$

For each $0 \leq i \leq k$ we shall now define a submonoid $M_i$ of $G_i$ inductively. 
The sequence of monoids we define will have the property that for all $i$ we have 
\[
M_{i+1} 
= 
M_i \cap G_{i+1}, 
\]
and also that $M_0 = P_w$. 
We will prove using Theorem~\ref{thm:amal} that the membership problem for $M_i$ in $G_i$ is decidable for all $i$, and since $M_0 = P_w$ this will suffice to complete the proof of the theorem. 

For all $1 \leq i \leq k$ set 
$$
W_{i} = \left\{\begin{array}{ll}
\pref(w_{i}) & \text{if }u(t_1,\dots,t_k)\text{ contains }t_{i}\text{ but not }t_{i}^{-1},\\
\pref(w_{i}^{-1}) & \text{if }u(t_1,\dots,t_k)\text{ contains }t_{i}^{-1}\text{ but not }t_{i},\\
\pref(w_{i})\cup \pref(w_{i}^{-1}) & \text{if }u(t_1,\dots,t_k)\text{ contains both }t_{i},t_{i}^{-1}.
\end{array}\right.
$$
Then set 
\begin{align*}
M_k  =\Mon\langle t_i^\eps:\  & 1\leq i\leq k\text{ and }\eps\in\{1,-1\}\\ 
& \text{such that }t_i^\eps\text{ is contained in }u(t_1,\dots,t_k)\rangle
\end{align*}
which is a submonoid of $G_k = FG(X_0) * H$, 
and define inductively 
\[
M_{i-1} = \Mon\gen{M_{i}\cup W_{i}} \leq G_{i-1} =
FG(X_i)*_{A_i} G_i
\]
for $1 \leq i \leq k$.

It may be shown that in fact $M_k = H$ 
using a similar argument as in the proof of Theorem \ref{thm:marker}.
Indeed, if, for example,
$t_i$ occurs in $u(t_1,\dots,t_k)$ but not $t_i^{-1}$, then one can write $u(t_1,\dots,t_k)\equiv u't_iu''$. Therefore, in 
$G_k$ we have $t_i^{-1}=u''u'\in M_k$. This shows that $t_i,t_i^{-1}\in M_k$ for all $1\leq i\leq k$, so the required conclusion $M_k=H$ follows.

Since the word $u$ defines a conservative factorisation of $w$,
it follows that the prefix monoid $P_w$
is equal to the submonoid of
$G = G_0$ generated by 
$\cup_{1 \leq i \leq k} W_i$. Now by definition we have 
\[
M_0 =  
\mathrm{Mon} \langle 
\mathrm{Mon} \langle 
\dots
\mathrm{Mon} \langle 
\mathrm{Mon} \langle 
H \cup W_k
\rangle 
\cup 
W_{k-1} 
\rangle 
\dots 
\rangle 
\cup 
W_1 
\rangle.
\]
From this, using the natural embeddings of $G_{i-1}$ into $G_i$ for all $i$ 
it may be verified that in $G= G_0$ we have  
\[
M_0 = \Mon\gen{W_1 \cup W_2 \cup \dots \cup W_k} = P_w. 
\]

So, it remains to argue by
induction that 
the memberhsip problem for 
$M_i$ 
withing $G_i$ is 
decidable 
for all $i$. 
Clearly, each $M_i$ is finitely generated, say $M_i=\Mon\gen{U_i}$ for some finite subset $U_i\subset M_i$.
Since both $FG(X_0)$ and the one-relator group $H$ have decidable word problems, the latter by Magnus' Theorem, and  $M_k\cap FG(X_0)=
\{1\}$ and $M_k\cap H=H$, we can apply Corollary \ref{cor:free-pr} 
to deduce that 
the membership 
problem for $M_k$ in $G_k$ is decidable.

Now assume inductively that the membership problem for $M_{i+1}$ 
in $G_{i+1}$ 
is decidable 
for some $i<k$. The latter is a one-relator group,
so it has decidable word problem, as does $FG(X_{i+1})$. 
Furthermore, since 
$FG(X_{i+1})$ 
is a free group, it follows from Benois' Theorem that the membership problem for 
$A_{i+1}=\Gp\gen{w_{i+1}}$ 
in  
$FG(X_{i+1})$ 
is decidable.
Since $t_{i+1}$ is one of its generators,  
it follows by   \cite[Theorem IV.5.3]{LSch}
that the membership problem for 
$A_{i+1}'=\Gp\gen{t_{i+1}}$ 
in the one-relator group $G_{i+1}$ is decidable. 

We claim that 
$M_i\cap G_{i+1}=M_{i+1}$.
Indeed, we have $G_i = FG(X_{i+1}) *_{A_{i+1}} G_{i+1}$, while $M_i=\Mon\gen{M_{i+1}\cup W_{i+1}}=
\Mon\gen{U_{i+1}\cup W_{i+1}}$. Therefore, by Lemma~\ref{lem:useful}, $M_i\cap G_{i+1}$ is generated by
$((U_{i+1}\cup W_{i+1})\cap G_{i+1})\cup A_{i+1}' = U_{i+1}\cup A_{i+1}'$ and thus $M_i\cap G_{i+1} = 
\Mon\gen{U_{i+1}\cup \{t_{i+1},t_{i+1}^{-1}\}} = M_{i+1}$ because $t_{i+1},t_{i+1}^{-1}\in H \subseteq M_{i+1}$. 
Similarly, 
Lemma~\ref{lem:useful}
yields that $M_i\cap FG(X_{i+1})$ is finitely generated by $W_{i+1}$.
This means that $M_i$ is indeed generated by $(M_i\cap G_{i+1})\cup (M_i\cap FG(X_{i+1}))$, and, furthermore, 
the membership problem for 
$M_{i+1} = M_i\cap G_{i+1}$ 
in $G_{i+1}$ 
is decidable 
by the inductive hypothesis,
while 
the membership problem for 
$M_i\cap FG(X_{i+1})$ in $FG(X_{i+1})$ 
is decidable by
by Benois' Theorem. 

Hence the hypotheses of Theorem \ref{thm:amal}
are satisfied, and applying this theorem we 
conclude that 
the membership problem for 
$M_i$ in $G_i$ is 
decidable. In particular, the membership problem for $M_0=P_w$ in $G_0=G$ is decidable, 
so the theorem follows.
\end{proof}

\begin{exa}
Consider the inverse monoid
$$
M = \Inv\pre{a,b,c,d}{ababcdcdababcdcdcdcdabab=1}.
$$
Then, if we denote $\alpha\equiv abab$ and $\beta\equiv cdcd$, the relator word becomes $w\equiv\alpha\beta\alpha\beta\beta\alpha$,
and we conclude, just as in Example \ref{exa:marker}, that both $\alpha$ and $\beta$ represent invertible elements of $M$.
Hence,
$$
w\equiv (abab)(cdcd)(abab)(cdcd)(cdcd)(abab)
$$
is a conservative factorisation that satisfies the conditions of Theorem \ref{thm:disj}. It follows that $G=\Gp\pre{a,b,c,d}
{\alpha\beta\alpha\beta\beta\alpha=1}$ has decidable prefix membership problem and that the word problem of $M$ is decidable, too.
In more detail,
$$
G = FG(a,b)*_{A_1}\left(FG(c,d)*_{A_2}\Gp\pre{t,s}{tstsst=1}\right),
$$
where $A_1$ and $A_2$ are infinite cyclic groups generated by $t=abab$ and $s=cdcd$, respectively, and the prefix monoid is 
generated by $t,t^{-1}=stsst,s,s^{-1}=tsstt$ (thus containing the whole group $\Gp\pre{t,s}{tstsst=1}$) and $a,ab,aba,c,cd,cdc$ 
(here $aba$ and $cdc$ are obviously redundant).
\end{exa}

\begin{exa}
We finish the subsection by a non-example, showing the significance of the disjoint content condition in Theorem \ref{thm:disj}.
Namely, let
$$
M = \Inv\pre{a,b,c}{ababbcbcbbababbcbcbbcbcbbababb=1}.
$$
Just as in previous examples, it is easy to see that 
$$
w = (ababb)(cbcbb)(ababb)(cbcbb)(cbcbb)(ababb)
$$
is a unital factorisation of the relator word, but the pieces do not have disjoint content (they have the letter $b$ in common).
As $w$ is cyclically reduced, the word problem for $M$ reduces to the prefix membership problem for the group $G$ defined by the presentation
$\Gp\pre{a,b,c}{\alpha\beta\alpha\beta\beta\alpha=1}$, where $\alpha\equiv ababb$ and $\beta\equiv cbcbb$. We can now replace
its sole relation by $ababbs^{-1}=1$ and $scbcbbscbcbbcbcbbs=1$ to obtain the free amalgamated decomposition
$$
G = FG(a,b)*_A \Gp\pre{c,d,s}{scbcbbscbcbbcbcbbs=1},
$$
where $A$ is a joint free subgroup of rank 2 generated by $b$ and $ababb=s$. (Note that the second factor satisfies the requirements
for applying Theorem \ref{thm:disj}.)

However, the prefix submonoid $P_w$ of $G$ is generated (after removing some obvious redundancies) by $s,s^{-1},a,ab,c,cb,cbcbb$.
It also contains $(cbcbb)^{-1}=(scbcbb)^2$ and thus $b^{-1}=(cbcbb)^{-1}(cb)^2$. The problem is that we don't know if $A\subseteq P_w$:
for this we would need $b\in P_w$ (which seems likely not to hold). Therefore, at present it seems that Theorem \ref{thm:amal} cannot
be applied to this case. 
\end{exa}

\subsection{Cyclically pinched presentations}

Following e.g.\ \cite{CFR,FRS} we say that a one-relator group $G$ is \emph{cyclically pinched} if it is defined by a presentation of the
form 
$$
\pre{X\cup Y}{u=v}
$$
where $u\in \ol{X}^*$ and $v\in\ol{Y}^*$ are nonempty reduced words, with the alphabets $X$ and $Y$ disjoint. Clearly, the defining relation
is equivalent to $uv^{-1}=1$, and in this sense we are going to refer to $\Gp\pre{X\cup Y}{uv^{-1}=1}$ as a \emph{cyclically pinched presentation}.

\begin{thm}\label{thm:pinch1}
The prefix membership problem is decidable for any group defined by a cyclically pinched presentation
$$
\Gp\pre{X\cup Y}{uv^{-1}=1}.
$$
Consequently, the word problem is decidable for all one-relator inverse monoids of the form 
$$
\Inv\pre{X\cup Y}{uv^{-1}=1}
$$
with $u\in\ol{X}^*$ and $v\in\ol{Y}^*$ both reduced words.
\end{thm}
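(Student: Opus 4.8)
The plan is to realise $G$ as an amalgamated free product and then apply Theorem~\ref{thm:amal51} to the prefix monoid. First I would rewrite the defining relation $uv^{-1}=1$ as $u=v$. Since $X$ and $Y$ are disjoint and $u\in\ol{X}^*$, $v\in\ol{Y}^*$ are nonempty reduced words, the elements $u\in FG(X)$ and $v\in FG(Y)$ are nontrivial, so $\Gp\gen{u}\leq FG(X)$ and $\Gp\gen{v}\leq FG(Y)$ are infinite cyclic and $u\mapsto v$ is an isomorphism between them. Hence
$$
G=\Gp\pre{X\cup Y}{u=v}=B*_A C,\qquad B=FG(X),\ C=FG(Y),\ A\cong\mathbb{Z},
$$
where $A$ is the common image of $\Gp\gen{u}$ and $\Gp\gen{v}$. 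Note that $w\equiv uv^{-1}$ is cyclically reduced (its first letter lies in $\ol{X}$, its last in $\ol{Y}$, and no cancellation occurs), so $\Inv\pre{X\cup Y}{uv^{-1}=1}$ is $E$-unitary by Theorem~\ref{thm:cr-euni}; the word-problem consequence will then follow from the prefix membership statement via Theorem~\ref{thm:pw-wp}.

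Next I would identify the prefix monoid explicitly. A prefix of $w\equiv uv^{-1}$ is either a prefix of $u$ or has the form $u\cdot s$ with $s$ a prefix of $v^{-1}$; writing $s\equiv (v'')^{-1}$ for a suffix $v''$ of $v$ and $v\equiv v'v''$, the relation $u=v$ gives $us=v'v''(v'')^{-1}=v'$ in $G$. Thus, as elements of $G$, the prefixes of $w$ are exactly the prefixes of $u$ (viewed in $B$) together with the prefixes of $v$ (viewed in $C$), so
$$
M:=P_w=\Mon\gen{\pref(u)\cup\pref(v)},\qquad \pref(u)\subseteq B,\ \pref(v)\subseteq C.
$$
In particular $M=\Mon\gen{(M\cap B)\cup(M\cap C)}$ holds automatically, and conditions (i)--(iii) of Theorem~\ref{thm:amal51} are straightforward: $B$ and $C$ are free groups, hence have decidable rational subset membership problem by Corollary~\ref{cor:ben}; and since $A$ is a finitely generated subgroup of the free group $B$ (resp.\ $C$) it is itself a rational subset, so $R\cap A$ is rational and computable for every $R\in\Rat(B)$ (resp.\ $\Rat(C)$) by the effective closure under intersection in Corollary~\ref{cor:ben}, i.e.\ $A$ is \EC\ in each factor.

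The remaining, and main, point is to verify that $M\cap B$ and $M\cap C$ are finitely generated. Here the hypothesis $A\subseteq M$ of Theorem~\ref{thm:amal} and of Lemma~\ref{lem:useful} is \emph{not} available---indeed $u^{-1}$ need not lie in $P_w$---so I would argue directly. Set $P=\Mon\gen{\pref(u)}\leq B$ and $Q=\Mon\gen{\pref(v)}\leq C$. The key observation is that $P\cap A$ and $Q\cap A$ are submonoids of $A\cong\mathbb{Z}$, and \emph{every} submonoid of $\mathbb{Z}$ is finitely generated. I would then run the reduction process of Lemma~\ref{lem:reduce} (as in Lemma~\ref{lem:useful}) on an arbitrary $g\in M\cap B$: writing $g$ as an alternating product of nontrivial $P$-blocks and $Q$-blocks, any failure of the product to be reduced forces some block to lie in $A$, hence in $P\cap A$ or $Q\cap A$, and absorbing such a block into its neighbours keeps every $B$-block inside $\Mon\gen{P\cup(Q\cap A)}$ and every $C$-block inside $\Mon\gen{Q\cup(P\cap A)}$. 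Induction on the number of blocks then yields
$$
M\cap B=\Mon\gen{P\cup(Q\cap A)},\qquad M\cap C=\Mon\gen{Q\cup(P\cap A)},
$$
both finitely generated since $P,Q$ are finitely generated and $P\cap A$, $Q\cap A$ are finitely generated submonoids of $\mathbb{Z}$.

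With all hypotheses of Theorem~\ref{thm:amal51} verified, that theorem gives decidability of the membership problem for $M=P_w$ in $G$, i.e.\ the cyclically pinched presentation has decidable prefix membership problem; and since $w$ is cyclically reduced, Theorem~\ref{thm:pw-wp} upgrades this to decidability of the word problem for $\Inv\pre{X\cup Y}{uv^{-1}=1}$. I expect the finite-generation step to be the main obstacle: the novelty compared with the free-product applications (Theorems~\ref{thm:marker} and~\ref{thm:disj}) is precisely that $A$ is a nontrivial amalgamated subgroup \emph{not} contained in the prefix monoid, and the device that makes the reduction terminate with a finite generating set is the elementary fact that submonoids of $\mathbb{Z}$ are finitely generated.
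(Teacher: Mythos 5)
Your proposal is correct in outline but takes a genuinely different route from the paper, and the difference is instructive. The paper realises $G$ as $FG(X)*_A FG(Y)$ and then applies Theorem~\ref{thm:amal} together with Lemma~\ref{lem:useful}, asserting that since the generating set of $P_w$ contains $u$ the prefix monoid contains the whole amalgamated subgroup $A=\Gp\gen{u}$. You instead apply Theorem~\ref{thm:amal51}, on the explicit grounds that $u^{-1}$ need not lie in $P_w$ and so the hypothesis $A\subseteq M$ may fail. You are right to be suspicious: for $u\equiv x$, $v\equiv y$ one has $P_w=\Mon\gen{x}$, which does not contain $x^{-1}$, so containing the generator $u$ of $A$ does not give containment of $A$. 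Your route through Theorem~\ref{thm:amal51}, whose hypotheses make no reference to $A\subseteq M$, is therefore the safer one, at the cost of having to verify finite generation of $M\cap B$ and $M\cap C$ by hand rather than reading it off from Lemma~\ref{lem:useful}.

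That finite-generation step is where your argument is under-justified. The absorption invariant you propose does not close up on its own: after one round of merging, the $C$-blocks live in $C'=\Mon\gen{Q\cup(P\cap A)}$, and when such a block falls into $A$ you need $C'\cap A\subseteq\Mon\gen{P\cup(Q\cap A)}$, which is not a formal consequence of anything you have said, since $C'\cap A$ could a priori be strictly larger than $Q\cap A$. The fact that rescues the argument is not merely that submonoids of $\mathbb{Z}$ are finitely generated, but the following dichotomy: $P\cap A$ and $Q\cap A$ are submonoids of $A=\Gp\gen{u}\cong\mathbb{Z}$ that contain the element $u$ (since $u\in\pref(u)$, $v\in\pref(v)$ and $u=v$ in $G$), so each is either $\Mon\gen{u}$ or all of $A$. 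If both equal $\Mon\gen{u}$ then both are contained in $P\cap Q$, absorption never produces anything outside $P$ or $Q$, and $M\cap B=P$, $M\cap C=Q$. If either equals $A$ then $A\subseteq P_w$ after all and Lemma~\ref{lem:useful} applies verbatim. In either case your formulas for $M\cap B$ and $M\cap C$ come out true, and the generating sets are computable: one decides which case holds by testing $u^{-1}\in\Mon\gen{\pref(u)}$ and $v^{-1}\in\Mon\gen{\pref(v)}$ inside the free factors via Benois' Theorem. With this repair your proof is complete, and it handles a configuration that the paper's own argument passes over.
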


\begin{proof}
As is well-known, a cyclically pinched group $G$ is a free product of free groups $FG(X)$ and $FG(Y)$ amalgamated over an infinite cyclic group $A$ such 
that $Af$ is generated by $u$ and $Ag$ is generated by $v$. Hence, it suffices to check if the conditions of Theorem A are satisfied for the prefix monoid
$P_w$ where $w\equiv uv^{-1}$. Indeed, the latter monoid is generated in $G$ by
$$
\pref(u) \cup u\cdot\pref(v^{-1}).
$$
Note that the set $u\cdot\pref(v^{-1})$ is in $G$ actually equal to $\pref(v)$. Since the generating set of $P_w$ contains $u$ (and thus $v$), the monoid
$P_w$ contains the whole amalgamated subgroup. Since this subgroup is finitely generated, Lemma \ref{lem:useful} implies that so are the monoids $P_w\cap FG(X)=
\Mon\gen{\pref(u)}$ and $P_w\cap FG(Y)=\Mon\gen{\pref(v)}$. Hence, the conditions (i) and (ii) of Theorem A are satisfied, and the remaining conditions hold 
by Benois' Theorem. Therefore, the membership problem for $P_w$ in $M$ is decidable.
\end{proof}

\begin{exa}\label{exa:surface}
Both the orientable surface group 
$$
\Gp\pre{a_1,\dots,a_n,b_1,\dots,b_n}{[a_1,b_1]\dots[a_n,b_n]=1}
$$
and the non-orientable surface group
$$
\Gp\pre{a_1,\dots,a_n}{a_1^2\dots a_n^2=1}
$$
of genus $n\geq 2$ are cyclically pinched (e.g.\ for $u\equiv [a_1,b_1]\dots[a_{n-1},b_{n-1}]$ and $v\equiv [a_n,b_n]^{-1}$ in the orientable
case, and for $u\equiv a_1^2\dots a_{n-1}^2$ and $v\equiv a_n^{-2}$ for the non-orientable case). 
Hence, the corresponding prefix membership problems 
are decidable.
For the first family of presentations above there are already several proofs in the literature that the prefix membership problem is decidable; see \cite{IMM,MMSu,Mea}.
\end{exa}

\section{Deciding membership in submonoids of HNN extensions}
\label{sec:hnn}

Our aim in this section is to give two general decidability results concerning the membership problem for certain submonoids of HNN extensions of finitely generated groups. Then in the next section these results will be applied to the prefix membership problem for certain one-relator groups.

Before stating the main results we first recall some definitions and fix some notation which will remain in place for the rest of the section.  Let $G$ be a group finitely generated by $X$ with canonical homomorphism $\pi: \ol{X}^* \rightarrow G$.  Let $A$ and $B$ be two isomorphic finitely generated subgroups of $G$ and let $\phi: A \rightarrow B$ be an isomorphism. Moreover, let $Y = \{y_1, \ldots, y_k \}$ and $Z = \{z_1, \ldots, z_k\}$ be, respectively, finite generating sets for $A$ and $B$, with canonical homomorphisms $\theta: \ol{Y}^* \rightarrow A$, $\xi: \ol{Z}^* \rightarrow B$ such that $y_i \theta \phi = z_i \xi$ for $1 \leq i \leq k$.   Set $a_i = y_i \theta$ and $b_i = z_i \xi$ for $1 \leq i \leq k$. Observe that $\{a_1, \ldots, a_k\}$ is a finite subset of $A$ which generates $A$, and similarly $\{b_1, \ldots, b_k\}$ is a finite subset of $B$ generating $B$. 

We use 
$$
G^* = G *_{t,\phi:A\to B}
$$
to denote the HNN extension of $G$ 
with a stable letter $t\not\in X$ and associated finitely generated subgroups $A,B$. 
So $G^*$ is the group obtained by taking a presentation for $G$ with respect to $X$, adding $t$ as a new generator, and adding the relations 
\[
t^{-1}a_it = b_i
\]
for $1\leq i\leq k$. Formally these relations should be written over the alphabet $\ol{X \cup \{t\}}$, that is, as $t^{-1} u_i t = v_i$ where $u_i, v_i \in \ol{X}^*$ satisfy $u_i \pi = a_i$ and $v_i \pi = b_i$. Throughout this section $u_i$ and $v_i$ will denote words with these properties.  Note that for any word $w(u_1, \ldots, u_k) \in \ol{X}^*$ representing the element $a\in A$, the word $w(v_1,\ldots,v_k)\in \ol{X}^*$ represents the element $a\phi\in B$.

We now state the two main results of this section.

\begin{theoremletter}\label{thm:hnn41}
Let $G^*=G *_{t,\phi:A\to B}$ be an HNN extension of a finitely generated group $G$ such that $A,B$ are also finitely
generated. Assume that $G$ has decidable word problem and that the membership problems of $A$ and $B$ in $G$ are decidable.
Let $M$ be a submonoid of $G^*$ such that the following conditions hold:
\begin{itemize}
\item[(i)] $A\cup B\subseteq M$;
\item[(ii)] $M\cap G$ is finitely generated, and
            $$M=\Mon\gen{(M\cap G)\cup\{t,t^{-1}\}};$$
\item[(iii)] the membership problem for $M\cap G$ in $G$ is decidable.
\end{itemize}
Then the membership problem for $M$ in $G^*$ is decidable.
\end{theoremletter}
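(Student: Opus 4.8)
The plan is to mirror the three-step strategy used for Theorem~\ref{thm:amal}, replacing the reduced-form theory of amalgamated free products by Britton's Lemma and the normal form theorem for HNN extensions (see \cite[Chapter~IV]{LSch}). Throughout I will write a general element of $G^*$ in the form $g = g_0 t^{\eps_1} g_1 \cdots t^{\eps_n} g_n$ with $g_i \in G$ and $\eps_i \in \{1,-1\}$, calling such an expression \emph{reduced} if it contains no pinch, i.e.\ no subword $t^{-1} g_i t$ with $g_i \in A$ and no subword $t g_i t^{-1}$ with $g_i \in B$. The first step is to prove an analogue of Lemma~\ref{lem:reduce}: every $g \in M$ admits a reduced form $g = g_0 t^{\eps_1} g_1 \cdots t^{\eps_n} g_n$ in which every factor $g_i$ lies in $M\cap G$. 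Since $M = \Mon\gen{(M\cap G)\cup\{t,t^{-1}\}}$ by (ii), any $g\in M$ is a product of elements of $M\cap G$ and occurrences of $t^{\pm 1}$; if this product is not reduced then it contains a pinch $t^{-1}g_i t$ with $g_i\in A$ (or $t g_i t^{-1}$ with $g_i \in B$). In the first case $t^{-1}g_i t = g_i\phi \in B$, and condition (i) gives $B\subseteq M$ while $g_i\phi\in G$, so $g_i\phi\in M\cap G$ can be absorbed into the two neighbouring $G$-factors, strictly reducing the number of occurrences of $t$; the second case is symmetric using $A\subseteq M$. As this quantity strictly decreases, the process terminates in a reduced form of the required shape.

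Next I would establish the effective version, the analogue of Lemma~\ref{lem:eff-reduce}: an algorithm that takes a word over $\ol{X\cup\{t\}}$ and returns a reduced form equal to it in $G^*$. The key subroutine detects pinches, using the decidability of the membership problems for $A$ and $B$ in $G$ to test whether a maximal $\ol{X}$-syllable represents an element of $A$ or of $B$, and then applies $\phi$ or $\phi^{-1}$. To carry out the latter one needs, given a syllable known to represent some $a\in A$, to compute a word representing $a\phi$; this is exactly where Lemma~\ref{lem:display} is invoked, its hypotheses holding because $G$ has decidable, hence recursively enumerable, word problem and $A$ has decidable membership in $G$. It yields an expression $w'(a_1,\dots,a_k)$ for $a$, whence $a\phi = w'(b_1,\dots,b_k)$ is obtained by substituting the words $v_i$ for the $u_i$. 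Each pinch-removal lowers the number of $t$-symbols, so the algorithm halts.

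I would then prove the analogue of Proposition~\ref{pro:g-in-M}: a reduced form $g = g_0 t^{\eps_1} g_1 \cdots t^{\eps_n} g_n$ represents an element of $M$ if and only if $g_i\in M\cap G$ for all $i$. The backward direction is immediate, since each $g_i\in M$ and $t^{\pm 1}\in M$. For the forward direction, the reduction lemma supplies a second reduced form of $g$ all of whose factors lie in $M\cap G$; by the HNN normal form theorem the two forms have the same length and the same exponent sequence, and corresponding factors differ by elements of $A\cup B$. Because $A$ and $B$ are \emph{subgroups} contained in $M$, these coset elements together with their inverses all lie in $M\cap G$, and since $M\cap G$ is closed under products it follows that the factors $g_i$ of the given reduced form also lie in $M\cap G$.

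Finally I would assemble these: given an input word $w$, apply the algorithm of the second step to obtain a reduced form $g_0 t^{\eps_1}\cdots t^{\eps_n}g_n$ equal to $w$ in $G^*$, and then use the decidability of membership for $M\cap G$ in $G$ (condition (iii)) to test whether every $g_i\in M\cap G$; by the characterisation this decides whether $w\in M$. I expect the main obstacle to be the effective reduction step, and within it the computation of $\phi$ and $\phi^{-1}$ on subgroup elements presented only as words over $\ol{X}$: converting the bare information ``$a\in A$'' into an explicit word in the fixed generators $a_1,\dots,a_k$ of $A$, so that $\phi$ can actually be applied, is the genuinely nontrivial content, and it is precisely what forces the recursive-enumerability hypothesis on the word problem of $G$ through Lemma~\ref{lem:display}.
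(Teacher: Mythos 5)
Your proposal is correct and follows essentially the same route as the paper: a reduction lemma showing every element of $M$ has a reduced form with all $G$-syllables in $M\cap G$, an effective pinch-removal algorithm resting on Lemma~\ref{lem:display} and the decidability of membership in $A$ and $B$, a Britton's-Lemma characterisation that a reduced form lies in $M$ iff all its $G$-syllables lie in $M\cap G$, and a final appeal to condition (iii). The only cosmetic difference is that in the reduction step the paper absorbs a pinch by noting $t^{-1}p_it\in M$ because $t,t^{-1}\in M$, whereas you invoke $A\cup B\subseteq M$ directly; both are valid here.
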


\begin{theoremletter}\label{thm:hnn}
Let $G^*=G *_{t,\phi:A\to B}$ be an HNN extension of a finitely generated group $G$ such that $A,B$ are also finitely
generated. Assume that the following conditions hold:
\begin{itemize}
\item[(i)] the rational subset membership problem is decidable in $G$;
\item[(ii)] $A \leq G$ is \EC.
\end{itemize}
Then for any finite $W_0,W_1,\dots,W_d,W_1',\dots,W_d'\subseteq G$, $d\geq 0$, the membership problem for 
$$
M=\Mon\gen{W_0\cup W_1t\cup W_2t^2\cup\dots\cup W_dt^d\cup tW_1'\cup\dots\cup t^dW_d'}
$$ 
in $G^*$ is decidable.
\end{theoremletter}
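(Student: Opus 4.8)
The plan is to follow the architecture of the proof of Theorem~\ref{thm:amal51}: first reduce an arbitrary input word to a canonical form, then characterise membership in $M$ by an effectively computable sequence of rational subsets of the edge group $A$, and finally settle a single rational subset membership question in $G$. The structural observation that makes everything work is that \emph{every} generator listed for $M$ is a word over $\ol{X}\cup\{t\}$ in which $t$ occurs only with positive exponent; hence so is any product of generators, and since a Britton reduction (a pinch $t^{-1}at$ or $tbt^{-1}$) requires an occurrence of $t^{-1}$, no reduction is ever possible. Thus every element of $M$ is already represented by a reduced word $g_0 t g_1 t \cdots t g_n$ with $g_i\in G$ and all $t$-syllables equal to $+1$; in particular $M\cap G=\Mon\gen{W_0}$, a finitely generated submonoid and hence a rational subset of $G$. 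For the preprocessing step I would use that condition~(i) yields decidability of the rational subset membership problem in $G$, and so of the membership problems for the finitely generated subgroups $A,B$ in $G$; this lets one Britton-reduce any input $w$. By the normal form theorem for HNN extensions (see \cite{LSch}) the sign sequence of the $t$-syllables of a reduced word is an invariant of the element, so if the reduced form of $w$ contains any $t^{-1}$ then $w\notin M$ and we reject; otherwise $w$ is given by an all-positive word $g_0 t g_1 t\cdots t g_n$, the case $n=0$ being the membership test $g_0\in\Mon\gen{W_0}$, decidable by~(i).

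The heart of the argument is an HNN analogue of Proposition~\ref{pro:amal55}. Two all-positive reduced words $g_0 t\cdots t g_n$ and $g_0' t\cdots t g_n'$ represent the same element of $G^*$ exactly when they differ by the sliding moves coming from the relations $a t = t (a\phi)$, $a\in A$; concretely $(g_0',\ldots,g_n')=(g_0 a_1^{-1},\,(a_1\phi)g_1 a_2^{-1},\,\ldots,\,(a_n\phi)g_n)$ for some $a_1,\ldots,a_n\in A$. On the other hand, analysing which all-positive words arise as products of the given generators shows that each slot value must lie in a prescribed rational subset $S_i$ of $G$: a slot interior to a generator $g t^{j}$ with $j\geq 2$ is forced to equal $1$, while a boundary slot has a value of the form $[\,W_{j}'\,]\cdot\Mon\gen{W_0}\cdot[\,W_{j'}\,]$, the optional factors being present according to the left/right types of the two adjacent generators. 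Combining the slide description with these slot constraints, $w\in M$ if and only if there exist $a_1,\ldots,a_n\in A$, a composition $n=j_1+\cdots+j_p$ with each $j_l\leq d$, and a left/right type for each block, for which the slid tuple matches the pattern. Since block lengths and types range over finite sets, I would encode them as finitely many \emph{modes} (current block type and number of $t$-syllables remaining in it) and, for each mode, propagate a rational subset $R_i\subseteq A$ of admissible slide elements by the recursion
\[
R_i = S_i^{-1}\,(R_{i-1}\phi)\,g_i \,\cap\, A ,
\]
with an appropriate initial set at slot $0$ and, at slot $n$, a final membership condition $g_n\in (R_{n-1}\phi)^{-1}S_n$; this is the direct analogue of the sets $Q_i$ in Proposition~\ref{pro:amal55}.

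It then remains to verify effectivity of the recursion. Each step uses only products and translates of rational subsets (which preserve rationality and are effective), the passage from $R_{i-1}\subseteq A$ to $R_{i-1}\phi\subseteq B$, and the intersection with $A$. The image under $\phi$ is computed effectively as a rational subset of $G$ by the effective version of Herbst's theorem for the isomorphism $\phi\colon A\to B$, namely Lemma~\ref{lem:ultimate:herbst}, whose hypotheses hold because $G$ has decidable (hence recursively enumerable) word problem by~(i); the intersection with $A$ is computed effectively by hypothesis~(ii), that $A\leq G$ is \EC. Hence all the $R_i$ are effectively computable rational subsets of $A$, and the final decision is a single rational subset membership test in $G$, decidable by~(i).

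I expect the main obstacle to be the bookkeeping for general $d$: a single generator $g t^{j}$ spans several $t$-syllables, forcing interior slots to be trivial and coupling the admissible values of neighbouring boundary slots through the left/right type choice. Producing one uniform recursion that simultaneously ranges over all compositions $n=j_1+\cdots+j_p$ and all type assignments---rather than hand-enumerating the exponentially many configurations---is the delicate combinatorial point, and correctly formulating and proving the HNN analogue of Proposition~\ref{pro:amal55} is where most of the work lies. The effectivity of the $\phi$-image and of the intersection with $A$, which is precisely what hypotheses~(i) and~(ii) are tailored to supply, is the second thing that must be checked with care.
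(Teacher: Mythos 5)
Your proposal is correct and follows essentially the same route as the paper: after Britton-reducing the input and rejecting if any $t^{-1}$ survives, membership is characterised by exactly the recursion $Q_{i+1}^{(j)}=((N_{n,i}^{(j)})^{-1}Q_i^{(j)}g_i\cap A)\phi$ of Proposition~\ref{pro:inM} (your $R_i$ and its $\phi$-placement are the same sets up to reindexing), with rationality and effectivity supplied by Theorem~\ref{thm:effective:herbst} and hypotheses (i)--(ii), precisely as in the paper. The only divergence is organisational: the ``delicate combinatorial point'' you defer is the content of Lemma~\ref{lem:seq}, which the paper settles not by a mode automaton but by a direct recursion on the number of $t$-occurrences that enumerates a finite (if exponentially large) list of admissible slot-constraint sequences $N_{m,i}^{(j)}$ realising exactly the interior-slot-$\{1\}$ versus boundary-slot-$W_\mu'\cdot\Mon\gen{W_0}\cdot W_{\mu}$ pattern you describe, so this step is more routine than you anticipate.
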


\begin{rmk}\label{rmk:inv}
The conclusion of the previous theorem also holds if we replace $t$ by $t^{-1}$ in the generating set of the monoid $M$.
Namely, it is straightforward to see that 
$$
\Mon\gen{W_0\cup W_1t^{-1}\cup W_2t^{-2}\cup\dots\cup W_dt^{-d}\cup t^{-1}W_1'\cup\dots\cup t^{-d}W_d'}
$$
is in fact equal to 
$$
\left(\Mon\gen{W_0^{-1}\cup (W_1')^{-1}t\cup (W_2')^{-1}t^2\cup\dots\cup (W_d')^{-1}t^d\cup tW_1^{-1}\cup\dots\cup t^dW_d^{-1}}\right)^{-1},
$$
which enables us to invoke Theorem \ref{thm:hnn}.
\end{rmk}

By standard results on HNN extensions (see e.g. \cite[Ch.\ IV]{LSch}) every element  $g\in G*_{t,\phi:A\to B}$ can be written written as 
$$
g = g_0t^{\eps_1}g_1t^{\eps_2}\dots t^{\eps_n}g_n,
$$
where $g_0,g_1,\dots,g_n\in G$ and $\eps_i\in\{1,-1\}$ for all $1\leq i\leq n$. This expression is said to be \emph{reduced} if for all $1\leq i\leq n-1$ we have $g_i\not\in A$ whenever $\eps_i=-1$ and $\eps_{i+1}=1$, and $g_i\not\in B$ whenever $\eps_i=1$ and $\eps_{i+1}=-1$. 
Similarly, for words $w_1, w_2, \ldots, w_n \in \ol{X}^*$ and $\eps_i \in \{1, -1\}$ $(1 \leq i \leq n)$ the word 
\[
w_0t^{\eps_1}w_1t^{\eps_2}\dots t^{\eps_n}w_n,
\]
is said to be \emph{a word in reduced form} if and only if 
$(w_0 \pi) t^{\eps_1}(w_1 \pi)t^{\eps_2}\dots t^{\eps_n}(w_n \pi)$
is a reduced expression.
The following result is standard and can be proved e.g. by applying \cite[Lemma IV.2.3]{LSch} and Britton's Lemma. 

\begin{lem}\label{lem:Britton}
An equality of two reduced forms
$$
g_0t^{\eps_1}g_1t^{\eps_2}\dots t^{\eps_n}g_n = h_0t^{\delta_1}h_1t^{\delta_2}\dots t^{\delta_m}h_m
$$
holds in the HNN extension $G*_{t,\phi:A\to B}$ if and only if $n=m$, $\eps_i=\delta_i$ for all $1\leq i\leq n$, and there exist $1=\alpha_0,\alpha_1,\dots,\alpha_n,\alpha_{n+1}=1\in A\cup B$ such that for all $0\leq i\leq n$ we have $\alpha_i\in A$ if $\eps_i=-1$, $\alpha_i\in B$ if $\eps_i=1$, and
$$
h_i = \alpha_i^{-1}g_i(t^{\eps_{i+1}}\alpha_{i+1}t^{-\eps_{i+1}}).
$$
\end{lem}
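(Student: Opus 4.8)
The plan is to prove the two implications separately: the reverse direction (sufficiency of the stated data) is a telescoping computation, while the forward direction rests on Britton's Lemma and an induction on the total number of stable letters. For the reverse direction, suppose we are given $\alpha_0 = 1, \alpha_1, \ldots, \alpha_n, \alpha_{n+1} = 1$ with the stated membership and the relation $h_i = \alpha_i^{-1} g_i (t^{\eps_{i+1}} \alpha_{i+1} t^{-\eps_{i+1}})$. First I would check that each bracketed factor lies in $G$: if $\eps_{i+1} = -1$ then $\alpha_{i+1} \in A$ and the defining relations give $t^{-1}\alpha_{i+1} t = \alpha_{i+1}\phi \in B \subseteq G$, while if $\eps_{i+1} = 1$ then $\alpha_{i+1} \in B$ and $t \alpha_{i+1} t^{-1} = \alpha_{i+1}\phi^{-1} \in A \subseteq G$, so each $h_i$ really is an element of $G$. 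Substituting these expressions into $h_0 t^{\eps_1} h_1 \cdots t^{\eps_n} h_n$ and cancelling the successive pairs $t^{-\eps_{i+1}} t^{\eps_{i+1}}$ and $\alpha_{i+1}\alpha_{i+1}^{-1}$ telescopes the whole product down to $g_0 t^{\eps_1} g_1 \cdots t^{\eps_n} g_n$, using $\alpha_0 = \alpha_{n+1} = 1$ at the two ends. This establishes the equality in $G^*$.

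For the forward direction, assume the two reduced forms are equal in $G^*$ and rewrite this as
\[
g_0 t^{\eps_1} g_1 \cdots t^{\eps_n} (g_n h_m^{-1}) t^{-\delta_m} h_{m-1}^{-1} \cdots t^{-\delta_1} h_0^{-1} = 1,
\]
using that the inverse of $h_0 t^{\delta_1} \cdots t^{\delta_m} h_m$ is $h_m^{-1} t^{-\delta_m} \cdots t^{-\delta_1} h_0^{-1}$. I would argue by induction on $n + m$. When $n+m = 0$ the statement is simply $g_0 = h_0$ in $G$, and the conclusion holds with $\alpha_0 = \alpha_1 = 1$. Otherwise the displayed word contains at least one stable letter, so by Britton's Lemma it must contain a pinch. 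Since $g_0 t^{\eps_1} \cdots t^{\eps_n} g_n$ is a reduced form, and since the inverse of a reduced form is again reduced, no pinch can occur strictly inside either half. Hence the only possible pinch straddles the junction $t^{\eps_n} (g_n h_m^{-1}) t^{-\delta_m}$; for this to be a pinch we must have $\eps_n = \delta_m$, together with $g_n h_m^{-1} \in A$ when $\eps_n = -1$ and $g_n h_m^{-1} \in B$ when $\eps_n = 1$. In particular a pinch at the junction requires both $n \geq 1$ and $m \geq 1$, which is what ultimately forces $n = m$.

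The induction step then proceeds as follows. Set $\alpha_n := g_n h_m^{-1}$, so that $h_m = \alpha_n^{-1} g_n$ matches the required formula (with $\alpha_{n+1} = 1$) and $\alpha_n$ lies in the correct subgroup. Replacing the pinch by its value $c := t^{\eps_n}\alpha_n t^{-\eps_n} \in G$ and absorbing $c$ into the adjacent group element yields the shorter relation
\[
g_0 t^{\eps_1} \cdots t^{\eps_{n-1}} (g_{n-1} c) = h_0 t^{\delta_1} \cdots t^{\delta_{m-1}} h_{m-1},
\]
in which both sides are again reduced forms, since the only group element that changed sits at the very end of the left-hand side, where reducedness imposes no condition. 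Applying the induction hypothesis to this pair gives $n-1 = m-1$, the equalities $\eps_i = \delta_i$ for $1 \leq i \leq n-1$, and elements $\alpha_0 = 1, \alpha_1, \ldots, \alpha_{n-1}$ of $A \cup B$ with the correct memberships realising the formula for $h_0, \ldots, h_{n-2}$ unchanged and for $h_{n-1}$ with $g_{n-1}$ replaced by $g_{n-1} c$. Re-inserting $c = t^{\eps_n}\alpha_n t^{-\eps_n}$ recovers $h_{n-1} = \alpha_{n-1}^{-1} g_{n-1}(t^{\eps_n}\alpha_n t^{-\eps_n})$, so the sequence $\alpha_0, \ldots, \alpha_{n-1}$ supplied by induction, together with the $\alpha_n$ extracted from the junction and the terminal $\alpha_{n+1} = 1$, furnishes exactly the data the lemma demands, completing the induction.

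The hard part will be the bookkeeping in the forward direction rather than any single deep idea. One must verify carefully that deleting the junction pinch leaves two genuinely reduced shortened words (so that the induction hypothesis applies), that absorbing the factor $c$ at position $n-1$ does not spoil reducedness, and that the subgroup memberships $\alpha_i \in A$ versus $\alpha_i \in B$ produced at each stage are precisely those dictated by the signs $\eps_i$. Keeping the $\alpha$-indices aligned across the inductive step — in particular confirming that the formula for $h_{n-1}$ reassembles correctly once $c$ is re-expanded — is where all the care is required; the remainder reduces to routine cancellation and appeals to Britton's Lemma (alternatively, the whole argument may be phrased through \cite[Lemma IV.2.3]{LSch}).
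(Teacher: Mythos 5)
Your proof is correct, and it is exactly the standard argument the paper has in mind: the paper gives no proof of this lemma, merely remarking that it follows from \cite[Lemma IV.2.3]{LSch} and Britton's Lemma, and your write-up (telescoping for sufficiency; Britton's Lemma forcing a pinch at the junction of the concatenated word, then induction on $n+m$) is a correct elaboration of precisely that. The key verifications — that the inverse of a reduced form is reduced, that the only admissible pinch straddles the junction (forcing $n,m\geq 1$ simultaneously and hence $n=m$), and that absorbing $c=t^{\eps_n}\alpha_n t^{-\eps_n}$ into $g_{n-1}$ preserves reducedness — are all present and sound.
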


\begin{lem}\label{lem:HNN:reduced}
Let $G^*=G *_{t,\phi:A\to B}$ and let $M$ be a submonoid of $G^*$ such that 
\[
M = \Mon{(M \cap G) \cup \{t, t^{-1} \}}.  
\]
Then every element $g \in M$ can be written in reduced form  
$$
g = g_0t^{\eps_1}g_1t^{\eps_2}\dots t^{\eps_n}g_n
$$
such that $g_i\in M\cap G$  for all $0 \leq i \leq n$. 
\end{lem}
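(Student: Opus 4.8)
The plan is to start from an arbitrary expression of $g \in M$ as a product of the given monoid generators, and then reduce it to a reduced form in the sense of HNN extensions while keeping track of the fact that all the $G$-factors stay inside $M \cap G$. Since $M = \Mon\gen{(M\cap G)\cup\{t,t^{-1}\}}$, any $g\in M$ can be written as a product of elements of $M\cap G$ and of the letters $t,t^{-1}$. Collecting consecutive $G$-factors together, this immediately gives an expression
\[
g = g_0 t^{\eps_1} g_1 t^{\eps_2} \dots t^{\eps_n} g_n
\]
with each $g_i \in M\cap G$ (where $M\cap G$ is a submonoid of $G$, so products of its elements stay inside it) and each $\eps_i \in \{1,-1\}$. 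If this expression is already reduced, there is nothing more to do. Otherwise it contains a \emph{pinch}, that is, a subword $t^{-1} g_i t$ with $g_i \in A$, or a subword $t g_i t^{-1}$ with $g_i \in B$.

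The main step is to show that such a pinch can be removed without leaving $M\cap G$. Suppose we have a pinch $t^{-1} g_i t$ with $g_i \in A$. Then in $G^*$ the defining relations give $t^{-1} g_i t = g_i\phi \in B$. Here is where I expect to use condition essentially built into the hypotheses of the ambient theorems, namely that $A \cup B \subseteq M$ (this is exactly condition (i) of Theorem~\ref{thm:hnn41}); under that assumption $g_i \phi \in B \subseteq M \cap G$. Replacing the subword $t^{-1} g_i t$ by $g_i\phi$ and absorbing it into the neighbouring $G$-factors $g_{i-1}(g_i\phi)g_{i+1}$, which again lies in $M\cap G$ because $M\cap G$ is closed under products, produces a shorter expression for $g$ of the same shape, with strictly fewer occurrences of $t^{\pm 1}$. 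The dual argument handles a pinch $t g_i t^{-1}$ with $g_i \in B$, using $t g_i t^{-1} = g_i \phi^{-1} \in A \subseteq M\cap G$.

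Since each pinch removal strictly decreases the number of stable-letter occurrences $n$, the process terminates after finitely many steps, leaving a reduced form whose $G$-factors all lie in $M\cap G$. The only genuinely delicate point, and the main obstacle, is the appeal to $A\cup B\subseteq M$: without it, the element $g_i\phi$ obtained after collapsing a pinch need not lie in $M\cap G$, and the reduction could escape the monoid. I would therefore flag that this lemma is implicitly being applied in the setting of Theorem~\ref{thm:hnn41}, where hypothesis (i) guarantees $A\cup B\subseteq M$, exactly mirroring the role played by the inclusion $A\subseteq M$ in Lemma~\ref{lem:reduce} for the amalgamated case. (Indeed, the structure of this proof is the direct HNN analogue of the argument for Lemma~\ref{lem:reduce}.) Everything else is routine bookkeeping, so I would keep the write-up short and concentrate on stating the pinch-removal step and its termination cleanly.
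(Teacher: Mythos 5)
Your overall strategy (write $g$ as a product of generators, collect consecutive $G$-factors, remove pinches by $t$-reductions, and check the $G$-terms stay in $M\cap G$) is exactly the paper's. However, you have misidentified the key point, and as a result you prove a strictly weaker statement than the lemma. You claim that removing a pinch $t^{-1}g_it$ with $g_i\in A$ requires the extra hypothesis $A\cup B\subseteq M$ in order to conclude $g_i\phi\in M\cap G$, and you propose to import condition (i) of Theorem~\ref{thm:hnn41} into the lemma. The lemma is stated, and is true, without that hypothesis. The observation you are missing is that the hypothesis $M=\Mon\gen{(M\cap G)\cup\{t,t^{-1}\}}$ already gives $t,t^{-1}\in M$, and the collapsed element is literally
$$
g_i\phi \;=\; t^{-1}\,g_i\,t,
$$
a product of three elements of $M$ (namely $t^{-1}$, $g_i$, and $t$), hence an element of $M$; since it also lies in $B\subseteq G$, it lies in $M\cap G$. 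The same works for a pinch $tg_it^{-1}$ with $g_i\in B$, via $g_i\phi^{-1}=tg_it^{-1}$. This is precisely how the paper argues, and it is why no containment of $A$ or $B$ in $M$ is needed. Your version would still suffice for the application inside Theorem~\ref{thm:hnn41} (where $A\cup B\subseteq M$ does hold), but it does not establish the lemma as stated; the fix is the one-line observation above, after which the rest of your write-up (termination by induction on the number of stable-letter occurrences) goes through unchanged.
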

\begin{proof}
Assume that $g\in M$. Then by assumption  
$$
g = h_0t^{\delta_1}h_1t^{\delta_2}\dots t^{\delta_k}h_k
$$
for some $h_i\in M\cap G$, $0\leq i\leq k$, and $\delta_i\in\{1,-1\}$, $1\leq i\leq k$. By \cite[page 184]{LSch}, the above product, which itself is not necessarily reduced, can be transformed into a reduced form by applying a finite number of $t$-reductions. Recall that $t$-reductions the following operations 
\begin{itemize}
    \item replace $t^{-1} g t$, where $g \in A$, by $g \phi$, or
    \item replace $t g t^{-1}$, where $g \in B$, by $g \phi^{-1}$. 
\end{itemize}
Hence to prove the lemma it suffices to show that, under our assumptions, applications of $t$-reductions to a product of the above form preserves the property of the $G$-terms belonging to $M$. 

Consider a product 
$p_0t^{\gamma_1}p_1t^{\gamma_2}\dots t^{\gamma_m}p_m$, 
such that $p_i \in M$ for $1 \leq i \leq m$, to which a $t$-reduction can be applied.  
Suppose without loss of generality this is a $t$-reduction of the first kind listed above. Applying this $t$-reduction will result in a product of the form 
$p_0t^{\gamma_1}p_1t^{\gamma_2}\dots 
p_{i-1}
(p_i \phi)
p_{i+1}
\dots 
t^{\gamma_m}p_m$, where $\gamma_i=-1$, $p_i \in A$ and $\gamma_{i+1}=-1$. 
Since $t, t^{-1} \in M$ by assumption, it follows that $p_i \phi = t^{-1} p_i t \in M$ and hence $p_{i-1} p_i p_{i+1} \in M$. Hence the $G$-terms in the above product still all belong to $M$. 

The argument when a $t$-reduction of the second kind is applied is analogous. 
\end{proof}

In the following lemma we identify conditions under which  the process in Lemma \ref{lem:HNN:reduced} can be performed algorithmically. 

\begin{lem}\label{lem:HNN-eff-red}
Let $G^*=G *_{t,\phi:A\to B}$ such that  $G$, $A$ and $B$ are all finitely generated,  $G$ has recursively enumerable word problem,  and the membership problems for $A$ and $B$ within $G$ are both decidable.  Then there an algorithm which takes any word $w \in (\ol{X \cup \{t\}})^*$ as input and returns a word in reduced form 
\[
w= w_0t^{\eps_1}w_1t^{\eps_2}\dots t^{\eps_n}w_n,
\]
with $w_i \in \ol{X}^*$ for $1 \leq i \leq n$. 
\end{lem}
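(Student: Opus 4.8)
The plan is to mimic the proof of Lemma~\ref{lem:eff-reduce}, replacing the appeal to $A$-membership in the two factors by appeals to $A$- and $B$-membership in the single group $G$, and replacing the syntactic collapsing of adjacent terms by the effective application of $t$-reductions. First I would parse the input word $w \in (\ol{X \cup \{t\}})^*$ into the shape
\[
w \equiv w_0 t^{\eps_1} w_1 t^{\eps_2} \dots t^{\eps_n} w_n,
\]
where each $w_i \in \ol{X}^*$ and each $\eps_i \in \{1,-1\}$; this is a purely syntactic step. Such a word is in reduced form precisely when it contains no \emph{pinch}, that is, no index $i$ with $1 \le i \le n-1$ for which either $\eps_i = -1$, $\eps_{i+1} = 1$ and $w_i \pi \in A$, or $\eps_i = 1$, $\eps_{i+1} = -1$ and $w_i \pi \in B$. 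Since by hypothesis the membership problems for $A$ and $B$ in $G$ are decidable, we can algorithmically test, for each such $i$ in turn, whether the corresponding pinch condition holds, and hence decide whether $w$ is already reduced.

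Next I would show how to perform a single $t$-reduction effectively. Suppose the algorithm detects the first pinch at index $i$, say of the form $t^{-1} w_i t$ with $\eps_i = -1$, $\eps_{i+1} = 1$ and $w_i \pi \in A$ (the case $t w_i t^{-1}$ with $w_i \pi \in B$ is entirely analogous, interchanging the roles of $A$ and $B$ and of $\phi$ and $\phi^{-1}$). Since $G$ has recursively enumerable word problem and the membership problem for $A = \Gp\gen{u_1, \ldots, u_k}$ in $G$ is decidable, Lemma~\ref{lem:display} applies and furnishes a word $\omega(s_1, \ldots, s_k)$ over an abstract alphabet $\{s_1, \ldots, s_k\}$ such that $w_i = \omega(u_1, \ldots, u_k)$ holds in $G$. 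Recalling that $\phi$ sends $u_j \pi = a_j$ to $v_j \pi = b_j$, the word $\omega(v_1, \ldots, v_k) \in \ol{X}^*$ then represents $w_i \phi \in B$. The algorithm therefore replaces the segment $w_{i-1}\, t^{-1} w_i t\, w_{i+1}$ by $w_{i-1}\, \omega(v_1, \ldots, v_k)\, w_{i+1}$, which amounts to applying a valid $t$-reduction of $G^*$ and hence does not change the element of $G^*$ represented.

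Finally, each application of a $t$-reduction removes two occurrences of $t^{\pm 1}$, so the total number of $t$-symbols strictly decreases. Since this number is a non-negative integer, the process terminates after finitely many steps in a word containing no pinch, which by definition (and Britton's Lemma, cf.\ Lemma~\ref{lem:Britton}) is a reduced form equal to $w$ in $G^*$. Correctness of the output is guaranteed because every rewriting step is a genuine $t$-reduction, exactly as in the non-effective Lemma~\ref{lem:HNN:reduced}. I expect the only point requiring real care to be the effective production of the replacement word $\omega(v_1, \ldots, v_k)$ representing $w_i \phi$: this is where the recursive enumerability of the word problem of $G$ is essential, via Lemma~\ref{lem:display}, since it is what allows us to pass from the mere knowledge that $w_i$ lies in $A$ to an explicit expression of $w_i$ in the generators of $A$, and thence to an explicit word for its image under $\phi$.
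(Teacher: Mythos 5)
Your argument is correct and is exactly the approach the paper takes: the paper's proof is a one-line citation to the discussion of effective $t$-reductions in Lyndon--Schupp together with Lemma~\ref{lem:display}, and your proposal simply spells out those details (pinch detection via decidable membership in $A$ and $B$, rewriting the pinched segment via Lemma~\ref{lem:display} and the substitution $u_j \mapsto v_j$, and termination by counting occurrences of $t^{\pm 1}$). No gaps.
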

\begin{proof}
This may be proved by combining the comments from \cite[pages 184--185]{LSch} discussing conditions under which the process of conducting $t$-reductions is effective, together with Lemma~\ref{lem:display}. 
\end{proof}

\subsection{Proof of Theorem~\ref{thm:hnn41}}

A key ingredient in our proof is given by the following auxiliary result, which strengthens Lemma \ref{lem:HNN:reduced} under the stronger conditions of Theorem \ref{thm:hnn41}.

\begin{pro}\label{pro:hnn43}
Assuming all the notation and conditions from Theorem \ref{thm:hnn41}, let
$$
g = g_0t^{\eps_1}g_1t^{\eps_2}\dots t^{\eps_n}g_n
$$
be an element of $G^*=G *_{t,\phi:A\to B}$ written in reduced form. Then $g\in M$ if and only if $g_i\in M\cap G$ for all  $0\leq i\leq n$. 
\end{pro}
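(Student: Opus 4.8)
The statement is the HNN-extension analogue of Proposition~\ref{pro:g-in-M}, and the plan is to mirror that proof, replacing the amalgam normal-form lemma with Britton's Lemma (here in the form of Lemma~\ref{lem:Britton}). The backward implication is immediate: if each $g_i \in M \cap G$ then, since $t, t^{-1} \in M$ by condition (ii) of Theorem~\ref{thm:hnn41} and $M$ is a submonoid, the product $g_0 t^{\eps_1} g_1 \dots t^{\eps_n} g_n$ lies in $M$. So all the work is in the forward direction.

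For the forward direction, first I would invoke Lemma~\ref{lem:HNN:reduced}: since $g \in M$, it can be rewritten in reduced form $g = h_0 t^{\delta_1} h_1 \dots t^{\delta_m} h_m$ with every $G$-term $h_j \in M \cap G$. Now I have two reduced forms for the same element of $G^*$, namely the given form $g_0 t^{\eps_1} \dots t^{\eps_n} g_n$ and the constructed form $h_0 t^{\delta_1} \dots t^{\delta_m} h_m$. Applying Lemma~\ref{lem:Britton} yields $m = n$, $\eps_i = \delta_i$, and elements $1 = \alpha_0, \alpha_1, \dots, \alpha_n, \alpha_{n+1} = 1 \in A \cup B$ (with $\alpha_i \in A$ when $\eps_i = -1$ and $\alpha_i \in B$ when $\eps_i = 1$) satisfying $h_i = \alpha_i^{-1} g_i (t^{\eps_{i+1}} \alpha_{i+1} t^{-\eps_{i+1}})$ for all $0 \leq i \leq n$. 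Solving each of these equations for $g_i$ gives $g_i = \alpha_i\, h_i\, (t^{\eps_{i+1}} \alpha_{i+1}^{-1} t^{-\eps_{i+1}})$, since $(t^{\eps}\alpha t^{-\eps})^{-1} = t^{\eps}\alpha^{-1}t^{-\eps}$.

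It then remains to check that each of the three factors lies in the submonoid $M \cap G$ of $G$, from which $g_i \in M \cap G$ follows by closure under multiplication. The middle factor $h_i$ is in $M \cap G$ by construction, and the factor $\alpha_i$ lies in $A \cup B \subseteq M \cap G$ by condition (i). The one point requiring care, and the step I expect to be the main (if modest) obstacle, is the conjugated factor $t^{\eps_{i+1}} \alpha_{i+1}^{-1} t^{-\eps_{i+1}}$: I must verify that it too lands inside $A \cup B$. This is exactly where the defining relations of the HNN extension enter. If $\eps_{i+1} = -1$ then $\alpha_{i+1} \in A$, so $t^{-1} \alpha_{i+1}^{-1} t = (\alpha_{i+1}^{-1})\phi \in B$; if $\eps_{i+1} = 1$ then $\alpha_{i+1} \in B$, so $t \alpha_{i+1}^{-1} t^{-1} = (\alpha_{i+1}^{-1})\phi^{-1} \in A$ (and when $i = n$ we have $\alpha_{n+1} = 1$, making the factor trivial). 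In either case the conjugate lies in $A \cup B \subseteq M \cap G$, and hence $g_i \in M \cap G$ for all $i$, completing the argument.
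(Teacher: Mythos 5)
Your proposal is correct and follows essentially the same route as the paper: backward direction via $t,t^{-1}\in M$, forward direction via Lemma~\ref{lem:HNN:reduced} followed by Lemma~\ref{lem:Britton}, concluding with $g_i\in(A\cup B)(M\cap G)(A\cup B)\subseteq M\cap G$. The only (immaterial) difference is that you apply Britton's Lemma with the roles of the two reduced forms swapped and then solve for $g_i$, and you make explicit the observation -- left implicit in the paper -- that $t^{\eps_{i+1}}\alpha_{i+1}t^{-\eps_{i+1}}$ lands back in $A\cup B$ via $\phi$ or $\phi^{-1}$.
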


\begin{proof}
($\Rightarrow$) 
By Lemma~\ref{lem:HNN:reduced}, since $g\in M$ there exists a reduced form 
$$
g = m_0t^{\mu_1}m_1t^{\mu_2}\dots t^{\mu_k}m_k
$$
such that $m_i\in M$ for all $0\leq i\leq k$. By Lemma \ref{lem:Britton}, we must have $k=n$, $\mu_i=\eps_i$ and
$$
g_i = \alpha_i^{-1} m_i (t^{\eps_{i+1}}\alpha_{i+1}t^{-\eps_{i+1}})
$$
for all $0\leq i\leq n$ and some $1=\alpha_0,\alpha_1\,\dots,\alpha_n,\alpha_{n+1}=1\in A\cup B$ (such that $\alpha_i\in A$
whenever $\eps_i=-1$ and $\alpha_i\in B$ otherwise). It follows that $$g_i\in (A\cup B)(M\cap G)(A\cup B)\subseteq M\cap G.$$

($\Leftarrow$) is trivial, since $t,t^{-1}\in M$.
\end{proof}

\begin{proof}[Proof of Theorem \ref{thm:hnn41}]
To prove the theorem we must show that  there is an algorithm which takes any word $w$ from $(\ol{X \cup \{t \}})^*$ as input and decides whether or not the word represents an element of the submonoid $M$.  The hypotheses of Lemma~\ref{lem:HNN-eff-red} are clearly satisfied  Applying this lemma  we conclude that  there is an algorithm that given any such word $w$ returns a word 
\[
w= w_0t^{\eps_1}w_1t^{\eps_2}\dots t^{\eps_n}w_n,
\]
with $w_i \in \ol{X}^*$ for $1 \leq i \leq n$, that is in reduced form.  It follows from Proposition~\ref{pro:hnn43} that $w$ represents an element of $M$ if and only if $w_i \in M \cap G$ for all $0 \leq i \leq n$.  However, this is decidable by assumption (iii), and this completes the proof. 
\end{proof}

Combining Benois' Theorem and Britton's Lemma with Theorem~\ref{thm:hnn41} gives the following result. 

\begin{cor}\label{cor:hnn-free197} 
Let $G^*=G *_{t,\phi:A\to B}$ where $G = FG(X)$ is a free group of finite rank, and $A$ and $B$ are finitely generated subgroups of $G$. Let $T$ be a finitely generated submonoid of $G$ containing $A$, and let $M$ be the submonoid of $G^*$ generated by $T \cup \{t, t^{-1} \}$. 
Then the membership problem for $M$ in $G^*$ is decidable. 
\end{cor}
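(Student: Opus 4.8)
The plan is to verify the three conditions of Theorem~\ref{thm:hnn41} for the submonoid $M=\Mon\gen{T\cup\{t,t^{-1}\}}$ of $G^*=G*_{t,\phi:A\to B}$, and then invoke that theorem. The ambient hypotheses on $G$, $A$, $B$ are immediate: since $G=FG(X)$ is free it has decidable word problem, and since $A$ and $B$ are finitely generated subgroups of a free group, Corollary~\ref{cor:ben} (Benois' Theorem) guarantees that the membership problems for $A$ and $B$ in $G$ are decidable. It remains to check conditions (i)--(iii) for $M$.

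Condition (i), namely $A\cup B\subseteq M$, I would check directly: $A\subseteq T\subseteq M$ by hypothesis, and for each $b=a\phi\in B$ with $a\in A$ we have $b=t^{-1}at\in M$ since $t,t^{-1},a\in M$, whence $B\subseteq M$. The second half of condition (ii), that $M=\Mon\gen{(M\cap G)\cup\{t,t^{-1}\}}$, is immediate because $T\subseteq M\cap G$ forces $M=\Mon\gen{T\cup\{t,t^{-1}\}}\subseteq\Mon\gen{(M\cap G)\cup\{t,t^{-1}\}}\subseteq M$.

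The key step, on which the remaining conditions rest, is the identification
\[
M\cap G=\Mon\gen{T\cup B}.
\]
The inclusion $\supseteq$ is clear since $T\subseteq M$ and $B\subseteq M$ by~(i). For $\subseteq$ I would take $g\in M\cap G$, write it as a (not necessarily reduced) product $g=s_0t^{\delta_1}s_1\cdots t^{\delta_k}s_k$ with each $s_i\in T$, and reduce it to reduced form by successive $t$-reductions (the standard reduction process recalled in the proof of Lemma~\ref{lem:HNN:reduced}). Since $g\in G$, Britton's Lemma forces every such expression with $k\geq 1$ to contain a pinch, so the process strictly decreases the $t$-length and terminates in an element of $G$ free of $t$. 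The crucial observation is that the property ``every $G$-term lies in $\Mon\gen{T\cup B}$'' is an invariant of $t$-reduction: reducing $t^{-1}st$ with $s\in A$ produces $s\phi\in B$, while reducing $tst^{-1}$ with $s\in B$ produces $s\phi^{-1}\in A\subseteq T$, so in either case the merged $G$-term is a product of elements of $\Mon\gen{T\cup B}$. As the initial terms $s_i$ lie in $T\subseteq\Mon\gen{T\cup B}$, the final $t$-free term $g$ lies in $\Mon\gen{T\cup B}$, establishing the inclusion.

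With this identification in hand the remaining conditions follow quickly. As $T$ and $B$ are both finitely generated, $M\cap G=\Mon\gen{T\cup B}$ is a finitely generated submonoid of $G$, giving the first half of condition~(ii); and since $G=FG(X)$ is free, Corollary~\ref{cor:ben} shows its finitely generated submonoids have decidable membership problem, giving condition~(iii). Having verified all the hypotheses, Theorem~\ref{thm:hnn41} yields decidability of the membership problem for $M$ in $G^*$. I expect the main obstacle to be precisely the inclusion $M\cap G\subseteq\Mon\gen{T\cup B}$: one must argue carefully, via Britton's Lemma and the invariance of the generating set under $t$-reductions, that no ``deeper'' elements of $G$ can arise that are not already products of elements of $T$ and $B$.
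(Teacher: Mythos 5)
Your proof is correct and follows exactly the route the paper intends: the corollary is stated there without a written proof, justified only by the phrase ``Combining Benois' Theorem and Britton's Lemma with Theorem~\ref{thm:hnn41}'', and your verification of conditions (i)--(iii) -- in particular the identification $M\cap G=\Mon\gen{T\cup B}$ via the $t$-reduction invariant -- is precisely the fleshing-out of that remark. The key step you flag (that $t$-reductions preserve membership of the $G$-terms in $\Mon\gen{T\cup B}$, since a pinch produces an element of $A\cup B$) is argued correctly and mirrors the paper's own Lemma~\ref{lem:HNN:reduced}.
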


This can be applied to show that membership is decidable in certain finitely generated submonoids of surface groups. It is an open problem whether in general the submonoid membership problem is decidable for surface groups. 

\subsection{Proof of Theorem \ref{thm:hnn}}

The following lemma of combinatorial nature will turn out to be crucial in what follows.

\begin{lem}\label{lem:seq}
Let $G^*=G *_{t,\phi:A\to B}$ be an HNN extension of a finitely generated group $G$ with finitely generated associated subgroups $A,B$.  Let $G$ be finitely generated by $X$ with canonical homomorphism $\pi: \ol{X}^* \rightarrow G$.  Then there is an algorithm which takes any finite list of finite subsets  $W_0,W_1,\dots,W_d,W_1',\dots,W_d'$ of $G$  and any integer $m \geq 0$ and 
returns an integer $C_m\geq 1$ and  a finite set of FSA 
\[
\{  \mathcal{N}_{m,i}^{(j)} :\ 0 \leq i \leq m, 1 \leq j \leq C_m \}
\]
over $\ol{X}$  such that with $N_{m,i}^{(j)} = [L(\mathcal{N}_{m,i}^{(j)})] \pi$ 
and
\[
D_m =  \left\{ h_0 t h_1 t \dots t h_m :  (h_0, \dots, h_m) \in  \bigcup_{1 \leq j \leq C_m} \left(  N_{m,0}^{(j)} \times \dots \times N_{m,m}^{(j)} \right) \right\}, 
\]
we have 
\[
\Mon\gen{W_0\cup W_1t\cup W_2t^2\cup\dots\cup W_dt^d\cup tW_1'\cup\dots\cup t^dW_d'} = \bigcup_{m \geq 0} D_m.
\]
\end{lem}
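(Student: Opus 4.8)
The plan is to exploit the fact that every generator of $M$ is \emph{positive} in the stable letter $t$: the generators lying in $W_0$ involve no $t$ at all, while those of the form $wt^i$ and $t^iw$ contribute the syllable $t^i$ with $i\ge 1$. Consequently any product of generators can be rewritten, \emph{without performing a single $t$-reduction} since $t^{-1}$ never occurs, as a word $h_0 t h_1 t \dots t h_m$ with $h_0,\dots,h_m\in G$, where $m$ is the total number of occurrences of $t$. Such a word is automatically in reduced form for the HNN extension, because it contains no subword $t\,g\,t^{-1}$ or $t^{-1}g\,t$ to which a $t$-reduction could apply. The integer $m$ is precisely the image of the element under the $t$-exponent homomorphism $G^*\to\mathbb{Z}$, so the sets $D_m$ are pairwise disjoint and partition $M$ by $t$-exponent. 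The finite generation of $G$ by $X$ is exactly what will let us present all the resulting slot-sets as FSA over $\ol{X}$; the subgroups $A,B$ play no direct role in this particular lemma.

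First I would track, at the level of the tuple $(h_0,\dots,h_k)$, the effect of right-multiplying a partial product by one further generator. A generator from $W_0$ multiplies into the current last slot $h_k$; a generator $wt^i$ multiplies $w$ into $h_k$ and then opens $i$ fresh slots $h_{k+1},\dots,h_{k+i}$, the last of which becomes the new current slot; a generator $t^iw$ opens $i$ fresh slots and places $w$ into the new current slot $h_{k+i}$. In every case the slots lying strictly between two consecutive $t$-jumps are trivial. Next I would introduce a finite notion of \emph{pattern} for a fixed target exponent $m$: a composition $m=i_1+\dots+i_p$ with each $i_r\in\{1,\dots,d\}$, together with a label in $\{\mathrm{R},\mathrm{L}\}$ for each part recording whether the corresponding jump is produced by a generator of the form $w t^{i_r}$ or $t^{i_r} w$. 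There are only finitely many such patterns and they can be enumerated effectively; I would let $C_m$ be their number, which is always at least $1$ since $m=1+\dots+1$ is a pattern. For a pattern the boundary positions are $c_0=0,\,c_1=i_1,\,\dots,\,c_p=m$, and I claim the tuples realisable under it form exactly a direct product $N_{m,0}\times\dots\times N_{m,m}$ of rational subsets of $G$: each interior slot is forced to be $\{1\}$, while a boundary slot $h_{c_r}$ ranges over $O_r\cdot\Mon\gen{W_0}\cdot K_r$, where the opening set $O_r$ equals $W_{i_r}'$ if the $r$-th jump is labelled $\mathrm{L}$ and is $\{1\}$ otherwise, and the closing set $K_r$ equals $W_{i_{r+1}}$ if the $(r+1)$-th jump is labelled $\mathrm{R}$ and is $\{1\}$ otherwise (with the obvious conventions $O_0=\{1\}$ and $K_p=\{1\}$ at the two ends).

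The crux, and the step I expect to be the main obstacle, is verifying this product structure, namely that fixing a pattern genuinely decouples the slots. The key observation is that the group part carried by the $r$-th jump-generator is absorbed into \emph{exactly one} slot: into the departure slot $h_{c_{r-1}}$ when the label is $\mathrm{R}$, and into the arrival slot $h_{c_r}$ when it is $\mathrm{L}$. Hence no generator's contribution is shared between two slots, the $W_0$-insertions into distinct slots are independent, and the realisable tuples form a bona fide direct product with no hidden coupling. Since $W_0,W_i,W_i'$ are given as finite sets of words over $\ol{X}$, each set $O_r\cdot\Mon\gen{W_0}\cdot K_r$ is a rational subset of $G$ for which an FSA $\mathcal{N}_{m,i}^{(j)}$ over $\ol{X}$ can be constructed effectively, using the standard automata constructions for finite languages, for the Kleene star realising $\Mon\gen{W_0}$, and for concatenation. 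Finally I would establish both inclusions of $\bigcup_{m\ge 0} D_m = M$ directly: every product of generators rewrites to some $h_0 t \dots t h_m$ whose tuple fits one of the patterns, giving $M\subseteq\bigcup_m D_m$; and conversely, from any pattern together with a choice of slot contents one reconstructs an explicit product of generators equal to $h_0 t \dots t h_m$ in $G^*$, yielding the reverse inclusion and completing the proof.
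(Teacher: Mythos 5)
Your proof is correct and is essentially the paper's argument: your labelled compositions (``patterns'') are exactly what one obtains by unrolling the paper's recursive construction of the sets $N_{m,i}^{(j)}$ (which peels off the first $t$-jump, distinguishes the two labels $W_\mu t^\mu$ versus $t^\mu W_\mu'$, and recurses on $D_{m-\mu}$), and even the counts $C_m$ satisfy the same recursion. The only difference is presentational --- you enumerate the jump-sequences explicitly and verify the product structure directly, whereas the paper builds the same family recursively and proves $D_m=K_m$ by induction on $m$ using Britton's Lemma in place of your $t$-exponent homomorphism.
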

\begin{proof}
We will first describe the algorithm, and then verify that the sets $D_m$ do indeed satisfy the property claimed in the statement of the lemma. 

The algorithm we describe will in fact output rational expressions for the sets $N_{m,i}^{(j)}$. By the comments in Section~\ref{sec:prelim}, this suffices to conclude that there is an algorithm computing the corresponding FSA 
$\mathcal{N}_{m,i}^{(j)}$. 

The algorithm is recursive.  When $m=0$ the algorithm returns $C_0=1$ and  the rational expression $W_0^*$ so that $N_{0,0}^{(1)}=\Mon\gen{W_0}$. 
Now consider a typical stage $m$ of the algorithm with $m>0$, assuming that the algorithm already constructed the integers $C_p$ and rational expressions for sets $N_{p,i}^{(j)}$, $1\leq i\leq p$, $1\leq j\leq C_p$, for all values $p<m$.

The algorithm then proceeds as follows. For any $1\leq\mu\leq \min(d,m)$ construct the following two collections
of sequences of subsets of $G$ (all of length $m+1$):
\begin{itemize}
    \item $\Mon\gen{W_0}W_\mu,\underbrace{\{1\},\dots,\{1\}}_{\mu-1},N_{m-\mu,0}^{(j)},\dots,N_{m-\mu,m-\mu}^{(j)}$,
    \item $\Mon\gen{W_0},\underbrace{\{1\},\dots,\{1\}}_{\mu-1},W_\mu'N_{m-\mu,0}^{(j)},\dots,N_{m-\mu,m-\mu}^{(j)}$,
\end{itemize}
where in both cases $1\leq j\leq C_{m-\mu}$. 
Set 
$$
C_m = \sum_{\mu=1}^{\min(d,m)} 2C_{m-\mu}.
$$
That is, $C_m$ is the total number of sequences constructed above. 
Then set $N_{m,i}^{(j)}$ to be the set appearing in position $i$ of the $j$th sequence in the above list,
$1\leq j\leq C_m$. 
The algorithm computes the above sequences, and the number $C_m$. It is clear from the definition of these sequences all the sets appearing in these sequences are rational subsets of $G$, and the algorithm can be instructed to output the appropriate corresponding rational expressions.

To complete the proof of the lemma we must now verify that  
\[
\Mon\gen{W_0\cup W_1t\cup W_2t^2\cup\dots\cup W_dt^d\cup tW_1'\cup\dots\cup t^dW_d'} = \bigcup_{m \geq 0} D_m
\]
holds. 

Set $K =  \Mon\gen{W_0\cup W_1t\cup W_2t^2\cup\dots\cup W_dt^d\cup tW_1'\cup\dots\cup t^dW_d'}$.  For all $m \geq 0$ let $K_m$ be the subset of $K$ of all elements that when written in reduced form have precisely $m$ occurrences of $t$.  This is well-defined by Britton's Lemma and $K$ is a disjoint union of the sets $K_m$ with $m \geq 0$. Note that it also follows from Britton's Lemma that $\bigcup_{m \geq 0} D_m$ is a union of pairwise disjoint sets. 

Hence to  finish the proof of the lemma it will suffice to prove that 
$K_m = D_m$ for all $m \geq 0$, which we shall prove by induction on $m$.  

The base case $m=0$ holds because $K_0=\Mon\gen{W_0}=D_0$ where the second equality follows by the construction of the algorithm.

For the induction step, consider $K_m$ and $D_m$ and suppose by induction that $K_{m’} = D_{m’}$ for all $0 \leq m' < m$.    

To see that $K_m \subseteq D_m$, let $g \in K_m$ be arbitrary. By the definition of $K$  
$$
g = u_1\dots u_k,
$$
where each term $u_r$, $1\leq r\leq k$, is either of the form $w_rt^{\delta_r}$ for some $w_r\in W_{\delta_r}$, or of the form 
$t^{\delta_r}w_r$ for some $w_r\in W_{\delta_r}'$, where $0\leq\delta_r\leq d$. 

Since there is no occurrence of $t^{-1}$ in $u_1 \dots u_k$ it follows that this is in reduced form thus,  as $g \in K_m$,   it follows from Britton's Lemma  that  $\delta_1+\dots+\delta_k=m$. 

Let $s$ be the smallest index such that in the above decomposition of $g$ we have  $\delta_s=\mu>0$.  
This implies that $u_1\dots u_{s-1}\in \Mon\gen{W_0}$ and $\mu\leq \min(d,m)$ because $\delta_1+\dots+\delta_k=m$.
As for $u_s$, we have
either $u_s\in W_\mu t^\mu$, or $u_s\in t^\mu W'_\mu$. Finally, $u_{s+1}\dots u_k$ is an element of the monoid $K$ with the property that any of its reduced forms has precisely $m-\mu$ occurrences of $t$. Therefore,
$u_{s+1}\dots u_k\in K_{m-\mu}$, which by induction hypothesis implies that $u_{s+1}\dots u_k\in D_{m-\mu}$.
We conclude that either
$$
g \in \Mon\gen{W_0}W_\mu t^\mu D_{m-\mu},
$$
or 
$$
g \in \Mon\gen{W_0}t^\mu W'_\mu D_{m-\mu}.
$$
In both cases, by the description of our algorithm, we have 
$$ 
g\in N_{m,0}^{(j)}tN_{m,1}^{(j)}t\dots tN_{m,m}^{(j)}
$$ 
for some $1\leq j\leq C_m$, yielding $g\in D_m$.

Conversely, to see that $D_m \subseteq K_m$, let $g\in D_m$. Then there exists an index $j$, $1\leq j\leq C_m$, such that $g\in N_{m,0}^{(j)}tN_{m,1}^{(j)}t\dots tN_{m,m}^{(j)}$. Now, the sequence of sets $N_{m,0}^{(j)},N_{m,1}^{(j)},\dots,N_{m,m}^{(j)}$ is obtained in one of the two ways as described in the definition of our algorithm. Therefore, there is an integer $\mu$, $1\leq\mu\leq\min(d,m)$, such that  either $g \in \Mon\gen{W_0}W_\mu t^\mu D_{m-\mu}$, or $g \in \Mon\gen{W_0}t^\mu W'_\mu D_{m-\mu}$. However, by the induction hypothesis, $D_{m-\mu}=K_{m-\mu}$, so we have either $g \in \Mon\gen{W_0}W_\mu t^\mu K_{m-\mu}\subseteq K_m$, or $g \in \Mon\gen{W_0}t^\mu W'_\mu K_{m-\mu}\subseteq K_m$. In summary, in either case we conclude that $g\in K_m$, which completes our proof.
\end{proof}

The following result which gives necessary and sufficient conditions for an element in reduced form to belong to $M$, will be essential for the proof of Theorem~\ref{thm:hnn}.  

\begin{pro}\label{pro:inM}
Let $G^*=G *_{t,\phi:A\to B}$ be an HNN extension of a finitely generated group $G$ with finitely generated associated subgroups $A,B$. Let 
$$
M=\Mon\gen{W_0\cup W_1t\cup W_2t^2\cup\dots\cup W_dt^d\cup tW_1'\cup\dots\cup t^dW_d'}
$$ 
for some finite $W_0,W_1,\dots,W_d,W_1',\dots,W_d'\subseteq G$. In addition, for $n\geq 0$, let $C_n\geq 1$ and $N_{n,i}^{(j)}$, $0\leq i\leq n$, $1\leq j\leq C_n$, be the 
integers and rational subsets of $G$ given by Lemma~\ref{lem:seq}. 

Let 
\[
g = g_0t^{\eps_1}g_1t^{\eps_2}\dots t^{\eps_n}g_n,
\]
be an element of $G^*$ in reduced form where $g_i \in G$ for $0 \leq i \leq n$ and $\eps_j \in \{1,-1\}$ for  $1 \leq j \leq n$.   For $i \in \{0, \ldots, n\}$ and $1\leq j\leq C_n$ define subsets $Q_i^{(j)} = Q_i^{(j)}(g_0,\dots,g_n)$ of $G$ in the following way: 
\begin{align*}
Q_0^{(j)} &= \{1\},\\
Q_{i+1}^{(j)} &= ((N_{n,i}^{(j)})^{-1}Q_i^{(j)}g_i\cap A)\phi, \; \mbox{for $0 \leq i \leq n-1$.}
\end{align*}

Then $g \in M$ if and only if $\eps_1=\dots=\eps_n=1$ and 
$$
g_n \in \bigcup_{1 \leq j \leq C_n} (Q_n^{(j)})^{-1}N_{n,n}^{(j)}.
$$
\end{pro}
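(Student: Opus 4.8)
The plan is to mirror the structure of the proof of Proposition~\ref{pro:amal55}, with the alternating normal form of the amalgamated product replaced by the Britton normal form of the HNN extension, and the role of the amalgamated subgroup taken over by conjugation by the stable letter $t$. First I would dispose of the claim that membership in $M$ forces $\eps_1=\dots=\eps_n=1$. By Lemma~\ref{lem:seq} we have $M=\bigcup_{m\geq 0}D_m$, and every element of every $D_m$ is displayed as $h_0th_1t\dots th_m$, a word containing only positive occurrences of $t$. Since no $t$-reduction can ever introduce a letter $t^{-1}$, the reduced form of any element of $M$ contains only positive powers of $t$; hence by the uniqueness of the exponent sequence in Lemma~\ref{lem:Britton} the given reduced form $g_0t^{\eps_1}\dots t^{\eps_n}g_n$ of an element of $M$ must have all $\eps_i=1$ and exactly $n$ occurrences of $t$. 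Because the sets $D_m$ are pairwise disjoint according to the number of occurrences of $t$ (again by Britton's Lemma), this also shows that, once $\eps_1=\dots=\eps_n=1$ is assumed, we have $g\in M$ if and only if $g\in D_n$, i.e.\ $g=h_0th_1t\dots th_n$ for some $(h_0,\dots,h_n)\in N_{n,0}^{(j)}\times\dots\times N_{n,n}^{(j)}$ and some $j$.

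With $\eps_1=\dots=\eps_n=1$ fixed the problem becomes a comparison of two reduced forms: the expression $h_0th_1t\dots th_n$ above is automatically reduced (the reducedness condition is vacuous when all exponents are positive), as is $g_0tg_1t\dots tg_n$. For the forward direction I would invoke Lemma~\ref{lem:Britton} to obtain elements $1=\alpha_0,\alpha_1,\dots,\alpha_n,\alpha_{n+1}=1$ with $\alpha_i\in B$ for $1\leq i\leq n$ such that $h_i=\alpha_i^{-1}g_i(t\alpha_{i+1}t^{-1})$, and then rewrite $t\alpha_{i+1}t^{-1}=\alpha_{i+1}\phi^{-1}\in A$ using the defining HNN relations. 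I would then prove by induction on $i$ that $\alpha_i^{-1}\in Q_i^{(j)}$: the inductive step solves $h_i=\alpha_i^{-1}g_i(\alpha_{i+1}\phi^{-1})$ for $\alpha_{i+1}$, observes that $(\alpha_{i+1}\phi^{-1})^{-1}=h_i^{-1}\alpha_i^{-1}g_i$ lies in $(N_{n,i}^{(j)})^{-1}Q_i^{(j)}g_i\cap A$, and applies $\phi$ to place $\alpha_{i+1}^{-1}$ in $Q_{i+1}^{(j)}$. The final equation $h_n=\alpha_n^{-1}g_n$ then yields $g_n=\alpha_n h_n\in(Q_n^{(j)})^{-1}N_{n,n}^{(j)}$, which is exactly the asserted condition.

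For the converse, assuming $\eps_1=\dots=\eps_n=1$ and $g_n\in(Q_n^{(j)})^{-1}N_{n,n}^{(j)}$ for some $j$, I would unwind the recursion from the top. Writing $g_n=s_n^{-1}h_n$ with $s_n\in Q_n^{(j)}$ and $h_n\in N_{n,n}^{(j)}$, the recursive definition of $Q_n^{(j)}$ lets me extract $h_{n-1}\in N_{n,n-1}^{(j)}$ and $s_{n-1}\in Q_{n-1}^{(j)}$, and iterating produces $h_i\in N_{n,i}^{(j)}$ and $s_i\in Q_i^{(j)}$ (with $s_0=1$) satisfying $s_{i+1}=(h_i^{-1}s_ig_i)\phi$ with $h_i^{-1}s_ig_i\in A$. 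Solving each relation gives $h_i=s_ig_ir_{i+1}^{-1}$ where $r_i=s_i\phi^{-1}\in A$, and using the HNN relation in the form $a\,t=t\,(a\phi)$ for $a\in A$ yields the telescoping identity $r_i^{-1}ts_i=t$. Substituting the expressions for the $h_i$ into $h_0th_1t\dots th_n$ and cancelling then collapses the product to $g_0tg_1t\dots tg_n=g$, exhibiting $g\in D_n\subseteq M$ and completing the equivalence.

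The main obstacle I anticipate is bookkeeping rather than conceptual: one must track the $\phi$/$\phi^{-1}$ correspondence between $A$ and $B$ across each stable letter and keep the left/right multiplications in the recursion consistent, so that the sets $Q_i^{(j)}$ (which lie in $B$) and the conjugates $t\alpha_{i+1}t^{-1}$ (which lie in $A$) are threaded together correctly. The telescoping identity $r_i^{-1}ts_i=t$ is the one place where the HNN relation does the real work, and it must be set up with its indices aligned exactly as in the recursive definition of $Q_{i+1}^{(j)}$; getting this alignment right, together with the disjointness-of-$D_m$ argument that reduces everything to the single stratum $D_n$, is where the care is needed.
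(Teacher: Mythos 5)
Your proposal is correct and takes essentially the same route as the paper's proof: both directions proceed by comparing the given reduced form with a form $h_0th_1\dots th_n$ supplied by Lemma~\ref{lem:seq}, applying Britton's Lemma and an induction that threads the $\alpha_i$ (resp.\ $s_i$) through the recursive definition of the $Q_i^{(j)}$, and then telescoping in the converse. The only differences are cosmetic — you apply Lemma~\ref{lem:Britton} with the two sides swapped, so you track $\alpha_i^{-1}\in Q_i^{(j)}$ where the paper tracks $\alpha_i\in Q_i^{(j)}$, and your substitution in the converse collapses $h_0t\dots th_n$ onto $g_0t\dots tg_n$ rather than the reverse.
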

\begin{proof}
($\Rightarrow$) Assume that $g\in M$. By Lemma~\ref{lem:seq} there exist $m\geq 0$ and $1\leq j\leq C_m$ such that
$$
g = h_0th_1t\dots h_{m-1}th_m
$$
holds for some $h_i\in N_{m,i}^{(j)}$, $0\leq i\leq m$. Since the right-hand side of the previous equality 
is in reduced form and $g= g_0t^{\eps_1}g_1t^{\eps_2}\dots t^{\eps_n}g_n$ is a reduced form, by Lemma \ref{lem:Britton}, it follows that $m=n$, and $\eps_1 = \dots = \eps_n = 1$, 
and there exist $1=\alpha_0,\alpha_1,\dots,\alpha_n,\alpha_{n+1}=1\in B$ such that 
$$
g_i = \alpha_i^{-1}h_i(t\alpha_{i+1}t^{-1})
$$
holds for all $0\leq i\leq n$. In particular, we have
$g_0 = h_0(t\alpha_1t^{-1})$, so
$$
\alpha_1\phi^{-1}=t\alpha_1t^{-1}=h_0^{-1}g_0\in (N_{n,0}^{(j)})^{-1}g_0\cap A=(N_{n,0}^{(j)})^{-1}Q_0^{(j)}g_0\cap A,
$$
implying $\alpha_1\in ((N_{n,0}^{(j)})^{-1}Q_0^{(j)}g_0\cap A)\phi=Q_1^{(j)}$.

Now we proceed by induction to prove that for all $0\leq i\leq n$, $\alpha_i\in Q_i^{(j)}$. 
Suppose that $\alpha_k \in Q_k^{(j)}$ for some $0 \leq k \leq n$ and consider $\alpha_{k+1}$. We have 
$$
\alpha_{k+1} = (t\alpha_{k+1}t^{-1})\phi = (h_k^{-1}\alpha_kg_k)\phi \in ((N_{n,k}^{(j)})^{-1}Q_k^{(j)}g_k\cap A)\phi =  Q_{k+1}^{(j)},
$$
completing the induction step. 
It follows that $g_n=\alpha_n^{-1}h_n\in (Q_n^{(j)})^{-1}N_{n,n}^{(j)}$, as required.

($\Leftarrow$) 
Let 
$g = g_0t^{\eps_1}g_1t^{\eps_2}\dots t^{\eps_n}g_n$ 
be an element of $G^*$ in reduced form such that $\eps_i = 1$ for all $1 \leq i \leq n$ and %%
$g_n\in (Q_n^{(j)})^{-1}N_{n,n}^{(j)}$ for some $1\leq j\leq C_n$. 
Then we can write $g_n=\beta_n^{-1}k_n$ for some 
$\beta_n\in Q_n^{(j)}$ and $k_n\in N_{n,n}^{(j)}$. Therefore
$$
t\beta_nt^{-1}=\beta_n\phi^{-1}\in Q_n^{(j)}\phi^{-1}= (N_{n,n-1}^{(j)})^{-1}Q_{n-1}^{(j)}g_{n-1}\cap A,
$$
by definition of $Q_n^{(j)}$. 
So there exist $k_{n-1}\in N_{n,n-1}^{(j)}$ and $\beta_{n-1}\in Q_{n-1}^{(j)}$ such that 
$t\beta_n t^{-1}=k_{n-1}^{-1}\beta_{n-1}g_{n-1}$. Rearranging this yields
$$
g_{n-1} = \beta_{n-1}^{-1}k_{n-1}(t\beta_n t^{-1}).
$$
Continuing in this way we obtain for $i = n-1, n-2, \ldots, 0$ elements  
$k_i\in N_{n,i}^{(j)}$ and $\beta_i\in Q_i^{(j)}$ ($0\leq i<n$) such that 
$$
g_i = \beta_i^{-1}k_i(t\beta_{i+1} t^{-1}). 
$$
Note that $\beta_0=1$ because $Q_0^{(j)}=\{1\}$. Substituting into the reduced form of $g$, and cancelling gives adjacent inverse pairs, we obtain 
$$
g = g_0tg_1t\dots tg_n = k_0tk_1t\dots tk_n \in M
$$
by Lemma~\ref{lem:seq}.
\end{proof}

\begin{lem}\label{lem:Qrat2}
Under the assumptions of Theorem~\ref{thm:hnn},
in the statement of Proposition~\ref{pro:inM}  
every set $Q_i^{(j)}$ ($0 \leq i \leq n$, $1 \leq j \leq C_n$) 
is a rational subset of $B$. 
\end{lem}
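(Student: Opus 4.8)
The plan is to prove the statement by induction on $i$, keeping careful track of which group each intermediate rational subset lives in. The base case is immediate: by definition $Q_0^{(j)} = \{1\}$ is a finite, hence rational, subset of $B$.

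For the inductive step, suppose that $Q_i^{(j)} \in \Rat(B)$ for some $0 \leq i \leq n-1$; I would then show that $Q_{i+1}^{(j)} \in \Rat(B)$. First, since $B$ is a subgroup of $G$, the elementary containment $\Rat(B) \subseteq \Rat(G)$ (noted in the subsection on Herbst's theorem) gives $Q_i^{(j)} \in \Rat(G)$. The set $N_{n,i}^{(j)}$ is a rational subset of $G$ by Lemma~\ref{lem:seq}, and rational subsets of a group are closed under inversion, so $(N_{n,i}^{(j)})^{-1} \in \Rat(G)$. As $\{g_i\}$ is finite and rational subsets are closed under product, the product $(N_{n,i}^{(j)})^{-1} Q_i^{(j)} g_i$ is a rational subset of $G$.

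Next I would invoke the two hypotheses of Theorem~\ref{thm:hnn}. By assumption (ii), $A \leq G$ is \EC, so intersecting the rational subset $(N_{n,i}^{(j)})^{-1} Q_i^{(j)} g_i$ with $A$ yields a rational subset of $G$ that happens to be contained in $A$. Herbst's theorem (Theorem~\ref{thm:effective:herbst}) then upgrades this to a rational subset of $A$, that is, $(N_{n,i}^{(j)})^{-1} Q_i^{(j)} g_i \cap A \in \Rat(A)$. Finally, $\phi : A \to B$ is an isomorphism, and homomorphisms of groups carry rational subsets to rational subsets; hence $Q_{i+1}^{(j)} = \big((N_{n,i}^{(j)})^{-1} Q_i^{(j)} g_i \cap A\big)\phi \in \Rat(B)$, completing the induction.

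The main point requiring care, rather than a genuine obstacle, is the bookkeeping of ambient groups: the intersection with $A$ furnished by assumption (ii) lives a priori only in $\Rat(G)$, and one must pass through Herbst's theorem to realise it as an element of $\Rat(A)$ before the isomorphism $\phi$ can legitimately be applied so as to land in $\Rat(B)$. All the closure properties used (inverse, product, and homomorphic image of rational subsets) are standard, and no effectivity is needed at this stage since the lemma asserts only membership in $\Rat(B)$ and not the computability of a recognising automaton.
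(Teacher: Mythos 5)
Your proof is correct and follows essentially the same route as the paper's: induction on $i$, closure of $\Rat(G)$ under the relevant operations, condition (ii) of Theorem~\ref{thm:hnn} to intersect with $A$, Herbst's theorem (Theorem~\ref{thm:effective:herbst}) to pass from $\Rat(G)$ to $\Rat(A)$, and finally the isomorphism $\phi$ to land in $\Rat(B)$. The only difference is that you spell out the closure properties (inversion, product with $\{g_i\}$) slightly more explicitly than the paper does.
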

\begin{proof}
Clearly $Q_0^{(j)}$ is a rational subset of $B$. 
We prove by induction on $i$ (with $j$ fixed) that $Q_i^{(j)}$ is rational for $0 \leq i \leq n$.  
For the induction step suppose that $Q_k^{(j)}$ is rational and consider $Q_{k+1}^{(j)}$.  
By definition
\[
Q_{k+1}^{(j)} = ((N_{n,k}^{(j)})^{-1}Q_k^{(j)}g_k\cap A)\phi. 
\]
By Lemma~\ref{lem:seq} the set $N_{n,k}^{(j)}$ is a rational subset of $G$. 
It then follows from 
condition (ii) 
in Theorem~\ref{thm:hnn} 
and the induction hypothesis 
that  
\[
R = (N_{n,k}^{(j)})^{-1}Q_k^{(j)}g_k\cap A
\]
is a rational subset of $G$. 
Since $R \subseteq A$ it then follows from 
Theorem~\ref{thm:effective:herbst}
that $R \in \Rat(A) $. Then since $\phi$ is an isomorphism it follows that $Q_{k+1}^{(j)} = R \phi \in \Rat(B)$. 
\end{proof}

\begin{proof}[Proof of Theorem \ref{thm:hnn}] 
Similarly to the proof of Theorem~\ref{thm:hnn41}, to prove the theorem we must show that  there is an algorithm which takes any word $w$ from $(\ol{X \cup \{t \}})^*$ as input and decides whether or not the word represents an element of the submonoid $M$.    By assumption (i) it follows that the membership problems for $A$ in $G$ and $B$ in $G$ are both decidable, since $G$ has decidable rational subset membership problem and $A$ and $B$ are both finitely generated. Also, condition (i) implies that $G$ has decidable subgroup membership problem, and hence decidable word problem.  Hence, the hypotheses of Lemma~\ref{lem:HNN-eff-red} are satisfied.  Applying this lemma  we conclude that  there is an algorithm that given any such word $w$ returns a word 
\[
w= w_0t^{\eps_1}w_1t^{\eps_2}\dots t^{\eps_n}w_n,
\]
with $w_i \in \ol{X}^*$ for $1 \leq i \leq n$, 
and $\eps_i \in \{1,-1\}$ for $1 \leq i \leq n$, 
that is in reduced form.  

At this point, the algorithm calls as a subroutine the algorithm from Lemma~\ref{lem:seq} which will return 
an integer $C_n \geq 1$ and 
a finite set of FSA 
\[
\{  \mathcal{N}_{n,i}^{(j)} :\ 0 \leq i \leq n, 1 \leq j \leq C_n \}
\]
over $\ol{X}$  such that with $N_{n,i}^{(j)} = [L(\mathcal{N}_{n,i}^{(j)})] \pi$ 
the conditions in the statement of Lemma~\ref{lem:seq} are satisfied.  

Set $g_i = w_i \pi$ for $0 \leq i \leq n$. 
For each $i \in \{0, \ldots, n \}$ and $j \in \{1, \dots, C_n \}$
let $Q_i^{(j)} = Q_i^{(j)}(g_0,\dots,g_n)$ be defined as in the statement of Proposition~\ref{pro:inM}.
Then by Lemma~\ref{lem:Qrat2} each of these sets $Q_i^{(j)}$ is a rational subset of $B$, and therefore also a rational subset of $G$.  
\begin{claim}
There exists an algorithm which for each 
$i \in \{0, \ldots, n\}$ and $j\in\{1,\dots,C_n\}$ computes 
\begin{itemize}
    \item a finite state automaton $\mathcal{G}_i^{(j)}$ over $\ol{X}$ with 
    $[L(\mathcal{G}_i^{(j)})]\pi = Q_i^{(j)}$, and
    \item a finite state automaton $\mathcal{B}_i^{(j)}$ over $\ol{Z}$ with 
    $[L(\mathcal{B}_i^{(j)})]\xi = Q_i^{(j)}$.
\end{itemize}
\end{claim}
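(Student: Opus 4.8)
The plan is to prove the Claim by induction on $i$, constructing the automata $\mathcal{G}_i^{(j)}$ over $\ol{X}$ and $\mathcal{B}_i^{(j)}$ over $\ol{Z}$ in tandem for all $j \in \{1,\dots,C_n\}$, following the recursive definition of $Q_i^{(j)}$ from Proposition~\ref{pro:inM}. For the base case $i=0$ we have $Q_0^{(j)} = \{1\}$, so it suffices to take for $\mathcal{G}_0^{(j)}$ and $\mathcal{B}_0^{(j)}$ the automata over $\ol{X}$ and $\ol{Z}$ respectively that accept only the empty word. The entire content of the argument lies in the inductive step, which must realise the formula
\[
Q_{i+1}^{(j)} = \bigl( (N_{n,i}^{(j)})^{-1} Q_i^{(j)} g_i \cap A \bigr)\phi
\]
at the level of finite state automata.

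Assuming inductively that $\mathcal{G}_i^{(j)}$ over $\ol{X}$ with $[L(\mathcal{G}_i^{(j)})]\pi = Q_i^{(j)}$ has been computed, I would first build an FSA for the product $(N_{n,i}^{(j)})^{-1} Q_i^{(j)} g_i$. From the automaton $\mathcal{N}_{n,i}^{(j)}$ supplied by Lemma~\ref{lem:seq}, reversing the transitions and inverting the letter labels produces an FSA over $\ol{X}$ whose $\pi$-image is $(N_{n,i}^{(j)})^{-1}$, since $\pi$ is a homomorphism into a group; concatenating this with $\mathcal{G}_i^{(j)}$ and with a one-word automaton for any fixed $w_i \in \ol{X}^*$ satisfying $w_i \pi = g_i$ yields an FSA $\mathcal{P}$ over $\ol{X}$ with $[L(\mathcal{P})]\pi = (N_{n,i}^{(j)})^{-1} Q_i^{(j)} g_i$, all of these being effective operations on regular languages. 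I would then invoke hypothesis (ii) of Theorem~\ref{thm:hnn}, that $A \leq G$ is effectively closed for rational intersections, applied to $\mathcal{P}$, to obtain an FSA $\mathcal{R}$ over $\ol{X}$ with $[L(\mathcal{R})]\pi = (N_{n,i}^{(j)})^{-1} Q_i^{(j)} g_i \cap A$, a rational subset of $G$ contained in $A$.

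The step I expect to be the crux is the effective application of the isomorphism $\phi$ to the rational set $[L(\mathcal{R})]\pi \subseteq A$. Here I would appeal to Lemma~\ref{lem:ultimate:herbst}: condition (i) of Theorem~\ref{thm:hnn} forces $G$ to have decidable, hence recursively enumerable, word problem, and $A, B$ are finitely generated subgroups related by $\phi$, so the lemma furnishes an algorithm that transforms $\mathcal{R}$ into an FSA $\mathcal{G}_{i+1}^{(j)}$ over $\ol{X}$ with
\[
[L(\mathcal{G}_{i+1}^{(j)})]\pi = \bigl([L(\mathcal{R})]\pi\bigr)\phi = Q_{i+1}^{(j)}.
\]
This is precisely where the effective Herbst machinery of Theorem~\ref{thm:effective:herbst} is essential: transporting a rational subset across $\phi$ requires re-expressing it over a generating set of $A$, relabelling via $\phi$, and pushing the result back into $G$.

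Finally, to produce the companion automaton over $\ol{Z}$, I would note that $Q_{i+1}^{(j)} = [L(\mathcal{G}_{i+1}^{(j)})]\pi \subseteq B$, and apply Theorem~\ref{thm:effective:herbst}(1) to $\mathcal{G}_{i+1}^{(j)}$ relative to the finitely generated subgroup $B$ with generating set $Z$ and canonical homomorphism $\xi$, obtaining an FSA $\mathcal{B}_{i+1}^{(j)}$ over $\ol{Z}$ with $[L(\mathcal{B}_{i+1}^{(j)})]\xi = Q_{i+1}^{(j)}$, as required. Running this procedure for every $j \in \{1,\dots,C_n\}$ and iterating from $i=0$ up to $i=n$ completes the induction. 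The elementary automaton manipulations (inverse, product, single-word recognition) are routine; the two genuinely non-trivial ingredients, and the expected obstacles, are the effective rational intersection with $A$ supplied by hypothesis (ii) and the effective transport across $\phi$ supplied by Lemma~\ref{lem:ultimate:herbst}.
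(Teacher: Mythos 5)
Your proposal is correct and follows essentially the same route as the paper: base case by empty-word automata, then at each step an automaton for $(N_{n,i}^{(j)})^{-1}Q_i^{(j)}g_i$, intersection with $A$ via hypothesis (ii), and transport across $\phi$ via the effective Herbst machinery. The only (immaterial) difference is that you invoke the packaged Lemma~\ref{lem:ultimate:herbst} to get $\mathcal{G}_{i+1}^{(j)}$ first and then derive $\mathcal{B}_{i+1}^{(j)}$ from it, whereas the paper inlines the two applications of Theorem~\ref{thm:effective:herbst} together with the letter relabelling $y_l\mapsto z_l$, producing $\mathcal{B}_i^{(j)}$ first.
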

\begin{proof}[Proof of claim.] 
For each $1 \leq j \leq C_n$, 
the algorithm iteratively constructs the pairs $(\mathcal{G}_i^{(j)}, \mathcal{B}_i^{(j)})$ in the following way. When $i=0$ we have $Q_i^{(j)} = \{ 1 \}$ and it is clear that an appropriate pair $(\mathcal{G}_0^{(j)}, \mathcal{B}_0^{(j)})$ can be computed e.g.\ by taking automata that accept only the empty word in each case. 
Now consider a typical stage $i$ with $i >0$. 

Then by definition 
\[
Q_{i}^{(j)} = ((N_{n,i-1}^{(j)})^{-1}Q_{i-1}^{(j)}g_{i-1}\cap A)\phi. 
\]
The algorithm constructs the automaton $\mathcal{M}_{n,i-1}^{(j)}$ over $\ol{X}$ such that $[L(\mathcal{M}_{n,i-1}^{(j)})]\pi=(N_{n,i-1}^{(j)})^{-1}$ from the automaton $\mathcal{N}_{n,i-1}^{(j)}$. Using $\mathcal{M}_{n,i-1}^{(j)}$ and $\mathcal{G}_{i-1}^{(j)}$ the algorithm then produces, in the obvious way, a FSA $\mathcal{D}_i^{(j)}$ over $\ol{X}$   such that  $[L(\mathcal{D}_i^{(j)})] \pi = (N_{n,i-1}^{(j)})^{-1}Q_{i-1}^{(j)}g_{i-1}$.
The algorithm given by assumption (ii), in the statement of the theorem,  is then applied to the automaton $\mathcal{D}_i^{(j)}$ which yields an automaton  $\mathcal{C}_i^{(j)}$ over $\ol{X}$  satisfying 
\[
[L(\mathcal{C}_i^{(j)})]\pi =  (N_{n,i-1}^{(j)})^{-1}Q_{i-1}^{(j)}g_{i-1} \cap A.
\]
The algorithm then calls as a subroutine the algorithm from Theorem~\ref{thm:effective:herbst} which returns a FSA $\mathcal{A}_i^{(j)}$ over $\ol{Y}$ satisfying  
\[
[L(\mathcal{A}_i^{(j)})]\theta =  (N_{n,i-1}^{(j)})^{-1}Q_{i-1}^{(j)}g_{i-1} \cap A.
\]
By replacing each letter $y_l\in \ol{Y}$, $1\leq l\leq k$, in the transitions of the automaton $\mathcal{A}_i^{(j)}$ by the corresponding letter $z_l\in \ol{Z}$, the algorithm computes the automaton $\mathcal{B}_i^{(j)}$ over $\ol{Z}$
such that 
\[
[L(\mathcal{B}_i^{(j)})]\xi =  ((N_{n,i-1}^{(j)})^{-1}Q_{i-1}^{(j)}g_{i-1} \cap A)\phi = Q_i^{(j)}.
\]
Finally, 
the algorithm calls 
Theorem~\ref{thm:effective:herbst} as a subroutine which returns the automaton $\mathcal{G}_i^{(j)}$ over $\ol{X}$ satisfying
\[
[L(\mathcal{G}_i^{(j)})]\pi =  ((N_{n,i-1}^{(j)})^{-1}Q_{i-1}^{(j)}g_{i-1} \cap A)\phi = Q_i^{(j)}.
\]
This completes the proof of the claim.
\end{proof}

To complete the proof, by Proposition~\ref{pro:inM}, we have $g \in M$ if and only if $\eps_1=\dots=\eps_n=1$ and $g_n \in (Q_n^{(j)})^{-1}N_{n,n}^{(j)}$.  Using the automata $\mathcal{G}_{n}^{(j)}$ and $\mathcal{N}_{n,n}^{(j)}$, the algorithm produces, in the obvious way,  a FSA $\mathcal{A}(w)$ over $\ol{X}$ such that $[L(\mathcal{A}(w))]\pi = (Q_n^{(j)})^{-1}N_{n,n}^{(j)}$. 

Therefore, in summary we have shown that there is an algorithm which given any word $w \in (\ol{X}\cup\{t\})^*$ computes a word $w_0t^{\eps_1}w_1t^{\eps_2}\dots t^{\eps_n}w_n$ in reduced form, equal to $w$ in $G$, and also computes 
a FSA $\mathcal{A}(w)$ over $\ol{X}$ such that $w$ represents an element of $M$ if and only if $\eps_1=\dots=\eps_n=1$ and $w_n \pi \in [L(\mathcal{A}(w))]\pi$. This is decidable by condition (i) of the theorem.  
\end{proof}

\begin{cor}\label{thm:hnn-free}
For a finite alphabet $X$ let 
$$
G = FG(X)*_{t,\phi:A\to B}
$$
be an HNN extension such that $A,B$ are finitely generated. 
Then for any finite subsets $W_0,W_1,\dots,W_d,W_1',\dots,W_d'$ of $FG(X)$ the membership problem for 
$$
M=\Mon\gen{W_0\cup W_1t\cup W_2t^2\cup\dots\cup W_dt^d\cup tW_1'\cup\dots\cup t^dW_d'}
$$ 
is decidable.
\end{cor}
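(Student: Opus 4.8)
The plan is to obtain this as the special case of Theorem~\ref{thm:hnn} in which the base group $G$ is the free group $FG(X)$. Thus it suffices to verify the two hypotheses of that theorem in the free-group setting. Condition~(i), that the rational subset membership problem is decidable in $FG(X)$, is exactly the content of Corollary~\ref{cor:ben}. The work therefore lies entirely in verifying condition~(ii): that the finitely generated subgroup $A$ is \EC\ in $FG(X)$.

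To establish this, I would fix the canonical homomorphism $\pi:\ol{X}^*\to FG(X)$ and let $\mathcal{A}$ be any FSA over $\ol{X}$, representing the rational subset $R = L(\mathcal{A})\pi$. Using the algorithmic form of Benois' Theorem (Theorem~\ref{thm:ben}), one computes a FSA recognising the regular language $\red(L(\mathcal{A}))$, which is a set of reduced words representing exactly the elements of $R$. Since $A$ is finitely generated, say by $a_1,\dots,a_r$, the subgroup $A$ is a rational subset of $FG(X)$ described by an explicit rational expression over $\ol{X}$; applying Benois' Theorem again produces a FSA recognising a regular language $L_A$ of reduced words with $L_A\pi = A$. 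The standard product construction then yields a FSA $\mathcal{A}_A$ recognising $\red(L(\mathcal{A}))\cap L_A$. As this language consists of reduced words, we have $L(\mathcal{A}_A)\pi = R\cap A$, and every step in the construction is effective. Hence $A$ is effectively closed for rational intersections, establishing condition~(ii).

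With both hypotheses verified, I would apply Theorem~\ref{thm:hnn} directly to conclude that the membership problem for $M$ in $G = FG(X)*_{t,\phi:A\to B}$ is decidable, which is the assertion of the corollary.

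The only point requiring care is the effectivity demanded in condition~(ii). Corollary~\ref{cor:ben} already guarantees that the rational subsets of $FG(X)$ are closed under intersection, but here we need more than the mere closure statement: we need an algorithm that actually produces a FSA for $R\cap A$ from one for $R$. This is precisely what the constructive nature of Benois' Theorem (the algorithmic passage $L\mapsto\red(L)$) combined with the effective product construction for intersecting regular languages supplies, so no genuine obstacle arises once these ingredients are assembled.
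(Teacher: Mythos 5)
Your proposal is correct and follows essentially the same route as the paper: reduce to Theorem~\ref{thm:hnn}, obtain condition~(i) from Benois' Theorem and Corollary~\ref{cor:ben}, and verify condition~(ii) for the finitely generated subgroup $A$ of $FG(X)$. The only difference is that the paper disposes of condition~(ii) by citing Bartholdi--Silva, whereas you supply the underlying argument directly (reduced-word languages for $R$ and $A$ via the effective form of Benois' Theorem, intersected by the product construction, with correctness resting on uniqueness of reduced representatives); this is a valid, self-contained substitute for the citation.
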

\begin{proof}
It will suffice to prove that the hypotheses of Theorem~\ref{thm:hnn} are satisfied. 
Since $G$ is a free group, hypothesis 
(i) is satisfied by Benois' Theorem (Theorem \ref{thm:ben}) and Corollary \ref{cor:ben},
while condition (ii) holds as a consequence of \cite[Theorem 3.3 and Corollary 3.4]{BS}.
\end{proof}

\section{Applications of HNN extension results to the prefix membership problem}
\label{sec:appl2}

In this section we present several applications of the results from the previous section, mainly Theorem \ref{thm:hnn}
and its Corollary \ref{thm:hnn-free}.
These applications include, among others, 
some presentations of Adjan type, conjugacy pinched presentations and, in particular, Baumslag-Solitar groups.

\subsection{Exponent sum zero theorem}

For a word $w\in\ol{X}^*$ and $x^\eps\in\ol{X}$, where
$x \in X$ and 
$\eps\in\{1,-1\}$, we write 
$|w|_{x^\eps}$ for the number of occurrences of $x^\eps$ in $w$. For $t\in X$, the \emph{exponent sum} of $t$ in $w$ is 
the number $|w|_t-|w|_{t^{-1}}$. We say that $t$ has \emph{exponent sum zero} in $w$ if $|w|_t=|w|_{t^{-1}}\neq 0$.

We now describe a well-known method due to McCool and Schupp \cite{McCSch} for expressing certain one-relator groups as HNN extensions of one-relator groups with a shorter defining relator. 
See \cite[Page 198]{LSch}.

Let $w\in\ol{X}^*$ be a word in which the letter $t \in X$ has exponent sum zero. We define a word $\rho_t(w)$ over the infinite alphabet
$$\Xi=\{x_l:\ x\in X\setminus\{t\}, l\in\mathbb{Z}\}$$
obtained from $w$ by first replacing each occurrence of $x\in X\setminus\{t\}$ by $x_{-i}$ where $i$ is the exponent sum of $t$ 
in the prefix of $w$ preceding the considered occurrence of $x$,
and then deleting every occurrence of $t$. 
For each  $x\in X\setminus\{t\}$ let $\mu_x$ and $m_x$ be respectively the smallest and the greatest value of $j$ such that  $x_j$ actually appears in $\rho_t(w)$.
For example 
\[
\rho_t(bt^{-1} a t^2 b t^{-1} a) = b_0 a_1 b_{-1} a_0, 
\]
and $\mu_a = 0$ and $m_a = 1$. 

The following result is originally due to Moldavanski\u{\i} \cite{Mol}. Its proof can be extracted from the proof of 
Freiheitssatz given in \cite[Section IV.5]{LSch} which follows the approach in \cite{McCSch}.

\begin{pro}\label{pro:mol}
Let $w\in\ol{X}^*$ be a word in which $t\in X$ has exponent sum zero such that $\rho_t(w)$ is cyclically reduced. Then 
the group $G=\Gp\pre{X}{w=1}$ is an HNN extension of the group 
$$
H = \Gp\pre{\Xi_w}{\rho_t(w)=1}
$$
where $\Xi_w=\{x_l:\ x\in X\setminus\{t\}, \mu_x\leq l\leq m_x\}$. The associated subgroups $A$ and $B$ in this extension
are free groups freely generated by $\Xi_w\setminus\{x_{m_x}:\ x\in X\setminus\{t\}\}$ and $\Xi_w\setminus
\{x_{\mu_x}:\ x\in X\setminus\{t\}\}$, respectively, with the isomorphism $\phi:x_i\mapsto x_{i+1}$ for all $x\in X\setminus\{t\}$
and $\mu_x\leq i<m_x$.
\end{pro}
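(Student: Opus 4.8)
The plan is to identify $G=\Gp\pre{X}{w=1}$ with the HNN extension $G^{\ast}=H\ast_{t,\phi:A\to B}$ by exhibiting a pair of mutually inverse homomorphisms, the whole construction resting on the Freiheitssatz to guarantee that the associated subgroups are free.

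First I would set up the conjugate letters compatibly with the conventions of this section. For $x\in X\setminus\{t\}$ and $i\in\mathbb{Z}$ put $x_i=t^{-i}xt^i$, so that $t^{-1}x_it=x_{i+1}$ and the intended edge map is $\phi\colon x_i\mapsto x_{i+1}$. Let $N$ be the kernel of the homomorphism $FG(X)\to\mathbb{Z}$ sending $t\mapsto 1$ and every other generator to $0$. By Reidemeister--Schreier (using the transversal $\{t^i:i\in\mathbb{Z}\}$) the group $N$ is free with basis $\{x_i:x\in X\setminus\{t\},\,i\in\mathbb{Z}\}$, and the word $\rho_t(w)$ is by construction exactly the coordinate expression of $w$ in this basis. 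Since $t$ has exponent sum zero in $w$ we have $w\in N$, whence the identity $w=\rho_t(w)$ holds in $FG(X)$ once each letter $x_j$ of $\rho_t(w)$ is interpreted as $t^{-j}xt^j$; this is the telescoping that underlies the definition of $\rho_t$.

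Next I would build the candidate base group and associated subgroups and check they form a legitimate HNN datum. Set $H=\Gp\pre{\Xi_w}{\rho_t(w)=1}$, and let $A,B\le H$ be generated by $\Xi_w\setminus\{x_{m_x}:x\in X\setminus\{t\}\}$ and $\Xi_w\setminus\{x_{\mu_x}:x\in X\setminus\{t\}\}$ respectively. The step I expect to be the main obstacle, and the only place the hypothesis that $\rho_t(w)$ is cyclically reduced is used, is showing that $A$ and $B$ are \emph{free} on these generating sets. Each such set omits, for every $x$, the extreme letter $x_{m_x}$ (respectively $x_{\mu_x}$), and by the definitions of $m_x,\mu_x$ at least one omitted letter genuinely occurs in the cyclically reduced word $\rho_t(w)$; hence Magnus' Freiheitssatz applies and yields that $A$ is free on $\{x_i:\mu_x\le i<m_x\}$ and $B$ is free on $\{x_i:\mu_x<i\le m_x\}$. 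The map $\phi$ sends the first of these free bases bijectively onto the second, so it extends to an isomorphism $A\to B$ and the HNN extension $G^{\ast}=H\ast_{t,\phi:A\to B}$ is well-defined.

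Finally I would identify $G$ with $G^{\ast}$. Define $\Psi\colon G^{\ast}\to G$ by $t\mapsto t$ and $x_i\mapsto t^{-i}xt^i$; it respects the edge relations trivially and respects $\rho_t(w)=1$ because $\Psi(\rho_t(w))=w=1$ in $G$ by the rewriting identity. Conversely define $\Phi\colon G\to G^{\ast}$ by $t\mapsto t$ and $x\mapsto t^{l}x_lt^{-l}$ for any $l\in[\mu_x,m_x]$, the choice of $l$ being immaterial because the edge relations give $t^{l+1}x_{l+1}t^{-(l+1)}=t^lx_lt^{-l}$ in $G^{\ast}$. To see $\Phi$ is well-defined one checks $\Phi(w)=1$: applying $\Phi$ to the identity $w=\rho_t(w)$ and noting that every index $j$ occurring in $\rho_t(w)$ lies in $[\mu_x,m_x]$, so that $\Phi(t^{-j}xt^j)=x_j$ in $H$, gives $\Phi(w)=\rho_t(w)=1$. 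A direct check on generators then shows $\Psi\circ\Phi=\id_G$ and $\Phi\circ\Psi=\id_{G^{\ast}}$, so $G\cong G^{\ast}$, which is precisely the asserted HNN decomposition with base group $H$, associated free subgroups $A,B$ and stable letter $t$ acting by $\phi$.
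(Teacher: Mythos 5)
Your proof is correct. The paper does not actually write out a proof of this proposition --- it only attributes the result to Moldavanski\u{\i} and says the argument ``can be extracted from'' the proof of the Freiheitssatz in Lyndon--Schupp following McCool--Schupp --- and your argument is exactly that standard one (Reidemeister--Schreier rewriting of the exponent-sum-zero kernel so that $w=\rho_t(w)$ under $x_j\mapsto t^{-j}xt^{j}$, the Freiheitssatz applied to the cyclically reduced relator $\rho_t(w)$ to see that $A$ and $B$ are free on the stated bases so that $\phi$ really is an isomorphism, and the explicit pair of mutually inverse homomorphisms), so it correctly supplies the details the paper leaves to the references.
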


We say that $w$ is \emph{prefix $t$-positive} if $|u|_t-|u|_{t^{-1}}\geq 0$ for all prefixes $u$ of $w$. 
Analogously, $w$ is said to be \emph{prefix $t$-negative} if $|u|_t-|u|_{t^{-1}}\leq 0$ for all prefixes $u$ of $w$. 

\begin{thm}\label{thm:pos-neg}
Maintaining the notation above,  
let $G=\pre{X}{w=1}$ be a one-relator group presentation such that some $t\in X$ has exponent sum zero in $w$ and that $\rho_t(w)$
is cyclically reduced, 
%%Assume that the group
and let 
$$
H = \Gp\pre{\Xi_w}{\rho_t(w)=1}. 
$$
Let $A$ be the subgroup of $H$ generated by 
$\Xi_w\setminus\{x_{m_x}:\ x\in X\setminus\{t\}\}$.
Suppose that 
\begin{itemize}
    \item[(i)] the rational subset membership problem is decidable in $H$, and 
    \item[(ii)] $A \leq H$ is effectively closed for rational intersections.   
\end{itemize}
(In particular conditions (i) and (ii) both hold in the case that $H$ is a free group.)
If $w$ is either prefix $t$-positive or prefix $t$-negative, then the group $G$ defined by the presentation 
$\Gp\pre{X}{w=1}$ has decidable prefix membership problem.
Consequently, if these conditions hold and 
the inverse monoid $\Inv\pre{X}{w=1}$
is $E$-unitary, then it has decidable word problem.
\end{thm}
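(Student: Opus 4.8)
The plan is to realise the prefix monoid $P_w$ as a submonoid of the HNN extension $G = H *_{t,\phi: A \to B}$ of exactly the form to which Theorem~\ref{thm:hnn} applies, and then to invoke that theorem. By Proposition~\ref{pro:mol}, the standing hypotheses (that $t$ has exponent sum zero in $w$ and that $\rho_t(w)$ is cyclically reduced) guarantee that $G$ is precisely this HNN extension, with $A$ generated by $\Xi_w \setminus \{x_{m_x}: x\in X\setminus\{t\}\}$ and with $\phi: x_i \mapsto x_{i+1}$. First I would treat the case in which $w$ is prefix $t$-positive, reducing the prefix $t$-negative case to it at the very end.

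The key step is to put each prefix of $w$ into HNN normal form. For a prefix $u$ of $w$ let $\sigma(u) = |u|_t - |u|_{t^{-1}}$ be the exponent sum of $t$ in $u$. I would show that in $G$ one has
\[
u = \rho_t(u)\, t^{\sigma(u)},
\]
where $\rho_t(u) \in H$ denotes the word over $\Xi_w$ obtained by the same indexing procedure used to define $\rho_t(w)$. This is verified by repeatedly applying the defining relations $t^{-1}x_i t = x_{i+1}$ and $t x_i t^{-1} = x_{i-1}$ to push every occurrence of $t^{\pm 1}$ in $u$ to the right-hand end; the net surviving power is $t^{\sigma(u)}$ and the accumulated index shifts reproduce exactly the $\rho_t$-indexing, which is the computation already underlying Proposition~\ref{pro:mol}. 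Because $u$ is a prefix of $w$, the running exponent sums inside $u$ coincide with those inside $w$, so every index occurring in $\rho_t(u)$ already occurs in $\rho_t(w)$ and hence lies in the admissible range $[\mu_x, m_x]$; thus $\rho_t(u)$ genuinely represents an element of $H$. Crucially, prefix $t$-positivity forces $\sigma(u) \geq 0$ for every prefix $u$, and clearly $\sigma(u) \leq d := |w|_t$.

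It then follows that, setting $W_e = \{\rho_t(u) : u \in \pref(w),\ \sigma(u) = e\}$ for $0 \leq e \leq d$, each $W_e$ is a finite subset of $H$ and
\[
P_w = \Mon\gen{\pref(w)} = \Mon\gen{W_0 \cup W_1 t \cup \dots \cup W_d t^d}.
\]
This is exactly the shape of submonoid handled by Theorem~\ref{thm:hnn} (with every $W_k'$ empty). Since hypotheses (i) and (ii) supply precisely that the rational subset membership problem is decidable in $H$ and that $A \leq H$ is effectively closed for rational intersections, Theorem~\ref{thm:hnn} shows the membership problem for $P_w$ in $G$ is decidable, i.e.\ $G$ has decidable prefix membership problem. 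When instead $w$ is prefix $t$-negative the same normal form gives $\sigma(u) \leq 0$ for all prefixes, so $P_w = \Mon\gen{W_0 \cup W_1 t^{-1} \cup \dots \cup W_d t^{-d}}$; by Remark~\ref{rmk:inv} this monoid is the inverse of one of the positive form over the same HNN extension, to which Theorem~\ref{thm:hnn} applies, and membership in a monoid is decidable if and only if membership in its inverse is, so decidability follows here too. The final assertion is then immediate from Theorem~\ref{thm:pw-wp}: if $\Inv\pre{X}{w=1}$ is $E$-unitary, the decidability of the prefix membership problem just established yields decidability of its word problem.

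The main obstacle I expect is the verification of the normal form $u = \rho_t(u)\,t^{\sigma(u)}$ together with the check that all its indices stay within $\Xi_w$; this is where the prefix $t$-positivity (resp.\ negativity) hypothesis is indispensable, since it is exactly what forces all the exponents $\sigma(u)$ to share a single sign and hence makes $P_w$ fit the rigid template $\bigcup_k W_k t^k$ (resp.\ $\bigcup_k W_k t^{-k}$) demanded by Theorem~\ref{thm:hnn}. Without a sign restriction the prefixes would yield mixed positive and negative powers of $t$, producing elements outside the class of submonoids that theorem can treat.
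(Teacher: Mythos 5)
Your proposal is correct and follows essentially the same route as the paper: decompose $G$ as the HNN extension $H*_{t,\phi:A\to B}$ via Proposition~\ref{pro:mol}, rewrite each prefix $u$ of $w$ in the form (element of $H$)$\cdot t^{\sigma(u)}$ with all exponents of one sign by the prefix $t$-positivity (resp.\ negativity), and apply Theorem~\ref{thm:hnn} (with Remark~\ref{rmk:inv} for the negative case) followed by Theorem~\ref{thm:pw-wp}. The only cosmetic difference is that you state a single unified normal form $u=\rho_t(u)t^{\sigma(u)}$ where the paper splits prefixes into three types, but the underlying telescoping computation and the check that all indices lie in $\Xi_w$ are the same.
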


\begin{proof}
First of all, the group $G=\Gp\pre{X}{w=1}$ is an HNN extension of $H$ by Proposition \ref{pro:mol}. So, to prove the theorem,
it suffices to show that the prefix monoid $P_w\subseteq G$ has a generating set of the form given in Theorem \ref{thm:hnn}.

We consider only the prefix $t$-positive case, the prefix $t$-negative case being analogous. Write 
$$
w \equiv 
u_0 
\tau_1
u_1 
\tau_2
u_2 \dots 
\tau_n
u_n,
$$
where $\tau_i \in \{t, t^{-1} \}^+$ for all $1 \leq i \leq n$ and $u_i \in \ol{X\setminus\{t\}}^*$,
where the words $u_1,\dots,u_{n-1}$ are all non-empty. 
Set $\xi_i$ to be the $t$-exponent sum of $\tau_i$ for all $1 \leq i \leq n$. 

The exponent sum zero condition translates to
$$
\xi_1 + \xi_2 +\dots+ \xi_n = 0,
$$
while it is easy to see that the prefix $t$-positive condition is implies that the inequalities
$$
\sigma_r = \xi_1 +\dots+ \xi_r \geq 0
$$
hold for all $1\leq r\leq n$. 
This implies that for any letter $x \neq t$ appearing in $w$ we have
$\mu_x,m_x\leq 0$ and $\mu_x\leq -\sigma_r\leq m_x$ for all $1\leq r\leq n$.
In particular, $\sigma_1=\xi_1>0$. 

Now we have
$$
\rho_t(w)\equiv u_0'u_1'\dots u_{n-1}'u_n',
$$
where both $u_0',u_n'$ are obtained from $u_0,u_n$, respectively, by equipping each of their letters by the subscript $0$, 
and for all $1\leq i\leq n-1$, the word $u_i'$
is obtained from $u_i$ by putting $-\sigma_i$ in the subscript of each letter in $u_i$. 

The prefixes $p$ of $w$ can be classified into the following three types:
\begin{itemize}
\item[(1)] $p$ is a prefix of $u_0$;
\item[(2)] $p\equiv u_0 \tau_1\dots \tau_i q$, where $q$ is a prefix of $u_i$ for some $1\leq i\leq n$;
\item[(3)] $p\equiv u_0 \tau_1 \dots u_{i-1} \theta_i$ 
for some 
$1\leq i\leq n$ and 
prefix $\theta_i$ of $\tau_i$.
\end{itemize}
Our aim is to rewrite the elements of $G$ represented by these prefixes with respect 
to the generating set $\Xi_w \cup \{t \}$ of the presentation of $G$ given in Proposition~\ref{pro:mol} as an HNN-extension $H*_{t,\phi:A\to B}$.  

Indeed, upon recalling 
from \cite[Page~198]{LSch}
that the generator $x_{-s}$ of $H$ represents the element $t^sxt^{-s}$ in 
$G = H*_{t,\phi:A\to B}$,   
we immediately see that the prefixes of $u_0$ translate into prefixes
of $u_0'$, a finite subset of $H$. 
This deals with the prefixes of type (1).
For prefixes of type (2), in $G$ we have the equality
$$p = u_0(t^{\sigma_1}u_1t^{-\sigma_1})\dots(t^{\sigma_{i-1}}u_{i-1}t^{-\sigma_{i-1}})(t^{\sigma_i}qt^{-\sigma_i})t^{\sigma_i},$$
which means that 
in the presentation for the HNN
extension 
$H*_{t,\phi:A\to B}$
we have 
$$p = u_0'\dots u_{i-1}'q't^{\sigma_i}$$ 
where 
$q'$ is the word obtained 
by putting the subscript $-\sigma_i$ on every letter of $q$, and thus 
$q'$ is a prefix of $u_i'$. 
By a very similar argument, prefixes of type (3) are expressed as 
$$u_0'\dots u_{i-1}'t^{\sigma_{i-1}} \theta_i.$$
Let 
$\alpha_i \in \mathbb{Z}$ be the $t$-exponent sum of $\theta_i$.  
By the prefix $t$-positive condition, $\sigma_{i-1}+\alpha_i\geq 0$.
This implies that $t^{\sigma_{i-1}} \theta_i \in \{t, t^{-1} \}^*$ has non-negative $t$-exponent sum 
and therefore is equal in $G$ to a non-negative power of $t$. 
Hence, upon defining $d=
\max_{1\leq i\leq n}\sigma_i$ we conclude that the monoid $P_w$ has a generating set of the form 
$$
W_0\cup W_1t\cup\dots\cup W_dt^d
$$
for some finite $W_0,W_1,\dots,W_d\subseteq H$. We now see that all the requirements of Theorem \ref{thm:hnn} are satisfied,
so we conclude that the membership problem of $P_w$ in $G$ is decidable. 
\end{proof}

\begin{rmk}
It is natural to compare the prefix $t$-positive condition in Theorem~\ref{thm:pos-neg} with of $w$-strictly positive presentations considered in \cite{IMM} where it was shown that 
groups defined by such 
presentations have decidable prefix membership problem; see \cite[Theorem~5.1]{IMM}. 
In \cite[Corollary~5.2]{IMM} it is shown that if $w$ is a cyclically reduced word 
such that $\Gp\pre{X}{w=1}$
is a $w$-strictly positive presentation 
then the group of units of 
$\Inv\pre{X}{w=1}$ is trivial.  
In contrast, the $t$-positive condition in our theorem certainly does not imply that the group of units is trivial, and in this way we see that the class of examples to which Theorem~\ref{thm:pos-neg} applies is distinct from those dealt with by \cite[Theorem~5.1]{IMM}. 
For example, the inverse monoid presentation 
\[
\Inv\pre{a,b,t}{ tat^{-1}btat^{-1}=1 }
\]
is $t$-positive and it may be shown the the group of units of this inverse monoid is the infinite cyclic group. Indeed it may be shown that $tat^{-1}$ and $b$ are the minimal invertible pieces of this relator, since it is easily seen that this inverse monoid is not a group.
These pieces satisfy the unique marker letter property, and hence by 
results proved in 
\cite{GR} the group of units of this inverse monoid 
isomorphic to the group 
defined by the presentation  
$\Gp\pre{x,y}{xyx=1}$. Hence the group of units of this monoid is the infinite cyclic group.  
\end{rmk}

\begin{exa}
Let
$$
M = \Inv\pre{a,b,c,t}{t^{-1}atcbt^{-2}at^2cbt^{-3}at^3c=1}.
$$
Then $t$ has exponent sum zero in the relator word, and, furthermore, this word is prefix $t$-negative. The corresponding group 
$$G=\Gp\pre{a,b,c,t}{(t^{-1}at)cb(t^{-2}at^2)cb(t^{-3}at^3)c=1}$$ is an HNN extension of the group 
$$
H=\Gp\pre{a_1,a_2,a_3,b_0,c_0}{a_1c_0b_0a_2c_0b_0a_3c_0=1}
$$
with $A=\Gp\gen{a_1,a_2}$, $B=\Gp\gen{a_2,a_3}$, and $a_i\phi=t^{-1}a_it=a_{i+1}$ for $i\in\{1,2\}$. 
The defining relator in the presentation of $H$ is a positive word and hence is cyclically reduced. 
Also, since the generator $a_1$ only appears once in that word, it follows from the 
Freiheitssatz that that $H$ is a free group of finite rank. 
Hence, by Corollary~\ref{thm:hnn-free} and Theorem~\ref{thm:pos-neg}
the group defined by  
$$G=\Gp\pre{a,b,c,t}{(t^{-1}at)cb(t^{-2}at^2)cb(t^{-3}at^3)c=1}$$ 
has decidable prefix membership problem. 
Since the monoid $M$ is $E$-unitary, as the defining relator word is cyclically reduced, it follows that $M$ has decidable word problem.  
\end{exa}

\begin{exa}
For a slightly more involved example, let $G=\Inv\pre{a,b,c,t}{w=1}$, where
$$
w\equiv tbcbt^8bbct^{-6}ct^{-3}at^3bt^{-3}at^3ct^{-2}ct^{-1}.
$$
Note that $w$ is not cyclically reduced; however, $t$ has exponent zero in $w$ and it is $t$-positive. Furthermore,
$$
\rho_t(w)\equiv b_{-1}c_{-1}b_{-1}b_{-9}^2c_{-9}c_{-3}a_0b_{-3}a_0c_{-3}c_{-1}
$$
is a cyclically reduced word, so G is an HNN extension of 
$$
H = \Gp\pre{a_0,b_{-9},\dots,b_{-1},c_{-9},\dots,c_{-1}}{\rho_t(w)=1}.
$$
Note that $\rho_t(w)$ is a cyclically reduced word. 
Since the generator $b_{-2}$ occurs only once in $\rho_t(w)$ it follows 
by the Freiheitssatz 
that $H$ is a free group of finite rank. 
Therefore, Theorem~\ref{thm:pos-neg} tells us that the membership problem for $P_w$ in $G$ is decidable. 
\end{exa}

As in the previous section, we now exhibit an example 
to which the methods of this section do not apply. 

\begin{exa}
Consider the presentation
$$
\pre{a,b,t}{bt^{-1}at^2bt^{-1}a=1}.
$$
Note that the 
relator word 
in this presentation 
is cyclically reduced and has exponent sum zero for the letter $t$.
However, it is neither prefix $t$-positive, 
nor prefix $t$-negative.
The group $G$ defined by this presentation is an HNN extension of $H=\Gp\pre{a_0,a_1,b_{-1},b_0}{b_0a_1b_{-1}a_0=1}$, which is
clearly a free group of rank 3. The associated subgroups $A=\Gp\gen{a_0,b_{-1}}$ and $B=\Gp\gen{a_1,b_0}$ are free groups of rank 2. 
However, upon identifying all the prefixes of $bt^{-1}at^2bt^{-1}a$ and expressing them in 
terms of the generators of the 
described HNN extension 
of $H$, we see that 
$$P_w=\Mon\gen{W_0 \cup W_1t\cup W_{-1}t^{-1}},$$ 
where
$$
W_0 = \{a_0,b_0,(b_{-1}a_0)^{-1}\}, \quad W_1=\{a_0,(b_{-1}a_0)^{-1}\}, \quad W_{-1}=\{b_0,(b_{-1}a_0)^{-1}\}.
$$
Now we cannot apply Corollary \ref{thm:hnn-free} because of the `mixed' nature of the generating set of $P_w$ which contains
both elements with $t$ and with $t^{-1}$. 
The underlying problem now is that when we form an arbitrary product of such elements
(that is, a product representing an element of $P_w$), we cannot guarantee anymore that such a product is already in reduced form, as we had in Lemma \ref{lem:seq} and Proposition \ref{pro:inM}. 
Also, keeping track of the rationality of subsets containing elements of $H$ occurring between consecutive  instances of $t$ and $t^{-1}$ in such products becomes increasingly troublesome as we are forced to make more and more  (potentially nested) $t$-reductions.
\end{exa}

\begin{exa}
In Example \ref{exa:surface} we have seen that the orientable surface group of genus $n\geq 2$, defined by its standard presentation 
$$
G_n=\Gp\pre{a_1,\dots,a_n,b_1,\dots,b_n}{[a_1,b_1][a_2,b_2]\dots[a_n,b_n]=1},
$$ 
has decidable prefix membership problem. Now, by using Theorem~\ref{thm:pos-neg} we can 
apply out results to give a new proof of 
\cite[Theorem 5.3(b)]{IMM} showing  
that the prefix membership problem is decidable for all cyclic conjugates of the relator word in the above presentation.

Indeed, upon denoting $u\equiv [a_2,b_2]\dots[a_n,b_n]$, we have four cases to consider:
\begin{itemize}
\item[(i)] $w\equiv a_1^{-1}b_1^{-1}a_1b_1u$;
\item[(ii)] $w\equiv b_1^{-1}a_1b_1ua_1^{-1}$;
\item[(iii)] $w\equiv a_1b_1ua_1^{-1}b_1^{-1}$;
\item[(iv)] $w\equiv b_1ua_1^{-1}b_1^{-1}a_1$.
\end{itemize}
Case (i) is already resolved in Example \ref{exa:surface}; 
to illustrate how to deal with the remaining ones, we consider Case (iii) the other cases being similar.
Take $a_1$ to be the stable letter. The word $w$ is cyclically reduced, $a_1$ is exponent sum zero in $w$, and
$w$ is $a_1$-positive. We conclude that $G_n$ is an HNN extension of 
$$
H = \Gp\pre{(b_1)_{-1},(b_1)_0,(a_2)_{-1},(b_2)_{-1},\dots}{(b_1)_{-1}v((b_1)_0)^{-1} = 1},
$$
where $v$ is obtained from $u$ by replacing each $a_i,b_i,\dots$ by $(a_i)_{-1},(b_i)_{-1},\dots$, respectively, for $2\leq i\leq n$. 
So, $H$ is a free group of rank $2n-1$ with associated cyclic subgroups generated by
$(b_1)_{-1}$ and $(b_1)_0=(b_1)_{-1}v$, respectively. By Theorem~\ref{thm:pos-neg} we obtain that $P_w$ has decidable membership in $G_n$.
\end{exa}

We finish this subsection by presenting yet another application of Theorem~\ref{thm:pos-neg} which concerns the prefix membership problem 
for one-relator groups defined by Adjan presentations \cite{Inam,MMSu} over a two-letter alphabet. Recall that a one-relator 
group, inverse monoid, or monoid presentation is an \emph{Adjan presentation} if it is of the form $\pre{X}{u=v}$, 
where $u,v\in X^*$ are positive words such that the first letters of $u,v$ are different, and also the last 
letters of $u,v$ are different. For our purposes, group presentations of Adjan type will be written as 
$\pre{X}{uv^{-1}=1}$; note that the given conditions ensure that the word $uv^{-1}$ is cyclically reduced.

\begin{thm}\label{thm:adjan}
Let $G=\Gp\pre{a,b}{uv^{-1}=1}$ be a group defined by an Adjan presentation such that $|u|_a=|v|_a$.
Assume that at least one of the following conditions hold:
\begin{itemize}
\item[(i)] one of the words $u$ or $v$ begins with $ba$;
\item[(ii)] one of the words $u$ or $v$ end with $ab$;
\item[(iii)] there exists an integer $k$, $1\leq k < |u|_a$, such that there is a single $b$ between the 
$k$th and the $(k+1)$th occurrence of $a$ in one of the words $u,v$, while in the other word the $k$th and 
the $(k+1)$th occurrence of $a$ are adjacent.
\end{itemize}
Then the prefix membership problem for $G$ is decidable, as is the word problem
for the inverse monoid $\Inv\pre{a,b}{uv^{-1}=1}$.
\end{thm}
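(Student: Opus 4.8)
The plan is to realise $G$ as an HNN extension to which Theorem~\ref{thm:pos-neg} applies, taking the stable letter to be $a$. Since $u$ and $v$ are positive words with $|u|_a=|v|_a=:n$, the letter $a$ occurs $n$ times positively in $w\equiv uv^{-1}$ (inside $u$) and $n$ times negatively (inside $v^{-1}$), so $a$ has exponent sum zero in $w$, and $n\geq 1$ because each of the conditions (i)--(iii) forces $a$ to occur. Moreover every prefix $p$ of $w$ is either a prefix of the positive word $u$, in which case its $a$-exponent sum is $|p|_a\geq 0$, or has the form $p\equiv uq$ with $q$ a prefix of $v^{-1}$, in which case its $a$-exponent sum is $|u|_a$ minus the number of occurrences of $a^{-1}$ in $q$, which is at least $|u|_a-|v|_a=0$. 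Hence $w$ is prefix $a$-positive, so the positivity hypothesis of Theorem~\ref{thm:pos-neg} holds automatically and requires none of (i)--(iii).

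Next I would compute $\rho_a(w)$ explicitly. Writing the words in syllable form $u\equiv b^{c_0}ab^{c_1}a\cdots ab^{c_n}$ and $v\equiv b^{d_0}ab^{d_1}a\cdots ab^{d_n}$ with all $c_i,d_i\geq 0$, a direct reading of the construction of $\rho_a$ (delete the $a$'s and record the running $a$-exponent sum as the subscript of each $b$) gives
\[
\rho_a(w)\equiv b_0^{c_0}b_{-1}^{c_1}\cdots b_{-n}^{c_n}\,b_{-n}^{-d_n}\cdots b_{-1}^{-d_1}b_0^{-d_0}.
\]
The Adjan hypotheses translate directly: the first letters of $u,v$ differ precisely when exactly one of $c_0,d_0$ is zero, and the last letters differ precisely when exactly one of $c_n,d_n$ is zero. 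I would then verify that $\rho_a(w)$ is cyclically reduced. Distinct syllables carry distinct subscripts, so the only place equal generators meet is the central junction $b_{-n}^{c_n}b_{-n}^{-d_n}$, and the last-letter condition (one of $c_n,d_n$ vanishing) removes any cancellation there, whence the displayed word is already reduced; the first-letter condition then guarantees that the leading letter (a positive $b_{-i}$ coming from $u$) and the inverse of the trailing letter (a positive $b_{-j}$ coming from $v$) carry different subscripts, so no cyclic cancellation occurs. Here I would treat separately the degenerate cases in which $u$ or $v$ contains no $b$, where the reduced word is entirely negative or entirely positive and cyclic reducedness is immediate. Thus Proposition~\ref{pro:mol} presents $G$ as an HNN extension of $H=\Gp\pre{\Xi_w}{\rho_a(w)=1}$.

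The role of conditions (i)--(iii) is precisely to force $H$ to be free. In the displayed word the generator $b_{-i}$ occurs exactly $c_i+d_i$ times (the positive block of length $c_i$ and the negative block of length $d_i$ are non-adjacent and the word is reduced), so it suffices to exhibit an index with $c_i+d_i=1$. I would check each case: if a word begins with $ba$ then the relevant $c_0$ or $d_0$ equals $1$ while the first-letter condition forces the other to be $0$, giving $c_0+d_0=1$; if a word ends with $ab$ the same reasoning with the last-letter condition gives $c_n+d_n=1$; and condition~(iii) directly yields an index $k$ with $\{c_k,d_k\}=\{1,0\}$, hence $c_k+d_k=1$. In each case some generator $b_l$ occurs exactly once in the cyclically reduced word $\rho_a(w)$, so a Tietze elimination of that generator (equivalently, the Freiheitssatz) shows that $H$ is a free group of finite rank. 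Consequently hypotheses (i) and (ii) of Theorem~\ref{thm:pos-neg} hold (the free case noted there), and that theorem yields decidability of the prefix membership problem for $G$. Finally, since $uv^{-1}$ is cyclically reduced, the inverse monoid $\Inv\pre{a,b}{uv^{-1}=1}$ is $E$-unitary by Theorem~\ref{thm:cr-euni}, so Theorem~\ref{thm:pw-wp} upgrades this to decidability of its word problem.

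I expect the main obstacle to be the combinatorial bookkeeping in the syllable computation: correctly matching the Adjan first/last-letter conditions and conditions (i)--(iii) to the vanishing and equal-to-one patterns of the exponents $c_i,d_i$, and carefully handling the degenerate configurations (a word with no $b$, and the central junction) so that one genuinely proves $\rho_a(w)$ is cyclically reduced as written rather than merely reducing to such a word, since the hypotheses of Proposition~\ref{pro:mol} and Theorem~\ref{thm:pos-neg} refer to the literal word $\rho_a(w)$.
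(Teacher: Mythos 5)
Your proposal is correct and follows essentially the same route as the paper: take $a$ as the exponent-sum-zero stable letter, observe that $uv^{-1}$ is automatically prefix $a$-positive, use the Adjan first/last-letter conditions to see $\rho_a(uv^{-1})$ is cyclically reduced, use (i)--(iii) to find a generator $b_{-i}$ occurring exactly once so that the base group is free, and conclude via Theorem~\ref{thm:pos-neg}. The only difference is presentational: you treat all cases uniformly via the syllable exponents $c_i,d_i$, whereas the paper works through representative cases of the first/last-letter configurations one at a time.
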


\begin{proof}
For each of the assumptions (i)--(iii), there are four cases to consider depending upon the first and last 
letters of $u,v$. However, all these cases are very similar, so we consider only one of them in each instance.
Let $p=|u|_a=|v|_a$.

We begin by assuming that $u$ begins with $ba$ and ends with $a$. Then $v$ begins with $a$ and ends with $b$,
so we may write
\begin{align*}
u & = bab^{\alpha_2}\dots b^{\alpha_p}a,\\
v & = ab^{\beta_1}\dots ab^{\beta_p+1},
\end{align*}
for some integers $\alpha_i,\beta_i\geq 0$. The word
$$
uv^{-1} = bab^{\alpha_2}\dots b^{\alpha_p}ab^{-(\beta_p+1)}a^{-1}\dots b^{-\beta_1}a^{-1},
$$
has exponent sum zero for $a$ and is prefix $a$-positive.
By considering $a$ as the stable letter, it follows that $G$ is an HNN extension of the group
$$
H = \Gp\pre{b_0,b_{-1},\dots,b_{-p}}{b_0b_{-1}^{\alpha_2}\dots b_{-(p-1)}^{\alpha_p}b_{-p}^{-(\beta_p+1)}\dots b_{-1}^{-\beta_1}=1}.
$$
The defining relator is a cyclically reduced word, and $H$ is a free group of finite rank by the 
Freiheitssatz since the generator 
$b_0$ arises exactly once in the defining relator. 
Hence in this case the result follows by  
Theorem~\ref{thm:pos-neg}.

Similarly, if e.g.\ $v$ begins with $b$ and ends with $ab$ then $u$ both begins and ends with $a$, and so we may write
\begin{align*}
u & = ab^{\alpha_1}\dots b^{\alpha_{p-1}}a,\\
v & = b^{\beta_0+1}ab^{\beta_1}\dots ab,
\end{align*}
for some integers $\alpha_i,\beta_i\geq 0$. In this case, we conclude that $G$ is an HNN extension of the group
$$
\Gp\pre{b_0,b_{-1},\dots,b_{-p}}{b_{-1}^{\alpha_1}\dots b_{-(p-1)}^{\alpha_{p-1}}b_{-p}^{-1}\dots b_{-1}^{-\beta_1}b_0^{-(\beta_0+1)}=1}.
$$
The defining relator is a cyclically reduced word, and $H$ is a free group of finite rank by the  Freiheitssatz since the generator  $b_{-p}$ arises exactly once in the defining relator.  Hence in this case the result follows by   Theorem~\ref{thm:pos-neg}.

Finally, upon assuming (iii), let us further assume that $u$ begins and ends with $a$, while $v$ begins and ends with $b$.
Then, for example,
\begin{align*}
u & = ab^{\alpha_1}\dots b^{\alpha_{k-1}}abab^{\alpha_{k+1}}\dots b^{\alpha_{p-1}}a,\\
v & = b^{\beta_0+1}ab^{\beta_1}\dots b^{\beta_{k-1}}aab^{\beta_{k+1}}\dots ab^{\beta_p+1},
\end{align*}
or some integers $\alpha_i,\beta_i\geq 0$. This leads to the conclusion that $G$ is an HNN extension of the group
$$
\Gp\pre{b_0,b_{-1},\dots,b_{-p}}{b_{-1}^{\alpha_1}\dots  b_{-k} \dots b_{-p}^{-(\beta_p+1)}\dots b_{-(k+1)}^{-\beta_{k+1}}b_{-(k-1)}^{-\beta_{k-1}} \dots b_0^{-(\beta_0+1)}=1}.
$$
The defining relator is a cyclically reduced word, and $H$ is a free group of finite rank by the  Freiheitssatz since the generator  $b_{-1}$ arises exactly once in the defining relator.  Hence in this case the result follows by   Theorem~\ref{thm:pos-neg}.
\end{proof}

\begin{rmk}
There are examples to which Theorem~\ref{thm:adjan} applies, which are not handled in 
\cite[Corollary 2.6]{MMSu}. For example, it covers a part of Case 4 from that corollary for which the decidability of the prefix membership problem is not deduced there (one of the  simplest examples is $u\equiv aba$, $v\equiv baab$). 
This shows that our results are not consequences of the approach via distortion functions pursued in \cite{MMSu}.
\end{rmk}

\subsection{Conjugacy pinched presentations}

The ``HNN analogue'' of the class of cyclically pinched groups are the \emph{conjugacy pinched groups}: these are one-relator
groups defined by a presentation of the form
$$
\Gp\pre{X\cup\{t\}}{t^{-1}ut = v},
$$ 
where $u,v\in \ol{X}^*$ are nonempty reduced words. Again, for our purposes, conjugacy pinched group 
presentations will be written in the form 
$$\Gp\pre{X\cup\{t\}}{t^{-1}utv^{-1}=1}.$$ 

\begin{thm}\label{thm:pinch2}
The prefix membership problem is decidable for any group defined by a conjugacy pinched presentation 
\[\Gp\pre{X\cup\{t\}}{t^{-1}utv^{-1}=1}.\] 
Consequently, the word problem is decidable for all one-relator inverse monoids of the form    
\[\Inv\pre{X\cup\{t\}}{t^{-1}utv^{-1}=1}\] 
with $u$ and $v$ both reduced reduced words from $\ol{X}^*$.  
\end{thm}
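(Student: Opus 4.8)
The plan is to mirror the proof of Theorem~\ref{thm:pinch1}, replacing the amalgamated free product decomposition by the HNN decomposition and replacing the appeal to Theorem~A by an appeal to Corollary~\ref{thm:hnn-free}. First I would observe that, since $u,v\in\ol{X}^*$ are nonempty reduced words and free groups are torsion-free, the subgroups $A=\Gp\gen{u}$ and $B=\Gp\gen{v}$ of $FG(X)$ are infinite cyclic, and the assignment $u\mapsto v$ extends to an isomorphism $\phi\colon A\to B$. The relation $t^{-1}utv^{-1}=1$ is precisely $t^{-1}ut=u\phi$, so by the standard theory of HNN extensions (see \cite[Ch.~IV]{LSch}) we have $G=FG(X)*_{t,\phi\colon A\to B}$, an HNN extension of the free group $FG(X)$ with finitely generated associated subgroups.

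Next I would compute a generating set for the prefix monoid $P_w$, where $w\equiv t^{-1}utv^{-1}$. The prefixes of $w$ are $1$, the words $t^{-1}p$ with $p\in\pref(u)$, the word $t^{-1}ut$, and the words $t^{-1}utq$ with $q\in\pref(v^{-1})$. Using the defining relation $t^{-1}ut=v$ of the HNN extension, every prefix of the last type equals $vq$ in $G$, and exactly as in the proof of Theorem~\ref{thm:pinch1} one has the set equality $v\cdot\pref(v^{-1})=\pref(v)$ in $G$. Hence
\[
P_w=\Mon\gen{\pref(v)\cup t^{-1}\pref(u)},
\]
which is of the form $\Mon\gen{W_0\cup t^{-1}W_1'}$ with $W_0=\pref(v)$ and $W_1'=\pref(u)$ finite subsets of $FG(X)$.

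The decisive step is then to feed this into the HNN machinery. Because the stable letter occurs here only with exponent $-1$, I would invoke Remark~\ref{rmk:inv}, which identifies $\Mon\gen{W_0\cup t^{-1}W_1'}$ with the inverse of the monoid $\Mon\gen{W_0^{-1}\cup (W_1')^{-1}t}$; the latter is of the positive form covered by Corollary~\ref{thm:hnn-free}, whose hypotheses hold since the base group $FG(X)$ is free and $A,B$ are finitely generated. Thus membership in $\Mon\gen{W_0^{-1}\cup (W_1')^{-1}t}$ is decidable, and since a subset of a group has decidable membership problem if and only if its inverse does, membership in $P_w$ is decidable. This establishes the prefix membership problem for $G$.

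Finally, for the statement about the inverse monoid, I would observe that $w\equiv t^{-1}utv^{-1}$ is cyclically reduced: it is reduced because $u$ and $v$ involve no occurrence of $t$, and it is cyclically reduced because its first letter $t^{-1}$ and its last letter (the inverse of the first letter of $v$) lie in disjoint alphabets and so cannot cancel cyclically. Hence by Theorem~\ref{thm:cr-euni} the inverse monoid $\Inv\pre{X\cup\{t\}}{t^{-1}utv^{-1}=1}$ is $E$-unitary, and decidability of its word problem follows from Theorem~\ref{thm:pw-wp}. I expect the only genuinely delicate points to be the bookkeeping in the prefix computation and the correct orientation of the stable letter, which forces the use of Remark~\ref{rmk:inv} rather than a direct application of Corollary~\ref{thm:hnn-free}.
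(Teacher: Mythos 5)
Your proposal is correct and follows essentially the same route as the paper: both realise $G$ as the HNN extension $FG(X)*_{t,\phi:A\to B}$ with $A=\Gp\gen{u}$, $B=\Gp\gen{v}$, compute $P_w=\Mon\gen{\pref(v)\cup t^{-1}\pref(u)}$ using $t^{-1}ut\cdot\pref(v^{-1})=\pref(v)$, and then apply Remark~\ref{rmk:inv} together with Theorem~\ref{thm:hnn} (in your case via its free-group specialisation, Corollary~\ref{thm:hnn-free}). Your additional remarks on cyclic reducedness and the $E$-unitary step merely make explicit what the paper leaves implicit.
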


\begin{proof}
By the Freiheitssatz,
any conjugacy pinched group is the HNN extension of the free group $FG(X)$ with associated
cyclic subgroups $A=\Gp\gen{u}$ and $B=\Gp\gen{v}$. Hence, to prove the theorem it suffices to compute the set of prefixes
of the word $w\equiv t^{-1}utv^{-1}$ (which generate the prefix monoid $P_w$) and see that it has the form required by 
Theorem~\ref{thm:hnn}. 
Indeed, we have 
$$
\pref(w) = t^{-1}\cdot\pref(u) \cup t^{-1}ut\cdot\pref(v^{-1}).       
$$
Note that in $G$ we have $t^{-1}ut\cdot\pref(v^{-1})=\pref(v)$, so $P_w$ is generated by $W_0\cup t^{-1}W_1'$ for
$W_0=\pref(v)$ and $W_1'=\pref(u)$, whence the required result follows (see Remark~\ref{rmk:inv}). 
\end{proof}

\begin{exa}
As an application of the previous theorem, 
groups defined by presentations of the form 
$$
B(m,n) = \Gp\pre{a,b}{b^{-1}a^mba^{-n}=1}
$$
have decidable prefix membership problems. 
These are so-called Baumslag-Solitar presentations.  
Hence, the inverse monoids 
$$\Inv\pre{a,b}{b^{-1}a^mba^{-n}=1}$$
have decidable word problems (cf.\ \cite[Theorem 4.2]{Inam} for a highly related result).
\end{exa}

\section{An undecidability result in the non-cyclically reduced case}
\label{sec:conclude}

In this article the main applications of our results have been to show that the prefix membership problem is decidable 
for certain groups defined by one-relator presentations. On the other hand, in the recent paper \cite{Gr-Inv} a word $w$ 
(over a 3-element alphabet $X$) is constructed such that the inverse monoid $M=\Inv\pre{X}{w=1}$ has undecidable word problem. 
Furthermore, it was proved in Theorem 3.8 of the same paper that $M$ is actually $E$-unitary. Combining this fact with 
\cite[Theorem 3.1]{IMM} (see Theorem~\ref{thm:pw-wp} for the statement) it follows that there does exists a one-relator group 
$G=\Gp\pre{X}{w=1}$ with undecidable prefix membership problem. Hence, the following open problem arises naturally.

\begin{prb}
Characterise the words $w\in\ol{X}^*$ with the property that the prefix membership problem for $\Gp\pre{X}{w=1}$ is decidable.
In particular, is the prefix membership problem decidable when $w$ is a cyclically reduced word?
\end{prb}

The latter question was stated in \cite[Question~13.10]{Bes-list}.
By modifying some ideas and results from \cite{Gr-Inv}, we shall now show that if one weakens the hypothesis of this problem 
to insisting only that $w$ is a reduced word, then this question has a negative answer. 

\begin{thm}\label{thm:undec}
There is a finite alphabet $X$ and a reduced word $w \in \ol{X}^*$ such that 
$\Gp\pre{X}{w=1}$  has undecidable prefix membership problem. 
\end{thm}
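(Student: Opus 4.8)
The plan is to revisit the construction of \cite{Gr-Inv} and adjust it so that the resulting defining word becomes reduced, while preserving the two features that force undecidability. Recall from the discussion preceding the statement that \cite{Gr-Inv} produces a finite alphabet $X$ and a word $\omega \in \ol{X}^*$ such that the inverse monoid $\Inv\pre{X}{\omega=1}$ is $E$-unitary (by Theorem~3.8 of \cite{Gr-Inv}) and has undecidable word problem; by the contrapositive of Theorem~\ref{thm:pw-wp} the group $\Gp\pre{X}{\omega=1}$ then has undecidable prefix membership problem. The one obstruction to this already settling Theorem~\ref{thm:undec} is that $\omega$ fails to be reduced: the encoding gadgets of \cite{Gr-Inv} insert cancelling pairs $x x^{-1}$ in order to realise certain idempotent/stabiliser relations of the simulated device. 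Thus the entire content of the theorem is to re-encode these gadgets over a (possibly enlarged) alphabet $\wh{X} \supseteq X$ by a reduced word $w$, in such a way that $\Inv\pre{\wh{X}}{w=1}$ remains $E$-unitary and retains an undecidable word problem; Theorem~\ref{thm:pw-wp} then yields the conclusion for the reduced word $w$.

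Concretely, I would locate each cancelling factor of $\omega$ and replace it by a reduced conjugation-type gadget over a fresh set of letters $T$, chosen so that (i) the resulting word $w \in \ol{\wh{X}}^*$, with $\wh{X} = X \cup T$, is reduced, and (ii) the Tietze transformations eliminating the letters of $T$ recover the original presentation, so that the maximal group image is unchanged and the underlying undecidable problem (the submonoid membership problem in the HNN extension of $\mathbb{Z}\times\mathbb{Z}$ underlying \cite{Gr-Inv}) is transported intact. The design principle is that a cancelling pair $x x^{-1}$ carrying a relation in the original construction can be rewritten, after introducing a fresh letter $t \notin X$, in a form such as $x\,t\cdots t^{-1}\,x^{-1}$ that has no internal cancellation, exactly as one passes from a pinch to a stable-letter presentation. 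This is the same mechanism by which the decidability results of Sections~\ref{sec:appl1}--\ref{sec:appl2} trade cyclically pinched and conjugacy pinched relators for amalgamated and HNN decompositions.

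The main obstacle will be to verify that the re-encoding preserves both $E$-unitarity and undecidability of the word problem for the modified inverse monoid, since neither property is automatic for a merely reduced (as opposed to cyclically reduced) relator. For $E$-unitarity one cannot appeal to Theorem~\ref{thm:cr-euni}, so one must reprove it directly for $w$, tracking how the Munn-tree analysis (Stephen's procedure) of \cite{Gr-Inv} behaves under the insertion of the letters of $T$; the crucial point is that no new incompatible pairs are created, i.e.\ that $\sigma = {\sim}$ still holds in $\Inv\pre{\wh{X}}{w=1}$. For undecidability, one must confirm that the reduction from the original undecidable problem still factors through membership in the prefix monoid $P_w$ of the modified presentation; since the prefix monoid depends on the presentation and not merely on the group, adjoining $T$ could in principle enlarge $P_w$, and here the conservativity criterion of Theorem~\ref{thm:UnitalConservative} is the natural tool for showing that the new letters have not enlarged $P_w$ beyond what faithfully encodes the simulated computation. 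Once these two points are secured, reducedness of $w$ is immediate by construction and the theorem follows.
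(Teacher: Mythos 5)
Your proposal does not reach a proof: it is a programme whose two hardest steps are named as ``obstacles'' and then left unresolved. Concretely, you propose to build a reduced word $w$ so that $\Inv\pre{\wh{X}}{w=1}$ is still $E$-unitary with undecidable word problem, and then invoke the contrapositive of Theorem~\ref{thm:pw-wp}. But for a relator that is reduced yet not cyclically reduced, Theorem~\ref{thm:cr-euni} is unavailable, and establishing $E$-unitarity is a substantial bespoke argument (in \cite{Gr-Inv} it occupies Theorem~3.8 and is specific to the word constructed there); you give no argument that your re-encoded gadgets preserve it. Likewise you give no argument that the word problem remains undecidable after the modification, nor do you exhibit the modified word or verify that it is reduced. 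Since these verifications are exactly where the difficulty of the theorem lies, the proposal has a genuine gap.

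The paper's proof avoids this entire route. It never mentions $E$-unitarity or the word problem of an inverse monoid. Instead it takes from \cite[Theorem~2.4]{Gr-Inv} a one-relator group $H=\Gp\pre{a,b}{r=1}$ with $r\equiv abab^{-1}a^{-1}ba^{-1}b^{-1}$, containing a finitely generated submonoid $T=\Mon\gen{u_1,\dots,u_k}$ with undecidable membership problem, and writes down an explicit reduced word $w\equiv\beta\gamma r\gamma^{-1}\beta^{-1}$ over $X=\{a,b,t\}$. The factors are engineered so that, using $r=1$ (and its cyclic conjugate $s=1$) in $G=\Gp\pre{X}{w=1}$, the prefixes of $\beta$ yield $a^{\pm1},b^{\pm1}$ and the prefixes of $\gamma$ yield $t$ and the elements $tu_it^{-1}$; hence $P_w$ is the submonoid generated by $\ol{\{a,b\}}\cup\{t\}\cup\{tu_it^{-1}: 1\leq i\leq k\}$, and by \cite[Lemma~3.6]{Gr-Inv} membership of $tvt^{-1}$ in $P_w$ is equivalent to membership of $v$ in $T$ within $H$. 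This is a direct, purely group-theoretic reduction of prefix membership to an already-known undecidable submonoid membership problem. If you wish to salvage your approach you would have to carry out the $E$-unitarity and undecidability verifications in full for your modified relator, which is considerably more work than the paper's construction and is not obviously feasible.
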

\begin{proof}
Let $H = \Gp\pre{a,b}{ab ab^{-1} a^{-1}b a^{-1}b^{-1} = 1}$. It follows from \cite[Theorem~2.4]{Gr-Inv} 
that there is a finite set of words $u_1, u_2, \ldots, u_k \in \ol{\{a,b\}}^*$ such that the membership problem for 
$$T =\Mon\gen{u_1, u_2, \ldots, u_k}$$ 
in $H$ is undecidable. Set $r \equiv ab ab^{-1} a^{-1}b a^{-1}b^{-1}$ and $s \equiv a^{-1} b^{-1} ab ab^{-1} a^{-1}b$, and let  
\[
\beta \equiv 
(a r a^{-1}) 
(b r b^{-1}) 
(a^{-1} s a) 
(b^{-1} s b) 
\]
and 
\[
\gamma \equiv 
(t u_1 t^{-1}) r (t u_1^{-1} t^{-1}) r 
(t u_2 t^{-1}) r (t u_2^{-1} t^{-1}) r 
\dots 
r
(t u_k t^{-1}) r (t u_k^{-1} t^{-1}),
\]
where $t$ is a new letter not in $\ol{\{a,b\}}$.  Now define 
\[
w \equiv 
\beta \gamma r \gamma^{-1} \beta^{-1}.
\]
It is easy to see that $w$ is a reduced word in $\ol{X}^*$ where $X = \{a,b,t\}$. We claim that $\Gp\pre{X}{w=1}$  has undecidable 
prefix membership problem. Let $P = \Mon\gen{\mathrm{pref}(w)} \leq G$.  From the definition of $w$ it follows that $r=1$ in the group $G$. 
Since $r=1$ and $s$ is a cyclic conjugate of $r$, it follows that $s=1$ in $G$. 
Using the fact that $r=1$ and $s=1$, by considering the prefixes of $\beta$ we see that all of $a, a^{-1}, b$ and $b^{-1}$ belong to $P$ 
(meaning that the elements these words represent all belong to $P$). Since $\beta = 1$ in $P$, considering prefixes of $\gamma$ and using 
the fact that $r=1$ in $G$ we see that $t$ belongs to $P$, and $t u_i t^{-1}$ belongs to $P$ for all $1 \leq i \leq k$. Since every other 
prefix of $w$ is clearly expressible as a product of these elements we conclude that $P$ is equal to the submonoid of $G$ generated by 
\[
\ol{\{a,b\}} \cup \{t\} \cup \{ t u_i t^{-1} : i \in \{1, \ldots, k \} \}.
\]
It may be shown (see \cite[Lemma~3.6]{Gr-Inv}) that for any word $v \in \ol{\{a,b\}}^*$ we have that $tvt^{-1}$ represents an element of $P$ 
if and only if in $H$ the word $v$ represents an element in the submonoid $T \leq H$. By assumption the submonoid membership problem for $T$ 
in $H$ is undecidable, and hence it follows that the membership problem for $P$ within $G$ is undecidable. Hence $\Gp\pre{X}{w=1}$  has 
undecidable prefix membership problem, where $w \in \ol{X}^*$ is a reduced word. 
\end{proof}

\begin{rmk}\label{rmk:final}
Note that in the proof of Theorem~\ref{thm:undec} the initial presentation
\[
\Gp\pre{a,b}{ab ab^{-1} a^{-1}b a^{-1}b^{-1} = 1}
\]
for the group $H$ does have decidable prefix membership problem, and this follows as a consequence of Theorem~\ref{thm:pos-neg}. 
To see this, note that the letter $a$ has exponent sum zero in the word $r\equiv ab ab^{-1} a^{-1}b a^{-1}b^{-1}$. Furthermore, 
$r$ is prefix $a$-positive. Now following the method described in Proposition~\ref{pro:mol}, working with respect to the exponent 
sum zero letter $a$, the group $H$ arises as an HNN extension of the group
$$
H_1 = \Gp\pre{b_{-2},b_{-1},b_0}{b_{-1}b_{-2}^{-1}b_{-1}b_0^{-1}=1},
$$
which is just the free group of rank 2 generated by $b_{-2}$ and $b_{-1}$. Since $H_1$ is a free group it follows that the hypotheses 
(i) and (ii) of Theorem~\ref{thm:pos-neg} are both satisfied. Hence, Theorem~\ref{thm:pos-neg} can be applied and it follows that the 
above presentation for $H$ has decidable prefix membership problem.  

We conclude that the question of decidability of the prefix membership problem depends on the presentation of the considered group; in this 
remark and in the previous theorem we have just seen two presentations of the same group $H$, one yielding undecidable prefix membership problem, 
whereas the same problem is decidable with respect to the other presentation.
\end{rmk}

%\begin{rmk}
%Note that the initial presentation $\Gp\pre{a,b}{ab ab^{-1} a^{-1}b a^{-1}b^{-1} = 1}$ for the group $H$ has \emph{decidable} membership problem,
%and this follows as a consequence of Theorem \ref{thm:pos-neg}. Namely, note that the letter $a$ has exponent sum zero in the word 
%$r\equiv ab ab^{-1} a^{-1}b a^{-1}b^{-1}$. Furthermore, $r$ is prefix $a$-positive. Therefore, $H$ arises as an HNN extension of the group 
%$$
%H_1 = \Gp\pre{b_{-2},b_{-1},b_0}{b_{-1}b_{-2}^{-1}b_{-1}b_0^{-1}=1},
%$$
%which is just the free group of rank 2 generated by $b_{-2}$ and $b_{-1}$. The associated subgroups are $A=\Gp\gen{b_{-2},b_{-1}}=H_1$ and 
%$B=\Gp\gen{b_{-1},b_0}=H_1$ (so that $H$ is in fact an HNN extension of the free group $H_1$ in which the automorphism of $H_1$ induced by
%$b_{-2}\mapsto b_{-1}$, $b_{-1}\mapsto b_0$ becomes an inner one via conjugation by $a$). Hence, Theorem \ref{thm:pos-neg} applies immediately.
%
%We conclude that the question of decidability of the prefix membership problem depends on the presentation of the considered group; in this
%remark and in the previous theorem we have just seen two presentations of the same group $H$, one yielding undecidable prefix membership problem,
%whereas the same problem is decidable with respect to the other presentation.
%\end{rmk}

% ACKNOWLEDGEMENTS

\begin{ackn}
The authors are grateful to an anonymous referee for a number of helpful comments. In particular, one of their suggestions led to the example
included in Remark \ref{rmk:final}. 
\end{ackn}

% REFERENCES

\end{document}